\RequirePackage[english]{babel}
\documentclass[a4paper,11pt]{amsart} % draft
\usepackage[latin1]{inputenc}
\usepackage{amsmath}
\usepackage{amssymb}
\usepackage{amsfonts}

%%% Enumerate %%%
%\usepackage{enumerate}
\usepackage{enumerate}

\renewcommand{\labelenumi}{\theenumi}

%%% Diagrams %%%
%\usepackage{diagrams}
\usepackage[matrix,arrow,cmtip,frame,graph]{xy} % curve, all
\SelectTips{cm}{}
\newdir{(}{{}*!/-5pt/@^{(}}
\newdir{(x}{{}*!/-5pt/@_{(}}
\newdir{+}{{}*!/-9pt/{}}
\newdir{>+}{@{>}*!/-9pt/{}}
\entrymodifiers={+!!<0pt,\fontdimen22\textfont2>}

%%% Theorems %%%
%\usepackage[nosubsections]{mytheorems2}
\usepackage{mytheorems2}

%%% Macros %%%
\newcommand{\pref}[2][]{%
\ifx\\#1\\%
\eqref{#2}%
\else%
{\upshape(\ref{#2},~\ref{#1})}%
\fi%
}

\newcommand{\upref}[1]{{\upshape\ref{#1}}}

\newcommand\N{\mathbb{N}}
\newcommand\Z{\mathbb{Z}}
\newcommand\Q{\mathbb{Q}}
\newcommand\C{\mathbb{C}}
\newcommand\F{\mathbb{F}}

% Schemes
\newcommand\Hilb{\mathrm{Hilb}}

% Functors
\newcommand\FHilb{\mathcal{H}\mathit{ilb}}
\DeclareRobustCommand\FGamma{\underline{\Gamma}}
\newcommand\fF{\mathcal{F}} % Functor
\newcommand\fG{\mathcal{G}} % Functor

% Categories
\newcommand\Sets{\mathbf{Sets}}
\newcommand\Mod{\mathbf{Mod}} % Mod
\newcommand\Alg{\mathbf{Alg}} % Alg
\newcommand\xMod{\text{\nobreakdash--}\Mod} % A-Mod
\newcommand\xAlg{\text{\nobreakdash--}\Alg} % A-Alg
\newcommand\AffSch{\mathbf{Aff}} % Affine schemes

% Sheaves
\newcommand\sO{\mathcal{O}}

% Maps
\newcommand\inj{\hookrightarrow}
\newcommand\surj{\twoheadrightarrow}
\newcommand\iso{\cong} % Isomorphism (perhaps \simeq is better?)
\newcommand\mapname[1]{#1\,:\,}
\newcommand\map[3]{#1\,:\, #2\rightarrow #3}
\newcommand\injmap[3]{#1\,:\, #2\hookrightarrow #3}

\newcommand\id[1]{\mathrm{id}_{#1}} % Identity map

% Misc
\newcommand{\etale}{\'{e}tale}

\newcommand\A[1]{\mathbb{A}^{#1}}    % Affine space
\newcommand\Spec{\mathrm{Spec}}
\newcommand\red{\mathrm{red}}
\newcommand\reg{\mathrm{reg}}
\newcommand\hensel[1]{{^{\mathrm{h}}{#1}}}

\newcommand\ceil[1]{\left\lceil#1\right\rceil}
\newcommand\divides{\mid}
\newcommand\I{\mathcal{I}} % Index set

\newcommand\im{\mathfrak{m}} % maximal ideal

\newcommand\SG[1]{{\mathfrak{S}_{#1}}}   % Symmetric group
\DeclareMathOperator{\Hom}{Hom}
\DeclareMathOperator{\Sym}{Sym}

\newcommand\mysetminus{\setminus}

\newcommand\bigtimes{\mathop{\times}}

\newcommand\sA{\mathcal{A}} % Sheaf
 % Sheaf

\newcommand\fpr{\mathrm{fpr}} % fixed-point reflecting
\newcommand\qreg{\mathrm{qreg}} % quasi-regular
 % stabilizer
\newcommand\nondeg{\mathrm{nondeg}} % non-degenerate families

\newcommand\CH{\mathrm{CH}} % Cayley-Hamilton (ideal)

\newcommand\T{\mathrm{T}}
\newcommand\TS{\mathrm{TS}}
\newcommand\SYM{\mathrm{S}}    % Symmetric algebra
\newcommand\Hompl[1]{\mathrm{Pol}^{#1}}
\newcommand\Hommpl[1]{{\mathrm{Pol}_{\mathrm{mult}}^{#1}}}

\newcommand\FCI{\mathrm{Image}} % Image of a family of cycles
\newcommand\FCS{\mathrm{Supp}} % Support of a family of cycles

\newcommand{\AF}{{\upshape AF}} % AF condition

\newcommand{\groupoidIL}[4]{\xymatrix@1@M=0mm@C=7mm{#1%
\ar@<.5ex>@{+->+}[r]^-{#3} \ar@<-.5ex>@{+->+}[r]_-{#4} & #2}}

\begin{document}

\title[Families of zero cycles]
{Families of zero cycles and divided powers: I.\\ Representability}
\author[D. Rydh]{David Rydh}
\address{Department of Mathematics, KTH, 100 44 Stockholm, Sweden}
\email{dary@math.kth.se}
\date{2008-03-04}
\subjclass[2000]{Primary 14C05; Secondary 14C25, 14L30}
\keywords{Families of cycles, zero cycles, divided powers, symmetric tensors,
symmetric product, Chow scheme, Hilbert scheme}

%14C05 Cycles and subschemes -
%      Chow and Hilbert schemes
%14L30 Algebraic Groups -
%      Group actions on varieties or schemes (quotients)

%14C25 Cycles and subschemes -
%      Algebraic cycles

%13A50 General commutative ring theory -
%      Actions on groups on commutative rings; invariant theory
%20B30 Permutation Groups -
%      Symmetric groups

\begin{abstract}
Let $X/S$ be a separated algebraic space. We construct an algebraic space
$\Gamma^d(X/S)$, \emph{the space of divided powers}, which parameterizes zero
cycles of degree $d$ on $X$. When $X/S$ is affine, this space is affine and
given by the spectrum of the ring of divided powers.
%
% When $X/S$ is (quasi-)projective, so is $\Gamma^d(X/S)$.
%
In characteristic zero or when $X/S$ is flat, the constructed space coincides
with the symmetric product $\Sym^d(X/S)$.
%
% If $X/S$ is quasi-projective and $S$ of characteristic zero then
% $\Gamma^d(X/S)_\red=\Sym^d(X/S)_\red=\Chow_{0,d}(X/S)$ is the classical Chow
% variety.
% If furthermore $X/S$ is quasi-projective, then the reduction of
% $\Gamma^d(X/S)$ is the classical Chow variety.
%
We also prove several fundamental results on the kernels of multiplicative
polynomial laws necessary for the construction of $\Gamma^d(X/S)$.
\end{abstract}

\maketitle

\setcounter{secnumdepth}{0}
\begin{section}{Introduction}

% We don't want to include the ``subsections'' of the introduction in the TOC.
\renewcommand\addcontentsline[3]{}

Chow varieties, parameterizing families of cycles of a certain dimension and
degree, are classically constructed using explicit projective
methods\ \cite{chow-vdW,samuel}. Moreover, Chow varieties are defined as
\emph{reduced} schemes and in positive characteristic the classical
construction has the unpleasant property that it depends on a given projective
embedding~\cite{nagata_chownorm}.
%The classical construction is not based on a functorial approach.

Many attempts to give a nice functorial description of Chow varieties have been
made and some successful steps towards this goal have been taken. For families
parameterized by \emph{seminormal} schemes, Koll\'ar, Suslin and Voevodsky,
have given a functorial
description~\cite{kollar_rat_curves_book,voevodsky_cycles}. In characteristic
zero, Barlet~\cite{barlet} has given an analytic description over
\emph{reduced} $\C$-schemes and Ang\'eniol~\cite{angeniol_thesis} has given an
algebraic description over, not necessarily reduced, $\Q$-schemes. The
situation in characteristic zero is simplified by the fact that for a finite
extension $A\inj B$ such that the determinant $B\to A$ is defined, the
determinant is determined by the trace.

In this article we will restrict our attention to Chow varieties of zero
cycles, that is, families of cycles of relative dimension zero. We will
construct an algebraic space $\Gamma^d(X/S)$, parameterizing zero cycles, which
coincides with Ang\'eniol's Chow space in characteristic zero. As with
Ang\'eniol's Chow space, the algebraic space $\Gamma^d(X/S)$ is not always
reduced but its reduction coincides with the classical Chow variety if we use a
\emph{sufficiently good} projective embedding. The relation with the Chow
variety will be discussed in a subsequent
article~\cite{rydh_gammasymchow_inprep}. A good understanding of families of
zero cycles is crucial for the understanding of families of higher-dimensional
cycles. In fact, a family of higher-dimensional cycles is defined by giving
zero-dimensional families on ``smooth
projections''~\cite{barlet,rydh_famofcycles}.

A natural candidate parameterizing zero cycles is the symmetric product
$\Sym^d(X/S)=(X/S)^d/\SG{d}$. This is the correct choice, in the sense
that it coincides with $\Gamma^d(X/S)$, when $X$ is of characteristic zero or
when $X/S$ is flat. In general, however, $\Sym^d(X/S)$ is not functorially
well-behaved and should be replaced with the ``scheme of divided powers''. In
the affine case, this is the spectrum of the algebra of divided power
$\Gamma^d_A(B)$ and it coincides with the symmetric product when $d!$ is
invertible in $A$ or when $B$ is a flat $A$-algebra.

Although the ring of divided powers $\Gamma^d_A(B)$ and multiplicative
polynomial laws have been studied by many
authors~\cite{roby_lois_pol,roby_lois_pol_mult,bergmann_formen_auf_moduln,
ziplies_gamma_hat,ferrand_norme}, there are some important
results missing. We provide these missing parts, giving a full treatment of the
\emph{kernel} of a multiplicative law.
% in \S\ref{S:support}
Somewhat surprisingly, the kernel does not commute with flat base change,
except in characteristic zero. We will show that the kernel does commute with
\etale{} base change.

After this preliminary study of $\Gamma^d_A(B)$
% in \S\S\ref{S:Gamma}--\ref{S:support}
%% and using the a priori knowledge that $\Gamma^d_A(B)$ is the right answer
%% in the affine case
we define,
% in \S\ref{S:representability}
for any separated algebraic space $X/S$, a functor $\FGamma^d_{X/S}$ which
parameterizes families of zero cycles. From the definition of $\FGamma^d_{X/S}$
and the results on the kernel of a multiplicative law,
% in \S\ref{S:support}
it will be obvious that $\FGamma^d_{X/S}$ is represented by
$\Spec(\Gamma^d_A(B))$ in the affine case. If $X/S$ is a scheme such that for
every $s\in S$, every finite subset of the fiber $X_s$ is contained in an
\emph{affine} open subset of $X$, then we say that $X/S$ is an \AF{}-scheme,
cf.\ Appendix~\ref{SS:AF}. In particular, this is the case if $X/S$ is
quasi-projective. For an \AF{}-scheme $X/S$ it is easy to show that
$\FGamma^d_{X/S}$ is representable by a scheme.

To treat the general case --- when $X/S$ is any separated scheme or separated
algebraic space --- we use the fact that $\FGamma^d_{X/S}$ is functorial in
$X$: For any morphism $\map{f}{U}{X}$ there is an induced \emph{push-forward}
$\map{f_*}{\FGamma^d_{U/S}}{\FGamma^d_{X/S}}$.
%In \S\ref{SS:push-fwd}
We show that when $f$ is \etale{}, then $f_*$ is \etale{} over a certain open
subset corresponding to families of cycles which are \emph{regular} with
respect to $f$.
% In \S\ref{SS:Rep_Gamma}
We then show that $\FGamma^d_{X/S}$ is represented by an algebraic space
$\Gamma^d(X/S)$ giving an explicit \etale{} covering.

In the last part of the article we introduce ``addition of cycles'' and
investigate the relation between the symmetric product $\Sym^d(X/S)$ and the
algebraic space $\Gamma^d(X/S)$. Intuitively, the universal family of
$\Gamma^d(X/S)$ should be related to the addition of cycles morphism
$\map{\Psi_{X/S}}{\Gamma^{d-1}(X/S)\times_S X}{\Gamma^d(X/S)}$. In the special
case when $\Psi_{X/S}$ is \emph{flat}, e.g., when $X/S$ is a smooth curve,
Iversen has shown that the universal family is given by the \emph{norm} of
$\Psi_{X/S}$~\cite{iversen_lin_det}. In general, there is a similar but more
subtle description. The universal family and some other properties of
$\Gamma^d(X/S)$ are treated in~\cite{rydh_famzerocycles-II}.

We now discuss the results and methods in more detail:

\begin{subsection}{Multiplicative polynomial laws}
In~\S\ref{S:Gamma} we recall the basic properties of the algebra of divided
powers $\Gamma_A(B)$ and the algebra $\Gamma^d_A(B)$. We also mention the
\emph{universal multiplication} of laws which later on will be described
geometrically as \emph{addition of cycles}.
\end{subsection}

\begin{subsection}{Kernel of a multiplicative polynomial law}
Let $B$ be an $A$-algebra. In~\S\ref{S:support} the basic properties of the
\emph{kernel} $\ker(F)$ of a multiplicative law $\map{F}{B}{A}$ is
established. First we show that $B/\ker(F)$ is integral over $A$ using
Cayley-Hamilton's theorem. We then show that the kernel commutes with limits,
localization and smooth base change. As mentioned above, the kernel does not
commute with flat base change in general and showing that the kernel commutes
with smooth base change takes some effort. Finally, we show some topological
properties of the kernel: The radical of the kernel commutes with arbitrary
base change, the fibers of $\Spec(B/\ker(F))\to\Spec(A)$ are finite sets, and
$\Spec(B/\ker(F))\to\Spec(A)$ is universally open.
\end{subsection}

\begin{subsection}{The functor $\FGamma^d_{X/S}$}
Guided by the knowledge that $\Gamma^d_A(B)$ is what we want in the
affine case, we define in~\S\ref{SS:FGamma} a well-behaved functor
$\FGamma^d_{X/S}$ parameterizing families of zero cycles of degree $d$ as
follows. A family over an affine $S$-scheme $T=\Spec(A)$ is given by the
following data
\begin{enumerate}
\item A closed subspace $Z\inj X\times_S T$ such that $Z\to T$
is \emph{integral}.
%(thus finite if $X/S$ is locally of finite type).
In particular $Z=\Spec(B)$ is \emph{affine}.
\item A family $\alpha$ on $Z$, i.e., a morphism
$T\to\Gamma^d(Z/T):=\Spec(\Gamma^d_A(B))$.
\end{enumerate}
Moreover, two families are equivalent if they are both induced by a family for
some common smaller subspace $Z$. We often suppress the subspace $Z$ and talk
about the family $\alpha$. The smallest subspace $Z\inj X\times_S T$ in the
equivalence class containing $\alpha$ is the \emph{image} of the family
$\alpha$ and the reduction $Z_\red$ of the image is the \emph{support} of the
family. The image of $\alpha$ is given by the kernel of the multiplicative law
corresponding to $\alpha$. Since the kernel commutes with \etale{} base change,
as shown in~\S\ref{S:support}, so does the image of a family. This is the key
result needed to show that $\FGamma^d_{X/S}$ is a sheaf in the \etale{}
topology.

In contrast to the Hilbert functor, for which families over $T$ are determined
by a subspace $Z\inj X\times_S T$, a family of zero cycles is not determined by
its image $Z$. If $T$ is \emph{reduced}, then the image $Z$ of a family
parameterized by $T$ is reduced and the family is determined by an effective
cycle supported on $Z$. In positive characteristic, over non-perfect fields,
this cycle may have rational coefficients. This is discussed
in~\cite{rydh_famzerocycles-II}.
%Section~\ref{SS:Points-of-Gamma}.
\end{subsection}

\begin{subsection}{Push-forward of cycles}
A morphism $\map{f}{X}{Y}$ of separated algebraic spaces induces a natural
transformation $\map{f_*}{\FGamma^d_{X/S}}{\FGamma^d_{Y/S}}$ which we call the
\emph{push-forward}. When $Y/S$ is locally of finite type, the existence of
$f_*$ follows from standard results. In general, we need a technical result on
integral morphisms given in Appendix~\ref{SS:tff-result}.

We say that a family $\alpha\in\FGamma^d_{X/S}(T)$ is \emph{regular} if the
restriction of $f_T$ to the image of $\alpha$ is an isomorphism. If
$\map{f}{X}{Y}$ is \etale{} then the regular locus is an open subfunctor of
$\FGamma^d_{X/S}$. A main result is that under certain regularity constraints,
push-forward commutes with products, cf.\ Proposition~\pref{P:pushfwd-diagram}.
Using this fact we show that the push-forward along an \etale{} morphism is
representable and \etale{} over the regular locus. This is
Proposition~\pref{P:pushfwd-over-etale-and-reg-is-etale}.
\end{subsection}

\begin{subsection}{Representability}
The representability of $\FGamma^d_{X/S}$ when $X/S$ is affine or \AF{} is, as
already mentioned, not difficult and given in~\ref{SS:FGamma}. When $X/S$ is
any separated algebraic space, the representability is proven in
Theorem~\pref{T:Gamma-representable:alg-space} using the results on the
push-forward.
\end{subsection}

\begin{subsection}{Addition of cycles}
Using the push-forward we define in~\S\ref{SS:addition-of-cycles} a morphism
$\Gamma^d(X/S)\times_S \Gamma^e(X/S)\to\Gamma^{d+e}(X/S)$ which on points is
addition of cycles. This induces a morphism $(X/S)^d\to\Gamma^d(X/S)$ which
has the topological properties of a quotient of $(X/S)^d$ by the symmetric
group.
\end{subsection}

\begin{subsection}{Relation with the symmetric product}
The addition of cycles morphism $(X/S)^d\to\Gamma^d(X/S)$ factors through
the quotient map $(X/S)^d\to\Sym^d(X/S)$ and it is easily proven that
$\Sym^d(X/S)\to\Gamma^d(X/S)$ is a universal homeomorphism with trivial
residue field extensions, cf.\ Corollary~\pref{C:Sym-Gamma}. It is further
easy to show that $\Sym^d(X/S)\to\Gamma^d(X/S)$ is an isomorphism over the
non-degeneracy locus, cf.\ Proposition~\pref{P:Sym-Gamma:nondeg}.
\end{subsection}

\begin{subsection}{Comparison of representability techniques}
Consider the following inclusions of categories:
$$\xygraph{ !{0;<1.8cm,0cm>:<0cm,2cm>::}
[d]
*++{\text{$X/S$ affine}}="AFF" :@{(->} [rr]
*++{\text{$X/S$ \AF{}-scheme}}="AF" [u]
*++{\parbox{3.6 cm}{$X/S$ quasi-projective\\of finite presentation}}="QP"
 :@{(->} "AF" "QP" :@{(->} [rrr]
*++{\parbox{5.5 cm}{$X/S$ separated algebraic space,\\locally of finite
presentation}} :@{(->} [d]
 *++{\parbox{5.5 cm}{$X/S$ separated algebraic space.}}="SAS"
"AF" :@{(->} "SAS"
}$$
When $X/S$ is affine, it is fairly easy to show the existence of the quotient
$\Sym^d(X/S)$~\cite[Ch.~V, \S2, No.~2, Thm.~2]{bourbaki_alg_comm_5_6}, the
representability of $\FGamma^d_{X/S}$ and the representability of the Hilbert
functor of points $\FHilb^d_{X/S}$~\cite{nori_Hilb_appendix,GLS_Affine_Hilb}.
The existence of $\Sym^d(X/S)$ and the representability of $\FGamma^d_{X/S}$
and $\FHilb^d_{X/S}$ in the category of \AF{}-schemes is then a simple
consequence.

When $X/S$ is (quasi-)projective and $S$ is noetherian, one can also show the
existence and (quasi-)projectivity of $\Sym^d(X/S)$, $\Gamma^d(X/S)$ and
$\Hilb^d(X/S)$ with projective methods, cf.~\cite{rydh_gammasymchow_inprep}
% \ref{C:Gamma-proj-repr} in GammaSymChow..
and~\cite[No.~221]{fga}. The representability of the Hilbert scheme in the
category of separated algebraic spaces locally of finite presentation can be
established using Artin's algebraization
theorem~\cite[Cor.~6.2]{artin_alg_formal_moduli_I}. We could likewise have used
Artin's algebraization theorem to prove the representability of
$\FGamma^d_{X/S}$ when $X/S$ is locally of finite presentation. The crucial
criterion, that $\FGamma^d_{X/S}$ is effectively pro-representable, is shown
in~\S\ref{SS:eff-pro-rep}.

Finally, the methods that we have used in this article to show that
$\FGamma^d_{X/S}$ is representable in the category of all separated algebraic
spaces can be applied, mutatis mutandis, to the Hilbert functor of points. The
proofs become significantly simpler as the difficulties encountered for
$\FGamma^d_{X/S}$ are almost trivial for the Hilbert functor. More generally,
these methods apply to the Hilbert stack of
points~\cite{rydh_hilbert}. The existence of $\Sym^d(X/S)$ can also be
proven in the same vein and this is done in~\cite{rydh_finite_quotients}.
\end{subsection}

\begin{subsection}{Notation and conventions}
We denote a \emph{closed} immersion of schemes or algebraic spaces with $X\inj
Y$. When $A$ and $B$ are rings or modules we use $A\inj B$ for an injective
homomorphism. We let $\N$ denote the set of non-negative integers
$0,1,2,\dots$ and use the notation $((a,b))=\binom{a+b}{a}$ for binomial
coefficients.
\end{subsection}

\end{section}
\setcounter{secnumdepth}{3}

\tableofcontents

%---------------------------------------------------------------

\begin{section}{The algebra of divided powers}\label{S:Gamma}

We begin this section by briefly recalling the definition of polynomial laws
in~\S\ref{SS:pol_laws}, the algebra of divided powers $\Gamma_A(M)$
in~\S\ref{SS:divided_powers} and the multiplicative structure of
$\Gamma^d_A(B)$ in~\S\ref{SS:mul_div_powers}.

\begin{subsection}{Polynomial laws and symmetric tensors}\label{SS:pol_laws}

We recall the definition of a polynomial
law~\cite{roby_lois_pol, roby_lois_pol_mult}.

\begin{definition}\label{D:pol-law}
Let $M$ and $N$ be $A$-modules. We denote by $\fF_M$ the functor
$$\map{\fF_M}{A\xAlg}{\Sets},\quad\quad A'\mapsto M\otimes_A A'$$
A \emph{polynomial law} from $M$ to $N$ is a natural transformation
$\map{F}{\fF_M}{\fF_N}$. More concretely, a polynomial law is a set of
\emph{maps} $\map{F_{A'}}{{M\otimes_A A'}}{{N\otimes_A A'}}$ for every
$A$-algebra $A'$ such that for any homomorphism of $A$-algebras
${\map{g}{A'}{A''}}$ the diagram
$$\xymatrix{
{M\otimes_A A'}\ar[r]^{F_{A'}}\ar[d]_{\id{M}\otimes g} & 
   {N\otimes_A A'}\ar[d]^{\id{N}\otimes g} \\
{M\otimes_A A''}\ar[r]^{F_{A''}} & {N\otimes_A A''}\ar@{}[ul]|{\circ}
}$$
commutes.
The polynomial law $F$ is
\emph{homogeneous of degree} $d$ if for any $A$-algebra $A'$, the corresponding
map $\map{F_{A'}}{M\otimes_A A'}{N\otimes_A A'}$ is such that $F_{A'}(ax)=a^d
F_{A'}(x)$ for any $a\in A'$ and $x\in M\otimes_A A'$. If $B$ and $C$ are
$A$-algebras then a polynomial law from $B$ to $C$ is \emph{multiplicative} if
for any $A$-algebra $A'$, the corresponding map $\map{F_{A'}}{B\otimes_A
A'}{C\otimes_A A'}$ is such that $F_{A'}(1)=1$ and
$F_{A'}(xy)=F_{A'}(x)F_{A'}(y)$ for any $x,y\in B\otimes_A A'$.
% Note that $f(1)=1$ does not follow automatically! E.g., $A=k[e]/(e^2-e)$ and
% the ring homomorphism (not respecting the unity) $\Gamma^d(A)\to A$ given by
% multiplication by $e$. This corresponds to a multiplicative law without
% $f(1)=1$.
\end{definition}

\begin{notation}
Let $A$ be a ring and $M$ and $N$ be $A$-modules (resp. $A$-algebras).
We let $\Hompl{d}(M,N)$ (resp. $\Hommpl{d}(M,N)$) denote the polynomial laws
(resp. multiplicative polynomial laws) $M\to N$ which are homogeneous of
degree $d$.
\end{notation}

\begin{notation}
Let $A$ be a ring and $M$ an $A$-algebra. We denote the $d$\textsuperscript{th}
tensor product of $M$ over $A$ by $\T^d_A(M)$. We have an action of the
symmetric group $\SG{d}$ on $\T^d_A(M)$ permuting the factors. The invariant
ring of this action is the symmetric tensors and is denoted $\TS^d_A(M)$. By
$\T_A(M)$ and $\TS_A(M)$ we denote the graded $A$-modules $\bigoplus_{d\geq 0}
\T^d_A(M)$ and $\bigoplus_{d\geq 0}\TS^d_A(M)$ respectively.
\end{notation}

\begin{xpar}\label{X:TS_and_filt_dir_lims}
The covariant functor $\TS^d_A(\cdot)$ commutes with filtered direct limits.
In fact, denoting the group ring of $\SG{d}$ by $\Z[\SG{d}]$ we have that
$$\TS^d_A(\cdot)=\T^d_A(\cdot)^{\SG{d}}=
\Hom_{\Z[\SG{d}]}\bigl(\Z,\T^d_A(\cdot)\bigr)$$
where $\SG{d}$ acts trivially on $\Z$. As tensor products, being left
adjoints, commute with any (small) direct limit so does $\T^d$.
Reasoning as in~\cite[Prop.~0.6.3.2]{egaI_NE} it follows that
$\Hom_{\Z[\SG{d}]}(\Z,\cdot)$ commutes with filtered direct limits. In fact,
$\Z$ is a $\Z[\SG{d}]$-module of finite presentation and that $\Z[\SG{d}]$ is
non-commutative is not a problem here.
\end{xpar}

\begin{xpar}[Shuffle product]\label{X:shuffle_product}
When $B$ is an $A$-algebra, then $\TS^d_A(B)$ has a natural $A$-algebra
structure induced from the $A$-algebra structure of $\T^d_A(B)$. The
multiplication on $\TS^d_A(B)$ will be written as juxtaposition. For any
$A$-module $M$, we can equip $\T_A(M)$ and $\TS_A(M)$ with $A$-algebra
structures. The multiplication on $\T_A(M)$ is the ordinary tensor product
and the multiplication on $\TS_A(M)$ is called the \emph{shuffle product} and
is denoted by $\times$. If $x\in\TS^d_A(M)$ and $y\in\TS^e_A(M)$ then
$$x\times y=\sum_{\sigma\in\SG{d,e}} \sigma\left(x\otimes_A y\right)$$
where $\SG{d,e}$ is the subset of $\SG{d+e}$ such that
$\sigma(1)<\sigma(2)<\dots<\sigma(d)$ and
$\sigma(d+1)<\sigma(d+2)<\dots\sigma(d+e)$.
\end{xpar}

\end{subsection}

\begin{subsection}{Divided powers}\label{SS:divided_powers}
Most of the material in this section can be found in~\cite{roby_lois_pol}
and~\cite{ferrand_norme}.

\begin{xpar}
Let $A$ be a ring and $M$ an $A$-module. Then there exists a graded
$A$-algebra, the algebra of divided powers, denoted
$\Gamma_A(M)=\bigoplus_{d\geq 0}\Gamma^d_A(M)$ equipped with maps
$\map{\gamma^d}{M}{\Gamma^d_A(M)}$ such that, denoting the multiplication with
$\times$ as in~\cite{ferrand_norme}, we have that for every $x,y\in M$,
$a\in A$ and $d,e\in\N$
\begin{align}
\Gamma^0_A(M) & = A,\quad\text{and}\quad \gamma^0(x)=1 \label{gamma0}\\
\Gamma^1_A(M) & = M,\quad\text{and}\quad \gamma^1(x)=x \label{gamma1}\\
\gamma^d(ax) & = a^d\gamma^d(x)\label{E:Gamma-scalars} \\
\gamma^d(x+y) & = \textstyle\sum_{d_1+d_2=d} \gamma^{d_1}(x)\times \gamma^{d_2}(y)\label{E:Gamma-addition} \\
\gamma^d(x)\times\gamma^e(x) & = ((d,e))\gamma^{d+e}(x)
\end{align}
Using~\eqref{gamma0} and~\eqref{gamma1} we will identify $A$ with
$\Gamma^0_A(M)$ and $M$ with $\Gamma^1_A(M)$.
If $(x_\alpha)_{\alpha\in\I}$ is a family of elements of $M$ and
$\nu\in\N^{(\I)}$ then we let
$$\gamma^\nu (x) = \bigtimes_{\alpha\in \I} \gamma^{\nu_\alpha} (x_\alpha)$$
which is an element of $\Gamma^d_A(M)$ with
$d=|\nu|=\sum_{\alpha\in\I} \nu_\alpha$.
\end{xpar}

\begin{xpar}[Functoriality]
$\Gamma_A(\cdot)$ is a covariant functor from the category of $A$-modules to
the category of graded $A$-algebras~\cite[Ch.~III \S4, p.~251]{roby_lois_pol}.
\end{xpar}

\begin{xpar}[Base change]\label{X:Gamma_base_change}
If $A'$ is an $A$-algebra then there is a natural isomorphism
$\Gamma_A(M)\otimes_A A'\to\Gamma_{A'}(M\otimes_A A')$ mapping
$\gamma^d(x)\otimes_A 1$ to ${\gamma^d(x\otimes_A 1)}$
\cite[Thm.~III.3, p.~262]{roby_lois_pol}.
This shows that $\gamma^d$ is a homogeneous polynomial law of degree $d$.
\end{xpar}

\begin{xpar}[Universal property]\label{X:univ_prop_of_Gamma}
The map $\Hom_A\bigl(\Gamma^d_A(M),N\bigr)\to\Hompl{d}(M,N)$ given by
$F\to F\circ\gamma^d$ is an isomorphism~%
\cite[Thm.~IV.1, p.~266]{roby_lois_pol}.
\end{xpar}

\begin{xpar}[Basis and generators]\label{X:Gamma_basis}
If $(x_\alpha)_{\alpha\in\I}$ is a set of generators of $M$, then
$\bigl(\gamma^\nu(x)\bigr)_{\nu\in\N^{(\I)}}$ is a set of generators of
$\Gamma_A(M)$ as an $A$-module. If $(x_\alpha)_{\alpha\in\I}$ is a basis of $M$
then $\bigl(\gamma^\nu(x)\bigr)_{\nu\in\N^{(\I)}}$ is a basis of
$\Gamma_A(M)$~\cite[Thm.~IV.2, p.~272]{roby_lois_pol}.  Furthermore, if $A$ is
an algebra over an infinite field or $A$ is an algebra over
$\Lambda_d=\Z[T]/P_d(T)$ where $P_d$ is the unitary polynomial
$P_d(T)=\prod_{0\leq i<j\leq d} (T^i-T^j)-1$, then $\gamma^d(M)$ generates
$\Gamma^d_A(M)$ \cite[Lemme 2.3.1]{ferrand_norme}. In particular, there is
always a finite \emph{faithfully flat} base change $A\to A'$ such that
$\Gamma^d_{A'}(M')$ is generated by $\gamma^d(M')$. More generally
$\gamma^d(M)$ generates $\Gamma^d_A(M)$ if and only if every residue field of
$A$ has at least $d$ elements~\cite{rydh_gammasymchow_inprep}.
\end{xpar}

\begin{xpar}[Exactness]\label{X:Gamma_exactness}
The functor $\Gamma_A(\cdot)$ is a left
adjoint~\cite[Thm.~III.1, p.~257]{roby_lois_pol} and thus commutes with any
(small) direct limit. It is thus
\emph{right exact}~\cite[Def.~2.4.1]{sga4_grothendieck_prefaisceaux} but note
that $\Gamma_A(\cdot)$ is a functor from $A\xMod$ to $A\xAlg$ and that the
latter category is not abelian.
By~\cite[Rem.~2.4.2]{sga4_grothendieck_prefaisceaux} a functor is right
exact if and only if it takes the initial object onto the initial object and
commutes with finite coproducts and coequalizers. Thus
$\Gamma_A(0)=A$ and given an exact diagram of $A$-modules
$$\xymatrix{
{M'} \ar@<.5ex>[r]^f \ar@<-.5ex>[r]_g & {M} \ar[r]^h & {M''}
}$$
the diagram
$$\xymatrix{
{\Gamma_A(M')}\ar@<.5ex>[r]^{\Gamma f} \ar@<-.5ex>[r]_{\Gamma g} &
{\Gamma_A(M)}\ar[r]^{\Gamma h} & {\Gamma_A(M'')}
}$$
is exact in the category of $A$-algebras and
$$\Gamma_A(M\oplus M')=\Gamma_A(M)\otimes_A \Gamma_A(M').$$
The latter identification can be made
explicit~\cite[Thm.~III.4, p.~262]{roby_lois_pol} as
\begin{equation}\label{E:Gamma^d_of_dir_sum}
\begin{split}
\Gamma^d_A(M\oplus M') &= \bigoplus_{a+b=d}
     \left(\Gamma^a_A(M)\otimes_A \Gamma^b_A(M')\right) \\
\gamma^d(x+y) &= \sum_{a+b=d}\gamma^a(x)\otimes\gamma^b(y).
\end{split}
\end{equation}
This makes $\Gamma_A(M\oplus M')=
%\bigoplus_{d\geq 0}\bigoplus_{a+b=d}
\bigoplus_{a,b\geq 0}
\Gamma^{a,b}(M\oplus M')$ into a bigraded algebra where
$\Gamma^{a,b}(M\oplus M')=\Gamma^a_A(M)\otimes_A\Gamma^b_A(M')$.
\end{xpar}

\begin{xpar}[Surjectivity]\label{X:surj_of_Gamma}
If $M\surj N$ is a surjection then it is easily seen
from the explicit generators of $\Gamma(N)$ in~\pref{X:Gamma_basis} that
$\Gamma_A(M)\surj\Gamma_A(N)$ is surjective. This also follows from the
right-exactness of $\Gamma_A(\cdot)$ as any right-exact functor from modules
to rings takes surjections onto surjections, cf.~\pref{X:Gamma_pres}
\end{xpar}

\begin{xpar}[Presentation]\label{X:Gamma_pres}
%[Presentation of $\Gamma_A(M)$]
Let $M=G/R$ be a presentation of the $A$-module $M$. Then
$\Gamma_A(M)=\Gamma_A(G)/I$ where $I$ is the ideal of $\Gamma_A(G)$ generated
by the images in $\Gamma_A(G)$ of $\gamma^d(x)$ for every $x\in R$ and
$d\geq 1$~\cite[Prop.~IV.8, p.~284]{roby_lois_pol}.
In fact, denoting the inclusion of $R$ in $G$ by $i$, we can write $M$ as a
coequalizer of $A$-modules
$$\xymatrix{
{R} \ar@<.5ex>[r]^i \ar@<-.5ex>[r]_0 & {G} \ar[r]^h & {M}
}$$
which by~\pref{X:Gamma_exactness} gives the exact sequence
$$\xymatrix{
{\Gamma_A(R)} \ar@<.5ex>[r]^{\Gamma(i)} \ar@<-.5ex>[r]_{\Gamma(0)}
& {\Gamma_A(G)} \ar[r]^{\Gamma(h)} & {\Gamma_A(M)}
}$$
of $A$-algebras. Since $\Gamma^0_A(0)=\Gamma^0_A(i)=\id{A}$ and
$\Gamma^d_A(0)=0$ for $d>0$ it follows that $\Gamma_A(M)$ is the quotient
of $\Gamma_A(G)$ by the ideal generated by
$\Gamma(i)\bigl(\bigoplus_{d\geq 1}\Gamma^d(R)\bigr)$.
\end{xpar}

\begin{xpar}[Exactness of $\Gamma^d_A(\cdot)$]\label{X:surj_of_Gamma^d}
If $M\surj N$ is a surjection then $\Gamma^d_A(M)\surj\Gamma^d_A(N)$ is
surjective since $\Gamma_A(M)\surj\Gamma_A(N)$ is surjective.
This does, however, not imply that $\Gamma^d_A(\cdot)$ is
right exact. In fact, in general it is not since we have that
$\Gamma^d_A(M\oplus M')\neq \Gamma^d_A(M)\oplus\Gamma^d_A(M')$.
\end{xpar}

\begin{xpar}[Presentation of $\Gamma^d_A(\cdot)$]\label{X:Gamma^d_pres}
If $M=G/R$ is a quotient of $A$-modules then $\Gamma^d_A(M)=\Gamma^d_A(G)/I$
where $I$ is the $A$-submodule generated by the elements $\gamma^k(x)\times y$
for $1\leq k\leq d$, $x\in R$ and $y\in\Gamma^{d-k}_A(G)$. This follows
immediately from~\pref{X:Gamma_pres}.
\end{xpar}

\begin{xpar}[Filtered direct limits]\label{X:Gamma^d_mod_filt_dir_lims}
The functor $\Gamma^d_A(\cdot)$ commutes with \emph{filtered} direct limits.
In fact, if $(M_\alpha)$ is a directed filtered system of $A$-modules then
\begin{align*}
\bigoplus_{d\geq 0} \Gamma^d_A
  (\varinjlim_{\!\!\scriptscriptstyle A\xMod\!\!} M_\alpha)
&= \varinjlim_{\scriptscriptstyle A\xAlg}
  \bigoplus_{d\geq 0}\Gamma^d_A(M_\alpha) = \\
&= \varinjlim_{\scriptscriptstyle A\xMod}
  \bigoplus_{d\geq 0}\Gamma^d_A(M_\alpha) =
\bigoplus_{d\geq 0}\varinjlim_{\scriptscriptstyle A\xMod}\Gamma^d_A(M_\alpha).
\end{align*}
The first equality follows from~\pref{X:Gamma_exactness} and the second
from the fact that a filtered direct limit in the category of $A$-algebras
coincides with the corresponding filtered direct limit in the category of
$A$-modules~\cite[Cor.~2.9]{sga4_grothendieck_prefaisceaux}.
%\footnote{Is there a Bourbaki-reference?}
\end{xpar}

\begin{xpar}\label{X:Gamma^d_flat/free}
If $M$ is a free (resp. flat) $A$-module then $\Gamma^d_A(M)$ is a free
(resp. flat) $A$-module. This follows from \pref{X:Gamma_basis} and
\pref{X:Gamma^d_mod_filt_dir_lims} as any flat module is a filtered direct
limit of free modules \cite[Thm.~1.2]{lazard_autour}.
\end{xpar}

\begin{xpar}[$\Gamma$ and $\TS$]\label{X:Gamma-TS-as-mod}
The homogeneous polynomial law $M\to\TS^d_A(M)$ of degree $d$ given by
$x\mapsto x^{\otimes_A d} = x\otimes_A \dots\otimes_A x$ corresponds by the
universal property~\pref{X:univ_prop_of_Gamma} to an $A$-module homomorphism
$\map{\varphi}{\Gamma^d_A(M)}{\TS^d_A(M)}$. This extends to an $A$-algebra
homomorphism $\Gamma_A(M)\to \TS_A(M)$, where the multiplication in $\TS_A(M)$
is the shuffle product~\pref{X:shuffle_product}, cf.~\cite[Prop.~III.1,
p.~254]{roby_lois_pol}.

When $M$ is a free $A$-module the homomorphisms $\Gamma^d_A(M)\to \TS^d_A(M)$
and $\Gamma_A(M)\to \TS_A(M)$ are isomorphisms of $A$-modules
respectively $A$-algebras~\cite[Prop.~IV.5, p.~272]{roby_lois_pol}.
The functors $\TS^d_A$ and $\Gamma^d_A$ commute with filtered direct limits
by~\pref{X:TS_and_filt_dir_lims} and~\pref{X:Gamma^d_mod_filt_dir_lims}.
Since any flat $A$-module is the filtered direct limit of free $A$-modules
\cite[Thm.~1.2]{lazard_autour}, it thus follows that $\Gamma_A(M)\to \TS_A(M)$
is an isomorphism of graded $A$-algebras for any flat $A$-module $M$.

Moreover by~\cite[Prop.~III.3, p.~256]{roby_lois_pol}, there are natural
$A$-module homomorphisms $\TS^d_A(M)\inj
\T^d_A(M)\surj\SYM^d_A(M)\to\Gamma^d_A(M)\to\TS^d_A(M)$ such that going around
one turn in the diagram
$$\xymatrix@C=5mm{
& \SYM^d_A(M)\ar[dl] & \\
\Gamma^d_A(M)\ar[rr] && \TS^d_A(M)\ar[ul]}$$
is multiplication by $d!$. Here $\SYM^d_A(M)$ denotes the degree $d$ part of
the symmetric algebra. Thus if $d!$ is invertible then
$\Gamma^d_A(M)\to\TS^d_A(M)$ is an isomorphism. In particular, this is the case
when $A$ is purely of characteristic zero, i.e., contains the field of
rationals.
\end{xpar}

\begin{xpar}[Universal multiplication of laws]
\label{X:univ-mult-of-laws}
Let $d,e\in \N$. There is a canonical homomorphism
$$\map{\rho_{d,e}}{\Gamma^{d+e}_A(M)}
{\Gamma^{d}_A(M)\otimes_A\Gamma^{e}_A(M)}$$
given by the homogeneous polynomial law
${x\mapsto\gamma^{d}(x)\otimes\gamma^{e}(x)}$
of degree $d+e$ and the universal property~\pref{X:univ_prop_of_Gamma}.
In particular
\begin{equation}\label{E:rho-formula}
\rho_{d,e}\bigl(\gamma^{\nu}(x)\bigr)=
\sum_{\substack{\nu'+\nu''=\nu\\|\nu'|=d,\;|\nu''|=e}}
\gamma^{\nu'}(x)\otimes \gamma^{\nu''}(x).
\end{equation}
We can factor $\rho_{d,e}$ as
$\pi_{d,e}\circ\Gamma^{d+e}(p)$ where $\map{p}{M}{M\oplus M}$ is the
diagonal map $x\mapsto x\oplus x$ and $\pi_{d,e}$ is the projection on the
factor of bidegree $(d,e)$ of $\Gamma^{d+e}(M\oplus M)$,
cf.\ Equation~\eqref{E:Gamma^d_of_dir_sum}.

If $\map{F_1}{M}{N_1}$ and $\map{F_2}{M}{N_2}$ are polynomial laws homogeneous
of degrees $d$ and $e$ respectively we can form the polynomial law
$\map{F_1\otimes F_2}{M}{N_1\otimes_A N_2}$ given by $(F_1\otimes
F_2)(x)=F_1(x)\otimes F_2(x)$. The law $F_1\otimes F_2$ is homogeneous of
degree $d+e$. If $\map{f_1}{\Gamma^{d}(M)}{N_1}$,
$\map{f_2}{\Gamma^{e}(M)}{N_2}$ and
$\map{f_{1,2}}{\Gamma^{d+e}(M)}{N_1\otimes_A N_2}$ are the corresponding
homomorphisms then $f_{1,2}=(f_1\otimes f_2)\circ \rho_{d,e}$.
\end{xpar}

\end{subsection}

\begin{subsection}{Multiplicative structure}\label{SS:mul_div_powers}
Let $M,N$ be $A$-modules and $d$ a positive integer. There is a unique
homomorphism
$$\map{\mu}{\Gamma^d_A(M)\otimes_A \Gamma^d_A(N)}{\Gamma^d(M\otimes_A N)}$$
sending $\mu(\gamma^d(x)\otimes\gamma^d(y))$ to
$\gamma^d(x\otimes y)$~\cite{roby_lois_pol_mult}.
%~\cite[5.5.2.8]{sga4_deligne_coh_supp_prop}
%~\cite[2.4.1]{ferrand_norme}
When $B$ is an $A$-algebra, the composition of $\mu$ and the multiplication
homomorphism $B\otimes_A B\to B$ induces a multiplication on $\Gamma^d_A(B)$
which we will denote by juxtaposition. The multiplication is such that
$\gamma^d(x)\gamma^d(y)=\gamma^d(xy)$ and this makes $\gamma^d$ into a
multiplicative polynomial law homogeneous of degree $d$.
%Moreover, the multiplication makes $\Gamma^d_A(B)\to\TS^d_A(B)$ an $A$-algebra
%homomorphism.
The unit in $\Gamma^d_A(B)$ is $\gamma^d(1)$.

If $B$ is an $A$-algebra and $M$ is a $B$-module, then $\mu$ together with the
module structure $B\otimes_A M\to M$ induces a $\Gamma^d_A(B)$-module structure
on $\Gamma^d_A(M)$.

\begin{xpar}[Universal property]\label{X:univ_prop_of_Gamma:mult}
Let $B$ and $C$ be $A$-algebras. Then the map
$\Hom_{A\xAlg}\bigl(\Gamma^d_A(B),C\bigr)\to\Hommpl{d}(B,C)$ given by
$F\to F\circ\gamma^d$ is an isomorphism~\cite{roby_lois_pol_mult}. Also
see~\cite[Prop.~2.5.1]{ferrand_norme}.
\end{xpar}

\begin{xpar}[$\Gamma$ and $\TS$]\label{X:Gamma-TS}
The homogeneous polynomial law $M\to\TS^d_A(M)$ of degree $d$ given by
$x\mapsto x^{\otimes_A d} = x\otimes_A \dots\otimes_A x$ is multiplicative.
The homomorphism $\map{\varphi}{\Gamma^d_A(B)}{\TS^d_A(B)}$
in~\pref{X:Gamma-TS-as-mod} is thus an $A$-algebra homomorphism. It is an
isomorphism when $B$ is a flat over $A$ or when $A$
is of pure characteristic zero~\pref{X:Gamma-TS-as-mod}. The morphism
$\Spec\bigl(\TS^d_A(B)\bigr)\to\Spec\bigl(\Gamma^d_A(B)\bigr)$ is a universal
homeomorphism with trivial residue field extensions, see
Corollary~\pref{C:Sym-Gamma}. Further results about this morphism is found in
\cite{rydh_gammasymchow_inprep}.
\end{xpar}

\begin{xpar}[Filtered direct limits]\label{X:Gamma^d_alg_filt_dir_lims}
The functor $B\mapsto \Gamma^d_A(B)$ commutes with filtered direct limits. This
follows from~\pref{X:Gamma^d_mod_filt_dir_lims} and the fact that a filtered
direct limit in the category of $A$-algebras coincides with the corresponding
filtered direct limit in the category of
$A$-modules~\cite[Cor.~2.9]{sga4_grothendieck_prefaisceaux}.
%\footnote{Is there a Bourbaki-reference?}
\end{xpar}

\begin{xpar}\label{X:Gamma^d_of_prod_of_rings}
The isomorphism of $A$-modules given by equation~\eqref{E:Gamma^d_of_dir_sum}
gives an isomorphism of $A$-algebras
\begin{align*}
\Gamma^d_A(B\times C) &= \prod_{a+b=d}
     \left(\Gamma^a_A(B)\otimes_A \Gamma^b_A(C)\right) \\
\gamma^d\bigl((x,y)\bigr) &= 
     \left(\gamma^a(x)\otimes\gamma^b(y)\right)_{a+b=d}.
\end{align*}
\end{xpar}

\begin{xpar}[Universal multiplication of laws]
\label{X:univ-mult-and-comp:alg-case}
Replacing $M$ with an algebra $B$ in~\pref{X:univ-mult-of-laws}, the polynomial
law defining the homomorphism $\rho_{d,e}$ is multiplicative. The homomorphism
$\rho_{d,e}$ is thus an $A$-algebra homomorphism. For a geometrical
interpretation of $\rho_{d,e}$ as ``addition of cycles'' see
section~\S\ref{SS:addition-of-cycles}.
\end{xpar}

\begin{formula}[Multiplication formula {\cite[Form. 2.4.2]{ferrand_norme}}]
\label{F:Gamma_mult_form}
Let $(x_\alpha)_{\alpha\in\I}$ be a set of elements in $B$ and let
$\mu,\nu\in \N^{(\I)}$ with $d=|\mu|=|\nu|$. Then we have the following
identity in $\Gamma^d_A(B)$
$$\gamma^\mu(x)\gamma^\nu(x)=
\sum_{\xi\in N_{\mu,\nu}} \gamma^\xi(x_{(1)}x_{(2)})=
\sum_{\xi\in N_{\mu,\nu}} \bigtimes_{(\alpha,\beta)\in\I\times\I}
       \gamma^{\xi_{\alpha,\beta}}(x_\alpha x_\beta)
$$
where $N_{\mu,\nu}$ is the set of multi-indices $\xi\in \N^{(\I\times\I)}$
such that $\sum_{\beta\in\I} \xi_{\alpha,\beta}=\mu_\alpha$ for every
$\alpha\in \I$ and $\sum_{\alpha\in\I} \xi_{\alpha,\beta}=\nu_\beta$ for
every $\beta\in\I$.
\end{formula}

\begin{proposition}\label{P:Gamma^d_finiteness}
If $B$ is an $A$-algebra of finite type (resp. of finite presentation,
resp. finite over $A$, resp. integral over $A$) then
$\Gamma^d_A(B)$ is an $A$-algebra of finite type (resp. of finite
presentation, resp. finite, resp. integral).
\end{proposition}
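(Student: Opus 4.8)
The plan is to reduce everything to the case of a polynomial algebra and then exploit the explicit description of generators and relations for $\Gamma^d_A$. First I would dispose of the \emph{finite type} case. If $B$ is generated as an $A$-algebra by finitely many elements $x_1,\dots,x_n$, then $B$ is a quotient of the polynomial ring $P = A[t_1,\dots,t_n]$. By~\pref{X:surj_of_Gamma^d}, $\Gamma^d_A(P)\surj\Gamma^d_A(B)$ is surjective, so it suffices to show that $\Gamma^d_A(P)$ is of finite type over $A$. But $P$ is a free $A$-module with countable basis given by monomials, so by~\pref{X:Gamma-TS-as-mod} we have $\Gamma^d_A(P)\iso\TS^d_A(P)\inj\T^d_A(P)$; concretely, $\Gamma^d_A(P)$ is generated as an $A$-module by the elements $\gamma^\nu(x)$ with $|\nu|=d$ (notation of~\pref{X:Gamma_basis}). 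A cleaner route avoiding the infinite basis: use~\pref{X:Gamma^d_pres} with the presentation $P = A[t_1,\dots,t_n]$ and observe that since every residue field can be enlarged by a faithfully flat base change so that $\gamma^d(M)$ generates $\Gamma^d_A(M)$ (cf.~\pref{X:Gamma_basis}), and $\Gamma^d_A$ commutes with such base change~\pref{X:Gamma_base_change}, finite generation descends. In fact the slickest argument is: $P$ is the filtered union of its finitely-generated $A$-subalgebras which are \emph{finite} as... no — instead, I would simply note that $\Gamma^d_A(P)$ is generated over $A$ by the finitely many elements $\gamma^\nu(t)$ with $\nu$ a multi-index on $\{1,\dots,n\}$ of total degree~$\le d$, using the product formula of~\pref{F:Gamma_mult_form} and~\pref{X:Gamma_base_change} to check that these multiply back into the $A$-span of the $\gamma^\nu(t)$ with $|\nu|=d$. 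This settles finite type.

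For the \emph{finite presentation} case, write $B = P/R$ with $P = A[t_1,\dots,t_n]$ and $R$ a finitely generated ideal, say $R = (r_1,\dots,r_m)$. By~\pref{X:Gamma^d_pres}, $\Gamma^d_A(B) = \Gamma^d_A(P)/I$ where $I$ is the $A$-submodule (hence ideal, once multiplied up) generated by the elements $\gamma^k(x)\times y$ for $1\le k\le d$, $x\in R$, $y\in\Gamma^{d-k}_A(P)$. The point is that it suffices to take $x$ among the generators $r_1,\dots,r_m$: using $\gamma^k(x+x') = \sum_{a+b=k}\gamma^a(x)\times\gamma^b(x')$ (Equation~\eqref{E:Gamma-addition}) and $\gamma^k(ax) = a^k\gamma^k(x)$ (Equation~\eqref{E:Gamma-scalars}) together with the fact that for $x\in R$ and $p\in P$ one has $px\in R$ with $\gamma^k(px)$ expressible via the multiplicative polynomial law $\gamma^k$ and~\pref{F:Gamma_mult_form}, one sees the ideal $I$ is generated over $\Gamma^d_A(P)$ by the \emph{finitely many} elements $\gamma^k(r_i)\times y$ with $y$ ranging over a set of $A$-module generators of $\Gamma^{d-k}_A(P)$ — but $\Gamma^{d-k}_A(P)$ is already known to be a finite-type $A$-algebra, in particular a finitely generated $\Gamma^d_A(P)$-module... here one must be slightly careful, since $\Gamma^{\bullet}_A(P)$ is graded and $\Gamma^{d-k}$ need not be finitely generated as a module over $\Gamma^d$. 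The honest statement is that $I$ is generated as an \emph{ideal} of $\Gamma^d_A(P)$ by the finitely many elements $\gamma^k(r_i)\times\gamma^\lambda(t)$ with $1\le k\le d$, $1\le i\le m$, and $\lambda$ a multi-index of total degree $d-k$; this is a finite list, so $\Gamma^d_A(B)$ is a finitely-presented quotient of the finitely-presented $A$-algebra $\Gamma^d_A(P)$, hence finitely presented. (That $\Gamma^d_A(P)$ itself is finitely \emph{presented}, not merely finite type, I would get either from the free-module computation $\Gamma^d_A(P)=\TS^d_A(P)$ plus a direct presentation of symmetric tensors of a polynomial ring, or by a noetherian approximation argument: $P$ is defined over $\Z$, $\Gamma^d$ commutes with the base change $\Z\to A$ by~\pref{X:Gamma_base_change}, and $\Gamma^d_{\Z}(\Z[t_1,\dots,t_n])$ is a finite-type algebra over the noetherian ring $\Z$, hence finitely presented.)

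For the \emph{finite} case, $B$ is a finitely generated $A$-module, so $B$ is a quotient of $A^{\oplus n}$ as an $A$-module. By~\pref{X:surj_of_Gamma^d}, $\Gamma^d_A(A^{\oplus n})\surj\Gamma^d_A(B)$ is surjective, and by~\pref{X:Gamma_exactness} (iterating $\Gamma_A(M\oplus M')=\Gamma_A(M)\otimes_A\Gamma_A(M')$, or directly from~\eqref{E:Gamma^d_of_dir_sum}) together with $\Gamma^a_A(A)=A$, the module $\Gamma^d_A(A^{\oplus n})=\bigoplus_{a_1+\dots+a_n=d}\Gamma^{a_1}_A(A)\otimes\cdots\otimes\Gamma^{a_n}_A(A)=A^{\binom{n+d-1}{d}}$ is a \emph{finite free} $A$-module. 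Since $\Gamma^d_A(B)$ is an $A$-algebra that is a quotient $A$-module of a finite free $A$-module, it is finite as an $A$-module, hence a finite $A$-algebra.

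Finally, the \emph{integral} case. Write $B$ as the filtered direct limit $B = \varinjlim B_\alpha$ of its finite-type $A$-subalgebras; since $B$ is integral over $A$, each $B_\alpha$ is a \emph{finite} $A$-algebra (a finite-type algebra that is integral over $A$ is finite). By~\pref{X:Gamma^d_alg_filt_dir_lims}, $\Gamma^d_A(B) = \varinjlim\Gamma^d_A(B_\alpha)$, and by the finite case just proved each $\Gamma^d_A(B_\alpha)$ is finite, in particular integral, over $A$. A filtered direct limit of integral $A$-algebras is integral over $A$ (each element lies in some $\Gamma^d_A(B_\alpha)$ and satisfies a monic equation there), so $\Gamma^d_A(B)$ is integral over $A$. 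The main obstacle in the whole argument is the bookkeeping in the finitely-presented case — specifically, checking that one needs only the generators $r_i$ of the ideal $R$ (not all of $R$) and only finitely many ``tails'' $\gamma^\lambda(t)$ to cut out $\Gamma^d_A(B)$ inside $\Gamma^d_A(P)$; this rests on the addition and scalar identities~\eqref{E:Gamma-addition},~\eqref{E:Gamma-scalars} and the multiplication formula~\pref{F:Gamma_mult_form}, all of which are already available.
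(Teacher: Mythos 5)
Your overall architecture (quotient of a polynomial ring for finite type, quotient of $A^{\oplus n}$ for finite, filtered direct limits of finite subalgebras for integral) is the same as the paper's, and your treatment of the finite and integral cases is correct. The two hard cases, however, each contain a genuine gap, and in both the paper takes a different and safer route.

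For finite type, everything reduces to showing that $\Gamma^d_A(A[t_1,\dots,t_n])$ is a finitely generated $A$-algebra, and none of the routes you sketch actually establishes this. The claim you settle on --- that the elements $\gamma^\nu(t)$ with $|\nu|\le d$ (padded by $\gamma^{d-|\nu|}(1)$) generate $\Gamma^d_A(A[t_1,\dots,t_n])$ as an $A$-algebra --- is true, but it is itself a nontrivial theorem about elementary multisymmetric functions, and your justification does not prove it: checking via the multiplication formula~\pref{F:Gamma_mult_form} that products of these elements land back in the $A$-span of the $\gamma^\nu$ only bounds the subalgebra they generate from above; it says nothing about whether every $\gamma^\mu$ of monomials is a polynomial in them, which is the direction needed. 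The paper avoids this entirely: since $\Gamma^d$ commutes with base change, it suffices to show that $\Gamma^d_{\Z}(\Z[t_1,\dots,t_n])=\TS^d_{\Z}(\Z[t_1,\dots,t_n])$ is of finite type over $\Z$, which is Noether's finiteness theorem for invariants of a finite group acting on a finitely generated algebra over a noetherian ring.

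For finite presentation, your proposed finite generating set for the ideal $I$ is simply wrong as stated. Take $d=2$, $P=A[t]$, $R=(t)$, so $B=A$, $\Gamma^2_A(P)=A[e_1,e_2]$ with $e_1=\gamma^1(t)\times\gamma^1(1)$ and $e_2=\gamma^2(t)$, and $I=(e_1,e_2)$. Your list consists of $\gamma^2(t)=e_2$ (for $k=2$) and $\gamma^1(t)\times\gamma^1(t)=2e_2$ (for $k=1$, $|\lambda|=1$); it generates only $(e_2)$ and misses $e_1$, because requiring $|\lambda|=d-k$ exactly forbids padding by powers of $\gamma(1)$. Even after adding the padded elements $\gamma^k(r_i)\times\gamma^\lambda(t)\times\gamma^{d-k-|\lambda|}(1)$, the assertion that finitely many $y$'s suffice to generate $I$ as an ideal --- when $\Gamma^{d-k}_A(P)$ is an infinite $A$-module --- is precisely the point requiring proof, and you supply none (your first reduction, from arbitrary $x\in R$ to the generators $r_i$, is fine and is essentially the computation in Lemma~\pref{L:elementwise_criteria_for_ker}; it is the reduction in $y$ that fails). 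The paper sidesteps all of this by noetherian approximation: descend $B$ to a finite-type algebra $B_0$ over a noetherian subring $A_0$, observe that $\Gamma^d_{A_0}(B_0)$ is of finite type and hence of finite presentation over the noetherian ring $A_0$, and conclude by base change. You should adopt both of these devices; as written, the finite-type and finite-presentation cases are not proved.
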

\begin{proof}
If $B$ is an $A$-algebra of finite type then $B$ is a quotient of a polynomial
ring $A[x_1,x_2,\dots,x_n]$. The induced homomorphism
$\Gamma^d(A[x_1,x_2,\dots,x_n])\to\Gamma^d_A(B)$ is surjective, and thus it is
enough to show that $\Gamma^d(A[x_1,x_2,\dots,x_n])$ is an $A$-algebra of
finite type.  As $\Gamma^d$ commutes with base change it is further enough to
show that
$\Gamma^d_{\Z}(\Z[x_1,x_2,\dots,x_n])=\TS^d_{\Z}(\Z[x_1,x_2,\dots,x_n])$ is a
$\Z$-algebra of finite type. This is well-known, cf.~\cite[Ch.~V, \S1, No.~9,
Thm.~2]{bourbaki_alg_comm_5_6}.

If $B$ is an $A$-algebra of finite presentation then there is a noetherian
ring $A_0$ and a $A_0$-algebra of finite type $B_0$ such that
$B=B_0\otimes_{A_0} A$. The first part of the proposition shows that
$\Gamma^d_{A_0}(B_0)$ is an $A_0$-algebra of finite type and thus also
of finite presentation as $A_0$ is noetherian. As $\Gamma^d$ commutes with base
change this shows that $\Gamma^d_A(B)$ is an $A$-algebra of finite
presentation.

If $B$ is a finite $A$-algebra then $\Gamma^d_A(B)$ is a finite
$A$-algebra by~\pref{X:Gamma_basis}. If $B$ is an integral $A$-algebra then
$B$ is a filtered direct limit of finite $A$-algebras. As $\Gamma^d$ commutes
with filtered direct limits this shows that $\Gamma^d_A(B)$ is an integral
$A$-algebra.
\end{proof}

\end{subsection}

%---------------------------------------------------------------

\begin{subsection}{The scheme $\Gamma^d(X/S)$ for $X/S$ affine}
\label{SS:Repr-affine}

Let $S$ be any scheme and $\sA$ a quasi-coherent sheaf of $\sO_S$-algebras. As
the construction of $\Gamma^d_A(B)$ commutes with localization with respect to
multiplicatively closed subsets of $A$ we may define a quasi-coherent sheaf of
$\sO_S$-algebras $\Gamma^d_{\sO_S}(\sA)$. 
This extends the definition of the covariant functor $\Gamma^d$ to the category
of quasi-coherent algebras on $S$.
If $\map{f}{X}{S}$ is an affine morphism we let $\Gamma^d(X/S)=\Spec\bigl(
\Gamma^d_{\sO_S}(f_*\sO_X)\bigr)$.
This defines a covariant functor
$$\map{\Gamma^d}{\AffSch_{/S}}{\AffSch_{/S}},\quad\quad X/S\mapsto \Gamma^d(X/S)$$
where $\AffSch_{/S}$ is the category of schemes affine over $S$. When it is not
likely to cause confusion, we will sometimes abbreviate $\Gamma^d(X/S)$ with
$\Gamma^d(X)$.

A polynomial law in this setting is a natural transformation of functors
from quasi-coherent $\sO_S$-algebras to sheaves of sets on $S$. We obtain
an isomorphism $\Hom_S\bigl(S',\Gamma^d(X/S)\bigr)\to
\Hommpl{d}_{,\sO_S}(\sO_X,\sO_{S'})$ for any affine $S$-scheme $S'$. Also
observe that
\begin{align*}
\Hom_S\bigl(S',\Gamma^d(X/S)\bigr) &\iso
\Hom_{S'}\bigl(S',\Gamma^d(X/S)\times_S S'\bigr) \\
&\iso \Hom_{S'}\bigl(S',\Gamma^d(X'/S')\bigr).
\end{align*}
More generally, if $S$ is an algebraic space and $X\to S$ is affine we define
$\Gamma^d(X/S)$ by \etale{} descent.

Defining $\Gamma^d(X/S)$ for any $S$-scheme $X$ is non-trivial. In the
following sections we will give a functorial description of $\Gamma^d(X/S)$ and
then show that this functor is represented by a scheme or algebraic space
$\Gamma^d(X/S)$.

A very useful fact that will repeatedly be used in the sequel is the following
rephrasing of paragraph~\pref{X:Gamma^d_of_prod_of_rings}:

\begin{proposition}\label{P:Gamma^d_of_disj_union}
Let $S$ be an algebraic space and let $X_1,X_2,\dots,X_n$ be
algebraic spaces affine over $S$. Then
$$\Gamma^d\left(\coprod_{i=1}^n X_i\right)=
\coprod_{\substack{d_i\in\N\\\sum_i d_i=d}}
\Gamma^{d_1}(X_1)\times_S \Gamma^{d_2}(X_2)\times_S\dots\times_S \Gamma^{d_n}(X_n).$$
\end{proposition}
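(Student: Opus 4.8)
The plan is to reduce the statement about the disjoint union to the ring-theoretic identity recorded in paragraph~\pref{X:Gamma^d_of_prod_of_rings}, which handles the case $n=2$ on the level of algebras, and then pass to spectra and \etale{}-localize. First I would treat the case where $S=\Spec(A)$ is affine and each $X_i=\Spec(B_i)$ is affine over $S$. Then $\coprod_i X_i=\Spec\bigl(\prod_i B_i\bigr)$, and an $n$-fold iteration of the isomorphism of $A$-algebras $\Gamma^d_A(B\times C)=\prod_{a+b=d}\bigl(\Gamma^a_A(B)\otimes_A\Gamma^b_A(C)\bigr)$ from~\pref{X:Gamma^d_of_prod_of_rings} yields
$$\Gamma^d_A\Bigl(\prod_{i=1}^n B_i\Bigr)=\prod_{\substack{d_i\in\N\\ \sum_i d_i=d}}\Gamma^{d_1}_A(B_1)\otimes_A\Gamma^{d_2}_A(B_2)\otimes_A\dots\otimes_A\Gamma^{d_n}_A(B_n).$$
Taking $\Spec$ turns the finite product of rings into the finite disjoint union of affine schemes and each tensor product over $A$ into a fibre product over $S$, which is exactly the asserted formula in the affine case.

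Next I would remove the hypothesis that $S$ is affine. Since $\Gamma^d$ of an affine morphism is defined by the quasi-coherent sheaf $\Gamma^d_{\sO_S}(f_*\sO_X)$ and this construction commutes with localization (as noted in~\S\ref{SS:Repr-affine}), the formation of $\Gamma^d(X/S)$ is Zariski-local, indeed \etale{}-local, on $S$. Likewise, forming fibre products over $S$ and finite disjoint unions commutes with base change along an open (or \etale{}) cover of $S$. Hence the two algebraic spaces in the statement, being affine over $S$, agree after pullback along an affine \'etale cover $S'\to S$ by the affine case already proved; by descent they agree over $S$. When $S$ is merely an algebraic space, $\Gamma^d(X/S)$ was defined by \'etale descent from the affine-base case, so the same argument applies verbatim.

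Finally I would note that the canonical map realizing the two sides as equal is natural: it is induced by the various projections $\coprod_i X_i\to X_i$ via functoriality of $\Gamma^d$ together with the universal multiplication maps $\rho_{d',d''}$ of~\pref{X:univ-mult-of-laws}, so there is no ambiguity in "$=$" here. I do not expect any serious obstacle: the only point requiring a word of care is that the iteration of the binary formula is associative and compatible, i.e.\ the decomposition $\Gamma^d_A(B_1\times\dots\times B_n)$ obtained by splitting off factors one at a time does not depend on the order, which follows from the symmetry and naturality of the isomorphism in~\pref{X:Gamma^d_of_prod_of_rings} (equivalently, from the uniqueness in the universal property~\pref{X:univ_prop_of_Gamma:mult}). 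Everything else is a routine translation of ring-theoretic statements into geometry.
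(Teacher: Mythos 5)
Your proposal is correct and follows the same route the paper intends: the paper presents this proposition as a direct rephrasing of paragraph~\pref{X:Gamma^d_of_prod_of_rings}, i.e.\ iterate the isomorphism $\Gamma^d_A(B\times C)=\prod_{a+b=d}\bigl(\Gamma^a_A(B)\otimes_A\Gamma^b_A(C)\bigr)$, take $\Spec$, and globalize using the fact that $\Gamma^d$ commutes with localization and that $\Gamma^d(X/S)$ over an algebraic space $S$ is defined by \etale{} descent. Your extra remarks on associativity of the iteration and naturality of the identification are sound but not needed beyond what the paper already records.
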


Similarly, the following Proposition is a translation of
paragraph~\pref{X:surj_of_Gamma^d}:

\begin{proposition}\label{P:affine-closed-immersion}
If $Y$ is an algebraic space affine over $S$ and $X\inj Y$ a closed subspace,
then $\Gamma^d(X/S)$ is a closed subspace of $\Gamma^d(Y/S)$.
\end{proposition}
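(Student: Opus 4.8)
The statement is essentially a geometric reformulation of the fact that $\Gamma^d_A(\cdot)$ carries surjections of $A$-modules (equivalently, of $A$-algebras) to surjections, recorded in~\pref{X:surj_of_Gamma^d}. The plan is first to reduce to the affine case, then to invoke that surjectivity, and finally to handle the algebraic space case by \etale{} descent.

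First, suppose $S=\Spec(A)$ is affine. Then $Y=\Spec(B)$ for an $A$-algebra $B$, and the closed subspace $X\inj Y$ corresponds to a surjection $B\surj B'$ with $X=\Spec(B')$. By~\pref{X:surj_of_Gamma^d} the induced homomorphism $\Gamma^d_A(B)\to\Gamma^d_A(B')$ is surjective. Applying $\Spec$ gives that $\Gamma^d(X/S)=\Spec(\Gamma^d_A(B'))\to\Spec(\Gamma^d_A(B))=\Gamma^d(Y/S)$ is a closed immersion, which is the assertion. One should note here that $\Gamma^d_A$ applied to the ideal-theoretic data is well-behaved: the kernel of $\Gamma^d_A(B)\to\Gamma^d_A(B')$ is the $A$-submodule described in~\pref{X:Gamma^d_pres}, so the closed subscheme structure is explicit, though for the mere statement only surjectivity is needed.

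Next, for general $S$ an algebraic space with $Y\to S$ affine and $X\inj Y$ a closed subspace, recall that $\Gamma^d(Y/S)=\Spec\bigl(\Gamma^d_{\sO_S}(g_*\sO_Y)\bigr)$ where $g\colon Y\to S$ is the structure morphism, and likewise for $X$; moreover, by construction in~\S\ref{SS:Repr-affine}, $\Gamma^d_{\sO_S}$ is defined by localization and hence the formation of $\Gamma^d(Y/S)$ commutes with \etale{} base change $S'\to S$ with $S'$ affine. Choosing an \etale{} surjection $S'\to S$ with $S'=\Spec(A)$ affine (a disjoint union of affines will do, using Proposition~\pref{P:Gamma^d_of_disj_union} if one wants a single affine locally), the base changes $X'\inj Y'$ over $S'$ are a closed immersion of affine $S'$-schemes, and the affine case shows $\Gamma^d(X'/S')\inj\Gamma^d(Y'/S')$ is a closed immersion. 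Since being a closed immersion is \etale{}-local on the target and $\Gamma^d$ commutes with this base change, $\Gamma^d(X/S)\inj\Gamma^d(Y/S)$ is a closed immersion as well.

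There is essentially no obstacle: the only point requiring care is the compatibility of the sheaf $\Gamma^d_{\sO_S}(\sA)$ with \etale{} localization on $S$, but this is immediate from its construction via localization with respect to multiplicatively closed subsets, as stated at the start of~\S\ref{SS:Repr-affine}. Thus the proof is a direct translation of~\pref{X:surj_of_Gamma^d} together with the descent definition of $\Gamma^d$.
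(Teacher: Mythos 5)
Your proof is correct and follows exactly the route the paper intends: the paper states this proposition as a direct "translation" of paragraph~\pref{X:surj_of_Gamma^d} (surjectivity of $\Gamma^d_A(B)\to\Gamma^d_A(B')$), with the algebraic space case handled by the \'etale descent built into the definition of $\Gamma^d(X/S)$ in~\S\ref{SS:Repr-affine}. The only cosmetic remark is that commutation with \'etale base change is most cleanly justified by the general base change isomorphism~\pref{X:Gamma_base_change} rather than by localization alone.
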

%% \begin{proof}
%% The statement is local over $S$ and we can thus assume that $S=\Spec(A)$,
%% $Y=\Spec(B)$ and $X=\Spec(B/I)$. As $\Gamma^d_A(B)\to\Gamma^d_A(B/I)$ is a
%% surjection by paragraph~\pref{X:surj_of_Gamma^d} the lemma follows.
%% \end{proof}

\end{subsection}

\end{section}

%----------------------------------------------------------------

\begin{section}{Support and image of a family of zero cycles}\label{S:support}

Let $X/S$ be a scheme or an algebraic space, affine over $S$. In this section
we will show that a ``family of zero cycles'' $\alpha$ on $X$ parameterized by
$S$, that is, a morphism $\map{\alpha}{S}{\Gamma^d(X/S)}$, has a unique minimal
closed subspace $Z=\FCI(\alpha)\inj X$, the \emph{image of $\alpha$}, such that
$\alpha$ factors through the closed subspace $\Gamma^d(Z/S)\inj\Gamma^d(X/S)$.
The reduction $Z_\red$ will be denoted the
\emph{support of $\alpha$} and written as $\FCS(\alpha)$.

For general $X/S$ a family of zero cycles $\alpha$, parameterized by a
$S$-scheme $T$, should be thought of as one of the following
\begin{enumerate}
\item A morphism $T\to \Gamma^d(X/S)$.
\item An ``object'' living over $\FCI(\alpha)\inj X\times_S T$.
\item A ``multi-section'' $T\to X\times_S T$ with image $\FCI(\alpha)$.
\end{enumerate}
Note that in contrast to ordinary sections and families of closed subschemes, a
family of zero cycles is \emph{not} uniquely determined by its image. If
$\alpha$ is a family over a reduced scheme $T$, then
$\FCS(\alpha)=\FCI(\alpha)$ is reduced,
cf.~Proposition~\pref{P:supp-over-red}. In this case, the ``object'' in (ii)
can be interpreted as a cycle in the ordinary sense.

We will show the following results about the image and the support:
\begin{enumerate}
\item The image is integral over $S$. (\S\ref{SS:kernel-of-mult-law})
\item The image commutes with essentially smooth base change $S'\to S$ and
projective limits. In particular it commutes with \etale{} base change and
henselization. (\S\ref{SS:kernel-and-base-change})
\item The support commutes with any base change.
(\S\ref{SS:image-and-base-change})
\item The support has \emph{universally topologically finite fibers}, i.e., each
fiber over $S$ consists of a finite number of points and the separable
degrees of the corresponding field extensions are finite.
(\S\ref{SS:various-props-of-image-and-support})
\item The support is universally open over $S$.
(\S\ref{SS:top-props-of-support})
\end{enumerate}
Many of the results require rather technical but standard demonstrations. In
particular we will often need to reduce from the integral to the finite case by
the standard limit techniques of~\cite[\S8]{egaIV}. The fact that the
support is universally open over $S$ will not be needed in the following
sections but this result, as well as the fact that the support has universally
topologically finite fibers, shows that topologically the support behaves as if
it was of finite presentation over $S$.

\begin{subsection}{Kernel of a multiplicative law}\label{SS:kernel-of-mult-law}
We will first define the \emph{kernel} of a multiplicative polynomial law
$\map{F}{B}{C}$ of $A$-algebras. If $F$ is of degree $1$, i.e., a ring
homomorphism, then the kernel is the usual kernel. In general, the kernel of
$F$ is the largest ideal $I$ such that $F$ factors through $B\surj B/I$. We
will focus our attention on the case when $C=A$. Then $B/\ker(F)$ is integral
over $A$ as shown in Proposition~\pref{P:supp-integrality} and there is a
canonical filtration of $\ker(F)$ which degenerates in characteristic zero.

\begin{definition}
Let $B$ and $C$ be $A$-algebras. Given a multiplicative law
$\map{F}{B}{C}$ we define its \emph{kernel} $\ker(F)$ as the largest ideal
$I$ such that $F$ factors as $B\surj B/I\to C$. This is a well-defined ideal
since if $F$ factors through $B\surj B/I$ and $B\surj B/J$ then $F$ factors
through $B/(I+J)$.
\end{definition}

Note that $F$ factors through $B\surj B/I$ if and only if
$F_{A'}(b'+IB')=F_{A'}(b')$ for any $A$-algebra $A'$ and $b'\in B'=B\otimes_A
A'$. Also note that the kernel $\ker(F_{A'})$ contains $\ker(F)B'$ but this
inclusion is often strict.

\begin{notation}
We will in the following denote homogeneous laws by upper-case Latin letters
and the corresponding homomorphisms by lower-case letters. For example, if
$\map{F}{B}{C}$ is a homogeneous multiplicative polynomial law of degree $d$ we
let $\map{f}{\Gamma^d_A(B)}{C}$ be the corresponding homomorphism. If
$A'$ is an $A$-algebra we denote by $\map{F'}{B'}{C'}$ the multiplicative law
given by $F'_R=F_R$ for every $A'$-algebra $R$. The corresponding homomorphism
$\map{f'}{\Gamma^d_{A'}(B')}{C'}$ is then the base change of $f$ along
$A\to A'$.
\end{notation}

\begin{lemma}\label{L:elementwise_criteria_for_ker}
Let $A$ be a ring and let $B$ and $C$ be $A$-algebras. Given a multiplicative
law $\map{F}{B}{C}$ homogeneous of degree $d$, or equivalently given a morphism
$\map{f}{\Gamma^d_A(B)}{C}$, define the following subsets of $B$
\begin{align*}
L_1 &= \left\{b\in B\;:\; f\bigl(\gamma^k(b)\times y\bigr)=0,
\;\forall k,y\right\}\\
L_2 &= \left\{b\in B\;:\; f\bigl(\gamma^k(bx)\times y\bigr)=0,
\;\forall k,x,y\right\}\\
L_3 &= \left\{b\in B\;:\; f'\bigl(\gamma^k(bx')\times y'\bigr)=0,
\;\forall k,A',x',y'\right\}
\end{align*}
where $1\leq k\leq d$, $x\in B$, $y\in\Gamma^{d-k}_A(B)$,
$x'\in B'$, $y'\in\Gamma^{d-k}_{A'}(B')$ and $A\to A'$ is a ring
homomorphism. Then $\ker(F)=L_1=L_2=L_3$. In particular, these sets are
ideals.
\end{lemma}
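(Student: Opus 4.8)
The plan is to show the chain of equalities $\ker(F)=L_1\subseteq L_2\subseteq L_3\subseteq\ker(F)$, which closes the loop. First I would reinterpret $L_1$: by the presentation of $\Gamma^d_A(\cdot)$ recalled in~\pref{X:Gamma^d_pres}, if $R$ denotes the $A$-submodule of $B$ generated by a given $b\in B$, then $\Gamma^d_A(B/R)=\Gamma^d_A(B)/I_R$ where $I_R$ is precisely the $A$-submodule generated by the elements $\gamma^k(b)\times y$ with $1\leq k\leq d$ and $y\in\Gamma^{d-k}_A(B)$ (up to scalars $\gamma^k(ab)=a^k\gamma^k(b)$, which generate the same submodule). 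Hence $f$ vanishes on all $\gamma^k(b)\times y$ if and only if $f$ factors through $\Gamma^d_A(B)\surj\Gamma^d_A(B/R)$, and by the universal property~\pref{X:univ_prop_of_Gamma:mult} this happens if and only if the law $F$ factors through $B\surj B/R$; since $\ker(F)$ is by definition the largest ideal through which $F$ factors, and any ideal is a union of its cyclic $A$-submodules $R=Ab$, this gives $b\in\ker(F)\iff Ab\subseteq\ker(F)\iff b\in L_1$. So $\ker(F)=L_1$, and in particular $L_1$ is an ideal.

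Next, $L_1\subseteq L_2$ is immediate: if $b\in L_1=\ker(F)$ and $x\in B$, then $bx\in\ker(F)$ since $\ker(F)$ is an ideal, so applying the $L_1$-condition to $bx$ in place of $b$ shows $f(\gamma^k(bx)\times y)=0$ for all $k,y$; as this holds for every $x$, we get $b\in L_2$. For $L_2\subseteq L_3$ I would use base change: fix $b\in L_2$ and an $A$-algebra $A'$. One must show $f'(\gamma^k(bx')\times y')=0$ for all $x'\in B'$, $y'\in\Gamma^{d-k}_{A'}(B')$. Here $f'$ is the base change of $f$ along $A\to A'$, so $f'=f\otimes_A\id{A'}$ under the canonical isomorphism $\Gamma^d_{A'}(B')\iso\Gamma^d_A(B)\otimes_A A'$ from~\pref{X:Gamma_base_change}. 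By the same presentation argument as above applied over $A'$, the condition "$f'$ kills $\gamma^k(bx')\times y'$ for all $k,x',y'$" is equivalent to "$F'$ factors through $B'\surj B'/A'b\cdot B'$", i.e.\ to $b\in\ker(F')$ (more precisely that the image of $b$ in $B'$ lies in $\ker(F')$). So it suffices to show $\ker(F)B'\subseteq\ker(F')$. But this is the remark made just before the lemma: $F'$ factors through $B\surj B/\ker(F)$ already over $A$, hence over $A'$ through $B'\surj B'/\ker(F)B'$; thus $\ker(F)B'\subseteq\ker(F')$, giving $b\in L_3$.

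Finally $L_3\subseteq\ker(F)$ is trivial, since taking $A'=A$ in the definition of $L_3$ recovers the defining condition for $L_1=\ker(F)$. This closes the circle. The one genuinely load-bearing point — and the step I expect to require the most care — is the translation "$f$ vanishes on all $\gamma^k(b)\times y$" $\iff$ "$F$ factors through $B/Ab$", because it must be checked that the $A$-submodule of $\Gamma^d_A(B)$ generated by the $\gamma^k(b)\times y$ really is a kernel of an $A$-algebra homomorphism (so that $f$ descends) and that descending $f$ as an algebra homomorphism corresponds, under~\pref{X:univ_prop_of_Gamma:mult}, exactly to descending the multiplicative law $F$; both are consequences of~\pref{X:Gamma^d_pres} together with the fact that $\gamma^d\colon B\to\Gamma^d_A(B)$ is the universal multiplicative homogeneous law of degree $d$. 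Everything else is formal manipulation with the ideal structure of $\ker(F)$ and the compatibility of $\gamma^d$ with base change.
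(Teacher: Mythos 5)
Your opening step --- the claim that $\ker(F)=L_1$ follows from the presentation \pref{X:Gamma^d_pres} and the universal property --- is where the genuine gap sits, and it is precisely the point the lemma exists to settle. Writing $R=Ab$ for the cyclic \emph{submodule}, the presentation does identify $\Gamma^d_A(B/R)$ with $\Gamma^d_A(B)/I_R$, and $f$ vanishes on all $\gamma^k(b)\times y$ if and only if $f$ kills $I_R$. But $B/R$ is not an $A$-algebra, so the multiplicative universal property \pref{X:univ_prop_of_Gamma:mult} cannot be invoked for it; only the module-level property \pref{X:univ_prop_of_Gamma} applies, and it tells you that $F$ factors through $B\surj B/R$ as a homogeneous polynomial law of \emph{modules}, i.e.\ that $F_{A'}(b'+a'b)=F_{A'}(b')$ for scalars $a'\in A'$. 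Membership in $\ker(F)$ requires invariance under translation by $x'b$ for arbitrary $x'\in B'$, i.e.\ factoring through the \emph{ideal} quotient $B/bB$, which is the condition $L_2$ (resp.\ $L_3$), not $L_1$. Equivalently, $I_R$ is an $A$-submodule but not an ideal of $\Gamma^d_A(B)$, and the inclusion $L_1\subseteq L_2$ amounts to identifying the ideal generated by $I_R$ with $I_{(b)}$. This inclusion is false for general non-multiplicative laws, so the multiplicativity of $F$ must be used somewhere; in your argument it never genuinely enters, and the chain $b\in\ker(F)\iff Ab\subseteq\ker(F)\iff b\in L_1$ silently conflates the submodule $Ab$ with the ideal $bB$.

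The paper closes exactly this gap with the multiplication formula \pref{F:Gamma_mult_form}: for $b\in L_1$ one writes

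$$\gamma^k(bx)\times y=\bigl(\gamma^k(b)\times y\bigr)\bigl(\gamma^k(x)\times\gamma^{d-k}(1)\bigr)+\sum_{i=1}^{k}\gamma^i(b)\times y_i,$$

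and since $f$ is an algebra homomorphism annihilating every $\gamma^i(b)\times(-)$, it annihilates the right-hand side; this gives $L_1\subseteq L_2$, after which $L_2=L_3$ follows from \eqref{E:Gamma-scalars} and \eqref{E:Gamma-addition}. Your other ingredients are sound in isolation: $\ker(F)\subseteq L_2$ does follow from the presentation applied to the ideal $\ker(F)$, and $\ker(F)B'\subseteq\ker(F_{A'})$ is the remark preceding the lemma. But your final step $L_3\subseteq\ker(F)$ also cannot be discharged by ``taking $A'=A$'', since that only recovers the $L_2$-condition and you would again be leaning on the unproved identity $L_1=L_2=\ker(F)$; it needs the direct computation $F_{A'}(b'+bx')=\sum_{k}f'\bigl(\gamma^k(bx')\times\gamma^{d-k}(b')\bigr)=f'\bigl(\gamma^d(b')\bigr)=F_{A'}(b')$, which is how the paper concludes.
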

\begin{proof}
Clearly $L_3\subseteq L_2 \subseteq L_1$. Let $b\in L_1$ and let $x\in B$. The
multiplication formula~\pref{F:Gamma_mult_form} shows that for any
$y\in\Gamma^{d-k}_A(B)$
$$\gamma^k(bx)\times y=
\bigl(\gamma^k(b)\times y\bigr)\bigl(\gamma^k(x)\times \gamma^{d-k}(1)\bigr)
+\sum_{i=1}^{k} \gamma^i(b)\times y_i$$
for some $y_i\in\Gamma^{d-i}_A(B)$. Thus $b\in L_2$ and hence $L_1=L_2$. From
Equations~\eqref{E:Gamma-scalars} and~\eqref{E:Gamma-addition} it follows that
$L_2=L_3$ and that this set is an ideal.

If $I$ is an ideal in $B$ then $\Gamma^d_A(B/I)=\Gamma^d_A(B)/J$ where $J$ is
the ideal generated by $\gamma^k(b)\times y$ where $b\in I$, $1\leq k\leq d$
and $y\in \Gamma^{d-k}_A(B)$, cf.~\pref{X:Gamma^d_pres}. Thus $\ker(F)$ is
contained in $L_2$. On the other hand, if $b$ is contained in $L_3$ then for
any $A$-algebra $A'$ and $b',x'\in B'=B\otimes_A A'$ we have that
$$F_{A'}(b'+bx')=\sum_{k=0}^d
f'\bigl(\gamma^k(bx')\times\gamma^{d-k}(b')\bigr)=
f'\bigl(\gamma^d(b')\bigr)=F_{A'}(b')$$
and thus $b\in\ker(F)$.
\end{proof}

\begin{proposition}[{\cite[Lem.~7.6]{ziplies_circ_comp_radicals}}]
\label{P:supp-over-red}
Let $A$ be a ring and $B,C$ be $A$-algebras together with a multiplicative
law $\map{F}{B}{C}$ homogeneous of degree $d$. If $C$ is reduced then
$B/\ker(F)$ is reduced.
\end{proposition}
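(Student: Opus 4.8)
The plan is to show that if $b \in B$ is nilpotent modulo $\ker(F)$, then already $b \in \ker(F)$; since $\ker(F)$ is an ideal this gives that $B/\ker(F)$ has no nonzero nilpotents. By the elementwise description of the kernel in Lemma~\ref{L:elementwise_criteria_for_ker} (using for instance $\ker(F) = L_1$), it suffices to prove: if $b^n \in \ker(F)$ for some $n \ge 1$, then $f\bigl(\gamma^k(b)\times y\bigr) = 0$ for all $1 \le k \le d$ and all $y \in \Gamma^{d-k}_A(B)$. Actually, because $L_1 = L_2 = L_3$, it is harmless (and convenient) to work with the largest of these descriptions and to allow base change to an arbitrary $A$-algebra $A'$; nilpotence is stable under the relevant manipulations, so I will show $f'\bigl(\gamma^k(bx')\times y'\bigr)=0$ for all the relevant data.

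First I would reduce to showing that $f\bigl(\gamma^k(b)\bigr) \times$ (anything) lands in a nilpotent element of $C$, and then invoke the hypothesis that $C$ is reduced. The key computational input is the relation from the divided power structure
\[
\gamma^k(b)\times\gamma^k(b) = ((k,k))\,\gamma^{2k}(b),
\]
together with the multiplicativity $f\bigl(\gamma^k(b)\cdot\gamma^k(b)\bigr)$ sitting inside $\Gamma^{2k}$ — but more usefully, the multiplication formula~\ref{F:Gamma_mult_form}, which lets one express powers of $\gamma^k(b)$ (under the ring multiplication of $\Gamma^d_A(B)$) in terms of the $\gamma^\xi$ of products $x_\alpha x_\beta$. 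Concretely: if $z = \gamma^k(b)\times y \in \Gamma^d_A(B)$, I want to estimate a high power $z^N$ of $z$ under the multiplicative structure of $\Gamma^d_A(B)$ and show that $z^N$ lies in the ideal generated by the elements $\gamma^j(b^n)\times(\text{stuff})$, hence in $J$, where $\Gamma^d_A(B/b^n B \cap (\dots))$... more precisely in the ideal corresponding to a quotient through which $f$ factors because $b^n \in \ker(F)$. Since $f(z^N) = f(z)^N$ and $f(z^N) = 0$, we get $f(z)^N = 0$ in $C$, and as $C$ is reduced, $f(z) = 0$. This is exactly the statement $b \in L_1 = \ker(F)$.

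The main obstacle — and the step requiring genuine care — is controlling powers of $\gamma^k(b)$ in $\Gamma^d_A(B)$: a single factor $\gamma^k(b)$ only ``sees'' $b$ to multiplicity $k$, so one power of $z = \gamma^k(b)\times y$ is not enough to produce $b^n$ inside any tensor slot. One must multiply enough copies of $z$ together and use formula~\ref{F:Gamma_mult_form} repeatedly: after taking $N$ copies, the terms $\gamma^\xi(x_{(1)}x_{(2)})$ that appear involve products of the chosen elements, and by a pigeonhole/degree count (total degree $d$, each factor contributing multiplicity $k$ of $b$ but the products of $b$'s with themselves accumulating), once $N$ is large enough every surviving term contains a factor $\gamma^j(b^n)$ with $j \ge 1$, hence lies in the defining ideal $J$ of $\Gamma^d_A(B/(b^n))$ as in~\ref{X:Gamma^d_pres}. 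Making this degree bookkeeping precise (picking the right $N$ in terms of $d$, $k$, $n$) is the crux; everything else is routine manipulation of the divided‑power identities~\eqref{E:Gamma-scalars}--\eqref{E:Gamma-addition} and the factorization of $f$ through $\Gamma^d_A(B/(b^n)) \to C$ supplied by $b^n \in \ker(F)$. Finally, since $\ker(F)$ is an ideal and we have shown the nilradical of $B/\ker(F)$ is zero, $B/\ker(F)$ is reduced.
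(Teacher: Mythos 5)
Your proposal is correct and follows essentially the same route as the paper: reduce to showing $b^n\in\ker(F)$ implies $f\bigl(\gamma^k(b)\times y\bigr)=0$ via Lemma~\pref{L:elementwise_criteria_for_ker}, then use the multiplication formula~\pref{F:Gamma_mult_form} to show a high power of $\gamma^k(b)\times y$ is killed by $f$ and conclude by reducedness of $C$. The degree bookkeeping you flag as the crux is exactly the paper's ``easy calculation'' producing the exponent $\ceil{dn/k}$, and your pigeonhole count (total $b$-multiplicity at least $Nk$ spread over at most $d$ slots) yields precisely that bound.
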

\begin{proof}
Let $\map{f}{\Gamma^d_A(B)}{C}$ be the homomorphism corresponding to $F$.  Let
$b\in B$ such that $b^n\in\ker(F)$ for some $n\in\N$. Then by
Lemma~\pref{L:elementwise_criteria_for_ker} we have that $f\bigl(\gamma^k(b^n
x)\times y\bigr)=0$ for every $1\leq k\leq d$, $x\in B$ and
$y\in\Gamma^{d-k}_A(B)$. An easy calculation using the multiplication
formula~\pref{F:Gamma_mult_form} shows that the element
$\bigl(\gamma^k(b)\times y\bigr)^{\ceil{dn/k}}$ is in the kernel of $f$ for
every $1\leq k\leq d$ and $y\in\Gamma^{d-k}_A(B)$. As $C$ is reduced this
implies that $\gamma^k(b)\times y$ is in the kernel of $f$ and thus
$b\in\ker(F)$.
\end{proof}

\begin{definition}\label{D:kernel}
Let $\map{F}{B}{A}$ be a multiplicative law homogeneous of degree $d$. For any
$b\in B$ we define its characteristic polynomial as
$$\chi_{F,b}(t)=F_{A[t]}(b-t)=\sum_{k=0}^d
(-1)^k f\bigl(\gamma^{d-k}(b)\times\gamma^{k}(1)\bigr)t^k\in A[t].$$
We let
$$I_\CH(F)=\bigl(\chi_{F,b}(b)\bigr)_{b\in B}\subseteq B$$
be the \emph{Cayley-Hamilton ideal} of $F$. Here $\chi_{F,b}(b)$ is the
evaluation of $\chi_{F,b}(t)$ at $b\in B$, i.e., the image of $\chi_{F,b}(t)$
along $A[t]\to B[t]\to B[t]/(t-b)=B$.
\end{definition}

\begin{proposition}[{\cite[Satz 4]{bergmann_formen_auf_moduln}}]
\label{P:CH-supp}\label{P:supp-integrality}
Let $\map{F}{B}{A}$ be a multiplicative law. Then
$I_\CH(F)\subseteq \ker(F)\subseteq \sqrt{I_\CH(F)}$. In particular
it follows that $B/\ker(F)$ is integral over $A$.
\end{proposition}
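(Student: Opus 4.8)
The plan is to prove the two inclusions $I_\CH(F)\subseteq\ker(F)$ and $\ker(F)\subseteq\sqrt{I_\CH(F)}$ separately, the first by a direct calculation using the characteristic polynomial, the second by a Cayley--Hamilton style argument with a generic determinant trick. The final assertion that $B/\ker(F)$ is integral over $A$ is then immediate: for each $b\in B$ the characteristic polynomial $\chi_{F,b}(t)\in A[t]$ is monic of degree $d$ (its leading coefficient is $(-1)^d f(\gamma^0(b)\times\gamma^d(1))=(-1)^d f(\gamma^d(1))=(-1)^d$ since $\gamma^d$ is multiplicative and $f$ is a ring homomorphism to $A$, so it sends the unit $\gamma^d(1)$ to $1$), and $\chi_{F,b}(b)\in I_\CH(F)\subseteq\ker(F)$ shows that the image of $b$ in $B/\ker(F)$ satisfies a monic polynomial over $A$.

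For the inclusion $I_\CH(F)\subseteq\ker(F)$: fix $b\in B$; I want to show $\chi_{F,b}(b)\in\ker(F)$, and by Lemma~\pref{L:elementwise_criteria_for_ker} (criterion $L_3$) it suffices to check, for every $A$-algebra $A'$, that $F'$ annihilates the relevant products after multiplying by $\chi_{F,b}(b)$. Since $\chi_{F,b}(t)=F_{A[t]}(b-t)$ by definition and $F$ is multiplicative, base-changing along $A[t]\to A'$, $t\mapsto b'$ (where $b'$ is the image of $b$) gives $\chi_{F,b}(b') = F_{A'}(b'-b') = F_{A'}(0)=0$ when $d\geq 1$. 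More carefully: working in $B'[t]$, one has the factorization statement that $(b-t)$ divides $\chi_{F,b}(t)$ in the appropriate sense — actually the cleanest route is to observe that for any $A$-algebra $A'$ and any $c'\in B'$, multiplicativity of $F'$ together with the formula $F'_{A'[t]}((b-t)c') = F'_{A'[t]}(b-t)\,F'_{A'[t]}(c') = \chi_{F,b}(t)\,F'_{A'[t]}(c')$ lets one transfer vanishing; specializing and using the $L_3$-description of $\ker(F)$ from the Lemma yields $\chi_{F,b}(b)\in\ker(F)$. I expect this to be a short but slightly fiddly manipulation of the polynomial-law axioms.

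For the inclusion $\ker(F)\subseteq\sqrt{I_\CH(F)}$: let $b\in\ker(F)$. I want $b^N\in I_\CH(F)$ for some $N$. The idea is to pass to $A/I_\CH(F)$ and show $b$ becomes nilpotent there; equivalently, show that for every prime $\mathfrak p\supseteq I_\CH(F)$, the image of $b$ lies in $\mathfrak p B$, i.e. reduce to the case where $A=k$ is a field with $I_\CH(F)=0$ and prove $b$ is nilpotent. Over a field, after a faithfully flat base change we may assume (by~\pref{X:Gamma_basis}) that $\Gamma^d_k(B)$ is generated by $\gamma^d(B)$; then the condition $I_\CH(F)=0$ says every element of $B$ has characteristic polynomial $\chi_{F,b}(t)=(-t)^d$ up to sign... no — rather, $\chi_{F,b}(b)=0$ in $B$, and combined with $b\in\ker(F)$ one extracts that all the "power sum / coefficient" data $f(\gamma^{d-k}(b)\times\gamma^k(1))$ controlling $\chi_{F,b}$ forces $b$ nilpotent via a Newton's-identities argument, or more directly by noting $\chi_{F,b}(t)$ must equal $(-1)^d(t-0)^{\,?}$ and $b^d$ or a power lies in $I_\CH(F)$. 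The hard part — and the main obstacle — is making this reduction rigorous: controlling $I_\CH(F)$ under base change (it is generated by finitely many $\chi_{F,b}(b)$ only after choosing a generating set of $B$, and $B$ need not be finite over $A$), and then handling the field case cleanly. I would reduce to $B$ finite over $A$ first using that $\Gamma^d$ commutes with filtered direct limits (\pref{X:Gamma^d_alg_filt_dir_lims}) and that $\ker(F)$ commutes with limits, then to $A$ noetherian local, then to $A$ a field, where the classical Cayley--Hamilton argument applies to the module $B$ (or to $B'$ after a flat base change making $\gamma^d$ generate) and pins down the characteristic polynomial, yielding nilpotence of $b$. This last field-level computation, extracting nilpotence from $\chi_{F,b}(b)=0$ plus $b\in\ker(F)$, is where the real content sits.
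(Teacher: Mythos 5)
Your overall architecture (two inclusions, integrality from monicity of $\chi_{F,b}$) matches the paper, and your derivation of the last assertion from the first inclusion is correct. But the proof of the first inclusion $I_\CH(F)\subseteq\ker(F)$ — which is where all the content of this proposition lives — has a genuine gap. The step you call ``specializing'' does not exist: $\chi_{F,b}(t)=F_{A[t]}(b-t)$ is computed with $t$ a \emph{base-ring} variable, and evaluating at $t=b$ means applying $A[t]\to B[t]\to B[t]/(t-b)$, which is not a homomorphism of $A$-algebras into the base (its target is $B$, not $A$). Consequently neither ``base-changing along $t\mapsto b'$'' nor specializing the identity $F_{A'[t]}\bigl((b-t)c'\bigr)=\chi_{F,b}(t)\,F_{A'[t]}(c')$ at $t=b$ is a legitimate operation on polynomial laws; the multiplicativity axiom gives you no access to the element $\chi_{F,b}(b)\in B$. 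This is precisely why the paper's proof is not ``a fiddly manipulation of the axioms'': it first reduces to $B$ flat over $A$ (lifting $F$ along a surjection from a free algebra, so that $\Gamma^d_A(B)=\TS^d_A(B)\subseteq\T^d_A(B)$), then embeds $B$ into diagonal $d\times d$ matrices over $\T^d_A(B)$ via $b\mapsto\mathrm{diag}\bigl(1^{\otimes k-1}\otimes b\otimes 1^{\otimes d-k}\bigr)_{k}$, identifies $\chi_{F,b}$ with the image under $f$ of the genuine characteristic polynomial of this matrix, and invokes the honest Cayley--Hamilton theorem together with a carefully constructed commutative diagram to push the vanishing back down to $F_{A'}\bigl(\chi_{F,b}(b)x'+b'\bigr)=F_{A'}(b')$. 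Your proposal contains no substitute for this mechanism.

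Conversely, you have placed ``the real content'' in the wrong inclusion. The second inclusion $\ker(F)\subseteq\sqrt{I_\CH(F)}$ needs none of the machinery you describe — and as sketched your reduction does not parse: $I_\CH(F)$ is an ideal of $B$, not of $A$, so ``pass to $A/I_\CH(F)$'' and ``reduce to $A=k$ a field with $I_\CH(F)=0$'' are not meaningful, and the worries about base change of $I_\CH(F)$ are moot. The correct argument is one line: if $b\in\ker(F)$, then Lemma~\pref{L:elementwise_criteria_for_ker} (applied with $y=\gamma^{k}(1)$) gives $f\bigl(\gamma^{d-k}(b)\times\gamma^{k}(1)\bigr)=0$ for all $k<d$, so every coefficient of $\chi_{F,b}(t)$ except the leading one vanishes, i.e.\ $\chi_{F,b}(t)=(-1)^d t^d$; hence $b^d=\pm\chi_{F,b}(b)\in I_\CH(F)$ and $b\in\sqrt{I_\CH(F)}$. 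You gesture at this (``more directly by noting\dots'') but then discard it in favour of an unnecessary and unworkable reduction to fields.
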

\begin{proof}
Let $P\surj B$ be a surjection from a flat $A$-algebra $P$ and let
$\map{F'}{P}{A}$ be the multiplicative law given as the composition of $F$ with
$P\surj B$. As the images of $I_\CH(F')$ and $\ker(F')$ in $B$ are $I_\CH(F)$
and $\ker(F)$ respectively, we can, replacing $B$ with $P$ and $F$ with $F'$,
assume that $B$ is flat over $A$. Then $\Gamma^d_A(B)=\TS^d_A(B)$.

We will first show the inclusion $I_\CH(F)\subseteq \ker(F)$. By definition
this is equivalent with the following: For every base change $A\to A'$, every
$b\in B$ and every $b',x'\in B'=B\otimes_A A'$, the identity
$F_{A'}\bigl(\chi_{F,b}(b)x'+b'\bigr)=F_{A'}(b')$ holds.

\newcommand\Diag{\mathrm{Diag}}
\newcommand\diag{\mathrm{diag}}

For any ring $R$ we let $\Diag_d(R)=R^d$ denote the diagonal
$d\times d$-matrices with coefficients in $R$. Let
$\map{\Psi}{B}{\Diag_d\bigl(\T^d_A(B)\bigr)}$ be the ring homomorphism such
that
$\Psi(b)=\diag\left(b_1,b_2,\dots,b_d\right)$ where
$b_k=1^{\otimes k-1}\otimes b\otimes 1^{\otimes d-k}\in \T^d_A(B)$.
The determinant gives a multiplicative law
$$\map{\det}{\Diag_d\bigl(\T^d_A(B)\bigr)}{\T^d_A(B)}$$
which is homogeneous of degree $d$. Let $E=\TS^d_A(A[t])=A[e_1,e_2,\dots,e_d]$
be the polynomial ring over $A$ in $d$ variables. Here $e_k$ denotes the
elementary symmetric function $t^{\otimes k}\times 1^{\otimes d-k}$.  Let $b\in
B$ be any element. We have a homomorphism $\injmap{\rho_b}{E}{\TS^d_A(B)}$
induced by the morphism $A[t]\to B$ mapping $t$ on $b$. More explicitly
$\rho_b(e_k)=b^{\otimes k}\times 1^{\otimes d-k}$.

Let $A\to A'$ be any ring homomorphism and let $B'=B\otimes_A A'$,
$E'=E\otimes_A A'$. We have a commutative diagram
$$\xymatrix{
{B'}\ar[r]^-{\gamma^d} & {\TS^d_{A'}(B')}\ar[r]^{f'} & {A'} \\
{B'\otimes_{A'} E'}\ar[r]^-{\gamma^d}\ar@{->>}[u]_{(\id{}, f'\circ\rho'_b)}
    \ar[d]^{\Psi}
  & {\TS^d_{A'}(B')\otimes_{A'} E'}\ar@{->>}[u]_{(\id{}, f'\circ\rho'_b)}
    \ar@{->>}[r]^-{(\id{},\rho'_b)}\ar@{(->}[d]\ar@{}[ul]|\circ
  & {\TS^d_{A'}(B')}\ar[u]^{f'}\ar@{(->}[dd]\ar@{}[ul]|\circ \\
{\Diag_d\bigl(\T^d_{A'}(B')\otimes_{A'} E'\bigr)}\ar[r]^-{\det}
    \ar@{->>}[d]^{\Diag(\id{},\rho'_b)}
  & {\T^d_{A'}(B')\otimes_{A'} E'}\ar@{->>}[dr]^{(\id{},\rho'_b)}
    \ar@{}[ul]|\circ
  & {}\ar@{}[ul]|\circ \\
{\Diag_d\bigl(\T^d_{A'}(B')\bigr)}\ar[rr]^-{\det}
  & & {\T^d_{A'}(B').}\ar@{}[ull]|\circ \\
}$$
Let $\chi(t)=\sum_{k=0}^d(-1)^k e_{d-k} t^k\in E[t]$ where we let
$e_0=1$. Let
$$\chi_b(t)=\rho_b\circ \chi(t)=
\sum_{k=0}^d (-1)^k \gamma^{d-k}(b)\times\gamma^{k}(1) t^k\in \TS^d_A(B)[t].$$
Then $f(\chi_b(t))=\chi_{F,b}(t)\in B[t]$. Let $b',x'\in B'$ be any
elements. We begin with the elements $\chi_{F,b}(b)x'+b'$ and $b'$ in the
upper-left corner $B'$ of the diagram and want to show that their images by
$F_{A'}=f'\circ\gamma^d$ in the upper-right corner $A'$ coincide. As
$\chi_{F,b}(b)x'+b'$ lifts to $\chi(b)x'+b'\in B'\otimes_{A'} E'$ it is enough
to show that images of $b',\chi(b)x'+b'\in B'\otimes_{A'} E'$ in the lower-left
corner $\Diag_d\bigl(\T^d_{A'}(B')\bigr)$ are equal.
%
%$$F\bigl(\chi_b(b)+b'\bigr)=(f\circ\gamma^d)\bigl(\chi_b(b)+b'\bigr)
%=(f\circ (\id{\TS^d_A(B)},\rho_b)\circ \gamma^d)\bigl(\chi(b)+b'\bigr)
%$$
%

For any ring $R$ and diagonal matrix $D\in\Diag_d(R)$ let $P_D(t)\in R[t]$ be
the characteristic polynomial of $D$. Then by Cayley-Hamilton's theorem
$P_D(D)=0$ in $\Diag_d(R)$. Note that the determinant and the characteristic
polynomial commute with arbitrary base change $R\to R'$. Now, the image of
$\chi(b)$ by $\Diag(\id{},\rho_b)\circ\Psi$ is easily seen to be
$\chi_b(\Psi(b))=P_{\Psi(b)}\bigl(\Psi(b)\bigr)=0$. Thus the images of
$\chi(b)x'+b'$ and $b'$ in the lower-left corner are equal. This concludes the
proof of the inclusion $I_\CH(F)\subseteq\ker(F)$.

If $b\in\ker(F)$ then by Lemma~\pref{L:elementwise_criteria_for_ker}
$f\bigl(\gamma^k(b)\times\gamma^{d-k}(1)\bigr)=0$ for every $k=1,2,\dots,d$.
Thus $\chi_{F,b}(t)=t^d$ and hence $b^d\in I_\CH(F)$ which shows the second
inclusion. Finally $B/I_\CH(F)$ is clearly integral over $A$ and thus also
$B/\ker(F)$.
\end{proof}

\begin{remark}
Ziplies defines the \emph{radical} of a not necessarily homogeneous
polynomial law in \cite[Def.~6.7]{ziplies_circ_comp_radicals}. When the
polynomial law is homogeneous the radical coincides with the kernel as
defined in~\pref{D:kernel}. Ziplies further proves in
\cite[Lem.~7.4]{ziplies_circ_comp_radicals} that if $I_\CH(F)$ is zero in
$B$ then $\ker(F)$ is contained in the Jacobson radical of
$B$. Proposition~\pref{P:CH-supp} shows more generally that under this
assumption $\ker(F)$ is contained in the nilradical of $B$. Note that both
inclusions $I_\CH(F)\subseteq\ker(F)\subseteq\sqrt{I_\CH(F)}$ can be
strict\footnote{There is a misprint in
\cite[Lem.~7.4]{ziplies_circ_comp_radicals}. ``equals'' should be replaced
with ``is contained in''. Also $A$ should be a $B$-algebra as well as an
$R$-algebra in his notation.}.

In \cite[3.4]{ziplies_gamma_hat} Ziplies also shows that $I_\CH(F)$ is
contained in the ideal
\begin{align*}
I_F^{(1)} &=
  \bigl\{b\in B\;:\; f\bigl(bx\times\gamma^{d-1}(1)\bigr)=0,
                \;\forall x\in B\bigr\}\\
 &= \bigl\{b\in B\;:\; f(b\times y)=0,\;\forall y\in\Gamma^{d-1}_A(B)\bigr\}.
\end{align*}
As this ideal by Lemma~\pref{L:elementwise_criteria_for_ker} clearly contains
$\ker(F)$, the first inclusion of Proposition~\pref{P:CH-supp} is a
generalization of this result.
%%
%% When the polynomial law $F$ is given by the determinant of a free $A$-module
%% $M$, then Proposition~\pref{P:supp-integrality} immediately follows from the
%% CH theorem. This is also a corollary of~\cite[Prop.~3.1]{iversen_lin_det}
%% but~\cite[Prop.~3.1]{iversen_lin_det} is rather an analogue
%% of~\pref{P:Lambda=u}.
\end{remark}

\end{subsection}

%========================================================================

\begin{subsection}{Kernel and base change}\label{SS:kernel-and-base-change}

\begin{definition}
Let $A$ be a ring and let $B$ and $C$ be $A$-algebras. Given a multiplicative
law $\map{F}{B}{C}$ homogeneous of degree $d$, or equivalently given a morphism
$\map{f}{\Gamma^d_A(B)}{C}$, we let
$$I_F^{(k)}=\left\{b\in B\;:\; f\bigl(\gamma^i(b)\times y\bigr)=0,
\;\forall 1\leq i\leq k,\; y\in\Gamma^{d-i}_A(B)\right\}.$$
for $k=0,1,2,\dots,d$.
\end{definition}

\begin{proposition}
Let $B$ and $C$ be $A$-algebras and let $\map{F}{B}{C}$ be a multiplicative law
homogeneous of degree $d$. Then the sets $I_F^{(k)}$ are ideals of $B$ and we
have a filtration
$$B=I_F^{(0)}\supseteq I_F^{(1)}\supseteq\dots\supseteq I_F^{(d)}=\ker(F).$$
If $A'$ is an $A$-algebra and $B'=B\otimes_A A'$ then
$I_{F_{A'}}^{(k)}\supseteq I_F^{(k)}B'$. In particular $\ker(F_{A'})\supseteq
\ker(F)B'$.
\end{proposition}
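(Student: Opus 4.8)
The plan is to verify three things in turn: that each $I_F^{(k)}$ is an ideal, that the claimed chain of inclusions holds, and that $I_F^{(k)}$ is compatible with base change in the stated one-sided sense. For the ideal property, the key observation is that $I_F^{(k)}$ is cut out by vanishing of $f$ on elements of the form $\gamma^i(b)\times y$ for $1\leq i\leq k$. Stability under scalar multiplication follows from $\gamma^i(ab)=a^i\gamma^i(b)$, i.e. Equation~\eqref{E:Gamma-scalars}. Stability under addition is the crucial point: using the addition formula~\eqref{E:Gamma-addition}, $\gamma^i(b+b')\times y=\sum_{i_1+i_2=i}\gamma^{i_1}(b)\times\gamma^{i_2}(b')\times y$, and for $b,b'\in I_F^{(k)}$ every summand with $i_1,i_2\geq 1$ lies in the image of an element on which $f$ vanishes; the terms with $i_1=0$ or $i_2=0$ reduce to $\gamma^{i}(b')\times y'$ or $\gamma^i(b)\times y'$ respectively with $i\leq k$, hence also vanish. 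This is exactly the pattern already used in Lemma~\pref{L:elementwise_criteria_for_ker} to show $L_2=L_3$ is an ideal, so I would simply invoke that reasoning.

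For the filtration, the inclusions $I_F^{(0)}\supseteq I_F^{(1)}\supseteq\dots\supseteq I_F^{(d)}$ are immediate from the definition, since requiring $f(\gamma^i(b)\times y)=0$ for all $i\leq k$ is a stronger condition as $k$ grows; and $I_F^{(0)}=B$ is the empty condition. The equalities at the two ends need identification: $I_F^{(0)}=B$ is vacuous, and $I_F^{(d)}=\ker(F)$ is precisely the statement $\ker(F)=L_1$ from Lemma~\pref{L:elementwise_criteria_for_ker} (the set $L_1$ there is literally $I_F^{(d)}$ in the present notation). So the filtration claim is essentially a restatement of the lemma plus a trivial monotonicity remark.

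For base change, let $A\to A'$ be a ring homomorphism, $B'=B\otimes_A A'$, and recall from~\pref{X:Gamma_base_change} that $\Gamma^d_{A'}(B')=\Gamma^d_A(B)\otimes_A A'$ and that $f'$ is the base change of $f$. Given $b\in I_F^{(k)}$ and a general element of $B'$, which is an $A'$-linear combination $\sum_j b_j\otimes a'_j$ — but actually it suffices to check $b\otimes 1$ generates, so I take a generator $b\otimes a'$ of $I_F^{(k)}B'$ — one computes for $1\leq i\leq k$ and $y'\in\Gamma^{d-i}_{A'}(B')$ that $\gamma^i(b\otimes a')\times y'=(a')^i\bigl(\gamma^i(b)\otimes 1\bigr)\times y'$ by~\eqref{E:Gamma-scalars}, and since $\gamma^i(b)\otimes 1$ pairs under the product with $y'$ to give an element of $\Gamma^d_{A'}(B')=\Gamma^d_A(B)\otimes_A A'$ lying in $\bigl(\Gamma^d_A(B)\cdot(\gamma^i(b)\times\Gamma^{d-i}_A(B))\bigr)\otimes_A A'$, its image under $f'=f\otimes\id{A'}$ is zero because $f$ kills $\gamma^i(b)\times\Gamma^{d-i}_A(B)$. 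Hence $b\otimes a'\in I_{F_{A'}}^{(k)}$, and since $I_{F_{A'}}^{(k)}$ is an ideal it contains $I_F^{(k)}B'$. Taking $k=d$ gives $\ker(F_{A'})\supseteq\ker(F)B'$.

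I expect the only mildly delicate point to be the bookkeeping in the ideal/additivity argument — making sure the decomposition of $\gamma^i(b+b')\times y$ via~\eqref{E:Gamma-addition} genuinely lands every term in one of the vanishing families, including the boundary terms where one of the two divided-power indices is $0$ and the other is some $i'\leq k$ (so that $\gamma^{i'}(b)\times(\gamma^0(b')\times y)=\gamma^{i'}(b)\times y$ still has index $\leq k$). Since this is precisely the argument already carried out in Lemma~\pref{L:elementwise_criteria_for_ker}, the cleanest exposition is to reference that lemma rather than repeat the computation, and the whole proof becomes short.
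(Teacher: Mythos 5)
Your overall route is the same as the paper's (which simply points back to Lemma~\pref{L:elementwise_criteria_for_ker}), and the filtration, the identification $I_F^{(d)}=L_1=\ker(F)$, and the base-change computation are all correct. But there is one genuine gap in the part you spell out: your argument for the ideal property only shows that $I_F^{(k)}$ is an \emph{$A$-submodule} of $B$. Closure under addition via \eqref{E:Gamma-addition} and under scalars via \eqref{E:Gamma-scalars} says nothing about multiplication by an arbitrary element $x\in B$, and $\gamma^i(bx)$ does not simplify by \eqref{E:Gamma-scalars} when $x\notin A$. The step of the lemma you invoke ($L_2=L_3$ is an ideal) works only because $L_2$ and $L_3$ have the factor $bx$ built into their definition, so $B$-multiplication closure is automatic there; the sets $I_F^{(k)}$ are defined in the $L_1$-style, without that factor. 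What you actually need is the \emph{other} half of the lemma's proof, $L_1\subseteq L_2$, which uses the multiplication formula~\pref{F:Gamma_mult_form} to write $\gamma^j(bx)\times y$ as a combination of terms $\gamma^i(b)\times y_i$ with $i\leq j$; since the indices appearing are bounded by $j\leq k$, that argument runs verbatim with the filtration level $k$ in place of $d$ and gives $bx\in I_F^{(k)}$. Without this, "since $I_{F_{A'}}^{(k)}$ is an ideal it contains $I_F^{(k)}B'$" in your base-change step is also unsupported, because $I_F^{(k)}B'$ consists of sums $\sum_j b_j x_j'$ with $x_j'\in B'$ arbitrary.

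Once that step is added, everything closes up: the base-change inclusion is correctly reduced, via the isomorphism of~\pref{X:Gamma_base_change} and $f'=f\otimes\id{A'}$, to the vanishing of $f$ on $\gamma^i(b)\times\Gamma^{d-i}_A(B)$, and taking $k=d$ gives $\ker(F_{A'})\supseteq\ker(F)B'$.
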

\begin{proof}
That $I_F^{(k)}$ are ideals follows exactly as in the proof of
Lemma~\pref{L:elementwise_criteria_for_ker}. That $I_F^{(d)}=\ker(F)$ is
Lemma~\pref{L:elementwise_criteria_for_ker} and the other assertions are
trivial.
\end{proof}

The main application for the filtration
$I_F^{(0)}\supseteq I_F^{(1)}\supseteq\dots\supseteq I_F^{(d)}$ is that
the elements in $I_F^{(k-1)}$ behave ``quasi-linear'' modulo $I_F^{(k)}$
with respect to $\gamma^k$ in a certain sense. This will be utilized in
Lemma~\pref{L:image-and-pol-extensions}.

\begin{lemma}\label{L:binom}
Let $n\in \N$ and $p$ be a prime. Then $p\divides\binom{n}{k}$ for every
$1\leq k\leq n-1$ if and only if $n=p^s$.
\end{lemma}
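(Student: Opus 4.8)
The plan is to prove both directions of the equivalence $p \mid \binom{n}{k}$ for all $1 \leq k \leq n-1$ iff $n = p^s$, using the classical connection between binomial coefficients mod $p$ and base-$p$ digit expansions, i.e.\ Kummer's theorem (the $p$-adic valuation of $\binom{n}{k}$ equals the number of carries when adding $k$ and $n-k$ in base $p$) or equivalently Lucas' theorem. Since the paper is self-contained on elementary matters, I would probably give a short direct argument rather than invoking Kummer by name.

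For the ``if'' direction, suppose $n = p^s$. For $1 \leq k \leq n-1$ write $k = p^r m$ with $p \nmid m$ and $r < s$. Using the identity $k\binom{n}{k} = n\binom{n-1}{k-1}$, we get $p^r m \binom{n}{k} = p^s \binom{n-1}{k-1}$, so $m\binom{n}{k} = p^{s-r}\binom{n-1}{k-1}$; since $p \nmid m$ and $s - r \geq 1$, we conclude $p \mid \binom{n}{k}$. For the ``only if'' direction, suppose $n$ is not a power of $p$. If $n$ is not divisible by $p$ at all, then already $\binom{n}{1} = n$ is not divisible by $p$, so we may assume $p \mid n$ and write $n = p^a t$ with $a \geq 1$, $t \geq 2$, $p \nmid t$. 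I would then exhibit a specific $k$ with $1 \leq k \leq n-1$ and $p \nmid \binom{n}{k}$: take $k = p^a$. One checks $1 \leq p^a < p^a t = n$, and then compute $\binom{n}{p^a} \bmod p$, e.g.\ via $\binom{p^a t}{p^a}$, which by Lucas' theorem (the base-$p$ digits of $p^a$ are a single $1$ in position $a$, and position $a$ of $n = p^a t$ has digit congruent to $t \bmod p \not\equiv 0$) is congruent to $\binom{t_a}{1} \cdot (\text{product of }\binom{0}{0}) \not\equiv 0 \pmod p$, where $t_a$ is the $a$-th base-$p$ digit of $n$. Alternatively, without Lucas, use the factored identity $\binom{p^a t}{p^a} = \prod_{j=1}^{p^a}\frac{p^a t - p^a + j}{j} = \prod_{j=1}^{p^a}\frac{p^a(t-1) + j}{j}$ and observe that the power of $p$ dividing numerator factor $p^a(t-1)+j$ equals the power dividing $j$ for each $j$ with $1 \leq j \leq p^a$ (since $v_p(j) < a$ unless $j = p^a$, and for $j = p^a$ the numerator is $p^a(t-1) + p^a = p^a t$ with $v_p = a$), so the total valuation is $0$.

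The main obstacle, such as it is, is the bookkeeping in the ``only if'' direction: one must be careful that $v_p\big(p^a(t-1)+j\big) = v_p(j)$ for each $1 \leq j \leq p^a$, which hinges on the elementary fact that adding a multiple of $p^a$ does not change the $p$-adic valuation of a number whose valuation is already less than $a$, together with the boundary case $j = p^a$ handled separately. This is a routine verification, so I expect no real difficulty; the cleanest writeup is probably just to cite Lucas' theorem or Kummer's theorem as standard and let the digit computation be immediate. I would state it as: by Lucas' theorem $\binom{n}{k} \not\equiv 0 \pmod p$ for all $0 \leq k \leq n$ precisely when every base-$p$ digit of $n$ is maximal relative to $k$'s — but more to the point, $p \mid \binom{n}{k}$ for \emph{all} $1 \leq k \leq n-1$ exactly when $n$ has a single nonzero base-$p$ digit equal to $1$, i.e.\ $n = p^s$, since any other digit pattern admits a $k < n$ whose digits are dominated by those of $n$.
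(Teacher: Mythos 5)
Your proof is correct, but it takes a genuinely different route from the paper's. The paper proves the forward direction by working in $\F_p$: the hypothesis gives the ``freshman's dream'' $(x+y)^n=x^n+y^n$, hence $a^n=a$ for every $a\in\F_p$, from which it deduces $p\divides n$ and then iterates (replacing $n$ by $n/p$) to conclude $n=p^s$; the converse is dismissed as easy. You instead argue via $p$-adic valuations: for the ``if'' direction you use the identity $k\binom{n}{k}=n\binom{n-1}{k-1}$ to transfer the full power $p^s$ from $n$ onto $\binom{n}{k}$ after cancelling $v_p(k)<s$, and for the ``only if'' direction you exhibit an explicit witness $k=p^{v_p(n)}$ and verify $v_p\bigl(\binom{n}{k}\bigr)=0$ term by term in the product $\prod_{j=1}^{p^a}\frac{p^a(t-1)+j}{j}$, using that adding a multiple of $p^a$ does not change the valuation of an integer of valuation $<a$ and handling $j=p^a$ separately. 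Both arguments are complete; yours is more constructive (it produces the specific $k$ at which divisibility fails and avoids any appeal to finite fields or to Lucas/Kummer as black boxes), while the paper's is shorter and purely algebraic. The only slight weakness of your writeup is that it hedges between three possible formulations (Lucas, Kummer, direct valuation); the direct valuation computation you sketch is the one that should be retained, as it is self-contained and fully rigorous.
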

\begin{proof}
Assume that $p\divides\binom{n}{k}$ for $1\leq k\leq n-1$. It easily follows
that $a^n=a$ in $\F_p$ for every $a\in\F_p$. Thus $x^p-x$ divides $x^n-x$ in
$\F_p[x]$ which shows that $p\divides n$. We obtain that $a^{n/p}=a$ for
every $a\in\F_p$ and by induction on $s$ that $n=p^s$. The converse is easy.
\end{proof}

\begin{proposition}\label{P:supp-filtration-stability}
Let $A$ be either a $\Z_{(p)}$-algebra with $p$ a prime or a $\Q$-algebra
in which case we let $p=1$. Then $I_F^{(k)}=I_F^{(k-1)}$ if $k\geq 1$ and
$k\neq p^s$. In particular, if $A$ is a $\Q$-algebra then $\ker(F)=I_F^{(1)}$.
\end{proposition}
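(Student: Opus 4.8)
The plan is to establish the non-trivial inclusion $I_F^{(k-1)}\subseteq I_F^{(k)}$; the reverse inclusion is immediate from the definition, so combining the two yields the asserted equality. The idea is to upgrade the vanishing of $f\bigl(\gamma^i(b)\times y\bigr)$ for $i\leq k-1$ to the vanishing of $f\bigl(\gamma^k(b)\times y\bigr)$, and the lever is the divided-power identity $\gamma^i(b)\times\gamma^j(b)=((i,j))\gamma^{i+j}(b)=\binom{i+j}{i}\gamma^{i+j}(b)$.

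First I would fix $b\in I_F^{(k-1)}$ and $y'\in\Gamma^{d-k}_A(B)$. For each pair $i,j\geq 1$ with $i+j=k$ we have $\gamma^j(b)\times y'\in\Gamma^{j+(d-k)}_A(B)=\Gamma^{d-i}_A(B)$ and $1\leq i\leq k-1$, so the defining condition for $I_F^{(k-1)}$, applied with index $i$, gives
$$0=f\bigl(\gamma^i(b)\times(\gamma^j(b)\times y')\bigr)=f\bigl(\gamma^i(b)\times\gamma^j(b)\times y'\bigr)=\binom{k}{i}\,f\bigl(\gamma^k(b)\times y'\bigr),$$
using associativity of $\times$. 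Hence $\binom{k}{i}\,f\bigl(\gamma^k(b)\times y'\bigr)=0$ for every $1\leq i\leq k-1$.

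Next I would produce an invertible binomial coefficient among these. If $A$ is a $\Z_{(p)}$-algebra, then since $k\neq p^s$ for any $s$, Lemma~\upref{L:binom} provides an $i$ with $1\leq i\leq k-1$ and $p\nmid\binom{k}{i}$; as $\Z\to A$ factors through $\Z_{(p)}$ and an integer prime to $p$ is a unit of $\Z_{(p)}$, the image of $\binom{k}{i}$ is a unit of $A$. If instead $A$ is a $\Q$-algebra (so $p=1$ and $k\geq 2$), take $i=1$: then $\binom{k}{1}=k$ is already a unit of $A$. In either case $f\bigl(\gamma^k(b)\times y'\bigr)=0$ for all $y'\in\Gamma^{d-k}_A(B)$, which together with $b\in I_F^{(k-1)}$ shows $b\in I_F^{(k)}$, so $I_F^{(k)}=I_F^{(k-1)}$. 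For the final assertion, when $A$ is a $\Q$-algebra this equality holds for every $k$ with $2\leq k\leq d$, whence $\ker(F)=I_F^{(d)}=I_F^{(d-1)}=\dots=I_F^{(1)}$. I do not expect a genuine obstacle here; the only point requiring care is reading off Lemma~\upref{L:binom} correctly, so that $k$ not being a power of $p$ is exactly the condition producing a unit among $\binom{k}{1},\dots,\binom{k}{k-1}$.
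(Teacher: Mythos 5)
Your proof is correct, and it reaches the same pivotal intermediate statement as the paper --- that $\binom{k}{i}$ annihilates $f\bigl(\gamma^k(b)\times y\bigr)$ for all $1\leq i\leq k-1$ --- before finishing with Lemma~\pref{L:binom} exactly as the paper does. The difference lies in how that annihilation is produced. The paper's proof passes to the base change $A'=A[t]$, first shows that $b\mapsto f'\bigl(\gamma^k(b)\times y\bigr)$ is additive on $I_{F'}^{(k-1)}$, and then compares the two evaluations $(1+t)^k f'\bigl(\gamma^k(b)\times y\bigr)=f'\bigl(\gamma^k((1+t)b)\times y\bigr)=(1+t^k)f\bigl(\gamma^k(b)\times y\bigr)$, extracting the coefficients of $t^i$. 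You instead obtain the same coefficients in one step from the divided-power relation $\gamma^i(b)\times\gamma^j(b)=((i,j))\gamma^{i+j}(b)$ together with the defining property of $I_F^{(k-1)}$ applied to $y=\gamma^j(b)\times y'$; this avoids the auxiliary base change entirely and is, if anything, a streamlining of the published argument (indeed, the cross terms in the paper's additivity computation are precisely the quantities $\binom{k}{i}\gamma^k(b)t^{k-i}$ you exhibit directly). Your handling of the unit among the binomial coefficients --- an integer prime to $p$ is invertible in any $\Z_{(p)}$-algebra, and $k=\binom{k}{1}$ is invertible in a $\Q$-algebra for $k\geq 2$ --- and the telescoping $\ker(F)=I_F^{(d)}=\dots=I_F^{(1)}$ in the rational case are both as intended.
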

\begin{proof}
Let $A'=A[t]$ and $b'_1,b'_2\in I_{F'}^{(k-1)}$. Then for any
$y'\in\Gamma^{d-k}_{A'}(B')$
$$f'\bigl(\gamma^k(b'_1+b'_2)\times y'\bigr)=
f'\bigl(\gamma^k(b'_1)\times y'\bigr)+f'\bigl(\gamma^k(b'_2)\times y'\bigr).$$
In particular for any $b\in I_{F}^{(k-1)}$ and $y\in\Gamma^{d-k}_A(B)$
$$(1+t)^k f'\bigl(\gamma^k(b)\times y\bigr)=
f'\bigl(\gamma^k((1+t)b)\times y\bigr)=
(1+t^k) f\bigl(\gamma^k(b)\times y\bigr)$$
which shows that $\binom{k}{i}$ annihilates $f\bigl(\gamma^k(b)\times y\bigr)$
for any $1\leq i\leq k-1$. By Lemma~\pref{L:binom}, it follows that if $k\neq
p^s$ then $f\bigl(\gamma^k(b)\times y\bigr)=0$ and thus $b\in I_F^{(k)}$.
\end{proof}

\begin{lemma}\label{L:image-and-inv-lim-of-carrier-scheme}
Let $A$ be a ring and $B=\varinjlim B_\lambda$ be a filtered direct limit of
$A$-algebras with induced homomorphisms $\map{\varphi_\lambda}{B_\lambda}{B}$.
Let $\map{f}{\Gamma^d_A(B)}{C}$ and denote by $f_\lambda$ the composition of
$\map{\Gamma^d_A(\varphi_\lambda)}{\Gamma^d_A(B_\lambda)}{\Gamma^d_A(B)}$ and
$f$. Then $I_F^{(k)}=\varinjlim I_{F_\lambda}^{(k)}$ for every
$k=0,1,\dots,d$. In particular $\ker(F)=\varinjlim \ker(F_\lambda)$.
\end{lemma}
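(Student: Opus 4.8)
The plan is to deduce everything from the fact that $\Gamma^d_A(\cdot)$ and $\Gamma^{d-i}_A(\cdot)$ commute with filtered direct limits (\pref{X:Gamma^d_alg_filt_dir_lims}, \pref{X:Gamma^d_mod_filt_dir_lims}). Writing $\varphi_{\lambda\mu}\colon B_\lambda\to B_\mu$ for the transition maps, we get $\Gamma^d_A(B)=\varinjlim_\lambda\Gamma^d_A(B_\lambda)$ and $\Gamma^{d-i}_A(B)=\varinjlim_\lambda\Gamma^{d-i}_A(B_\lambda)$ with structure maps $\Gamma^\bullet(\varphi_{\lambda\mu})$, and by construction $f\circ\Gamma^d(\varphi_\lambda)=f_\lambda$, $f_\mu\circ\Gamma^d(\varphi_{\lambda\mu})=f_\lambda$; since filtered colimits of $A$-modules are exact, the ring-theoretic kernel satisfies $\ker(f)=\varinjlim_\lambda\ker(f_\lambda)$ inside $\Gamma^d_A(B)$, with $\Gamma^d(\varphi_{\lambda\mu})$ carrying $\ker(f_\lambda)$ into $\ker(f_\mu)$. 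The feature that tames the divided-power structure is that for a \emph{fixed} $b\in B$ and $1\le i\le k$ the map $\theta_{i,b}\colon\Gamma^{d-i}_A(B)\to C$, $y\mapsto f\bigl(\gamma^i(b)\times y\bigr)$, is $A$-linear: it is $y\mapsto\gamma^i(b)\times y$ (multiplication by a fixed element of the graded algebra $\Gamma_A(B)$, \pref{X:shuffle_product}) followed by the $A$-linear $f$. Thus $b\in I_F^{(k)}$ exactly when $\theta_{1,b}=\dots=\theta_{k,b}=0$, and, because $\Gamma(\varphi_\mu)$ is a homomorphism for $\times$ and $\gamma^i$ is natural, $\theta^{(\mu)}_{i,\varphi_{\lambda\mu}(b_\lambda)}=\theta_{i,\varphi_\lambda(b_\lambda)}\circ\Gamma^{d-i}(\varphi_\mu)$, where $\theta^{(\mu)}$ is the analogue for $f_\mu$.

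Now an $A$-linear map out of $\varinjlim_\mu\Gamma^{d-i}_A(B_\mu)$ vanishes iff each restriction to a $\Gamma^{d-i}_A(B_\mu)$ does, so the last identity gives, for every $b_\lambda\in B_\lambda$,
$$\varphi_\lambda(b_\lambda)\in I_F^{(k)}\quad\Longleftrightarrow\quad\varphi_{\lambda\mu}(b_\lambda)\in I_{F_\mu}^{(k)}\ \text{ for all }\mu\ge\lambda.$$
Taking $\mu=\lambda$ on the right immediately yields $I_F^{(k)}\subseteq\varinjlim_\lambda I_{F_\lambda}^{(k)}$: any $b\in I_F^{(k)}$ lifts to some $b_\lambda$, which then lies in $I_{F_\lambda}^{(k)}$. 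For the reverse inclusion — equivalently, in order that $\varinjlim_\lambda I_{F_\lambda}^{(k)}$ even make sense as a filtered union of submodules of $B$ — one needs $\bigl(I_{F_\lambda}^{(k)}\bigr)_\lambda$ to be a directed subsystem, i.e.\ $\varphi_{\lambda\mu}\bigl(I_{F_\lambda}^{(k)}\bigr)\subseteq I_{F_\mu}^{(k)}$ for $\lambda\le\mu$. This is the one non-formal step, and the main obstacle: the naive verification fails because a test element $y\in\Gamma^{d-i}_A(B_\mu)$ need not lie in the image of $\Gamma^{d-i}(\varphi_{\lambda\mu})$.

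To get this directedness I would invoke the base-change–independent ($L_3$-type) description of $I_F^{(k)}$, obtained just as $L_3=\ker(F)$ is in the proof of Lemma~\pref{L:elementwise_criteria_for_ker}: $b\in I_F^{(k)}$ iff $f'\bigl(\gamma^i(bx')\times y'\bigr)=0$ for all $1\le i\le k$, all $A$-algebras $A'$, and all $x'\in B\otimes_A A'$, $y'\in\Gamma^{d-i}_{A'}(B\otimes_A A')$. Applying this to $F_\lambda$ with the base change $A\to B_\mu$ and transporting along the multiplication homomorphism $B_\lambda\otimes_A B_\mu\to B_\mu$ (an $A$-algebra map compatible with the laws, since $F_\lambda=F_\mu\circ(\varphi_{\lambda\mu})_*$) should reduce the conditions defining $\varphi_{\lambda\mu}(b_\lambda)\in I_{F_\mu}^{(k)}$ to ones guaranteed by $b_\lambda\in I_{F_\lambda}^{(k)}$. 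With directedness established, $\varinjlim_\lambda I_{F_\lambda}^{(k)}$ is the filtered union of the $\varphi_\lambda\bigl(I_{F_\lambda}^{(k)}\bigr)$, each contained in $I_F^{(k)}$ by the displayed equivalence, whence $I_F^{(k)}=\varinjlim_\lambda I_{F_\lambda}^{(k)}$. The final clause is the case $k=d$, since $I_F^{(d)}=\ker(F)$ by Lemma~\pref{L:elementwise_criteria_for_ker}.
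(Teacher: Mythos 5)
Your argument through the displayed equivalence is correct, and it is in fact the entire proof: it is exactly the paper's argument (the paper's first step is your left-to-right implication, and its ``converse'' step is the contrapositive of your right-to-left implication, obtained by lifting a witness $y$ with $f\bigl(\gamma^i(b)\times y\bigr)\neq 0$ to some $\Gamma^{d-i}_A(B_\alpha)$). The gap is in what you do afterwards. The directedness $\varphi_{\lambda\mu}\bigl(I_{F_\lambda}^{(k)}\bigr)\subseteq I_{F_\mu}^{(k)}$ that you isolate as ``the one non-formal step'' is \emph{false} in general, so the $L_3$-type argument you sketch cannot be completed. Concretely, take $A=k$ a field, $d=2$, $B_1=k[b]/(b^2)\subseteq B_2=k[b,c]/(b^2,c^2)$, and let $\map{f}{\Gamma^2_k(B_2)=\TS^2_k(B_2)}{C}$ be the quotient by the ideal generated by $u=1\times b$. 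A direct computation in $\T^2_k(B_2)$ gives $u^2=2\gamma^2(b)$ and $u\cdot(1\times c)=1\times bc+b\times c$, and the ideal $(u)$ is the $k$-span of $u$, $2\gamma^2(b)$, $1\times bc+b\times c$, $b\times bc$, $c\times bc$, $2\gamma^2(bc)$; in particular $b\times c\notin (u)$. Hence $f(b\times 1)=f(u)=0$ and $f(b\times b)=f(u^2)=0$, so $b\in I_{F_1}^{(1)}$, while $f(b\times c)\neq 0$, so $b\notin I_{F_2}^{(1)}=I_F^{(1)}$. (This does not contradict the lemma, whose colimit over the system $\{1\leq 2\}$ is just $I_{F_2}^{(1)}$; it only contradicts directedness, and it shows that $\bigcup_\lambda\varphi_\lambda\bigl(I_{F_\lambda}^{(k)}\bigr)$ can be strictly larger than $I_F^{(k)}$.)

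The repair is to drop the directedness requirement rather than to prove it. The sense in which $\varinjlim I_{F_\lambda}^{(k)}$ is to be read --- and the only sense used downstream, in Propositions~\pref{P:image-and-localization} and~\pref{P:image-and-limits} --- is precisely your displayed equivalence: $b\in I_F^{(k)}$ if and only if its lifts to $B_\mu$ lie in $I_{F_\mu}^{(k)}$ for all sufficiently large $\mu$; equivalently, $\varphi_\lambda^{-1}\bigl(I_F^{(k)}\bigr)=\bigcap_{\mu\geq\lambda}\varphi_{\lambda\mu}^{-1}\bigl(I_{F_\mu}^{(k)}\bigr)$, and \emph{these} preimages do form a directed system with colimit $I_F^{(k)}$. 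Reading your equivalence from right to left therefore already supplies the reverse inclusion; there is nothing left to prove once the equivalence is established.
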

\begin{proof}
As $f_\lambda$ factors as $\Gamma^d_A(B_\lambda)\to\Gamma^d_A(B)\to C$ it
follows that $\varphi_\lambda^{-1}\bigl(I_F^{(k)}\bigr)\subseteq
I_{F_\lambda}^{(k)}$. Thus $I_F^{(k)} \subseteq \varinjlim
I_{F_\lambda}^{(k)}$. Conversely, for any $b\in B\mysetminus I_F^{(k)}$ there
is an $i\leq k$ and $y\in \Gamma^{d-i}_A(B)$ such that
$f\bigl(\gamma^i(b)\times y\bigr)\neq 0$. If we let $\alpha$ be such that
${\varphi_\alpha}^{-1}(b)\neq \emptyset$ and
${\Gamma^{d-i}(\varphi_\alpha)}^{-1}(y)\neq \emptyset$ then for any
$\lambda\geq\alpha$ and $b_\lambda\in B_\lambda$ such that
$\varphi_\lambda(b_\lambda)=b$ we have that $b_\lambda\notin
I_{F_\lambda}^{(k)}$. Thus $\varinjlim I_{F_\lambda}^{(k)}\subseteq I_F^{(k)}$.
\end{proof}

\begin{proposition}\label{P:image-and-localization}
Let $A$ be a ring and $S$ a multiplicative closed subset. Let $\map{F}{B}{A}$
be a multiplicative homogeneous law of degree $d$ and denote by
$\map{S^{-1}F}{S^{-1}B}{S^{-1}A}$ the map corresponding to the $A$-algebra
$S^{-1}A$. Then $S^{-1}I_F^{(k)}=I_{S^{-1}F}^{(k)}$. In particular
$S^{-1}\ker(F)=\ker(S^{-1}F)$, i.e., the kernel commutes with localization.
\end{proposition}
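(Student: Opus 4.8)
The plan is to prove the two inclusions of $S^{-1}I_F^{(k)}=I_{S^{-1}F}^{(k)}$ separately. The inclusion $S^{-1}I_F^{(k)}\subseteq I_{S^{-1}F}^{(k)}$ is immediate from the preceding proposition applied to the $A$-algebra $A'=S^{-1}A$: it gives $I_{S^{-1}F}^{(k)}\supseteq I_F^{(k)}\cdot S^{-1}B=S^{-1}I_F^{(k)}$, the last equality because the extension of an ideal under localization is its localization. For the reverse inclusion the first step is to unwind the membership condition. Since $\Gamma^d$ commutes with localization we have $\Gamma^d_{S^{-1}A}(S^{-1}B)=S^{-1}\Gamma^d_A(B)$, the homomorphism attached to $S^{-1}F$ is $S^{-1}f$, and $\gamma^i(b/1)\times(y/t)$ is the image of $\gamma^i(b)\times y$ scaled by $1/t$ (using \pref{X:Gamma_base_change} and compatibility of the shuffle product with base change). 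As $I_{S^{-1}F}^{(k)}$ is an ideal and $s/1$ is a unit, every element is of the form $b/1$ with $b\in B$, and $b/1\in I_{S^{-1}F}^{(k)}$ means precisely that for all $1\le i\le k$ and all $y\in\Gamma^{d-i}_A(B)$ the element $f\bigl(\gamma^i(b)\times y\bigr)$ is killed by some element of $S$; equivalently, the ideal $N_i:=f\bigl(\gamma^i(b)\times\Gamma^{d-i}_A(B)\bigr)$ of $A$ satisfies $S^{-1}N_i=0$ for each $i\le k$.

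Next I would reduce to the case where $A$ is noetherian. Every $A$-algebra is a filtered direct limit of $A$-algebras of finite presentation, and by Lemma~\pref{L:image-and-inv-lim-of-carrier-scheme} the ideals $I_F^{(k)}$ commute with such limits, so we may assume $B$ of finite presentation over $A$. Then $\Gamma^d_A(B)$ is of finite presentation over $A$ by Proposition~\pref{P:Gamma^d_finiteness}, so writing $A$ as the filtered direct limit of its finitely generated $\Z$-subalgebras, both $B$ and the homomorphism $f$ descend to such a (noetherian) subalgebra $A_0$; since $\Gamma^d$ commutes with base change and localizing at $S$ commutes with this direct limit, one more application of Lemma~\pref{L:image-and-inv-lim-of-carrier-scheme} brings the identity down to the noetherian ring $A_0$.

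Finally, assuming $A$ noetherian, each $N_i$ is a finitely generated ideal, so $S^{-1}N_i=0$ forces $u_iN_i=0$ for some $u_i\in S$; with $u=\prod_{i=1}^{k}u_i\in S$ and $\gamma^i(ub)=u^i\gamma^i(b)$ from \eqref{E:Gamma-scalars} we get $f\bigl(\gamma^i(ub)\times y\bigr)=u^i f\bigl(\gamma^i(b)\times y\bigr)\in u^iN_i=0$ for all $1\le i\le k$ and all $y$, that is, $ub\in I_F^{(k)}$; hence $b/1=(ub)/u\in S^{-1}I_F^{(k)}$. This yields the reverse inclusion, and the case $k=d$ together with Lemma~\pref{L:elementwise_criteria_for_ker} gives $S^{-1}\ker(F)=\ker(S^{-1}F)$. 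I expect the noetherian reduction to be the only genuine obstacle: for a general base $A$ the ideals $N_i$ need not be finitely generated even though they become zero after localization, so one cannot annihilate all of them by a single element of $S$ without first passing to a noetherian ring; everything else is bookkeeping together with the elementary computation above.
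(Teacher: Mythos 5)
Your easy inclusion, the unwinding of membership in $I_{S^{-1}F}^{(k)}$, and the final step (kill the ideals $N_i$ by a single $u\in S$, then use $\gamma^i(ub)=u^i\gamma^i(b)$) are all correct, and that last computation is essentially the same trick the paper uses. The gap is in the middle: the claimed reduction to ``$A$ noetherian''. Lemma~\pref{L:image-and-inv-lim-of-carrier-scheme} is a statement about writing the \emph{algebra} $B$ as a filtered direct limit over a \emph{fixed} base $A$ (your first use of it, to reduce to $B$ of finite presentation, is fine); it says nothing about replacing the base ring $A$ itself by a finitely generated $\Z$-subalgebra $A_0$. Worse, ``the identity over $A_0$'' is not even the statement you need: the multiplicative set $S$ lives in $A$, not in $A_0$, so $S^{-1}A$ is not a localization of $A_0$ and no instance of the proposition over $A_0$ specializes to yours. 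The tool that does handle base change along a filtered colimit of base rings is Proposition~\pref{P:image-and-limits}, which comes \emph{after} this proposition and is proved by the same reduction, so invoking it here would require reordering the arguments and nontrivial bookkeeping over pairs $(A_\lambda,s)$ with $s\in S\cap A_\lambda$. As written, after your descent $A$ is unchanged and not noetherian, so the assertion that each $N_i$ is finitely generated is unjustified.

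The gap is repairable in two ways. Staying with your descent: enlarge $A_0$ so that $b$ as well as $f$ descends; then $N_i$ is the extension to $A$ of the finitely generated ideal $f_0\bigl(\gamma^i(b)\times\Gamma^{d-i}_{A_0}(B_0)\bigr)$ of the noetherian ring $A_0$, hence is a finitely generated ideal of $A$ — and finite generation of $N_i$ is all your last step actually uses, so nothing needs to be ``brought down'' to $A_0$. The paper sidesteps the issue entirely: it first replaces $B$ by $B/\ker(F)$, which is integral over $A$ by Proposition~\pref{P:supp-integrality}, writes that as a filtered direct limit of its \emph{finite} sub-$A$-algebras (a correct use of Lemma~\pref{L:image-and-inv-lim-of-carrier-scheme}), and so reduces to $B$ finite over $A$; then $\Gamma^{d-i}_A(B)$ is a finite $A$-module by Proposition~\pref{P:Gamma^d_finiteness}, the image of $y\mapsto f\bigl(\gamma^i(b)\times y\bigr)$ is automatically finitely generated, and a single $t\in S$ works — with no noetherian hypothesis and no descent to $\Z$-algebras.
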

\begin{proof}
By Proposition~\pref{P:supp-integrality} the quotient $B/\ker(F)$ is integral
over $A$. Replacing $B$ by $B/\ker(F)$ we can thus assume that $B$ is integral
over $A$. As $B$ is the filtered direct limit of its finite sub-$A$-algebras
and both the kernel of a multiplicative law,
Lemma~\pref{L:image-and-inv-lim-of-carrier-scheme}, and tensor products commute
with filtered direct limits we can assume that $B$ is a finite $A$-algebra.
Then $\Gamma^i_A(B)$ is a finite $A$-algebra for all $i=0,1,\dots,d$
by Proposition~\pref{P:Gamma^d_finiteness}.

Let $x/s\in I_{S^{-1}F}^{(k)}$, i.e., by definition $x/s\in S^{-1}B$ such that
$S^{-1}f\bigl(\gamma^i(x/s)\times y\bigr)=0$ for all $1\leq i\leq k$ and
$y\in\Gamma^{d-i}_A(B)$. For any $y\in\Gamma^{d-i}_A(B)$ there is then a
$t\in S$ such that $t f\bigl(\gamma^i(x)\times y\bigr)=0$ in $A$.
As $\Gamma^{d-i}_A(B)$ is a finite $A$-algebra we can find a common $t$ that
works for all $i\leq k$ and $y$. Then $f\bigl(\gamma^i(tx)\times y\bigr)=
t^i f\bigl(\gamma^i(x)\times y\bigr)=0$ for all $i\leq k$ and $y$. As
$x/s=tx/st$, this shows that $I_{S^{-1}F}^{(k)}=S^{-1}I_F^{(k)}$.
\end{proof}

\begin{proposition}\label{P:image-and-limits}
Let $A$ be a ring and $B$ an $A$-algebra. Let $A'=\varinjlim A'_\lambda$ be a
filtered direct limit of $A$-algebras with induced homomorphisms
$\map{\varphi_\lambda}{A'_\lambda}{A'}$. Let $\map{F}{B}{A}$ be a
multiplicative polynomial law of degree $d$. Then $I_{F_{A'}}^{(k)}=
\varinjlim I_{F_{A'_\lambda}}^{(k)}$ for every
$k=0,1,\dots,d$. In particular $\ker(F_{A'})=\varinjlim \ker(F_{A'_\lambda})$.
\end{proposition}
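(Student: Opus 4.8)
The plan is to prove the two inclusions separately, following closely the structure of the proof of Proposition~\pref{P:image-and-localization}. The inclusion $\varinjlim_\lambda I_{F_{A'_\lambda}}^{(k)}\subseteq I_{F_{A'}}^{(k)}$ is immediate: applying the preceding proposition to the base change $A'_\lambda\to A'$ yields $I_{F_{A'_\lambda}}^{(k)}B'\subseteq I_{F_{A'}}^{(k)}$ with $B'=B\otimes_A A'$, so the image in $B'$ of each $I_{F_{A'_\lambda}}^{(k)}$ lies in $I_{F_{A'}}^{(k)}$. All the content is in the reverse inclusion.

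The first step I would take is to reduce to the case where $B$ is a \emph{finite} $A$-algebra. Replace $B$ by $\bar B=B/\ker(F)$: since $F$, and hence every base change $F_R$, factors through $B\surj\bar B$, and since $\Gamma^{d-i}_R(B\otimes_A R)\surj\Gamma^{d-i}_R(\bar B\otimes_A R)$ is surjective by~\pref{X:surj_of_Gamma^d}, the ideal $I_{F_R}^{(k)}$ is the preimage of $I_{\bar F_R}^{(k)}$ under $B\otimes_A R\surj\bar B\otimes_A R$, for every $A$-algebra $R$. Preimages commute with filtered direct limits, so it suffices to prove the statement for $\bar F$ in place of $F$. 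By Proposition~\pref{P:supp-integrality}, $\bar B$ is integral over $A$, hence a filtered union of finite sub-$A$-algebras $B_j$; writing $F_j\colon B_j\to A$ for the restriction and applying Lemma~\pref{L:image-and-inv-lim-of-carrier-scheme} over the base ring $A'$ and over each $A'_\lambda$, one obtains $I_{\bar F_{A'}}^{(k)}=\varinjlim_j I_{(F_j)_{A'}}^{(k)}$ and $\varinjlim_\lambda I_{\bar F_{A'_\lambda}}^{(k)}=\varinjlim_\lambda\varinjlim_j I_{(F_j)_{A'_\lambda}}^{(k)}=\varinjlim_j\varinjlim_\lambda I_{(F_j)_{A'_\lambda}}^{(k)}$. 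Interchanging these two filtered colimits reduces us to each $F_j$, so from now on I may assume $B$ is finite over $A$.

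In the finite case, $\Gamma^j_A(B)$ is a finite $A$-module for every $j$ by~\pref{X:Gamma_basis}, so I fix finitely many homogeneous elements $y_1,\dots,y_m$ generating $\bigoplus_{i=1}^{k}\Gamma^{d-i}_A(B)$ over $A$, say with $y_r$ of degree $e_r$. By base change (\pref{X:Gamma_base_change}) these generate over every $A$-algebra $R$, and since the shuffle product is $A$-bilinear, an element $b'\in B'$ lies in $I_{F_{A'}}^{(k)}$ if and only if $f'\bigl(\gamma^i(b')\times(y_r\otimes 1)\bigr)=0$ for every $1\le i\le k$ and every $r$ with $e_r=d-i$ --- finitely many conditions. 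Each assignment $b\mapsto f\bigl(\gamma^i(b)\times y_r\bigr)$ is a homogeneous polynomial law $B\to A$ of degree $i$, hence by~\pref{X:univ_prop_of_Gamma} corresponds to an $A$-linear map out of $\Gamma^i_A(B)$ whose formation commutes with base change. Given $b'\in I_{F_{A'}}^{(k)}$, I would lift it to some $b'_\mu\in B\otimes_A A'_\mu$; then for each of the finitely many relevant pairs $(i,r)$ the element $f'_\mu\bigl(\gamma^i(b'_\mu)\times(y_r\otimes 1)\bigr)\in A'_\mu$ maps to $f'\bigl(\gamma^i(b')\times(y_r\otimes 1)\bigr)=0$ in $A'=\varinjlim_\lambda A'_\lambda$, hence already vanishes in some $A'_\lambda$ with $\lambda\ge\mu$; as only finitely many pairs occur, one $\lambda$ works for all of them, so the image of $b'_\mu$ in $B\otimes_A A'_\lambda$ lies in $I_{F_{A'_\lambda}}^{(k)}$ and maps to $b'$, giving $b'\in\varinjlim_\lambda I_{F_{A'_\lambda}}^{(k)}$.

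The step I expect to be the real obstacle is the reduction to finite $B$, not the closing colimit bookkeeping: one cannot base change the ideals $I_F^{(k)}$ directly --- indeed even $\ker(F)$ does \emph{not} commute with flat base change --- so the only thing that can safely be pushed down to subalgebras is the quotient $B/\ker(F)$, which is why the surjectivity of $\Gamma$ and the integrality result of~\S\ref{SS:kernel-of-mult-law} enter in an essential way here, together with the care needed to interchange the two filtered colimits (over the subalgebras $B_j$ and over the rings $A'_\lambda$). Once $B$ is finite, so that membership in $I_{F_{A'}}^{(k)}$ is governed by finitely many base-change compatible linear conditions, the remainder is the routine observation that a finite set of elements dying in a filtered colimit already dies at a finite stage.
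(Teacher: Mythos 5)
Your proof is correct and follows essentially the same route as the paper: the paper likewise reduces to $B$ finite over $A$ by the argument of Proposition~\pref{P:image-and-localization} (quotient by $\ker(F)$, integrality, Lemma~\pref{L:image-and-inv-lim-of-carrier-scheme}), then picks finitely many $A$-module generators of the $\Gamma^{d-i}_A(B)$ and observes that the finitely many vanishing conditions on a lift of $b'$ already hold at some finite stage of the colimit. You merely spell out the reduction and the easy inclusion in more detail than the paper does.
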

\begin{proof}
As in the proof of Proposition~\pref{P:image-and-localization} we can assume
that $B$ is finite over $A$ and hence that $\Gamma^i_A(B)$ is a finite
$A$-module. Choose generators $y_{i1},y_{i2},\dots,y_{in_i}$ of
$\Gamma^{d-i}_A(B)$ as an $A$-module for $i=1,2,\dots,d$. Let $B'=B\otimes_A
A'$ and $B'_\lambda=B\otimes_A A'_\lambda$.
Let $b'\in I_{F_{A'}}^{(k)}$. Then there exists an $\alpha$ and $b'_\alpha\in
B'_\alpha$ such that $b'$ is the image of $b'_\alpha$ by $B'_\alpha\to B$. As
the image of $f_{A'_\alpha}(\gamma^i(b'_\alpha)\times y_{ij})$ in $A'$ is
$f_{A'}(\gamma^i(b')\times y_{ij})$ and hence zero for $i=1,2,\dots,k$, there
is a $\beta\geq\alpha$ such that $b'_\alpha \in I_{F_{A'_\lambda}}^{(k)}$ for
all $\lambda\geq\beta$. Thus $b'\in \varinjlim_\lambda
I_{F_{A'_\lambda}}^{(k)}$ and $I_{F_{A'}}^{(k)}\subseteq \varinjlim_{\lambda}
I_{F_{A'_\lambda}}^{(k)}$. The reverse inclusion is obvious.
\end{proof}

We will now show that the kernel, always commutes with smooth base change and
that it commutes with flat base change in characteristic zero.

\begin{proposition} %\label{P:image-and-flat-char0}
Let $A$ be a ring and let $\map{F}{B}{A}$ a multiplicative homogeneous law of
degree $d$. Let $A'$ be a flat $A$-algebra and denote by $F'$ the
multiplicative law corresponding to $A'$.
Then $I_F^{(1)}B'=I_{F'}^{(1)}$. In particular, if $A$ is a
$\Q$-algebra then the kernel commutes with flat base change.
\end{proposition}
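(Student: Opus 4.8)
The plan is to identify $I_F^{(1)}$ with the kernel of a genuine module homomorphism and then check that this kernel commutes with flat base change, after first reducing to a situation where the relevant modules are finitely generated. Observe that $I_F^{(1)}=\ker(\phi_F)$, where $\phi_F\colon B\to\Hom_A\bigl(\Gamma^{d-1}_A(B),A\bigr)$ is the $A$-linear map sending $b$ to the linear form $y\mapsto f(\gamma^1(b)\times y)$. Since $\Gamma^{d-1}$ commutes with base change (\pref{X:Gamma_base_change}) and $f'=f\otimes_A\id{A'}$, one checks directly that $\phi_{F'}$ is the composite of $\phi_F\otimes_A\id{A'}$ with the canonical comparison map $\Hom_A\bigl(\Gamma^{d-1}_A(B),A\bigr)\otimes_A A'\to\Hom_{A'}\bigl(\Gamma^{d-1}_{A'}(B'),A'\bigr)$. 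As $A'$ is flat over $A$, we have $\ker(\phi_F\otimes_A\id{A'})=I_F^{(1)}B'$, so the identity $I_F^{(1)}B'=I_{F'}^{(1)}$ will follow once we know this comparison map is injective.

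First I would reduce to the case that $B$ is finite over $A$. Since $\ker(F)=I_F^{(d)}\subseteq I_F^{(1)}$ and, as observed earlier, $\ker(F)B'\subseteq\ker(F')\subseteq I_{F'}^{(1)}$, and since the ideal $I^{(1)}$ attached to the law induced on a quotient $B/J$ with $J\subseteq\ker(F)$ is just the image of $I_F^{(1)}$, replacing $B$ by $B/\ker(F)$ does not affect the validity of the desired equality; by \pref{P:supp-integrality} we may thus assume $B$ is integral over $A$. Writing $B$ as the filtered union of its finite $A$-subalgebras and using that $I^{(1)}$, $\ker$, and $-\otimes_A A'$ all commute with filtered colimits (\pref{L:image-and-inv-lim-of-carrier-scheme}), we further reduce to $B$ finite over $A$. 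Then $\Gamma^{d-1}_A(B)$ is a finitely generated $A$-module by \pref{P:Gamma^d_finiteness}.

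It then remains to prove the elementary fact that for a finitely generated $A$-module $M$ and a flat $A$-algebra $A'$ the canonical map $\Hom_A(M,A)\otimes_A A'\to\Hom_{A'}(M\otimes_A A',A')$ is injective. Choosing a surjection $A^{n}\surj M$, the induced injection $\Hom_A(M,A)\inj A^{n}$ remains injective after applying the exact functor $-\otimes_A A'$, and it factors compatibly through the injection $\Hom_{A'}(M\otimes_A A',A')\inj (A')^{n}$ coming from $(A')^{n}\surj M\otimes_A A'$; hence the comparison map is injective, which completes the proof that $I_F^{(1)}B'=I_{F'}^{(1)}$. For the last assertion, if $A$ is a $\Q$-algebra then so is the flat $A$-algebra $A'$, and \pref{P:supp-filtration-stability} gives $\ker(F)=I_F^{(1)}$ and $\ker(F')=I_{F'}^{(1)}$, whence $\ker(F)B'=\ker(F')$.

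The one point requiring care is the reduction to $B$ finite over $A$: one must verify that passing to $B/\ker(F)$ genuinely preserves the equality $I_F^{(1)}B'=I_{F'}^{(1)}$ — this uses precisely that $I_F^{(1)}$ contains $\ker(F)$ and that $I_{F'}^{(1)}$ contains $\ker(F)B'$ — and that $I^{(1)}$, $\ker$, and base change are all compatible with the filtered colimit over finite subalgebras. Once $\Gamma^{d-1}_A(B)$ is known to be finitely generated, the algebraic heart of the argument is completely formal.
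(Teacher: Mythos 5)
Your proof is correct and follows essentially the same route as the paper: the reduction to $B$ finite over $A$ (via $B/\ker(F)$ and filtered colimits) is identical, and your final step — realizing $I_F^{(1)}$ as the kernel of $B\to\Hom_A\bigl(\Gamma^{d-1}_A(B),A\bigr)$ and using injectivity of the Hom--tensor comparison map for a finitely generated module — is just a repackaging of the paper's observation that $I_F^{(1)}$ is a \emph{finite} intersection of kernels $\ker(f\circ\varphi_{y_i})$, which commutes with flat base change.
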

\begin{proof}
We reduce to $B$ a finite $A$-algebra as in the proof of
Proposition~\pref{P:image-and-localization}. For any $y\in\Gamma^{d-1}_A(B)$
let $\varphi_y$ be the $A$-module homomorphism $B\to\Gamma^d_A(B)$ given by
$b\mapsto b\times y$. Then $I_f^{(1)}=\bigcap_{y\in\Gamma^{d-1}_A(B)}
\ker(f\circ \varphi_y)$. As $\Gamma^{d-1}_A(B)$ is a finitely generated
$A$-module and $\varphi_y$ is linear in $y$, this intersection coincides with
an intersection over a finite number of $y$'s. As both finite intersections and
kernels commute with flat base change the first statement of the proposition
follows. The last statement follows from
Proposition~\pref{P:supp-filtration-stability}.
\end{proof}

Recall that a monic polynomial $g\in A[t]$ is \emph{separable} if
$(g,g')=A[t]$, where $g'$ is the formal derivative of $g$. Further recall that
$A\inj A[t]/g$ is \emph{\etale} if and only if $g$ is separable.
We will need the following basic lemma to which we, for a lack of suitable
reference, include a proof.

\begin{lemma}\label{L:etale-independence}
Let $A\inj A'=A[t]/g$ be an \etale{} homomorphism, i.e., such that $g$ is a
separable polynomial. If $A$ is a local ring of residue characteristic $p>0$
then for any prime power $q=p^s$, $s\in\N$, the elements
$1,t^q,t^{2q},\dots,t^{(n-1)q}$ form an $A$-module basis of $A'$ where
$n=\deg(g)$.
\end{lemma}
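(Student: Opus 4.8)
The plan is to reduce to a statement over a field and then deduce the general case by Nakayama. First I would observe that $A'=A[t]/g$ is a free $A$-module of rank $n=\deg(g)$ with basis $1,t,\dots,t^{n-1}$, so it suffices to prove that the $n$ elements $1,t^q,t^{2q},\dots,t^{(n-1)q}$ generate $A'$ as an $A$-module; a surjective endomorphism of a finite free module over a (commutative, hence in particular Noetherian-free-of-finite-rank) module, or more simply a square matrix over the local ring $A$ whose reduction mod $\im$ is invertible, is itself invertible. So the whole lemma comes down to: the reductions $\bar 1,\bar t^{\,q},\dots,\bar t^{\,(n-1)q}$ form a $k$-basis of $A'\otimes_A k = k[t]/\bar g$, where $k=A/\im$ is the residue field, of characteristic $p$, and $\bar g$ is separable of degree $n$ over $k$.

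Next I would prove this field statement. Write $\bar g = \prod_{i} (t-\alpha_i)$ over an algebraic closure $\bar k$; separability means the $\alpha_i$ are pairwise distinct. By the Chinese Remainder Theorem $\bar k[t]/\bar g \iso \prod_{i=1}^n \bar k$ via $h\mapsto (h(\alpha_i))_i$. Under this isomorphism the element $t^{jq}$ goes to $(\alpha_i^{jq})_i = ((\alpha_i^{q})^{j})_i$. Since $q=p^s$ and we are in characteristic $p$, the Frobenius $x\mapsto x^q$ is a field automorphism of $\bar k$ (it is injective on the perfect closure and in any case injective on the finite set $\{\alpha_i\}$), so the elements $\beta_i := \alpha_i^q$ are again pairwise distinct. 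Hence the matrix $(\beta_i^{\,j})_{1\le i\le n,\ 0\le j\le n-1}$ is a Vandermonde matrix in the distinct entries $\beta_1,\dots,\beta_n$ and is therefore invertible over $\bar k$. This says precisely that $1,t^q,\dots,t^{(n-1)q}$ is a $\bar k$-basis of $\bar k[t]/\bar g$, and since these elements lie in $k[t]/\bar g$ and the latter has $k$-dimension $n$, they also form a $k$-basis of $k[t]/\bar g$.

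Finally I would assemble the two pieces. Let $M$ be the $n\times n$ matrix over $A$ expressing $1,t^q,\dots,t^{(n-1)q}$ in terms of the basis $1,t,\dots,t^{n-1}$ of the free $A$-module $A'$. The previous paragraph shows $M\bmod\im$ is invertible over $k$, so $\det M \notin \im$, hence $\det M$ is a unit in the local ring $A$, so $M\in\mathrm{GL}_n(A)$. Therefore $1,t^q,t^{2q},\dots,t^{(n-1)q}$ is an $A$-module basis of $A'$, as claimed. The only point requiring any care — the ``main obstacle,'' such as it is — is the observation that raising to the $q=p^s$ power preserves distinctness of the roots $\alpha_i$; this uses that $\mathrm{char}\,k = p$ divides $q$ but $q$ is a power of $p$ (an arbitrary multiple of $p$ would not do, e.g. $x\mapsto x^2$ in characteristic $2$ acting on $\mathbb{F}_4$), which is exactly the hypothesis $q=p^s$ built into the statement.
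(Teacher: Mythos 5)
Your proof is correct. Both you and the paper begin with the same reduction: by Nakayama (equivalently, by invertibility of the transition matrix over the local ring $A$), the claim follows once one knows that $1,t^q,\dots,t^{(n-1)q}$ is a basis of $k[t]/\overline{g}$ over the residue field $k$. Where you diverge is in the field-level argument. The paper factors $\overline{g}=g_1\cdots g_m$ into irreducibles, so that $k[t]/\overline{g}$ is a product of separable field extensions $k'_i=k[t]/g_i$, and then shows $t^q$ generates each $k'_i$ by observing that the subextension $k(t^q)\subseteq k(t)$ is simultaneously separable (being inside a separable extension) and purely inseparable (since $t$ is a root of $X^q-t^q$), hence trivial. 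You instead base-change all the way to $\overline{k}$, use the Chinese Remainder Theorem to identify $\overline{k}[t]/\overline{g}$ with $\overline{k}^{\,n}$ via evaluation at the distinct roots $\alpha_i$, and reduce the claim to the nonvanishing of a Vandermonde determinant in the $\alpha_i^q$, which holds because Frobenius is injective on a field. The underlying mechanism is the same in both cases — injectivity of $x\mapsto x^q$ in characteristic $p$ — but your route is more computational and self-contained (no appeal to the dichotomy separable/purely inseparable), at the mild cost of passing to the algebraic closure and then descending linear independence back to $k$; the paper's route stays over $k$ and makes the role of separability of the extension more transparent. Your closing remark correctly identifies the one place where the hypothesis $q=p^s$ (rather than an arbitrary multiple of $p$) is genuinely needed.
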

\begin{proof}
\newcommand\tbar{\overline{t}}
Let $k=A/\im_A$. By Nakayama's lemma it is enough to show that a basis of
$A'/\im_A A'=k[t]/\overline{g}$ over $k$ is given by
$1,t^q,t^{2q},\dots,t^{(n-1)q}$. Replacing $A$, $A'$ and $g$ with
$k$, $A'/\im_A A'$ and $\overline{g}$ respectively, we can thus assume that
$A=k$ is a field of characteristic $p$.
%The statement is trivial when $s=0$. Using induction on $s$ it is enough to
%show the lemma when $s=1$.

Let $g=g_1 g_2\dots g_m$ be a factorization of $g$ into irreducible
polynomials. We have that $A'=k[t]/g=k'_1\times k'_2\times\dots\times k'_m$
where $k\inj k'_i=k[t]/g_i$ are separable field extensions. The subring
generated by $t^q$ is the image of $k[t^q]/g^q=\prod k[t^q]/g_i^q$ in $\prod_i
k'_i$. To show that $t^q$ generates $k[t]/g$ it is thus enough to show that its
image in $k'_i$ generates $k'_i$ for every $i$. Thus, we can assume that $g$ is
irreducible such that $A'=k[t]/g=k'$ is a field.

The field extension $k\inj k(t^q)\inj k(t)=k'$ is separable which shows that
so is $k(t^q)\inj k(t)$. Thus $k(t^q)=k(t)$ and $t^q$ generates $k'$.
\end{proof}
%% Alternative geometric proof:
%%
%% Let B be the subring of A' generated by t^q. Then \Spec(A')\to\Spec(B)
%% is universally injective, finite and sch. dominant. As
%% $\Spec(A')\to\Spec(A)$ is unramified so is $\Spec(A')\to\Spec(B)$.
%% Thus $\Spec(A')\to\Spec(B)$ is a sch. dominant closed immersion, i.e.,
%% an isomorphism.

\begin{lemma}\label{L:image-and-pol-extensions}
Let $\map{F}{B}{A}$ be a multiplicative polynomial law of degree $d$.
Let $A'=A[t]/g$ where either $g=0$ or $g$ is separable. Then 
$I_F^{(k)}$ and $\ker(F)$ commute with the base change $A\inj A'$.
\end{lemma}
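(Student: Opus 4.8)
The plan is to handle the two cases $g=0$ and $g$ separable somewhat in parallel, reducing first to the finite case and then exploiting an explicit $A$-module basis of $A'$ over $A$. As usual, by Proposition~\pref{P:supp-integrality} we may replace $B$ by $B/\ker(F)$ and then, using that both the ideals $I_F^{(k)}$ (Lemma~\pref{L:image-and-inv-lim-of-carrier-scheme}) and tensor products commute with filtered direct limits, assume that $B$ is a \emph{finite} $A$-algebra, so that each $\Gamma^{d-i}_A(B)$ is a finite $A$-module by Proposition~\pref{P:Gamma^d_finiteness}. It suffices to prove $I_{F'}^{(k)}=I_F^{(k)}B'$ for each $k$; the reverse inclusion $\supseteq$ is always automatic, so the content is the inclusion $I_{F'}^{(k)}\subseteq I_F^{(k)}B'$.

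For $g=0$, i.e. $A'=A[t]$, the module $A'$ is free over $A$ with basis $1,t,t^2,\dots$, so every $b'\in B'=B[t]$ is a \emph{finite} sum $b'=\sum_{j} b_j t^j$ with $b_j\in B$. Given $y'\in\Gamma^{d-i}_{A'}(B')$, write $y'=\sum_m y_m t^m$ with $y_m\in\Gamma^{d-i}_A(B)$ (using that $\Gamma^{d-i}_A(\cdot)$ commutes with the flat base change $A\to A[t]$ and is a free module extension by \pref{X:Gamma^d_flat/free}). The condition $f'(\gamma^i(b')\times y')=0$ for all $i\le k$ and all $y'$ becomes, after expanding $\gamma^i(b')$ via the addition formula~\eqref{E:Gamma-addition} and~\eqref{E:Gamma-scalars} and collecting powers of $t$, a system of equations in $A$ of the form $f(\gamma^{\nu}(b_{\bullet})\times y_m)=0$. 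Specializing $t$ and varying the test elements, one peels off the top-degree coefficient $b_{\deg b'}$ first — it satisfies $f(\gamma^i(b_{\deg b'})\times y)=0$ for all $y\in\Gamma^{d-i}_A(B)$, $i\le k$, hence $b_{\deg b'}\in I_F^{(k)}$ — and then descends by induction on $\deg b'$, subtracting $b_{\deg b'} t^{\deg b'}$ (which lies in $I_F^{(k)}B'$) and repeating. This shows $b'\in I_F^{(k)}B'$.

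For $g$ separable, the statement is local on $\Spec A$: both $I_F^{(k)}$ and base change along $A\to A'$ commute with localization (Proposition~\pref{P:image-and-localization} and flatness of $A'$), and membership in an ideal can be checked after localizing at each prime. So we may assume $A$ is local, with residue characteristic $p>0$ (if $p=0$ the claim follows from the $\Q$-algebra case already proved, since then the kernel commutes with all flat base change). Now choose a prime power $q=p^s$ with $q>k$; by Proposition~\pref{P:supp-filtration-stability} applied to the $\Z_{(p)}$-algebra $A'$, we have $I_{F'}^{(k)}=I_{F'}^{(q')}$ where $q'$ is the largest prime power $\le k$, so it is really the ideals up to index $k$ that matter, and I only need an $A$-basis of $A'$ adapted to these indices. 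By Lemma~\pref{L:etale-independence}, $1,t^q,t^{2q},\dots,t^{(n-1)q}$ is an $A$-module basis of $A'$, where $n=\deg g$. The point of this basis is that modulo $I_F^{(k)}B'$ (equivalently by~\pref{X:Gamma^d_pres} working in $\Gamma^d$ of the relevant quotient), the element $\gamma^i$ of a $q$th power $t^{qj}$ times the rest factors through a $\gamma^{i}$ with the scalars pulled out as $q$th powers, and the key numerical input is exactly Lemma~\pref{L:binom}: for $1\le i\le k<q$, the binomial coefficients $\binom{i}{\cdot}$ obstruct nontrivial mixing, so the condition $f'(\gamma^i(b')\times y')=0$ decouples across the basis $t^{qj}$ just as it did for powers of $t$ in the $g=0$ case. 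Running the same top-coefficient-first induction, now on the expansion $b'=\sum_j b_j t^{qj}$, yields each $b_j\in I_F^{(k)}$ and hence $b'\in I_F^{(k)}B'$.

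The main obstacle is the bookkeeping in the separable case: making precise the claim that $\gamma^i$ applied to a sum $\sum_j b_j t^{qj}$ decomposes — modulo the relevant ideal — into a sum of $\gamma^{i}(b_j)$'s with $q$th-power scalars and no cross terms surviving. This is where Lemma~\pref{L:etale-independence} (to get the $q$-power basis) and Lemma~\pref{L:binom} (to kill the binomial cross-terms, via the $(1+t)^k$-versus-$1+t^k$ trick of Proposition~\pref{P:supp-filtration-stability}) both get used in an essential way; once that decoupling is established, the descending induction on the degree of $b'$ is routine.
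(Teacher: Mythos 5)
Your treatment of the case $g=0$ is fine: extracting the coefficient of the top power of $t$ in $f'\bigl(\gamma^i(b')\times y\bigr)$ for $y\in\Gamma^{d-i}_A(B)$ does isolate $f\bigl(\gamma^i(b_{\deg b'})\times y\bigr)$ with no cross-term contamination, and the descending induction on $\deg_t b'$ closes. This is a valid (leading-coefficient) variant of the argument, different from the paper's, which instead inducts on the filtration index $k$.

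The separable case, however, has a genuine gap, and it is precisely the case that carries the content of the lemma. Your ``decoupling'' claim --- that $f'\bigl(\gamma^i(\sum_j b_jt^{qj})\times y'\bigr)$ splits into contributions from the individual $b_j$ because Lemma~\pref{L:binom} kills the cross terms --- is not justified. The cross terms in the expansion of $\gamma^i(b_1'+b_2')$ are $\gamma^{i_1}(b_1')\times\gamma^{i_2}(b_2')$ with $i_1+i_2=i$, and for \emph{distinct} summands these carry no binomial coefficients at all; binomial coefficients enter only through $\gamma^{i_1}(b)\times\gamma^{i_2}(b)=\binom{i}{i_1}\gamma^i(b)$ for a single element, or through the $(1+t)^k$ trick of Proposition~\pref{P:supp-filtration-stability}, and the latter only operates once one already knows the elements lie in $I_F^{(k-1)}$. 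You have no such hypothesis on the $b_j$, because you are inducting on the $t$-degree rather than on $k$. Two further problems compound this: in $A[t]/g$ multiplication wraps around ($t^{qj}\cdot t^{qj'}$ need not stay in ``degree $j+j'$''), so there is no well-defined leading coefficient to peel off; and your exponents $t^{iqj}$ with $1\le i\le k<q$ are generally not $q'$-th powers for any single prime power $q'$, so Lemma~\pref{L:etale-independence} does not apply to them. The working argument is the paper's: induct on $k$, reduce to $k=p^s$ via Proposition~\pref{P:supp-filtration-stability}, write $x'=\sum_i x_it^i$ with $x_i\in I_F^{(p^s-1)}$ by the inductive hypothesis (this is what annihilates every cross term $\gamma^{\nu_i}(x_i)$ with $0<\nu_i<p^s$, leaving $f'\bigl(\gamma^{p^s}(x')\times y\bigr)=\sum_i t^{p^si}f\bigl(\gamma^{p^s}(x_i)\times y\bigr)$), and then invoke the linear independence of $1,t^{p^s},\dots,t^{(n-1)p^s}$ from Lemma~\pref{L:etale-independence} to conclude that each coefficient vanishes.
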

\begin{proof}
If $g=0$ we let $n=\infty$ and otherwise we let $n=\deg(g)$. A basis of $A'$
as an $A$-module is then given by $1,t,t^2,\dots,t^{n-1}$. By
Proposition~\pref{P:image-and-localization} we can assume that $A$ is a local
ring. Let $p$ be the exponential characteristic of the residue field $A/\im_A$,
i.e., $p$ equals the characteristic if it is positive and $1$ if the
characteristic is zero.

We will proceed by induction on $k$ to show that $I_F^{(k)}B'=I_{F'}^{(k)}$.
As $I_F^{(0)}=B$ and $I_{F'}^{(0)}=B'$ the case $k=0$ is obvious.
Proposition~\pref{P:supp-filtration-stability} shows that
$I_F^{(k)}=I_F^{(k-1)}$ if $k\neq p^s$ and we can thus assume that $k=p^s$.

Let $x'\in I_{F'}^{(p^s)}\subseteq I_{F'}^{(p^s-1)}$. By induction $x'\in
I_F^{(p^s-1)}B'$ and we can thus write uniquely $x'=\sum_{i=0}^{n-1} x_i t^i$
where $x_i\in I_F^{(p^s-1)}$ are almost all zero. Let
$y\in\Gamma^{d-p^s}_A(B)$. Then
$$f'\bigl(\gamma^{p^s}(x')\times y\bigr)=
\sum_{i=0}^{n-1} t^{p^s i} f\bigl(\gamma^{p^s}(x_i)\times y\bigr).$$
If $g=0$ then $1,t^{p^s},t^{2p^s},\dots$ are linearly independent in
${A'=A[t]}$. If $g$ is separable then $1,t^{p^s},t^{2p^s},t^{(n-1)p^s}$ are
linearly independent by Lemma~\pref{L:etale-independence}. This shows that
$f\bigl(\gamma^{p^s}(x_i)\times y\bigr)=0$ for every $y$ and thus $x_i\in
I_F^{(p^s)}$ as $x_i\in I_F^{(p^s-1)}$. Hence $x'\in I_F^{(p^s)}B'$ which shows
that $I_F^{(p^s)}B'=I_{F'}^{(p^s)}$.
\end{proof}

\end{subsection}

%===========================================================================

\begin{subsection}{Image and base change}\label{SS:image-and-base-change}

As the kernel of a multiplicative law commutes with localization by
Proposition~\pref{P:image-and-localization} it is possible to define the kernel
for a multiplicative law for schemes:

\begin{definition}\label{D:FCI}
Let $S$ be a scheme, $\sA$ a quasi-coherent sheaf of $\sO_S$-algebras and
$\map{F}{\sA}{\sO_S}$ a multiplicative polynomial law,
cf.~\S\ref{SS:Repr-affine}. We let $\ker(F)\subseteq \sA$ be the
quasi-coherent ideal sheaf given by $\ker(F)|_U=\ker\bigl(F|_{U}\bigr)$ for any
affine open subset $U\subseteq S$.
If $\map{f}{X}{S}$ is an affine morphism of schemes and
$\map{\alpha}{S}{\Gamma^d(X/S)}$ is a morphism then we let the \emph{image} of
$\alpha$, denoted $\FCI(\alpha)$, be the closed subscheme of $X$ corresponding
to the ideal sheaf $\ker(F_\alpha)$ where $\map{F_\alpha}{f_*\sO_X}{\sO_S}$ is
the polynomial law corresponding to $\alpha$.
\end{definition}

We say that a morphism $S'\to S$ is \emph{essentially smooth} if every local
ring of $S'$ is a local ring of a scheme which is smooth over $S$. The results
of the previous section are summarized in the following proposition.

\begin{theorem}\label{T:image-and-ess-smooth}
Let $\map{f}{X}{S}$ be an affine morphism of schemes and let
$\map{\alpha}{S}{\Gamma^d(X/S)}$ be a morphism. If $S'\to S$ is an
\emph{essentially smooth} morphism then $\FCI(\alpha)\times_S
S'=\FCI(\alpha\times_S S')$, i.e., the image commutes with essentially smooth
base change.
\end{theorem}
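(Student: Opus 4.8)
The plan is to reduce the global statement to the affine case, where it follows from the results of \S\ref{SS:kernel-and-base-change}. Since both $\FCI(\alpha)$ and the formation of the base change $\FCI(\alpha\times_S S')$ are local on $S'$ and on $S$, and since $\ker(F)$ is defined affine-locally by Definition~\pref{D:FCI}, it suffices to treat the case where $S=\Spec(A)$, $S'=\Spec(A')$ and $X=\Spec(B)$ are all affine. Thus we must show: if $A\to A'$ is essentially smooth, then $\ker(F_{A'})=\ker(F)\otimes_A A'$ (as ideals of $B'=B\otimes_A A'$), where $F:B\to A$ is the multiplicative law of degree $d$ corresponding to $\alpha$.

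The first reduction is to pass from ``essentially smooth'' to a standard local model. By definition every local ring of $A'$ is a local ring of some $A$-algebra smooth over $A$; since the kernel commutes with localization by Proposition~\pref{P:image-and-localization}, and since an ideal of $B'$ is determined by its localizations at primes, I may assume $A\to A'$ is smooth. Next, smoothness is local on $A'$ in the Zariski topology, so after localizing again (and using Proposition~\pref{P:image-and-localization} once more) I may assume $A\to A'$ is \emph{standard smooth}, i.e.\ factors as $A\to A[t_1,\dots,t_n]\to A'$ where the second map is \etale. Finally, an \etale{} extension of $A[t_1,\dots,t_n]$ is, Zariski-locally, of the form $R[t]/g$ with $g$ separable (by the standard structure theory of \etale{} morphisms), so by an induction on the number of variables it suffices to handle the two elementary cases: a polynomial extension $A\to A[t]$ and a simple \etale{} extension $A\to A[t]/g$ with $g$ separable monic. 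Both of these are covered by Lemma~\pref{L:image-and-pol-extensions}, which asserts precisely that $I_F^{(k)}$ and $\ker(F)$ commute with the base change $A\inj A[t]/g$ when $g=0$ or $g$ is separable.

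The main obstacle is really invisible here because it has already been dealt with: it is the content of Lemma~\pref{L:image-and-pol-extensions} together with Lemma~\pref{L:etale-independence}, namely the fact that over a local ring of residue characteristic $p$ the powers $1,t^q,t^{2q},\dots$ with $q=p^s$ form a basis of an \etale{} extension $A[t]/g$, which is exactly what makes the filtration $I_F^{(k)}$ stable under base change in the critical degrees $k=p^s$. So the only genuinely new work in this proof is bookkeeping: verifying that the various localization and composition steps are compatible with the ideal $\ker(F)$, which is guaranteed by Proposition~\pref{P:image-and-localization} (localization) and by the observation that a composite of base changes each commuting with $\ker(F)$ again commutes with $\ker(F)$. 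Assembling these, one concludes $\ker(F_{A'})=\ker(F)B'$ for every essentially smooth $A\to A'$, hence $\FCI(\alpha)\times_S S'=\FCI(\alpha\times_S S')$ globally.
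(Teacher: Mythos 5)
Your argument is correct and is essentially the paper's own proof: both reduce to the affine case via Proposition~\pref{P:image-and-localization}, factor a smooth morphism Zariski-locally as a polynomial extension followed by an \etale{} one (EGA~IV, 17.11.4), use the local structure of \etale{} morphisms to reduce to $A\to A[t]/g$ with $g$ separable, and conclude by Lemma~\pref{L:image-and-pol-extensions}. No substantive differences to report.
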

\begin{proof}
As $\FCI(\alpha)$ commutes with localization we can assume that $S=\Spec(A)$
is local and that $S'\to S$ is smooth. Further it is enough that for any
$x\in S'$ there is an affine neighborhood $S''\subseteq S'$ such that the
image commutes with the base change $S''\to S$. By \cite[Cor.~17.11.4]{egaIV}
we can choose $S''$ such that $S''\to S$ is the composition of an \etale{}
morphism followed by a morphism
$\A{n}_S=\Spec(A[t_1,t_2,\dots,t_n])\to S=\Spec(A)$. We can thus assume that
either $S'\to S$ is \etale{} or $S'=\A{1}_S$. 

If $S'\to S$ is \etale{} and $S=\Spec(A)$ is local, then for any $s'\in S'$ we
have that $\sO_{S',s'}=A[t]/g$ where $g\in A[t]$ is a separable
polynomial~\cite[Thm.~18.4.6 (ii)]{egaIV} and it is thus enough to consider
base changes $S'\to S$ of the form $A\to A[t]/g$. The result now follows from
Lemma~\pref{L:image-and-pol-extensions}.
\end{proof}

\begin{corollary}\label{C:image-and-hensel}
Let $S=\Spec(A)$ and $S'=\Spec(A')$ such that $A'$ is a direct limit of
essentially smooth $A$-algebras. Let $\map{f}{X}{S}$ be an affine morphism and
let $\map{\alpha}{S}{\Gamma^d(X/S)}$ be a morphism.  Then
$\FCI(\alpha')=\FCI(\alpha)\times_S S'$. In particular this holds if $S'$ is
the henselization or the strict henselization of a local ring of $S$.
\end{corollary}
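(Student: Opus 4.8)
The plan is to deduce the corollary formally from two results already at hand: Theorem~\pref{T:image-and-ess-smooth}, which treats a single essentially smooth base change, and Proposition~\pref{P:image-and-limits}, which says that the kernel of a multiplicative law commutes with filtered direct limits of $A$-algebras. Since $S=\Spec(A)$ is affine and $\map{f}{X}{S}$ is affine, I write $X=\Spec(B)$ and let $\map{F}{B}{A}$ be the multiplicative polynomial law of degree $d$ corresponding to $\alpha$, as in Definition~\pref{D:FCI}, so that $\FCI(\alpha)=\Spec\bigl(B/\ker(F)\bigr)$ and, for any $A$-algebra $A''$ with $B''=B\otimes_A A''$, one has $\FCI(\alpha\times_S\Spec A'')=\Spec\bigl(B''/\ker(F_{A''})\bigr)$ while, by right-exactness of the tensor product, $\FCI(\alpha)\times_S\Spec(A'')=\Spec\bigl(B''/\ker(F)B''\bigr)$. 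Thus the assertion to prove is the equality of ideals $\ker(F_{A'})=\ker(F)B'$ in $B'=B\otimes_A A'$.

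To prove this I would write $A'=\varinjlim_\lambda A'_\lambda$ with each $A'_\lambda$ essentially smooth over $A$ and set $B'_\lambda=B\otimes_A A'_\lambda$. First, Theorem~\pref{T:image-and-ess-smooth} applied to the essentially smooth base change $\Spec(A'_\lambda)\to S$ gives $\ker(F_{A'_\lambda})=\ker(F)B'_\lambda$ for every $\lambda$. Second, Proposition~\pref{P:image-and-limits} gives $\ker(F_{A'})=\varinjlim_\lambda\ker(F_{A'_\lambda})$ inside $B'=\varinjlim_\lambda B'_\lambda$. Third, since tensoring with $A'=\varinjlim_\lambda A'_\lambda$ commutes with the filtered colimit and filtered colimits are exact, the image of $\ker(F)\otimes_A A'$ in $B'$ is $\varinjlim_\lambda\ker(F)B'_\lambda$; that is, $\ker(F)B'=\varinjlim_\lambda\ker(F)B'_\lambda$. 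Combining these three identities yields $\ker(F_{A'})=\ker(F)B'$, which is exactly $\FCI(\alpha')=\FCI(\alpha)\times_S S'$.

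For the last sentence I would recall that the henselization, resp.\ strict henselization, of a local ring of $S$ is a filtered direct limit of localizations of \etale{} $A$-algebras --- the local rings of pointed \etale{} neighborhoods, with trivial, resp.\ finite separable, residue field extension, cf.~\cite[\S18.6,~\S18.8]{egaIV}. Since an \etale{} morphism is smooth, each such algebra is a local ring of a scheme smooth over $S$, hence essentially smooth over $A$; therefore the henselization and strict henselization of a local ring of $S$ are filtered direct limits of essentially smooth $A$-algebras and the first part applies.

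I do not expect a genuine obstacle: the entire content already resides in Theorem~\pref{T:image-and-ess-smooth} and Proposition~\pref{P:image-and-limits}, and the corollary is essentially bookkeeping. The one point calling for a moment's care is the third step, namely checking that the filtered colimit of the ideals $\ker(F)B'_\lambda$, formed along the transition maps $B'_\lambda\to B'$, is the ideal $\ker(F)B'$ and matches $\varinjlim_\lambda\ker(F_{A'_\lambda})$ term by term; this is immediate from the per-$\lambda$ identity $\ker(F_{A'_\lambda})=\ker(F)B'_\lambda$ and the compatibility of these identities with the transition maps.
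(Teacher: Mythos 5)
Your proof is correct and follows exactly the route the paper takes: its own proof is the one-line citation of Theorem~\pref{T:image-and-ess-smooth} and Proposition~\pref{P:image-and-limits}, which you have simply unpacked into the three bookkeeping steps (per-$\lambda$ identity, colimit of kernels, colimit of extended ideals). No issues.
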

\begin{proof}
Follows from Proposition~\pref{P:image-and-limits}
and Theorem~\pref{T:image-and-ess-smooth}.
\end{proof}

\begin{remark}
If $S$ and $S'$ are locally noetherian and $S'\to S$ is a flat morphism with
geometrically regular fibers, then $S'$ is a filtered direct limit of smooth
morphisms by Popescu's
theorem~\cite{swan_popescus_theorem,spivakovsky_popescus_theorem}. Thus the
image of a family $\map{\alpha}{S}{\Gamma^d(X/S)}$ commutes with the base
change $S'\to S$ under this hypothesis. In particular we can apply this with
$S'=\Spec(\widehat{\sO_{S,s}})$ for $s\in S$ if $S$ is an excellent
scheme~\cite[Def.~7.8.2]{egaIV}.
\end{remark}

\begin{definition}\label{D:FCS}
Let $\map{f}{X}{S}$ be an affine morphism of \emph{algebraic spaces} and let
$\map{\alpha}{S}{\Gamma^d(X/S)}$ be a morphism. We let $\FCI(\alpha)$ be the
closed subspace of $X$ such that for any \emph{scheme} $S'$ and \etale{}
morphism $S'\to S$ we have that $\FCI(\alpha)\times_S S'=\FCI(\alpha\times_S
S')$. As \etale{} morphisms descend closed subspaces and the image commutes
with \etale{} base change, this is a unique and well-defined closed
subspace. When $S$ is a scheme, this definition of $\FCI(\alpha)$ and the one
in Definition~\pref{D:FCI} agree. We let $\FCS(\alpha)=\FCI(\alpha)_\red$ and
call this subscheme the \emph{support} of $\alpha$.
\end{definition}

\begin{theorem}\label{T:supp-and-arbitrary-base-change}
Let $S$ and $X$ be algebraic spaces such that $X$ is affine over $S$. Let
$\map{\alpha}{S}{\Gamma^d(X/S)}$ be a morphism and let $S'\to S$ be any
morphism. Then $\bigl(\FCS(\alpha)\times_S S'\bigr)_\red=\FCS(\alpha\times_S
S')$, i.e., the support commutes with arbitrary base change.
\end{theorem}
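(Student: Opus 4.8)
The plan is to reduce to the affine case, reformulate the statement in terms of Cayley--Hamilton ideals, and then split an arbitrary base change into a smooth part (already handled) and a closed immersion (which becomes transparent). First I would pass to the affine setting: by Definition~\pref{D:FCS} the formation of the support is \etale{}-local on $S$, and both $\FCS(\alpha)\times_S S'$ and $\FCS(\alpha\times_S S')$ are \etale{}-local over $S'$, so we may assume $S=\Spec(A)$, $X=\Spec(B)$, $S'=\Spec(A')$, with $\alpha$ corresponding to a multiplicative homogeneous law $\map{F}{B}{A}$ of degree $d$. Both sides are reduced closed subspaces of $X'=\Spec(B')$, $B'=B\otimes_A A'$, so it suffices to show they have the same underlying set. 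By Proposition~\pref{P:CH-supp} we have $\sqrt{\ker(F)}=\sqrt{I_\CH(F)}$, and likewise after base change, so $|\FCS(\alpha)|=V(I_\CH(F))$ and $|\FCS(\alpha\times_S S')|=V(I_\CH(F_{A'}))$. Thus the theorem is equivalent to the equality of closed subsets $V(I_\CH(F)\,B')=V(I_\CH(F_{A'}))$ in $\Spec(B')$.

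This equality is transitive in $S'\to S$ (using $(Z\times_S S')_\red=(Z_\red\times_S S')_\red$) and, since $\ker(F)$ commutes with filtered colimits of $A$-algebras by Proposition~\pref{P:image-and-limits}, it is enough to prove it when $A'$ is an $A$-algebra of finite type, say $A'=A[t_1,\dots,t_n]/\mathfrak{a}$. Then $\Spec(A')\to\Spec(A)$ factors as a closed immersion $\Spec(A')\inj\A{n}_A$ followed by the projection $\A{n}_A\to\Spec(A)$, and by transitivity it suffices to treat each of these two base changes.

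For the projection $\A{n}_A\to\Spec(A)$ --- indeed for any smooth, or more generally essentially smooth, morphism --- Theorem~\pref{T:image-and-ess-smooth} gives the stronger statement that the \emph{image} $\FCI(\alpha)$ commutes with the base change; taking reductions then gives the claim for the support. For a closed immersion $\Spec(A/\mathfrak{a})\inj\Spec(A)$ the crucial point is that the Cayley--Hamilton ideal, unlike the kernel itself, commutes with the surjective base change $A\surj A/\mathfrak{a}$ on the nose: every element of $B/\mathfrak{a}B$ lifts to $B$, and the coefficients of $\chi_{F,b}(t)$ lie in $A$ and are compatible with base change by~\pref{X:Gamma_base_change}, so $\chi_{F_{A/\mathfrak{a}},\,\overline{b}}(\overline{b})$ is the image of $\chi_{F,b}(b)$ in $B/\mathfrak{a}B$. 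Hence $I_\CH(F_{A/\mathfrak{a}})=(I_\CH(F)+\mathfrak{a}B)/\mathfrak{a}B=I_\CH(F)\cdot(B/\mathfrak{a}B)$, which is even better than needed.

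Assembling these --- transitivity along $S'\to\A{n}_S\to S$, the limit reduction above, and \etale{} descent to recover the case of algebraic spaces --- completes the proof. The only genuinely non-formal ingredient is Theorem~\pref{T:image-and-ess-smooth} (whose proof runs through the separable-polynomial Lemma~\pref{L:image-and-pol-extensions}); I expect the main obstacle to be organizing the reduction so that this is the only ``hard'' base change required, the closed-immersion case, which carries the real idea, being immediate once one works with $I_\CH(F)$ rather than $\ker(F)$.
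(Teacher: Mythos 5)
Your proof is correct and rests on the same two key ingredients as the paper's: Theorem~\pref{T:image-and-ess-smooth} for the polynomial-ring part of the base change, and the Cayley--Hamilton sandwich $I_\CH(F)\subseteq\ker(F)\subseteq\sqrt{I_\CH(F)}$ of Proposition~\pref{P:CH-supp} combined with lifting elements along a surjection. The paper organizes the reduction a bit more directly --- it surjects onto $A'$ from a single, possibly infinite-dimensional, polynomial ring over $A$ (a filtered limit of smooth $A$-algebras), which avoids your finite-type/transitivity bookkeeping --- and then verifies $\ker(F_{A'})\subseteq\sqrt{I_\CH(F)\,B'}$ by the same lifting computation that underlies your on-the-nose identity $I_\CH(F_{A'})=I_\CH(F)\,B'$.
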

\begin{proof}
We can assume that $S=\Spec(A)$ and $S'=\Spec(A')$ are affine. Let $P$ be a,
possibly infinite-dimensional, polynomial algebra over $A$ such that there is
a surjection $P\to A'$. Then as $\Spec(P)$ is a limit of smooth $S$-schemes we
can by Theorem~\pref{T:image-and-ess-smooth} replace $A$ with $P$ and assume
that $A\to A'$ is surjective.

Let $X=\Spec(B)$, let $\map{f}{\Gamma^d_A(B)}{A}$ correspond to $\alpha$
and let $\map{F}{B}{A}$ be the corresponding multiplicative law. Pick an
element $b'\in \ker(F_{A'})\subseteq B\otimes_A A'$ and choose a lifting
$b\in B$ of $b'$. Then by Lemma~\pref{L:elementwise_criteria_for_ker},
the elements $f(\gamma^{d-k}(b)\times \gamma^{k}(1))$, $k=0,1,2,\dots,d-1$
lie in the kernel of $A\to A'$. In particular, the image of $\chi_{F,b}(b)$
in $B$ is $b'^d$. Thus $\ker(F_{A'})\subseteq \sqrt{I_{\CH}(F)}(B\otimes_A
A')$. As $\sqrt{I_{\CH}(F)}=\sqrt{\ker{F}}$ by
Proposition~\pref{P:CH-supp} the theorem follows.
\end{proof}

\begin{examples}\label{Es:kernel-does-not-commute}
We give two examples. The first shows that $\ker(F)$ does not commute with
arbitrary base change even in characteristic zero. The second shows that
$\ker(F)$ does not commute with flat base change in positive characteristic.

\begin{enumerate}
\item Let $A=k[x]$ and $B=k[x,y]/(x^2-y^2)$. Then $B$ is a free $A$-module of
rank $2$. The norm $\map{N}{B}{A}$ is a multiplicative law of degree $2$. It
can further be seen that $\ker(N)=0$. Let $A'=k[x]/x$. Then $B'=B\otimes_A A'
=k[y]/y^2$ is not reduced and by Proposition~\pref{P:supp-over-red} the kernel
of $N'$ cannot be trivial. In fact, we have that $\ker(N')=(y)$.

\item Let $k$ be a field of characteristic $p$ and $A=B=k$. We let
$\map{F}{B}{A}$ be the polynomial law given by $x\mapsto x^p$, i.e., the
Frobenius. Clearly $\ker(F)=0$. Let $A'=A[t]/t^p$ which is a flat
$A$-algebra. Then $\ker(F')=(t)$ as $(b''+tx'')^p={b''}^p+t^p{x''}^p={b''}^p$
for any $A'\to A''$ and $b'',x''\in B''=A''$.

It is further easily seen that $\ker(F)$ does not commute with any base change
such that $A'$ is not reduced. In fact, if $t\in A'$ is such that $t^p=0$ then
$t\in\ker(F')$.
\end{enumerate}
\end{examples}

\end{subsection}

%---------------------------

\begin{subsection}{Various properties of the image and support}
\label{SS:various-props-of-image-and-support}
A morphism $\map{\alpha}{S}{\Gamma^d(X/S)}$ is, as we will see later on, a
``family of zero cycles of degree $d$ on $X$ parameterized by $S$''.
The subscheme $\FCS(\alpha)\inj X$ is the support of this family
of cycles. In particular it should, topologically at least, have finite fibers
over $S$.

\begin{proposition}\label{P:supp-at-most-d-components}
Let $S$ be a \emph{connected} algebraic space and $X$ a space affine over
$S$. Let $\map{\alpha}{S}{\Gamma^d(X/S)}$ be a morphism. If $X=\coprod_{i=1}^n
X_i$, then there are uniquely defined integers $d_1,d_2,\dots,d_n\in\N$ such
that $d=d_1+d_2+\dots+d_n$ and such that $\alpha$ factors through the closed
subspace $\Gamma^{d_1}(X_1)\times_S
\Gamma^{d_2}(X_2)\times_S\dots\times_S \Gamma^{d_n}(X_n)\inj \Gamma^d(X/S)$.
The support $\FCS(\alpha)$ is contained in the union of the $X_i$'s with
$d_i>0$. In particular $\FCS(\alpha)$ has at most $d$ connected components.
\end{proposition}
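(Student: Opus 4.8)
The plan is to reduce immediately to the affine situation and then apply Proposition~\pref{P:Gamma^d_of_disj_union}. Since $\FCS(\alpha)$ and the factorization claim can both be checked \'etale{}-locally on $S$, and since $X/S$ is affine, I may assume $S=\Spec(A)$ with $A$ a ring whose spectrum is connected (i.e.\ $A$ has no nontrivial idempotents) and $X=\coprod_{i=1}^n X_i=\Spec(B)$ with $B=\prod_{i=1}^n B_i$. By Proposition~\pref{P:Gamma^d_of_disj_union} we have
$$\Gamma^d(X/S)=\coprod_{\substack{(d_1,\dots,d_n)\in\N^n\\ d_1+\dots+d_n=d}}
\Gamma^{d_1}(X_1)\times_S\dots\times_S\Gamma^{d_n}(X_n),$$
so $\Gamma^d(X/S)$ is a disjoint union of open-and-closed subspaces indexed by the compositions of $d$ into $n$ parts. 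The morphism $\alpha$ therefore factors through exactly one of these pieces: indeed $\Spec(A)$ is connected, so its image under $\alpha$ lands in a single connected component of the target, which forces a unique tuple $(d_1,\dots,d_n)$ with $\sum_i d_i=d$ such that $\alpha$ factors through the closed subspace $\Gamma^{d_1}(X_1)\times_S\dots\times_S\Gamma^{d_n}(X_n)$. This gives the first two assertions, the uniqueness being immediate from the disjointness of the pieces.

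Next I would identify the image of $\alpha$ with respect to this factorization. Writing $B=\prod_i B_i$, paragraph~\pref{X:Gamma^d_of_prod_of_rings} identifies $\Gamma^d_A(B)$ with $\prod_{\sum d_i=d}\bigotimes_i\Gamma^{d_i}_A(B_i)$, and the factorization just established says that the homomorphism $f\colon\Gamma^d_A(B)\to A$ corresponding to $\alpha$ kills every factor except the one of multidegree $(d_1,\dots,d_n)$; that is, $f$ factors as $\Gamma^d_A(B)\twoheadrightarrow\bigotimes_i\Gamma^{d_i}_A(B_i)\xrightarrow{\;f_i\otimes\cdots\;}A$. Now I claim that the image $\FCI(\alpha)$ is contained in $\coprod_{d_i>0}X_i$, equivalently that $\ker(F)$ contains the ideal $e_j B$ for the idempotent $e_j$ cutting out $X_j$ whenever $d_j=0$. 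This follows directly from the elementwise criterion, Lemma~\pref{L:elementwise_criteria_for_ker}: for $b\in e_jB$, every product $\gamma^k(b)\times y$ with $1\le k\le d$ and $y\in\Gamma^{d-k}_A(B)$ lies, under the identification above, in a summand involving a factor $\gamma^{k'}(B_j)$ with $k'\ge 1$ and hence in the kernel of $f$, since $f$ only sees the component where $B_j$ contributes $\Gamma^0=A$. Therefore $\ker(F)\supseteq\bigoplus_{d_j=0}e_jB$, so $\FCI(\alpha)$, and a fortiori $\FCS(\alpha)=\FCI(\alpha)_\red$, is a closed subspace of $\coprod_{d_i>0}X_i$.

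Finally, the count of connected components: $\FCS(\alpha)$ is closed in $\coprod_{d_i>0}X_i$, and there are at most $d$ indices $i$ with $d_i>0$ since the $d_i$ are non-negative integers summing to $d$; a closed subspace of a disjoint union of at most $d$ spaces has at most $d$ connected components. I expect the only genuinely delicate point to be the bookkeeping in the second paragraph — verifying via the multiplication structure of Formula~\pref{F:Gamma_mult_form} and paragraph~\pref{X:Gamma^d_of_prod_of_rings} that elements supported on a ``$d_j=0$'' component really do land in $\ker(F)$ — but this is a direct, if slightly tedious, application of Lemma~\pref{L:elementwise_criteria_for_ker}; everything else is formal.
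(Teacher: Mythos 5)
Your first two paragraphs are essentially the paper's argument (decompose $\Gamma^d(X/S)$ via Proposition~\pref{P:Gamma^d_of_disj_union}, use connectedness of $S$ to single out one clopen piece), with a welcome extra verification via Lemma~\pref{L:elementwise_criteria_for_ker} that $e_jB\subseteq\ker(F)$ when $d_j=0$; that computation is correct. One small wrinkle: you cannot literally ``assume $S$ connected after passing to an \'etale{} cover,'' since an \'etale{} cover of a connected space need not be connected. The fix is harmless --- establish the factorization through a single clopen piece of $\Gamma^d(X/S)$ globally (the decomposition already holds for algebraic spaces), and only then check the inclusion $\FCI(\alpha)\subseteq\coprod_{d_i>0}X_i$ \'etale{}-locally, where your kernel computation applies.

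The genuine gap is the last step. The assertion ``a closed subspace of a disjoint union of at most $d$ spaces has at most $d$ connected components'' is false: each $X_i$ may itself be badly disconnected, and $\FCS(\alpha)\cap X_i$ may have many components (already for $n=1$ and $X_1=\A{1}_k$, a family of degree $2$ supported at two distinct points has two components inside one connected $X_1$ --- here the count happens to work out, but your argument provides no bound at all when $n=1$). The correct argument, which is the one the paper uses, is to apply the first part of the proposition to the image itself: replace $X$ by $Z=\FCI(\alpha)$ and consider an \emph{arbitrary} decomposition $Z=\coprod_{i=1}^m Z_i$ into nonempty clopen pieces. By the first part, $\alpha$ factors through $\prod_i\Gamma^{d_i}(Z_i)$ with $\sum d_i=d$; if some $d_i$ were $0$ then $\alpha$ would factor through $\Gamma^d\bigl(\coprod_{j\neq i}Z_j\bigr)$, contradicting minimality of the image. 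Hence every $d_i>0$ and $m\leq d$, so $Z$ (and therefore $\FCS(\alpha)=Z_\red$) admits no clopen partition into more than $d$ nonempty pieces, which gives the bound on connected components. Without this use of minimality of $\FCI(\alpha)$, the component count does not follow.
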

\begin{proof}
By Proposition~\pref{P:Gamma^d_of_disj_union} there is a decomposition
$$\Gamma^d(X/S)=\coprod_{\substack{d_i\in\N\\\sum_i d_i=d}}
\Gamma^{d_1}(X_1)\times_S \Gamma^{d_2}(X_2)\times_S\dots\times_S \Gamma^{d_n}(X_n).$$
As $S$ is connected $\alpha$ factors uniquely through one of the spaces in this
decomposition. It is further clear that $X_i\cap \FCS(\alpha)\neq\emptyset$ if
and only if $d_i>0$. The last observation follows after replacing $X$ with
$\FCI(\alpha)$ as then $n$ is at most $d$ in any decomposition.
\end{proof}

\begin{definition}
Let $S$ and $X$ be as in Proposition~\pref{P:supp-at-most-d-components}. The
\emph{multiplicity} of $\alpha$ on $X_i$ is the integer $d_i$.
\end{definition}

\begin{proposition}\label{P:supp-over-field}
Let $S=\Spec(k)$ where $k$ is a field and let $X/S$ be an affine scheme. Let
$\map{\alpha}{S}{\Gamma^d(X/S)}$ be a morphism. Then
$\FCI(\alpha)=\FCS(\alpha)=\coprod_{i=1}^n\Spec(k_i)$ is a disjoint union of a
at most $d$ points such that the separable degree of each $k_i/k$ is
finite.
\end{proposition}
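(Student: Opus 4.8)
The plan is to work affinely and reduce everything to Propositions~\pref{P:supp-over-red}, \pref{P:CH-supp} and~\pref{P:supp-at-most-d-components}. Write $X=\Spec(B)$ and let $\map{F}{B}{k}$ be the multiplicative law of degree $d$ corresponding to $\alpha$. By definition $\FCI(\alpha)=\Spec(B')$ with $B'=B/\ker(F)$, and the induced law $\map{\overline{F}}{B'}{k}$ satisfies $\ker(\overline{F})=0$ by maximality of the kernel. Since $k$ is reduced, Proposition~\pref{P:supp-over-red} shows that $B'$ is reduced and Proposition~\pref{P:CH-supp} shows that $B'$ is integral over $k$; thus $\Spec(B')$ is zero-dimensional and $\FCI(\alpha)$ is already reduced, so $\FCI(\alpha)=\FCS(\alpha)$. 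Moreover $I_\CH(\overline{F})\subseteq\ker(\overline{F})=0$ gives $\chi_{\overline{F},b}(b)=0$ for every $b\in B'$, and since $\chi_{\overline{F},b}$ has degree $d$ and leading coefficient $\pm 1$, every $b\in B'$ satisfies an equation $b^d=c_{d-1}b^{d-1}+\dots+c_0$ with the $c_i\in k$. In particular every element of every residue field of $B'$ has degree at most $d$ over $k$.

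First I would bound the number of points of $\Spec(B')$ by $d$. Suppose $\Spec(B')$ had $d+1$ distinct points $\mathfrak{p}_0,\dots,\mathfrak{p}_d$ (all maximal). For each pair $i\neq j$ pick an element of $B'$ lying in exactly one of $\mathfrak{p}_i,\mathfrak{p}_j$, and let $B'_0\subseteq B'$ be the sub-$k$-algebra generated by these finitely many elements. Each generator is integral over $k$, so $B'_0$ is finite over $k$; it is reduced, being a subring of $B'$; and the contractions $\mathfrak{p}_i\cap B'_0$ are still pairwise distinct, so $\Spec(B'_0)$ has at least $d+1$ points. On the other hand, for $b\in B'_0$ we have $\chi_{F|_{B'_0},b}=\chi_{\overline{F},b}$, so $I_\CH(F|_{B'_0})=0$ and Proposition~\pref{P:CH-supp} yields $\ker(F|_{B'_0})\subseteq\sqrt{0}=0$. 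Writing the finite reduced $k$-algebra $B'_0$ as $\ell_1\times\dots\times\ell_n$ with the $\ell_i/k$ finite field extensions and letting $\alpha_0$ be the family on $\Spec(B'_0)=\coprod_i\Spec(\ell_i)$ corresponding to $F|_{B'_0}$, Proposition~\pref{P:supp-at-most-d-components} over the connected base $\Spec(k)$ gives multiplicities $d_i$ with $\sum_i d_i=d$ and $\FCS(\alpha_0)\subseteq\bigcup_{d_i>0}\Spec(\ell_i)$; but $\ker(F|_{B'_0})=0$ and $B'_0$ reduced force $\FCS(\alpha_0)=\Spec(B'_0)$, so, the $\Spec(\ell_i)$ being disjoint, every $d_i>0$ and $n\le d$ --- contradicting $n\ge d+1$. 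Hence $\Spec(B')$ has at most $d$ points.

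It then remains to read off the structure. Since $B'$ is reduced, integral over $k$, and has finitely many primes $\mathfrak{p}_1,\dots,\mathfrak{p}_n$ with $n\le d$, these primes are maximal and pairwise comaximal and $\bigcap_i\mathfrak{p}_i=\sqrt{0}=0$; the Chinese Remainder Theorem gives $B'\iso\prod_{i=1}^n k_i$ with $k_i=\kappa(\mathfrak{p}_i)$, so $\FCI(\alpha)=\FCS(\alpha)=\coprod_{i=1}^n\Spec(k_i)$. Finally, every element of $k_i$ has degree at most $d$ over $k$ by the first paragraph, so $k_i/k$ is algebraic and the separable closure of $k$ in $k_i$ is a separable extension all of whose elements have degree at most $d$ over $k$; by the primitive element theorem this closure has degree at most $d$ over $k$, so the separable degree of $k_i/k$ is finite.

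The step I expect to be the main obstacle is the bound on the number of points: Proposition~\pref{P:supp-at-most-d-components} applies only to a \emph{finite} disjoint union, so one must first descend from the possibly infinite scheme $\Spec(B')$ to a suitable finite sub-$k$-algebra, and verify that the restricted law still has trivial kernel so that its image is the whole of that finite scheme. The remaining steps are routine assembly. One could alternatively bypass the separating-points argument by writing $B'$ as the filtered union of its finite sub-$k$-algebras, observing via Lemma~\pref{L:image-and-inv-lim-of-carrier-scheme} that the restricted kernels vanish, and passing to the inverse limit of the finite spectra.
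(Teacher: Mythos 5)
Your proof is correct, and while it follows the paper's skeleton (reduced by Proposition~\pref{P:supp-over-red}, integral over $k$ by Proposition~\pref{P:CH-supp}, hence zero-dimensional; then Proposition~\pref{P:supp-at-most-d-components} to bound the number of pieces by $d$), it diverges in two places worth noting. For the bound on the number of points, the paper simply says the image is totally disconnected and invokes the ``at most $d$ connected components'' statement, which tacitly uses that in a spectral space connected components coincide with quasi-components, so that $d+1$ distinct closed points could be separated by clopen sets; your descent to a finite reduced sub-$k$-algebra $B'_0$ with $\geq d+1$ primes and trivial restricted kernel makes this step explicit and avoids the point-set-topological input, at the cost of having to verify $I_\CH(F|_{B'_0})=0$ (which you do correctly, since the characteristic polynomial of $b\in B'_0$ computed for the restricted law agrees with that for $\overline{F}$). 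For the separable degree, the routes are genuinely different: the paper base-changes to $\overline{k}$ and appeals to Theorem~\pref{T:supp-and-arbitrary-base-change}, whereas you read the bound off the degree-$d$ monic Cayley--Hamilton equations satisfied by every element of $B'$ together with the primitive element theorem. Your argument is more self-contained (it does not need the base-change theorem for the support) and yields the sharper quantitative conclusion that the separable degree of each $k_i/k$ is at most $d$, which the paper's phrasing leaves implicit.
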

\begin{proof}
Propositions~\pref{P:supp-over-red} and~\pref{P:supp-integrality} shows that
$\FCI(\alpha)$ is reduced and affine of dimension zero, hence totally
disconnected. By Proposition~\pref{P:supp-at-most-d-components} it is thus a
disjoint union of at most $d$ reduced points. As the support commutes with
arbitrary base change by
Theorem~\pref{T:supp-and-arbitrary-base-change}, it follows after
considering the base change $k\inj \overline{k}$ that the separable degree of
$k_i/k$ is finite.
\end{proof}

%% zero-dim => has constr. top. (Z pro-constr => closure(Z)=S(Z)=Z)

%% Curious fact: quasi-separated + zero-dim => tot. disc. \cite[1.9.15]{egaIV}
%% (For every pair of points $x$, $y$, there is a quasi-compact open subset
%% $U$ containing $x$ but not $y$. As $U$ is retro-compact and hence
%% pro-constr., its closure is its specialization $S(U)=U$. So $U$
%% is open and closed. This shows that $X$ is totally separated (=> tot. disc).

\begin{corollary}\label{C:push-fwd-and-supp}
Let $X$, $Y$ and $S$ be algebraic spaces with affine morphisms $\map{f}{X}{Y}$
and $\map{g}{Y}{S}$. Let $\map{\alpha}{S}{\Gamma^d(X/S)}$ be a morphism and
denote by $f_*\alpha$ the composition of $\alpha$ and the morphism
$\map{\Gamma^d(f)}{\Gamma^d(X/S)}{\Gamma^d(Y/S)}$. Then
$\FCS(f_*\alpha)=f\bigl(\FCS(\alpha)\bigr)$.
\end{corollary}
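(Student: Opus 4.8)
The plan is to reduce at once to the case where $S=\Spec(A)$ is affine, reinterpret the statement in terms of the kernel of the multiplicative law attached to $\alpha$, and then play the Cayley--Hamilton ideal of Proposition~\pref{P:CH-supp} off against a contraction of ideals. For the reduction: both $\FCS(f_*\alpha)$ and the set-theoretic image $f\bigl(\FCS(\alpha)\bigr)$ are subsets of $Y$, the formation of $\FCS$ commutes with arbitrary base change by Theorem~\pref{T:supp-and-arbitrary-base-change}, push-forward commutes with base change, and the set-theoretic image of a morphism pulls back along any base change; so after passing to an \'etale covering $S'\to S$ by a scheme and then to affine opens it suffices to treat $S=\Spec(A)$, in which case $Y=\Spec(C)$ and $X=\Spec(B)$ are affine with structure maps given by ring homomorphisms $A\to C$ and $\map{\iota}{C}{B}$. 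If $\map{F}{B}{A}$ is the multiplicative law of degree $d$ corresponding to $\alpha$, then by functoriality of $\Gamma^d$ and of $\gamma^d$ the law corresponding to $f_*\alpha$ is the restriction $G:=F\circ\iota$, a multiplicative law from $C$ to $A$, and $\FCS(f_*\alpha)$ is $V(\ker(G))\subseteq Y$.

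The heart of the argument is to show $\sqrt{\ker(G)}=\sqrt{\mathfrak q}$, where $\mathfrak q:=\iota^{-1}\bigl(\ker(F)\bigr)$. One inclusion is formal: since $\iota(\mathfrak q)\subseteq\ker(F)$ we get $\iota_{A'}(\mathfrak q C')\subseteq\ker(F)B'\subseteq\ker(F_{A'})$ for every $A$-algebra $A'$, hence $G_{A'}(c'+\mathfrak q C')=F_{A'}\bigl(\iota_{A'}(c')\bigr)=G_{A'}(c')$; thus $G$ factors through $C/\mathfrak q$ and $\mathfrak q\subseteq\ker(G)$. For the reverse I would use characteristic polynomials: for $c\in C$ one has $\chi_{G,c}(t)=G_{A[t]}(c-t)=F_{A[t]}(\iota(c)-t)=\chi_{F,\iota(c)}(t)$, so $\iota\bigl(\chi_{G,c}(c)\bigr)=\chi_{F,\iota(c)}(\iota(c))\in I_\CH(F)\subseteq\ker(F)$, giving $I_\CH(G)\subseteq\mathfrak q$; combined with $\ker(G)\subseteq\sqrt{I_\CH(G)}$ from Proposition~\pref{P:CH-supp} this yields $\ker(G)\subseteq\sqrt{\mathfrak q}$. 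Hence $\sqrt{\ker(G)}=\sqrt{\mathfrak q}$ and $\FCS(f_*\alpha)=V(\mathfrak q)$ as subsets of $Y$.

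Finally I would identify $V(\mathfrak q)$ with $f\bigl(\FCS(\alpha)\bigr)$. The coordinate ring $B/\sqrt{\ker(F)}$ of $\FCS(\alpha)$ is integral over $A$ by Proposition~\pref{P:supp-integrality}, hence integral over the image of $C$ in it; so by lying over the morphism $\Spec\bigl(B/\sqrt{\ker(F)}\bigr)\to\Spec(C)$ surjects onto $V\bigl(\iota^{-1}(\sqrt{\ker(F)})\bigr)=V\bigl(\sqrt{\iota^{-1}(\ker(F))}\bigr)=V(\mathfrak q)$, that is, $f\bigl(\FCS(\alpha)\bigr)=V(\mathfrak q)$. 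Therefore $f\bigl(\FCS(\alpha)\bigr)=\FCS(f_*\alpha)$, and as both are taken with reduced structure they agree as closed subspaces of $Y$. The only genuinely non-formal input is the containment $\ker(G)\subseteq\sqrt{I_\CH(G)}$ of Proposition~\pref{P:CH-supp}, which controls $\ker(G)$ from above and forces the radical equality with $\mathfrak q$; the rest is base-change bookkeeping together with the functoriality $\chi_{G,c}=\chi_{F,\iota(c)}$ of characteristic polynomials and lying over for the integral extension $B/\sqrt{\ker(F)}$.
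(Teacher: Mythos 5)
Your proof is correct, but it takes a genuinely different route from the paper's. The paper reduces all the way to $S=\Spec(k)$ a field (using, as you do, that the support and the set-theoretic image commute with arbitrary base change), where Proposition~\pref{P:supp-over-field} makes $\FCI(\alpha)$ a finite disjoint union of points; the statement is then read off from the decomposition of Proposition~\pref{P:Gamma^d_of_disj_union} and the observation that the push-forward multiplicities $e_j=\sum_{f(x_i)=y_j}d_i$ stay positive. You instead stop at affine $S$ and prove the sharper ideal-theoretic identity $\sqrt{\ker(F\circ\iota)}=\sqrt{\iota^{-1}(\ker F)}$: the inclusion $\iota^{-1}(\ker F)\subseteq\ker(F\circ\iota)$ is formal from $\ker(F)B'\subseteq\ker(F_{A'})$, and the reverse inclusion up to radical follows from $I_\CH(F\circ\iota)\subseteq\iota^{-1}\bigl(I_\CH(F)\bigr)\subseteq\iota^{-1}(\ker F)$ (via the functoriality $\chi_{F\circ\iota,c}=\chi_{F,\iota(c)}$ of characteristic polynomials) combined with $\ker\subseteq\sqrt{I_\CH}$ from Proposition~\pref{P:CH-supp}; lying over for the integral extension $C/\sqrt{\mathfrak{q}}\inj B/\sqrt{\ker F}$ then turns this into the set-theoretic equality. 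What your argument buys is a purely algebraic description of $\FCI(f_*\alpha)$ up to radical over an arbitrary affine base, without passing to fibers; what the paper's buys is brevity, since the fiberwise structure theory was already established. All the individual steps you invoke check out, so there is no gap.
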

\begin{proof}
As the support and the set-theoretic image commute with any base change, we can
assume that $S=\Spec(k)$ where $k$ is a field. Then
$$\FCI(\alpha)=\coprod_{i=1}^n \Spec(k_i)=\{x_1,x_2,\dots,x_n\}$$
by Proposition~\pref{P:supp-over-field}. Further, by
Proposition~\pref{P:Gamma^d_of_disj_union} there are positive integers
$d_1,d_2,\dots,d_n$ such that $\alpha$ factors through $\prod_{i=1}^n
\Gamma^{d_i}\bigl(\Spec(k_i)\bigr)\inj \Gamma^d(X/S)$. Let
$$f\bigl(\FCI(\alpha)\bigr)=\coprod_{j=1}^m
\Spec(k'_j)=\{y_1,y_2,\dots,y_m\}$$
where $m\leq n$. It is then immediately seen that $f_*\alpha$ factors through
the closed subspace
$\prod_{j=1}^m\Gamma^{e_j}\bigl(\Spec(k'_j)\bigr)\inj \Gamma^d(Y/S)$ where
$e_j=\sum_{f(x_i)=y_j} d_i$. As $d_i$ is positive so is $e_j$ and thus $y_j\in
\FCS(f_*\alpha)$. This shows that $\FCS(f_*\alpha)=f\bigl(\FCS(\alpha)\bigr)$.
\end{proof}

\begin{proposition}\label{P:supp-comp-dom}
Let $X$ be an algebraic space affine over $S$ and let
$\map{\alpha}{S}{\Gamma^d(X/S)}$ be a morphism. Then every irreducible
component of $\FCS(\alpha)$ maps onto an irreducible component of $S$.
\end{proposition}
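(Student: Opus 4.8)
The plan is to reduce to the case of a local base and then to show that a component of $\FCS(\alpha)$ lying entirely over the closed point would force that base to be a field.

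Since $\FCS$ depends only on underlying topological spaces and commutes with arbitrary base change by Theorem~\pref{T:supp-and-arbitrary-base-change}, I would first replace $S$ by $S_\red$ and $X$ by $\FCI(\alpha)$, so that $X$ is integral over $S$, the associated law has trivial kernel, and $X$ is reduced by Proposition~\pref{P:supp-over-red}; thus $\FCI(\alpha)=\FCS(\alpha)=X$. Let $Z_0$ be a component of $X$ with generic point $\xi$ and put $w=f(\xi)$. As $f$ is integral, hence closed (Proposition~\pref{P:CH-supp}), $f(Z_0)=\overline{\{w\}}$, so it suffices to prove that $\overline{\{w\}}$ is a component of $S$. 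Working \'etale\nobreakdash-locally and then localizing at $w$ --- harmless for $\FCI$ by Proposition~\pref{P:image-and-localization}, Theorem~\pref{T:image-and-ess-smooth} and Corollary~\pref{C:image-and-hensel} --- we reduce to $S=\Spec(A)$ with $A$ local and reduced and $w$ its closed point; then $\overline{\{w\}}$ is a component of $S$ exactly when $\dim A=0$, so we assume $\dim A\ge1$ and seek a contradiction. Since $f(Z_0)=\{w\}$, the component $Z_0$ lies in the closed fibre $X_w=X\times_S\Spec\kappa(w)$, whose reduction $(X_w)_\red=\FCS(\alpha_{\kappa(w)})$ is a finite disjoint union of spectra of fields by Theorem~\pref{T:supp-and-arbitrary-base-change} and Proposition~\pref{P:supp-over-field}; hence $Z_0=\{\xi\}$ is a single reduced point. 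Writing $k=\kappa(\xi)$, a finite extension of $\kappa(w)$, the maximal ideal $\mathfrak m$ of $A$ acts as zero on $k$ because $\xi$ lies over $w$.

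The key step is to split off the part of $\alpha$ concentrated at $\xi$. By the standard limit techniques of~\cite[\S8]{egaIV} one reduces to $A$ noetherian and $X$ finite over $A$, so that $X$ has finitely many components. Being in the closed fibre of a finite $A$\nobreakdash-scheme, $\xi$ is a closed point of $X$, and being the generic point of the component $Z_0=\{\xi\}$ it lies on no other component; hence $\{\xi\}$ is open in $X$ and $X=\Spec(k)\amalg X'$ over $S$. By Proposition~\pref{P:Gamma^d_of_disj_union} and connectedness of $\Spec(A)$, the family decomposes as $\alpha=(\alpha_1,\alpha_2)$ with $\alpha_1\colon S\to\Gamma^a(\Spec(k)/S)$ for some $a$ with $0\le a\le d$, and the images are additive: $\FCI(\alpha)=\FCI(\alpha_1)\amalg\FCI(\alpha_2)$ with $\FCI(\alpha_1)\subseteq\Spec(k)$. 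Since $\FCI(\alpha)=X=\Spec(k)\amalg X'$ this forces $\FCI(\alpha_1)=\Spec(k)\ne\emptyset$, and as a family of degree $0$ has empty image we get $a\ge1$. Let $f_1\colon\Gamma^a_A(k)\to A$ be the $A$-algebra homomorphism corresponding to $\alpha_1$ (Paragraph~\pref{X:univ_prop_of_Gamma:mult}).

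To conclude, fix $m\in\mathfrak m$. Then $m\cdot1_k=0$ in $k$, so by homogeneity of $\gamma^a$ and the fact that $\gamma^a(1_k)$ is the unit of $\Gamma^a_A(k)$,
$$m^{a}\cdot 1_{\Gamma^a_A(k)}=m^{a}\,\gamma^{a}(1_k)=\gamma^{a}(m\cdot1_k)=\gamma^{a}(0)=0$$
(using $a\ge1$). Applying the $A$-algebra homomorphism $f_1$ gives $m^{a}=f_1\bigl(m^{a}\cdot 1\bigr)=0$ in $A$. Thus every element of $\mathfrak m$ is nilpotent, and since $A$ is reduced, $\mathfrak m=0$, i.e.\ $\dim A=0$, a contradiction.

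The main obstacle is the reduction in the third paragraph: establishing that $\{\xi\}$ is open in $\FCI(\alpha)$, equivalently that $\alpha$ genuinely decomposes near $\xi$. Over a noetherian base this follows from the finiteness of the set of minimal primes together with the fact that all closed points of $\FCI(\alpha)$ lie over $w$; passing from the general integral situation to the finite noetherian one uses the limit formalism and the compatibility of $\Gamma^d$, of the kernel of a multiplicative law (Lemma~\pref{L:image-and-inv-lim-of-carrier-scheme}) and of base change with filtered colimits.
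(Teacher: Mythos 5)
Your overall strategy is genuinely different from the paper's and is sound in outline: localize at the image $w$ of the generic point $\xi$ of a putative bad component, split off the part of $\alpha$ concentrated at $\xi$, and deduce $m^{a}=0$ for every $m\in\mathfrak{m}$ from $\gamma^{a}(m\cdot 1_k)=m^{a}\gamma^{a}(1_k)$. That final computation, and the additivity $\ker(F)=\ker(F_1)\times\ker(F_2)$ forcing $a\geq 1$, are correct. The genuine gap is exactly the step you flag as the main obstacle: producing the clopen splitting $\FCI(\alpha)=\Spec(k)\amalg X'$. The limit reduction you propose does not deliver it. Writing $B=\varinjlim B_\lambda$ with $B_\lambda$ finite over $A$ and $C_\lambda=B_\lambda/\ker(F_\lambda)$, the point $\xi$ contracts to a \emph{maximal} ideal of each $C_\lambda$ (by integrality), but there is no reason for that contraction to be a \emph{minimal} prime: minimal primes of a filtered colimit need not contract to minimal primes at any finite stage, and Lemma~\pref{L:image-and-inv-lim-of-carrier-scheme} controls the kernels, not the component structure. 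So the hypothesis you are trying to contradict --- the existence of an irreducible component over $w$ --- is not known to survive to any finite stage, and the finite noetherian situation you reduce to may contain no bad component even though $B$ does. (Note also that for $B$ integral but not finite over a noetherian $A$, the set of minimal primes can be infinite, so noetherianity of $A$ alone does not make $\{\xi\}$ open.) A second, minor, inaccuracy: Proposition~\pref{P:supp-over-field} only gives finite \emph{separable} degree for $\kappa(\xi)/\kappa(w)$, not finiteness; this is immaterial since you only use $\mathfrak{m}\cdot k=0$.

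The splitting can be secured by a mechanism already in the paper and compatible with your plan: after localizing at $w$, pass to the henselization $A^{\mathrm{h}}$, which is harmless for the image by Corollary~\pref{C:image-and-hensel} and preserves reducedness. By Theorem~\pref[IS:semi-local]{T:supp-properties} the image is semi-local and integral over the henselian local ring $A^{\mathrm{h}}$, hence by Proposition~\pref{P:int_hensel} a finite product of local henselian rings; the factor containing a point $\xi'$ above $\xi$ is a reduced local ring whose maximal ideal is a minimal prime (flatness of $A\to A^{\mathrm{h}}$ keeps $\xi'$ generic, and it still lies in the closed fibre), hence a field. This yields $\FCI(\alpha')=\Spec(k')\amalg X'$ directly, and the rest of your argument goes through verbatim. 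For comparison, the paper's proof is much shorter: it reduces to $S$ irreducible, reduced and affine and localizes at the \emph{generic} point of $S$, observing that $\ker\bigl(B\to B\otimes_A K(A)\bigr)\subseteq\ker(F)=0$, so that every component of $\Spec(B)$ dominates $S$ (the intersection over the dominating minimal primes being finite because fibres have at most $d$ points). Your route, once repaired, trades that one-line localization for a more concrete ``multiplicity at a closed point'' contradiction.
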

\begin{proof}
As the support commutes with any base change it is enough to
consider the case where $S=\Spec(A)$ is irreducible, reduced and affine. Let
$\FCI(\alpha)=\Spec(B)$ and $\map{F}{B}{A}$ be the multiplicative polynomial
law corresponding to $\alpha$. We have a commutative diagram
$$\xymatrix{
{\Gamma^d_A(B)}\ar[d]_f\ar@{->>}[r] & {\Gamma^d_A(B/I)}\ar[r]
  & {\Gamma^d_{K(A)}\bigl(B\otimes_A K(A)\bigr)}\ar[d]_-{f} \\
{A}\ar@{(->}[rr] && {K(A)}\ar@{}[ull]|\circ}
$$
where $I=\ker\bigl(B\to B\otimes_A K(A)\bigr)$. This shows that
$I\subseteq\ker(F)=0$. As $V(I)$ is the union of the irreducible
components of $\FCS(\alpha)$ which dominate $S$ this shows that every
component surjects onto $S$.
\end{proof}

In the following theorem we restate the main properties of the image and
support of a family of cycles:

\begin{theorem}\label{T:supp-properties}
Let $X$ be an algebraic space affine over $S$ and
let $\map{\alpha}{S}{\Gamma^d(X/S)}$ be a morphism. Then
\begin{enumerate}
\item If $S$ is reduced then $\FCI(\alpha)$ is reduced.\label{IS:reduced}
\item $\FCI(\alpha)\to S$ is integral.\label{IS:integral}
\item If $S$ is connected then $\FCS(\alpha)$ has at most $d$ connected
components.\label{IS:<=d-comps}
\item If $S=\Spec(k)$ where $k$ is a field then
$\FCI(\alpha)=\coprod_{i=1}^n\Spec(k_i)$ is a disjoint union of a finite
number of points, at most $d$, such that the separable degree of each
$k_i/k$ is finite.
\label{IS:field}
\item $\FCS(\alpha)\to S$ has universally topologically finite fibers,
cf.\ Definition~\pref{D:tff}. Moreover, each fiber has at most $d$ points.
\label{IS:utff}
\item If $S$ is a semi-local scheme, i.e., the spectrum of a semi-local ring,
then $\FCS(\alpha)$ is semi-local.
\label{IS:semi-local}
\item Every irreducible component of $\FCS(\alpha)$ maps onto an irreducible
component of $S$.
\label{IS:comp-dom}
\end{enumerate}
\end{theorem}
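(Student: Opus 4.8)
The strategy is that this theorem is a \emph{summary}: each of the seven items has essentially already been established (or all but) in the preceding subsections, so the proof consists of assembling the citations, together with a small amount of extra work for the two statements that have not yet appeared verbatim, namely \eqref{IS:utff} (universally topologically finite fibers) and \eqref{IS:semi-local} (the semi-local case).

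\begin{proof}
Items \eqref{IS:reduced}, \eqref{IS:integral}, \eqref{IS:field} and \eqref{IS:comp-dom} are immediate restatements of results already proven: \eqref{IS:reduced} is Proposition~\pref{P:supp-over-red} applied to the multiplicative law $\map{F_\alpha}{f_*\sO_X}{\sO_S}$ (working \etale{}-locally on $S$, using that $\FCI$ commutes with \etale{} base change by Definition~\pref{D:FCS}); \eqref{IS:integral} is Proposition~\pref{P:supp-integrality}, again checked \etale{}-locally; \eqref{IS:field} is Proposition~\pref{P:supp-over-field}; and \eqref{IS:comp-dom} is Proposition~\pref{P:supp-comp-dom}. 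Item \eqref{IS:<=d-comps} is the last assertion of Proposition~\pref{P:supp-at-most-d-components}. So for these six items the proof is just a line apiece pointing to the corresponding earlier statement.

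For \eqref{IS:utff}, first recall that by Theorem~\pref{T:supp-and-arbitrary-base-change} the support commutes with arbitrary base change, so ``universally topologically finite fibers'' reduces to checking that every fiber is topologically finite: if $s\in S$ and $k(s)$ is its residue field, then $\FCS(\alpha)\times_S \Spec k(s) = \FCS(\alpha\times_S \Spec k(s))$, and by \eqref{IS:field} (i.e.\ Proposition~\pref{P:supp-over-field}) this is a disjoint union of at most $d$ points each of finite separable degree over $k(s)$ --- which is exactly the condition of Definition~\pref{D:tff}. The bound of $d$ points in each fiber is part of the same statement. So \eqref{IS:utff} follows formally from Theorem~\pref{T:supp-and-arbitrary-base-change} and Proposition~\pref{P:supp-over-field}.

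For \eqref{IS:semi-local}: write $S = \Spec(A)$ with $A$ semi-local, so $S$ has finitely many closed points $s_1,\dots,s_r$. Since $\FCS(\alpha)\to S$ is integral (item \eqref{IS:integral}), it is closed and satisfies going-up, hence every point of $\FCS(\alpha)$ specializes to a point lying over some $s_i$; and each fiber $\FCS(\alpha)_{s_i}$ is a finite set of closed points of $\FCS(\alpha)$ by \eqref{IS:utff}. Thus the closed points of $\FCS(\alpha)$ are precisely the finitely many points in $\bigcup_i \FCS(\alpha)_{s_i}$, and every point specializes to one of them, so $\FCS(\alpha)$ is semi-local. The main thing to be careful about here is that ``semi-local scheme'' must be interpreted as ``finitely many closed points with every point specializing to a closed one,'' which is exactly what integrality plus finite fibers over the finitely many closed points of $A$ delivers.
\end{proof}

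The only genuine content beyond bookkeeping is \eqref{IS:utff} and \eqref{IS:semi-local}, and even there the work is light once Theorem~\pref{T:supp-and-arbitrary-base-change} and integrality are in hand; I expect the subtlest point to be stating Definition~\pref{D:tff} correctly and matching the fiber description from Proposition~\pref{P:supp-over-field} to it.
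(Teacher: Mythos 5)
Your proposal is correct and follows essentially the same route as the paper: items (i)--(iv) and (vii) are cited to the same propositions, (v) is deduced from (iv) via Theorem~\pref{T:supp-and-arbitrary-base-change} exactly as in the paper, and (vi) is obtained from (ii) and (v), which you merely spell out in slightly more detail than the paper's ``follows immediately.''
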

\begin{proof}
Properties \ref{IS:reduced} and \ref{IS:integral} follows from
Propositions~\pref{P:supp-over-red} and \pref{P:supp-integrality}
respectively.  Properties \ref{IS:<=d-comps} and \ref{IS:field} are
Propositions~\pref{P:supp-at-most-d-components} and~\pref{P:supp-over-field}
respectively.
Property \ref{IS:utff} follows from \ref{IS:field} as the support commutes with
any base change and property \ref{IS:semi-local} follows immediately from
\ref{IS:integral} and \ref{IS:utff}. Property \ref{IS:comp-dom} is
Proposition~\pref{P:supp-comp-dom}.
\end{proof}

The following examples show that the support is not always finite.

% The following examples show that the support is not always finite. 
% The first example is over the base $S=\Spec(k)$, where $k$ is an imperfect
% field. The second example is over a non-noetherian base in any
% characteristic.

\begin{example}
Let $k=\F_p(t_1,t_2,\dots)$ and $K=\F_p(t_1^{1/p},t_2^{1/p},\dots)$. We have a
polynomial law $\map{F}{K}{k}$ given by $a\mapsto a^p$. The support of the
corresponding family $\map{\alpha}{\Spec(k)}{\Gamma^d(\Spec(K))}$ is $\Spec(K)$
and $k\inj K$ is not finite.
\end{example}

%% \begin{example}
%% Let $A=k[t_1,t_2,\dots]$ and $B=A[x_1,x_2,\dots]$. Consider the polynomial
%% law $F$ corresponding to $\Gamma^2_A(B)\to A$ given by
%% %
%% $$\gamma^2(x_i)\mapsto -t_i^2,\quad x_i\times 1\mapsto 0,\quad
%%  x_i\times x_j\mapsto -2t_i t_j.$$
%% %
%% The kernel of $F$ is then
%% %
%% $$\ker(F)=(x_i x_j-t_i t_j,x_i t_j-x_j t_i)$$
%% %
%% and thus $B/\ker(F)$ is not a finite $A$-algebra. In fact, the
%% fiber at the origin is
%% $B/(\ker(F),t_1,t_2,\dots)=k[x_1,x_2,\dots]/(x_1,x_2,\dots)^2$
%% which is not a finite $k$-algebra. The support of the corresponding family
%% of cycles after the base change $A\to A/t=k$ is however $\Spec(k)$ which
%% is finite over the base.
%%
%% Geometrically, this is the family
%% $(t_1,t_2,\dots)\to[(t_1,t_2,t_3,\dots)]+[(-t_1,-t_2,-t_3,\dots)]$. The
%% polynomial law $F$ can be constructed as the composition of the homomorphism
%% $B\to A\times A$, $x_i\mapsto (t_i,-t_i)$ and the multiplication map
%% $A\times A\to A$.
%% \end{example}

The following example shows that even if $X\to S$ is of finite presentation
then the image of a family $\map{\alpha}{S}{\Gamma^d(X/S)}$ need not
be of finite presentation.

\begin{example}
Let $X=S=\Spec(A)$ where $A=k[t_1,t_2,\dots]/(t_1^p,t_2^p,\dots)$ and
$k$ is a field of characteristic $p$. Let $\alpha$ correspond to the
multiplicative polynomial law $\map{F}{A}{A}$, $x\mapsto x^p$. Then, as in
Examples~\pref{Es:kernel-does-not-commute} the kernel of $F$ is
$(t_1,t_2,\dots)$ which is not finitely generated. Hence
$\FCI(\alpha)=\Spec(k)\inj X$ is not finitely presented over $S$.
\end{example}

\end{subsection}

\begin{subsection}{Topological properties of the support}
\label{SS:top-props-of-support}

\begin{definition}[{\cite[Def.~3.9.2]{egaI_NE}}]
We say that a morphism of algebraic spaces $\map{f}{X}{Y}$ is \emph{generizing}
if for any $x\in X$ and generization $y'\in Y$ of $y=f(x)$ there exists a
generization $x'$ of $x$ such that $f(x')=y'$. Equivalently, if $X$ and $Y$
are schemes, the image of $\Spec(\sO_{X,x})$ by $f$ is $\Spec(\sO_{Y,y})$.
We say that $f$ is \emph{component-wise dominating} if every irreducible
component of $X$ dominates an irreducible component of $Y$.
We say that $f$ is \emph{universally generizing} (resp.
\emph{universally component-wise dominating}) if $\map{f'}{X'}{Y'}$ is
generizing (resp. dominating) for any morphism $\map{g}{Y'}{Y}$ where
$X'=X\times_Y Y'$. 
\end{definition}

\begin{remark}
A morphism $\map{f}{X}{Y}$ is generizing (resp. universally generizing) if and
only if $f_\red$ is generizing (resp. universally generizing). If
$\map{g}{Y'}{Y}$ is a generizing surjective morphism, we have that $f$ is
generizing if $f'$ is generizing. If $\map{g}{Y'}{Y}$ is a universally
generizing surjective morphism, then $f$ is generizing (resp. universally
generizing) if and only if $f'$ is generizing (resp. universally generizing).
Any flat morphism $Y'\to Y$ of algebraic spaces is universally generizing.
\end{remark}

\begin{lemma}\label{L:univ-gen_is_univ-dom}
Let $\map{f}{X}{Y}$ be a morphism of algebraic spaces. Then $f$ is universally
generizing if and only if it is universally component-wise dominating.
\end{lemma}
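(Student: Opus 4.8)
The plan is to prove both implications, reducing throughout to the affine situation (the notions are \'etale-local on source and target) and to the reduced case (both properties are insensitive to passing to $X_\red$ and $Y_\red$, as recorded in the remark just above).

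First I would prove that universally component-wise dominating implies universally generizing. Since the property ``universally component-wise dominating'' is visibly stable under any base change, it suffices to show: a component-wise dominating morphism of reduced affine schemes $\map{f}{X=\Spec(B)}{Y=\Spec(A)}$ is generizing. So let $x\in X$ with image $y=f(x)$, and let $y'$ be a generization of $y$, i.e.\ a prime $\mathfrak{p}'\subseteq\mathfrak{p}_y$ of $A$. I want a prime of $B$ contained in $\mathfrak{p}_x$ lying over $\mathfrak{p}'$. Let $\mathfrak{q}$ be a minimal prime of $B$ contained in $\mathfrak{p}_x$; its image in $Y$ is a minimal prime $\mathfrak{r}$ of $A$ (this uses that $B$ is reduced, so that $V(\mathfrak{q})$ is an irreducible component of $X$, together with the hypothesis that it dominates an irreducible component of $Y$, which for $A$ reduced is exactly $V(\mathfrak{r})$ with $\mathfrak{r}$ minimal). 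Then $\mathfrak{r}\subseteq\mathfrak{p}'\subseteq\mathfrak{p}_y$, and working in $B/\mathfrak{q}$, which is a domain with fraction field extending $A/\mathfrak{r}$, the going-down-free fact that a domain dominating a domain hits every generization of a point in the image via the generic point does the job: more precisely, the local ring $B_{\mathfrak{p}_x}/\mathfrak{q}B_{\mathfrak{p}_x}$ is a domain whose spectrum surjects, via $\Spec$ of the flat inclusion $A/\mathfrak{r}\hookrightarrow (\text{its fraction field localized})$... cleaner: restrict to the integral domain $D=B/\mathfrak{q}$ with inclusion $A/\mathfrak{r}\inj D$ (injective because $V(\mathfrak{q})$ dominates $V(\mathfrak{r})$). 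Localizing $D$ at $\mathfrak{p}_x$ we get a domain dominating the domain $(A/\mathfrak{r})_{\mathfrak{p}_y}$, so its generic point (the zero ideal) maps to the generic point of $\Spec (A/\mathfrak{r})_{\mathfrak{p}_y}$, which dominates $\mathfrak{p}'$; interpolating, any prime of $D_{\mathfrak{p}_x}$ lying over $\mathfrak{p}'/\mathfrak{r}$ works, and such a prime exists because $\Spec(D_{\mathfrak{p}_x})\to\Spec((A/\mathfrak{r})_{\mathfrak{p}_y})$ is dominant between spectra of domains and $\mathfrak{p}'/\mathfrak{r}$ is in the closure of the image hence in the image up to a further generization. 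The honest way to organize this is via the standard lemma: \emph{a dominant morphism from an irreducible scheme to an irreducible scheme is generizing onto its image}, applied to $\Spec(D_{\mathfrak{p}_x})\to\Spec((A/\mathfrak{r})_{\mathfrak{p}_y})$.

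Next I would prove the converse: universally generizing implies universally component-wise dominating. By base change it suffices to show a universally generizing $\map{f}{X}{Y}$ is component-wise dominating, and again we may assume $X,Y$ reduced and then that both are irreducible (restrict to an irreducible component of $X$ with its reduced structure — note a closed immersion composed with a generizing map need not be generizing, so here is a subtlety: I should instead argue that if $Z$ is an irreducible component of $X$ then $f(Z)$, being irreducible, lies in some irreducible component $W$ of $Y$, and I must show $\overline{f(Z)}=W$). Suppose not: then $\overline{f(Z)}$ is a proper closed irreducible subset of $W$. Pick the generic point $\eta_Z$ of $Z$; its image $f(\eta_Z)$ is the generic point of $\overline{f(Z)}$, which is \emph{not} the generic point $\eta_W$ of $W$. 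Now $\eta_W$ is a generization of $f(\eta_Z)$ in $Y$, so by the generizing hypothesis there is a generization $x'$ of $\eta_Z$ in $X$ with $f(x')=\eta_W$. But $\eta_Z$ is the generic point of an irreducible component of $X$, hence has no proper generization in $X$: the only generization of $\eta_Z$ is $\eta_Z$ itself. So $x'=\eta_Z$ and $f(\eta_Z)=\eta_W$, contradiction. This direction does not even need the ``universally''.

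The main obstacle I anticipate is the bookkeeping in the first implication — making the ``dominant from irreducible to irreducible is generizing onto its image'' step precise at the level of local rings, and making sure the reduction to the affine, reduced, irreducible situation is legitimate given that generizing-ness is not preserved by closed immersions. The second implication is essentially formal. I expect the write-up to cite or quickly prove the local-ring reformulation of ``generizing'' given in the definition (image of $\Spec(\sO_{X,x})$ is $\Spec(\sO_{Y,y})$), which makes the first argument cleanest: it says exactly that $\Spec(\sO_{X,x})\to\Spec(\sO_{Y,y})$ is surjective, and surjectivity can be checked after the faithfully flat (indeed injective) reduction $\sO_{Y,y}\to\sO_{Y,y}/\mathfrak{r}$ followed by the dominant map to the domain $\sO_{X,x}/\mathfrak{q}$.
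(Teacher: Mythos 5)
Your second implication (universally generizing $\Rightarrow$ universally component-wise dominating) is correct and is essentially the paper's one-line argument. The first implication, however, has a genuine gap: you reduce to showing that a \emph{plain} component-wise dominating morphism of reduced affine schemes is generizing, and that statement is false --- the ``universally'' cannot be discarded in this direction. Counterexample: let $A$ be the local ring of $k[u,v]/(uv)$ at the origin, $Y=\Spec(A)$, and let $X=\Spec(A/(v))\inj Y$ be one of the two branches. Then $X$ is irreducible and dominates the component $V(v)$ of $Y$, so $f$ is component-wise dominating; but the generic point of the other branch $V(u)$ is a generization of the image of the closed point $x$ that is hit by no generization of $x$. (Consistently, $f$ is \emph{not} universally component-wise dominating: base changing along $V(u)\inj Y$ gives the inclusion of the closed point into $V(u)$.) The concrete place your argument breaks is the assertion ``$\mathfrak{r}\subseteq\mathfrak{p}'$'': the contraction $\mathfrak{r}$ of your chosen minimal prime $\mathfrak{q}\subseteq\mathfrak{p}_x$ is indeed a minimal prime of $A$ contained in $\mathfrak{p}_y$, but nothing forces it to be contained in the \emph{given} $\mathfrak{p}'$ --- in the example, $\mathfrak{r}=(v)$ while $\mathfrak{p}'=(u)$.

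The paper's proof repairs exactly this by actually invoking the hypothesis universally: given $x$, $y=f(x)$ and a generization $y'$, one base changes along the closed immersion $\overline{\{y'\}}\inj Y$ (with its reduced structure). The fibre product contains $x$; any irreducible component through $x$ must, by the base-changed hypothesis, dominate the irreducible scheme $\overline{\{y'\}}$, and its generic point is the required generization of $x$ lying over $y'$. Your commutative-algebra lemma (``a dominant morphism of irreducible schemes sends the generic point to the generic point, hence produces the generization'') then applies verbatim to this fibre product; the base change to $\overline{\{y'\}}$ is the missing essential step, not an optional simplification.
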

\begin{proof}
%Trivial.
A generizing morphism is component-wise dominating so the condition is
necessary. For sufficiency, assume that $f$ is universally component-wise
dominating. Let $x\in X$, $y=f(x)$ and choose a generization $y'\in Y$. Let
$Y'=\overline{\{y'\}}$ with the reduced structure and consider the base
change $Y'\inj Y$. As $f'$ is component-wise dominating, there is a
generization $x'$ of $x$ above $y'$.
\end{proof}

\begin{proposition}\label{P:supp-univ-gen}
Let $\map{f}{X}{S}$ be an affine morphism of algebraic spaces. Let
$\map{\alpha}{S}{\Gamma^d(X/S)}$ be a family with support
$Z=\FCS(\alpha)\inj X$. Then $f|_Z$ is universally generizing.
\end{proposition}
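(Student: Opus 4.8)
The plan is to invoke Lemma~\pref{L:univ-gen_is_univ-dom}, according to which a morphism of algebraic spaces is universally generizing if and only if it is universally component-wise dominating; thus it is enough to prove that $\map{f|_Z}{Z}{S}$ is universally component-wise dominating. So I would fix an arbitrary morphism of algebraic spaces $\map{g}{S'}{S}$, put $X'=X\times_S S'$ and $Z'=Z\times_S S'$, and form the base-changed family $\alpha'=\alpha\times_S S'$. Here $X'\to S'$ is again affine, and since $\Gamma^d$ commutes with base change in the affine setting (cf.~\S\ref{SS:Repr-affine}), $\alpha'$ is a morphism $S'\to\Gamma^d(X'/S')$. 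The goal then becomes to show that every irreducible component of $Z'$ dominates an irreducible component of $S'$.

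The next step is to compare $Z'$ with the support $\FCS(\alpha')$. By Theorem~\pref{T:supp-and-arbitrary-base-change}, applied to $\alpha$ and the base change $g$, we have $(Z')_\red=\bigl(\FCS(\alpha)\times_S S'\bigr)_\red=\FCS(\alpha')$, so $Z'$ and $\FCS(\alpha')$ have the same underlying topological space. Being component-wise dominating is a purely topological condition and hence insensitive to nilpotents, so it suffices to show that $\FCS(\alpha')\to S'$ is component-wise dominating; but this is exactly Proposition~\pref{P:supp-comp-dom} applied to the family $\alpha'$ on $X'/S'$. Letting $g$ range over all morphisms $S'\to S$ then shows that $f|_Z$ is universally component-wise dominating, and therefore universally generizing.

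I do not expect a real obstacle here, as the substance is already contained in the cited results: the stability of the support under arbitrary base change --- which in turn rests on the Cayley--Hamilton inclusion $\ker(F)\subseteq\sqrt{I_\CH(F)}$ of Proposition~\pref{P:CH-supp} --- together with the component-wise dominance over the base established in Proposition~\pref{P:supp-comp-dom}. The only point deserving a moment's care is the discrepancy between $Z\times_S S'$, which need not be reduced, and the reduced support $\FCS(\alpha')$; this is bridged precisely by Theorem~\pref{T:supp-and-arbitrary-base-change}, after which one only observes that component-wise domination depends solely on the underlying topological spaces.
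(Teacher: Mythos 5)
Your proposal is correct and follows exactly the paper's route: reduce via Lemma~\pref{L:univ-gen_is_univ-dom} to universal component-wise domination, then combine Theorem~\pref{T:supp-and-arbitrary-base-change} (support commutes with arbitrary base change) with Proposition~\pref{P:supp-comp-dom} applied to the base-changed family. The paper's own proof is just a one-line citation of these same three ingredients, so your write-up is merely a more explicit version of the identical argument.
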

\begin{proof}
Follows immediately from Lemma~\pref{L:univ-gen_is_univ-dom} as the support of
a family of cycles is universally component-wise dominating by
Theorems~\pref[IS:comp-dom]{T:supp-properties}
and~\pref{T:supp-and-arbitrary-base-change}.
\end{proof}

\begin{remark}
If $Z\to S$ is of finite presentation, e.g., if $S$ is locally noetherian
and $X\to S$ is locally of finite type, then it immediately follows that
$f|_{Z}$ is \emph{universally open} from \cite[Prop.~7.3.10]{egaI_NE}.
We will show that $f|_{Z}$ is universally open without any hypothesis on $f$.
The following lemma settles the case when $X\to S$ is locally of finite type.
\end{remark}

\begin{lemma}\label{L:supp-almost-fp}
Let $S$ and $X$ be \emph{affine} schemes and $\map{f}{X}{S}$ a morphism
of \emph{finite type}. Let $\map{\alpha}{S}{\Gamma^d(X/S)}$ be a family of
cycles and $Z=\FCS(\alpha)$ its support. There is then a bijective
closed immersion $Z\inj Z'$ such that $Z'$ is of finite presentation over $S$.
\end{lemma}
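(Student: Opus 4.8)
The plan is to descend the whole situation, by the standard limit techniques of~\cite[\S8]{egaIV}, to a Noetherian base --- where the relevant ideal is automatically finitely generated --- and then to transport the resulting finitely presented scheme back to $S$ using the fact that the support commutes with arbitrary base change (Theorem~\pref{T:supp-and-arbitrary-base-change}). The first step is to reduce to the case where $B$ is a \emph{polynomial} ring over $A$. Write $X=\Spec(B)$ with $B=A[x_1,\dots,x_n]/J$, set $P=A[x_1,\dots,x_n]$, and let $\surjmap{q}{P}{B}$ be the quotient map. Then $F=F_\alpha$ pulls back to a multiplicative law $F^P=F\circ q$ of the same degree $d$, corresponding to $f\circ\Gamma^d(q)$; since $F^P$ factors through $B=P/J$ and $F$ factors through $P/\ker(F^P)$, maximality of the two kernels forces $\ker(F^P)=q^{-1}\bigl(\ker(F)\bigr)$, so $P/\ker(F^P)\iso B/\ker(F)$. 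Hence $\FCI$ and $\FCS$ are unchanged up to canonical $S$-isomorphism if we replace $X$ by $\A{n}_S=\Spec(P)$, and we may assume $B=A[x_1,\dots,x_n]$ (the original $Z$ being canonically $S$-isomorphic to the support of this reduced family). In particular $B$ is of finite presentation over $A$, so by Proposition~\pref{P:Gamma^d_finiteness} the $A$-algebra $\Gamma^d_A(B)$ is of finite presentation.

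Now write $A=\varinjlim_\lambda A_\lambda$ as the filtered union of its finitely generated $\Z$-subalgebras; each $A_\lambda$ is Noetherian. For each $\lambda$ put $B_\lambda=A_\lambda[x_1,\dots,x_n]$, so that $B=B_\lambda\otimes_{A_\lambda}A$ and, by base change for $\Gamma^d$~\pref{X:Gamma_base_change}, $\Gamma^d_A(B)=\Gamma^d_{A_\lambda}(B_\lambda)\otimes_{A_\lambda}A$, where $\Gamma^d_{A_\lambda}(B_\lambda)$ is of finite presentation over $A_\lambda$ by Proposition~\pref{P:Gamma^d_finiteness}. Hence, by the standard limit argument, the $A$-algebra homomorphism $\map{f}{\Gamma^d_A(B)}{A}$ corresponding to $\alpha$ descends: for some index $\lambda_0$ it is the base change of an $A_0$-algebra homomorphism $\map{f_0}{\Gamma^d_{A_0}(B_0)}{A_0}$, where $A_0=A_{\lambda_0}$ and $B_0=B_{\lambda_0}$. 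Let $F_0$ be the corresponding multiplicative law and $\alpha_0$ the corresponding family over $S_0=\Spec(A_0)$; then $\alpha=\alpha_0\times_{S_0}S$. As $A_0$ is Noetherian and $B_0/\ker(F_0)$ is of finite type over $A_0$, the image $\FCI(\alpha_0)=\Spec\bigl(B_0/\ker(F_0)\bigr)$ is of finite presentation over $S_0$.

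Set $Z'=\FCI(\alpha_0)\times_{S_0}S$, which is of finite presentation over $S$ as a base change of such a morphism. It remains to see $Z'_\red=Z$. Because $A_0$ is Noetherian, the nilradical of $B_0/\ker(F_0)$ is nilpotent, so $\FCS(\alpha_0)=\FCI(\alpha_0)_\red\inj\FCI(\alpha_0)$ is cut out by a nilpotent ideal; since the extension of a nilpotent ideal of exponent $N$ again has exponent at most $N$, this property is preserved by the base change $S\to S_0$, whence $\bigl(\FCS(\alpha_0)\times_{S_0}S\bigr)_\red=\bigl(\FCI(\alpha_0)\times_{S_0}S\bigr)_\red=Z'_\red$. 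On the other hand, Theorem~\pref{T:supp-and-arbitrary-base-change}, applied to $\alpha_0$ and the base change $S\to S_0$, gives $\bigl(\FCS(\alpha_0)\times_{S_0}S\bigr)_\red=\FCS(\alpha_0\times_{S_0}S)=\FCS(\alpha)=Z$. Therefore $Z'_\red=Z$, and $Z=Z'_\red\inj Z'$ is a closed immersion that is a homeomorphism onto its target --- i.e.\ a bijective closed immersion --- with $Z'$ of finite presentation over $S$, as required.

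The step requiring care is the descent: one must verify that not merely $B$ and $\Gamma^d_A(B)$ but the homomorphism $f$ itself descends to a Noetherian base, which is exactly why reducing first to $B$ a polynomial ring --- so that $\Gamma^d_A(B)$ becomes of finite presentation over $A$ --- is needed. The remaining points are routine manipulations with reductions and base change; the one subtlety is that $\FCS(\alpha_0)\inj\FCI(\alpha_0)$ is a \emph{nilpotent}, not merely a bijective, closed immersion, which uses Noetherianness of $A_0$ and genuinely fails over the non-Noetherian ring $A$, as the examples in~\pref{Es:kernel-does-not-commute} illustrate.
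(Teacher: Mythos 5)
Your proof is correct and follows essentially the same route as the paper's: pull the law back to affine space so that $\Gamma^d_A(B)$ is of finite presentation over $A$, descend the classifying homomorphism to a noetherian ring $A_0$ over which the image is automatically finite and finitely presented, and base change back using Theorem~\pref{T:supp-and-arbitrary-base-change}. (The only superfluous detail is the appeal to nilpotence of the nilradical of $B_0/\ker(F_0)$: a nil ideal extends to a nil ideal under arbitrary base change, so noetherianness is not actually needed for that particular step.)
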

\begin{proof}
Let $S=\Spec(A)$, $Z=\Spec(B)$ and let $\map{F}{B}{A}$ be the multiplicative
law corresponding to $\alpha$ restricted to its image. Let
$C=A[t_1,t_2,\dots,t_n]\to B$ be a surjection. The multiplicative law $F$
induces a multiplicative law $\map{G}{C\surj B}{A}$. Note that
$B=C/\ker(G)$. Corresponding to $G$ is a homomorphism
$\map{g}{\Gamma^d_A(C)\surj\Gamma^d_A(B)}{A}$. As $\Gamma^d_A(C)$ is a finitely
presented $A$-algebra, cf.\ Proposition~\pref{P:Gamma^d_finiteness}, this
homomorphism descends to a homomorphism $\map{g_0}{\Gamma^d_{A_0}(C_0)}{A_0}$
with $A_0$ noetherian such that $C=C_0\otimes_{A_0} A$ and
$g=g_0\otimes_{A_0} \id{A}$. As $A_0$ is noetherian $C_0/\ker(G_0)$ is a finite
$A_0$-algebra of finite presentation.

Let $Z_0=\Spec\bigl(C_0/\ker(G_0)\bigr)$ and $Z'=Z_0\times_{\Spec(A_0)}
\Spec(A)$. As the support commutes with base change by
Theorem~\pref{T:supp-and-arbitrary-base-change} we have that $Z\inj Z'$
is a bijective closed immersion.
\end{proof}

\begin{proposition}\label{P:supp-univ-open}
Let $S$ and $X$ be algebraic spaces and $\map{f}{X}{S}$ an affine morphism. Let
$Z$ be the support of a family $\map{\alpha}{S}{\Gamma^d(X/S)}$. Then the
restriction of $f$ to $Z$ is \emph{universally open}.
\end{proposition}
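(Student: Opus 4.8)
The plan is to establish the a priori stronger assertion that, for \emph{every} affine morphism $\map{f}{X}{S}$ of algebraic spaces and every family $\map{\alpha}{S}{\Gamma^d(X/S)}$, the restriction of $f$ to $Z=\FCS(\alpha)$ is an \emph{open} map. Since the formation of the support commutes with arbitrary base change by Theorem~\pref{T:supp-and-arbitrary-base-change} and since openness of a morphism depends only on the underlying reduced morphism, applying this assertion to the families $\alpha\times_S S'$ for all $S'\to S$ then shows that $f|_Z$ is universally open. As openness is local on the target I would reduce to $S=\Spec(A)$, so that $X=\Spec(B)$ is an affine scheme and $\alpha$ corresponds to a multiplicative law $\map{F}{B}{A}$ homogeneous of degree $d$; recall that $B/\ker(F)$ is integral over $A$ by Proposition~\pref{P:CH-supp}.

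The first main step is the case where $X/S$ is of finite type, which is essentially the Remark following Proposition~\pref{P:supp-univ-gen}. Here Lemma~\pref{L:supp-almost-fp} provides a bijective closed immersion $Z\inj Z'$ with $Z'\to S$ of finite presentation. A bijective closed immersion is a universal homeomorphism, so $Z'\to S$ inherits from $Z\to S$ the property of being universally generizing (Proposition~\pref{P:supp-univ-gen}); a universally generizing morphism of finite presentation is universally open by \cite[Prop.~7.3.10]{egaI_NE}, and hence so is $Z\to S$.

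The bulk of the work, and the step I expect to be the main obstacle, is reducing the general affine case to the finite-type case by a limit argument. Write $B=\varinjlim_i B_i$ with $B_i$ running over the finitely generated $A$-subalgebras of $B$; put $X_i=\Spec(B_i)$, which is of finite type over $S$, and let $\alpha_i$ be the family on $X_i$ attached to the restricted law $F|_{B_i}$, so that by the first step each $f|_{\FCS(\alpha_i)}$ is open. I would then show that, as topological spaces over $S$,
$$\FCS(\alpha)=\varprojlim_i \FCS(\alpha_i),$$
the transition maps being affine and surjective. The inclusion $\ker(F)\cap B_i\subseteq\ker(F|_{B_i})$ is clear, since $F|_{B_i}$ factors through $B_i/(\ker(F)\cap B_i)$. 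Conversely, for $b\in\ker(F|_{B_i})$ the coefficients of the characteristic polynomial $\chi_{F,b}(t)$ of Definition~\pref{D:kernel} involve only divided powers of $b$ and of $1$ --- all lying in $B_i$ --- so $\chi_{F,b}(t)=\chi_{F|_{B_i},b}(t)=t^d$ by Lemma~\pref{L:elementwise_criteria_for_ker}, whence $b^d=\chi_{F,b}(b)\in I_\CH(F)\subseteq\ker(F)$ by Proposition~\pref{P:CH-supp}; thus $\ker(F|_{B_i})\subseteq\sqrt{\ker(F)}$. Together these give $\sqrt{\ker(F)}=\sqrt{\textstyle\sum_i\ker(F|_{B_i})B}$, which, translated through $\Spec(B)=\varprojlim_i\Spec(B_i)$, is exactly the displayed identity; and the same computation applied to an inclusion $B_i\subseteq B_j$ shows that $X_j\to X_i$ restricts to a well-defined affine dominant --- hence, being integral, surjective --- morphism $\FCS(\alpha_j)\to\FCS(\alpha_i)$. (This can also be extracted from Lemma~\pref{L:image-and-inv-lim-of-carrier-scheme}.) The point to watch is that $\ker$ of a multiplicative law is \emph{not} monotone under enlarging $B_i$; it is precisely the inclusions $I_\CH(F)\subseteq\ker(F)\subseteq\sqrt{I_\CH(F)}$ that repair this after passing to radicals.

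To finish, I would invoke a standard limit argument: $\FCS(\alpha)=\varprojlim_i\FCS(\alpha_i)$ is a cofiltered limit of affine schemes along affine surjective transition maps, so each projection $\map{\pi_i}{\FCS(\alpha)}{\FCS(\alpha_i)}$ is surjective, and $f|_{\FCS(\alpha)}=f_i\circ\pi_i$ with $f_i=f|_{\FCS(\alpha_i)}$. Every open $V\subseteq\FCS(\alpha)$ is a union $\bigcup_\lambda\pi_{i_\lambda}^{-1}(V_{i_\lambda})$ of preimages of opens $V_{i_\lambda}\subseteq\FCS(\alpha_{i_\lambda})$, and surjectivity of $\pi_i$ gives $f\bigl(\pi_i^{-1}(V_i)\bigr)=f_i(V_i)$, which is open by the first step; hence $f(V)=\bigcup_\lambda f_{i_\lambda}(V_{i_\lambda})$ is open. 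Thus $f|_{\FCS(\alpha)}$ is open, and by the first paragraph it is universally open.
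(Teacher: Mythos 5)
Your proof is correct and follows essentially the same route as the paper's: reduce to $S$ affine, express the algebra as a filtered colimit of finite-type $A$-subalgebras, push the family forward, handle the finite-type case via Lemma~\pref{L:supp-almost-fp} together with Proposition~\pref{P:supp-univ-gen} and \cite[Prop.~7.3.10]{egaI_NE}, and conclude by descending opens to a finite level and using surjectivity of the projections. The only cosmetic differences are that the paper first replaces $X$ by the support $Z$ (so that the pushed-forward supports are all of $Z_\lambda$, by Corollary~\pref{C:push-fwd-and-supp}) whereas you reprove the needed compatibility of supports with the colimit by hand via the Cayley--Hamilton sandwich $I_\CH(F)\subseteq\ker(F)\subseteq\sqrt{I_\CH(F)}$.
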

\begin{proof}
The statement is \etale{}-local so we can assume that $S=\Spec(A)$ and
$Z=\Spec(B)$. Further as the support commutes with any base change,
cf.\ Theorem~\pref{T:supp-and-arbitrary-base-change}, it is enough to
show that $\map{f|_Z}{Z}{S}$ is open.

We can write $B$ as a filtered direct limit of finite
$A$-subalgebras $B_\lambda\inj B$. Let $Z_\lambda=\Spec(B_\lambda)$. As
$B_\lambda\inj B$ is integral and injective it follows that
$Z\to Z_\lambda$ is closed and dominating and thus surjective. Let
$\map{\alpha}{S}{\Gamma^d(Z/S)}$ be a family with support $Z$ and let
$\map{\alpha_\lambda}{S}{\Gamma^d(Z_\lambda/S)}$ be the family given by
push-forward along $\map{\varphi_\lambda}{Z}{Z_\lambda}$.

By Corollary~\pref{C:push-fwd-and-supp} we have that
$\FCS(\alpha_\lambda)=\varphi_\lambda(Z_\red)=(Z_\lambda)_\red$.
Further by Lemma~\pref{L:supp-almost-fp} there is a scheme $Z'_\lambda$ of
finite presentation over $S$ such that $\FCS(\alpha_\lambda)$ and $Z_\lambda$
are homeomorphic to $Z'_\lambda$. As $\FCS(\alpha_\lambda)\to S$ is
generizing by Proposition~\pref{P:supp-univ-gen} so is
$Z'_\lambda\to S$. As $Z'_\lambda\to S$ is also of finite presentation it is
\emph{open} by \cite[Prop.~7.3.10]{egaI_NE} and hence so is
$Z_\lambda\to S$.

To show that $\map{f|_Z}{Z}{S}$ is open it is enough to show that the image
of any quasi-compact open subset of $Z$ is open. Let $U\subseteq Z$ be a
quasi-compact open subset. Then according to~\cite[Cor.~8.2.11]{egaIV} there is
a $\lambda$ and $U_\lambda\subseteq Z_\lambda$ such that
$U={\varphi_\lambda}^{-1}(U_\lambda)$. As $\varphi_\lambda$ is surjective and
$Z_\lambda\to S$ is open this shows that $f|_Z (U)$ is open.
\end{proof}

\end{subsection}

\end{section}

% ----------------------------------------------------------------------

\begin{section}{Definition and representability of $\FGamma^d_{X/S}$}
\label{S:representability}

We will define a functor $\FGamma^d_{X/S}$ and show that when $X/S$ is affine
it is represented by $\Gamma^d(X/S)$. It is then easy to prove that
$\FGamma^d_{X/S}$ is represented by a scheme for any \AF{}-scheme $X/S$. To
prove representability in general, i.e., when $X/S$ is any separated algebraic
space, is more difficult. For \emph{any} morphism $\map{f}{X}{Y}$ there is a
natural transformation $\map{f_*}{\FGamma^d_{X/S}}{\FGamma^d_{Y/S}}$ which is
``push-forward of cycles''. If $f$ is \etale{}, then $f_*$ is \etale{} over a
certain open subset of $\Gamma^d(X/S)$. We will use this result to show
representability of $\FGamma^d_{X/S}$ giving an explicit \etale{} covering.

%---------------------

\begin{subsection}{The functor $\FGamma^d_{X/S}$}\label{SS:FGamma}

Recall that a morphism of algebraic spaces $\map{f}{X}{S}$ is said to be
\emph{integral} if it is affine and the corresponding homomorphism $\sO_S\to
f_*\sO_X$ is integral. Equivalently, for any affine scheme $T=\Spec(A)$ and
morphism $T\to S$ the space $X\times_S T=\Spec(B)$ is affine and $A\to B$
is integral. Further recall, Proposition~\pref{P:affine-closed-immersion}, that
if $X/S$ is affine and $Z$ is a closed subspace of $X$, then
$\Gamma^d(Z/S)$ is a closed subspace of $\Gamma^d(X/S)$.

\begin{definition}
Let $S$ be an algebraic space and $X/S$ an algebraic space \emph{separated}
over $S$. A family of zero cycles of degree $d$ consists of a closed subscheme
$Z\inj X$ such that $Z\inj X\to S$ is \emph{integral} together with a morphism
$\map{\alpha}{S}{\Gamma^d(Z/S)}$. Two families $(Z_1,\alpha_1)$ and
$(Z_2,\alpha_2)$ are equivalent if there is a closed subscheme $Z$ of both
$Z_1$ and $Z_2$ and a morphism $\map{\alpha}{S}{\Gamma^d(Z/S)}$ such that
$\alpha_i$ is the composition of $\alpha$ and the morphism $\Gamma^d(Z/S)\inj
\Gamma^d(Z_i/S)$ for $i=1,2$.

If $\map{g}{S'}{S}$ is a morphism of spaces and $(Z,\alpha)$ a family of
cycles on $X/S$, we let $g^{*}(Z,\alpha)=\bigl(g^{*}(Z),g^{*}\alpha\bigr)$ be
the pull-back along $g$.
The image and support of a family of cycles $(Z,\alpha)$ is the image and
support of $\alpha$, cf.\ Definitions~cf.~\pref{D:FCI} and~\pref{D:FCS}.
\end{definition}

\begin{remark}\label{R:minimal-repr}
It is clear that the pull-backs of equivalent families are equivalent and that
the image and support of equivalent families coincide. If $(Z,\alpha)$ is a
family then the family $(\FCI(\alpha),\alpha')$ is a minimal representative
in the same equivalence class. Here $\alpha'$ is the restriction of $\alpha$
to its image, i.e., the morphism $S\to \Gamma^d(\FCI(\alpha)/S)$ which composed
with $\Gamma^d(\FCI(\alpha)/S)\inj\Gamma^d(Z/S)$ is $\alpha$.

The pull-back $g^{*}\alpha$ of a minimal representative $\alpha$ will not in
general be a minimal representative. However note that by
Theorem~\pref{T:supp-and-arbitrary-base-change} we have a canonical
bijective closed immersion $\FCI(g^{*}\alpha)\inj g^{*}\FCI(\alpha)$.
\end{remark}

\begin{definition}
We let $\FGamma^d_{X/S}$ be the contravariant functor from $S$-schemes to sets
defined as follows. For any $S$-scheme $T$ we let $\FGamma^d_{X/S}(T)$ be the
set of equivalence classes of families of zero cycles $(Z,\alpha)$ of degree
$d$ of $X\times_S T/T$. For any morphism $\map{g}{T'}{T}$ of $S$-schemes, the
map $\FGamma^d_{X/S}(g)$ is the pull-back of families of cycles as defined
above.
\end{definition}

In the sequel we will suppress the space of definition $Z$ and write
$\alpha\in\FGamma^d_{X/S}(T)$. We will not make explicit use of $Z$. Instead,
we will use the subspace $\FCI(\alpha)\inj X\times_S T$ which is independent on
the choice of $Z$ by Remark~\pref{R:minimal-repr}.

\begin{proposition}\label{P:Gamma-representable:affine}
If $X$ is \emph{affine} over $S$ then the functor $\FGamma^d_{X/S}$ is
represented by the algebraic space $\Gamma^d(X/S)$, defined
in~\S\upref{SS:Repr-affine}, which is affine over~$S$.
\end{proposition}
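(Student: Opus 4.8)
The plan is to reduce to the case of an affine parametrising scheme $T$ and then to re-express a family of zero cycles over $T$ as a multiplicative polynomial law, via the universal property~\pref{X:univ_prop_of_Gamma:mult} together with the results on the kernel from~\S\ref{S:support}.

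\emph{Reduction to $T$ affine.} Both $\FGamma^d_{X/S}$ and the functor of points of $\Gamma^d(X/S)$ are sheaves for the Zariski topology on $S$-schemes; for $\Gamma^d(X/S)$ this is automatic, and for $\FGamma^d_{X/S}$ it is straightforward, since the datum $(Z,\alpha)$ glues (a closed subscheme of $X\times_S T$ glues, integrality of $Z\to T$ is local on $T$, and $\map{\alpha}{T}{\Gamma^d(Z/T)}$ glues because $\Gamma^d(Z/T)$ is an algebraic space), while equivalence of families is Zariski-local because two families are equivalent exactly when their minimal representatives $\bigl(\FCI(\alpha),\alpha'\bigr)$ coincide, and $\FCI(\alpha)$ commutes with the base changes along the members $T_i\hookrightarrow T$ of an affine open cover by Theorem~\pref{T:image-and-ess-smooth}. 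Hence it suffices to produce a bijection, natural in $T$, when $T=\Spec(A')$.

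\emph{The affine case.} Since $X\to S$ is affine, $X\times_S T=\Spec(B)$ is an affine scheme with $B$ an $A'$-algebra, and the base-change property of $\Gamma^d$ recalled in~\S\ref{SS:Repr-affine} gives $\Gamma^d(X/S)\times_S T=\Spec\bigl(\Gamma^d_{A'}(B)\bigr)$; so $\Hom_S\bigl(T,\Gamma^d(X/S)\bigr)=\Hom_{A'\xAlg}\bigl(\Gamma^d_{A'}(B),A'\bigr)=\Hommpl{d}(B,A')$ by~\pref{X:univ_prop_of_Gamma:mult}. On the other side, a family over $T$ is a closed subscheme $\Spec(B/I)\inj\Spec(B)$ with $B/I$ integral over $A'$ together with a multiplicative law $\map{G}{B/I}{A'}$ homogeneous of degree $d$ (again by~\pref{X:univ_prop_of_Gamma:mult}), and composing $G$ with $B\surj B/I$ gives a multiplicative law $\map{F}{B}{A'}$ of degree $d$ with $I\subseteq\ker(F)$. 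I would then check that $(Z,G)\mapsto F$ induces a bijection $\FGamma^d_{X/S}(T)\iso\Hommpl{d}(B,A')$: it is well defined on equivalence classes (a common refinement $Z\subseteq Z_1,Z_2$ produces the same $F$); it is surjective because for an arbitrary law $\map{F}{B}{A'}$ the quotient $B/\ker(F)$ is integral over $A'$ by Proposition~\pref{P:supp-integrality}, so $\bigl(\Spec(B/\ker(F)),\overline{F}\bigr)$, with $\overline{F}$ the induced law, is a genuine family mapping to $F$; and it is injective because, starting from $(Z=\Spec(B/I),G)$ with image $F$, the ideal $\ker(F)$ is the largest ideal of $B$ through which $F$, equivalently $G$, factors, so $\ker(G)=\ker(F)/I$, whence $G$ descends to $\overline{F}$ on $B/\ker(F)$ and $(Z,G)$ is equivalent to the minimal representative $\bigl(\Spec(B/\ker(F)),\overline{F}\bigr)$, which depends only on $F$. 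Naturality in $T$ is clear, since base-changing a family corresponds to base-changing its law.

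There is no serious obstacle; the only non-formal point is the bijectivity step, and this is precisely where~\S\ref{S:support} enters. Proposition~\pref{P:supp-integrality} ensures that the \emph{integrality} condition built into the definition of a family imposes no restriction on the laws that can occur and is matched exactly by the image $\FCI$, and the elementary description of $\ker(F)$ as a largest ideal identifies $\FCI(\alpha)$ with the canonical minimal representative of its equivalence class, cf.~Remark~\pref{R:minimal-repr}. The remaining steps --- Zariski descent and the translation through~\pref{X:univ_prop_of_Gamma:mult} --- are routine.
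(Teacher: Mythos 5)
Your proof is correct and follows essentially the same route as the paper: both hinge on Proposition~\pref{P:supp-integrality} (integrality of $B/\ker(F)$) to show that an arbitrary morphism $T\to\Gamma^d(X/S)$ yields a genuine family, and on the identification of $\FCI(\alpha)$ with the minimal representative of the equivalence class to get bijectivity. The only cosmetic difference is that you first reduce to affine $T$ by Zariski descent and then argue with multiplicative laws, whereas the paper works with arbitrary $T$ directly via the closed immersion $\Gamma^d(Z/T)\inj\Gamma^d(X/S)\times_S T$; your extra descent step is harmless and correctly justified.
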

\begin{proof}
There is a natural transformation from $\FGamma^d_{X/S}$ to
$\Hom_S(-,\Gamma^d(X/S))$ given by composing a family
$\map{\alpha}{T}{\Gamma^d(Z/T)}$ with
$\Gamma^d(Z/T)\inj \Gamma^d(X\times_S T/T)=\Gamma^d(X/S)\times_S T\to
\Gamma^d(X/S)$. If $\map{\alpha}{T}{\Gamma^d(X/S)}$ is any morphism then
$\alpha\times_S \id{T}$ factors through $\Gamma^d(Z/T)\inj\Gamma^d(X\times_S
T/T)$ where $Z\inj X\times_S T$ is the image of $\alpha\times_S \id{T}$.
As $Z$ is integral over $S$ by
Theorem~\pref[IS:integral]{T:supp-properties}, we have that
the morphism $\alpha$ corresponds to a unique equivalence class of families.
It is thus clear that $\Gamma^d(X/S)$ represents $\FGamma^d_{X/S}$.
\end{proof}

\begin{remark}\label{R:Gamma^1}
For an affine morphism of algebraic spaces $X\to S$, we have that
$\Gamma^1(X/S)=X$ and that the $T$-points of $\Gamma^1(X/S)$
parameterizes sections of $X\times_S T\to T$. Thus, for any separated algebraic
space $X/S$ it follows that $\FGamma^1_{X/S}$ parameterizes sections of
$X\to S$ and that $\FGamma^1_{X/S}$ is represented by $X$.
\end{remark}

\begin{proposition}\label{P:Gamma-is-etale-sheaf}
The functor $\FGamma^d_{X/S}$ is a sheaf in the \etale{} topology.
\end{proposition}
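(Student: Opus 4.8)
The plan is to check the two conditions whose conjunction is equivalent to being an \etale{} sheaf: first, that $\FGamma^d_{X/S}$ is a Zariski sheaf, and second, that for every surjective \etale{} morphism $\map{p}{T'}{T}$ of $S$-schemes the sequence
$$\FGamma^d_{X/S}(T)\longrightarrow\FGamma^d_{X/S}(T')\rightrightarrows\FGamma^d_{X/S}(T'\times_T T')$$
is exact. Throughout I would represent a family by its \emph{minimal} representative $(\FCI(\alpha),\alpha)$ of Remark~\ref{R:minimal-repr}; thus a family over $T$ is a closed subspace $Z=\FCI(\alpha)\inj X\times_S T$ with $Z\to T$ integral, together with a section $\map{\alpha}{T}{\Gamma^d(Z/T)}$ of $\Gamma^d(Z/T)\to T$ whose image is all of $Z$.

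The single non-formal ingredient is that the image of a family commutes with \etale{} base change: for $\map{g}{T'}{T}$ \etale{}, the canonical bijective closed immersion $\FCI(g^{*}\alpha)\inj g^{*}\FCI(\alpha)$ of Remark~\ref{R:minimal-repr} is an isomorphism. This is Theorem~\ref{T:image-and-ess-smooth} together with Definition~\ref{D:FCS} (and Proposition~\ref{P:image-and-localization} covers the Zariski statements). Granting it, exactness follows from descent. For \emph{surjectivity onto the equalizer}: let $\alpha'\in\FGamma^d_{X/S}(T')$ have equal pull-backs to $T''=T'\times_T T'$. Writing $q_1,q_2$ for the two \etale{} projections $T''\to T'$, the ingredient gives $\FCI(q_i^{*}\alpha')\iso q_i^{*}\FCI(\alpha')$; since $q_1^{*}\alpha'=q_2^{*}\alpha'$ and equivalent families have equal image, these isomorphisms assemble into descent data on the closed subspace $\FCI(\alpha')\inj X\times_S T'$ along $p$. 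Hence $\FCI(\alpha')$ descends to a closed subspace $Z\inj X\times_S T$, and as $\FCI(\alpha')\to T'$ is integral and integrality is local on the base in the \etale{} topology, $Z\to T$ is integral. Now $\alpha'$, being a section of $\Gamma^d(\FCI(\alpha')/T')=\Gamma^d(Z/T)\times_T T'\to T'$, is a $T$-morphism $T'\to\Gamma^d(Z/T)$ into $\Gamma^d(Z/T)$, which is affine over $T$ by \S\ref{SS:Repr-affine}; the hypothesis says its two pull-backs to $T''$ agree, so, $\Hom_T\bigl(-,\Gamma^d(Z/T)\bigr)$ being an \etale{} sheaf, $\alpha'$ descends to $\map{\alpha}{T}{\Gamma^d(Z/T)}$, and $(Z,\alpha)\in\FGamma^d_{X/S}(T)$ pulls back to $\alpha'$.

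For \emph{injectivity}: if $\alpha,\beta\in\FGamma^d_{X/S}(T)$ satisfy $p^{*}\alpha=p^{*}\beta$, then $p^{*}\FCI(\alpha)=\FCI(p^{*}\alpha)=\FCI(p^{*}\beta)=p^{*}\FCI(\beta)$, so $\FCI(\alpha)=\FCI(\beta)=:Z$ by \etale{} descent of closed subspaces; then $\alpha$ and $\beta$ are $T$-points of $\Gamma^d(Z/T)$ with the same pull-back to $T'$, hence equal. The Zariski sheaf property is proved by the same argument with $p$ replaced by an open cover $\{T_i\to T\}$, using that the image commutes with the open immersions $T_i\cap T_j\inj T_i$ (Proposition~\ref{P:image-and-localization}) to glue the $\FCI(\alpha_i)$.

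The step requiring the most care is the verification that the canonical isomorphisms $\FCI(q_i^{*}\alpha')\iso q_i^{*}\FCI(\alpha')$ satisfy the cocycle condition over $T'\times_T T'\times_T T'$, so that the descent datum on $\FCI(\alpha')$ is effective; this is routine given the naturality of the base-change identification in Theorem~\ref{T:image-and-ess-smooth}, but it should be spelled out. Everything else --- \etale{} descent of closed subspaces and of affine and integral morphisms, and the fact that a representable functor is an \etale{} sheaf --- is standard. It is essential that we claim only an \etale{} sheaf and not an fppf sheaf: by Examples~\ref{Es:kernel-does-not-commute} the image does not commute with flat base change, so the descent argument above genuinely breaks down in the fppf topology.
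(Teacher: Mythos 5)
Your proof is correct and follows essentially the same route as the paper: use Theorem~\ref{T:image-and-ess-smooth} to see that the image $\FCI(\alpha')$ carries a (canonical, hence automatically cocycle-compatible) descent datum, invoke effective descent of closed immersions along \etale{} morphisms to produce $Z\inj X\times_S T$, and then conclude because $\Gamma^d(Z/T)$ is affine over $T$ and the \etale{} topology is subcanonical. The paper handles injectivity and surjectivity in one stroke by reducing to the sheaf property of the representable functor $\FGamma^d_{Z/T}$, whereas you separate the two; this is only a cosmetic difference.
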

\begin{proof}
Let $T$ be an $S$-scheme and $\map{f}{T'}{T}$ an \etale{} surjective
morphism. Let $T''=T'\times_T T'$ with projections $\pi_1$ and $\pi_2$. Given
an element $\alpha'\in\FGamma^d_{X/S}(T')$ such that
$\pi_1^{*}\alpha'=\pi_2^{*}\alpha'$ we have to show that there is a unique
$\alpha\in\FGamma^d_{X/S}(T)$ such that $f^{*}\alpha=\alpha'$. Let $Z'\inj
X\times_S T'$ be the image of $\alpha'$. As the image commutes with
\etale{} base change, cf.\ Theorem~\pref{T:image-and-ess-smooth}, the
image of $\alpha''$ is $Z''=\pi_1^{-1}(Z')=\pi_2^{-1}(Z')$. As closed
immersions satisfy effective descent with respect to \etale{}
morphisms~\cite[Exp.~VIII, Cor.~1.9]{sga1}, there is a closed subspace $Z\inj
X\times_S T$ such that $Z'=Z\times_T T'$. Moreover $Z$ is affine over $T$. Any
$\alpha\in\FGamma^d_{X/S}(T)$ such that $f^{*}\alpha=\alpha'$ is then in the
subset $\FGamma^d_{Z/T}(T)\subseteq\FGamma^d_{X/S}(T)$.  It is thus enough to
show that $\FGamma^d_{Z/S}$ is a sheaf in the \etale{} topology. But
$\FGamma^d_{Z/S}$ is represented by the space $\Gamma^d(Z/S)$ which is affine
over $S$. As the \etale{} topology is sub-canonical, it follows that
$\FGamma^d_{Z/S}$ is a sheaf.
\end{proof}

\begin{proposition}\label{P:Gamma-immersion}
Let $X/S$ and $Y/S$ be separated algebraic spaces. If $\map{f}{X}{Y}$ is an
immersion (resp. a closed immersion, resp. an open immersion) then
$\FGamma^d_{X/S}$ is a locally closed subfunctor (resp. a closed subfunctor,
resp. an open subfunctor) of $\FGamma^d_{Y/S}$.
%%
%% Also, if $f$ is a monomorphism, \emph{locally of finite type},
%% then $f_*$ is a subfunctor.
\end{proposition}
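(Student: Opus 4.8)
The plan is to reduce to the two basic cases — $f$ a closed immersion and $f$ an open immersion — since an arbitrary immersion is a closed immersion followed by an open immersion, and a locally closed subfunctor is by definition a closed subfunctor of an open subfunctor. The common preliminary step is to describe $f_*$ explicitly when $f$ is an immersion. If $(Z,\beta)$ is a family of cycles on $X\times_S T/T$, then $Z\inj X\times_S T$ is a closed immersion and $Z\to T$ is integral, hence universally closed; as $Y/S$ is separated, $Y\times_S T\to T$ is separated, so the immersion $Z\to X\times_S T\to Y\times_S T$ is universally closed, hence a closed immersion. Thus $Z$ may be regarded as a closed subscheme of $Y\times_S T$, still integral over $T$, and $f_*(Z,\beta)$ is just $(Z,\beta)$ viewed in this way. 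Two consequences are then immediate: $f_*$ is a monomorphism of functors (a common refinement of two families on $X$, viewed on $Y$, is contained in $X\times_S T$, hence underlies a family on $X$); and a family $\alpha\in\FGamma^d_{Y/S}(T)$ lies in $f_*\FGamma^d_{X/S}(T)$ if and only if it has a representative $(Z,\alpha)$ with $Z\inj X\times_S T$, equivalently — since $\FCI(\alpha)$ is contained in every representative — if and only if $\FCI(\alpha)\inj X\times_S T$. By Corollary~\pref{C:push-fwd-and-supp} this last condition depends only on the underlying set, so it is equivalent to $\FCS(\alpha)\subseteq X\times_S T$.

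Suppose first that $f$ is a \emph{closed} immersion, and fix $\alpha\in\FGamma^d_{Y/S}(T)$ with a representative $(Z,\alpha\colon T\to\Gamma^d(Z/T))$, where $Z\inj Y\times_S T$ is closed and integral over $T$. Put $Z_X=Z\cap(X\times_S T)$, a closed subscheme of $Z$ which is again integral over $T$; by Proposition~\pref{P:affine-closed-immersion} the induced morphism $\Gamma^d(Z_X/T)\inj\Gamma^d(Z/T)$ is a closed immersion. Using the description of $f_*$ above together with a common-refinement argument, one checks that $\alpha\in f_*\FGamma^d_{X/S}(T)$ if and only if $\alpha\colon T\to\Gamma^d(Z/T)$ factors through $\Gamma^d(Z_X/T)$. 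Hence the fibre product $\FGamma^d_{X/S}\times_{\FGamma^d_{Y/S}}T$ is represented by the closed subscheme $T_0:=T\times_{\Gamma^d(Z/T)}\Gamma^d(Z_X/T)$ of $T$; since the formation of $Z_X$ and of the closed immersion $\Gamma^d(Z_X/T)\inj\Gamma^d(Z/T)$ commutes with base change $T'\to T$, this identification is compatible with base change, and $\FGamma^d_{X/S}$ is a closed subfunctor of $\FGamma^d_{Y/S}$.

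Suppose next that $f$ is an \emph{open} immersion; here I would exploit that the support commutes with arbitrary base change, Theorem~\pref{T:supp-and-arbitrary-base-change}. With $(Z,\alpha)$ a representative as above, set $C=Z\setminus(X\times_S T)$, a closed subset of $Z$, and $D=\FCS(\alpha)\cap C$, a closed subset of $Z$ with its reduced structure, which is integral over $T$; since integral morphisms are universally closed, the image $W=p(D)$ of $D$ under $p\colon Z\to T$ is closed in $T$, and I put $V=T\setminus W$, an open subscheme. For any $g\colon T'\to T$, Corollary~\pref{C:push-fwd-and-supp} and Theorem~\pref{T:supp-and-arbitrary-base-change} give the chain of equivalences: $g^{*}\alpha\in f_*\FGamma^d_{X/S}(T')$ $\iff$ $\FCS(g^{*}\alpha)\subseteq X\times_S T'$ $\iff$ $(\FCS(\alpha)\times_T T')\cap(C\times_T T')=\emptyset$ $\iff$ $D\times_T T'=\emptyset$ $\iff$ $g(T')\cap W=\emptyset$ $\iff$ $g$ factors through $V$. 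Thus $\FGamma^d_{X/S}\times_{\FGamma^d_{Y/S}}T$ is represented by the open subscheme $V\inj T$, whose formation is again compatible with base change, so $\FGamma^d_{X/S}$ is an open subfunctor of $\FGamma^d_{Y/S}$. Combining the two cases, for a general immersion $f=j\circ i$ with $i$ a closed immersion into an open subspace $U\subseteq Y$ and $j\colon U\inj Y$ the inclusion, $\FGamma^d_{X/S}$ is a closed subfunctor of $\FGamma^d_{U/S}$, which is an open subfunctor of $\FGamma^d_{Y/S}$, hence a locally closed subfunctor.

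The one point that genuinely needs care is the explicit description of $f_*$ in the first paragraph — in particular the assertion that $Z\to Y\times_S T$ is a closed immersion — which is where the separatedness hypotheses enter, and which makes the push-forward along an immersion elementary (no appeal to the technical appendix result on integral morphisms is needed). Everything after that is bookkeeping with images, supports, and base change.
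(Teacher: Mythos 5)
Your proof is correct and follows essentially the same route as the paper's: the same use of separatedness and integrality to show $Z\inj Y\times_S T$ is closed (hence that one has a subfunctor), the same reduction of the closed-immersion case to Proposition~\pref{P:affine-closed-immersion} via the fibre product $T\times_{\Gamma^d(Z/T)}\Gamma^d(Z_X/T)$, and the same open-immersion argument via the closed image in $T$ of the part of the support lying outside $X\times_S T$, using Theorem~\pref{T:supp-and-arbitrary-base-change}. The only cosmetic difference is that you make the reduction of a general immersion to the composite of a closed and an open immersion explicit, and your citation of Corollary~\pref{C:push-fwd-and-supp} for the purely set-theoretic criterion is slightly off (the point is simply that a closed subscheme of $Y\times_S T$ whose underlying set lies in the open subset $X\times_S T$ is a closed subscheme of $X\times_S T$), but this does not affect the argument.
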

\begin{proof}
Let $T$ be an $S$-scheme and let $\alpha\in \FGamma^d_{X/S}(T)$ be a family
with $Z=\FCI(\alpha)\inj X_T$. Then $Z\inj X_T$ is a closed subscheme such that
$Z\to T$ is integral and hence universally closed. As $Y\to S$ is separated it
thus follows that $Z\inj X_T\inj Y_T$ is a closed subscheme. It follows that
$\FGamma^d_{X/S}$ is a subfunctor of $\FGamma^d_{Y/S}$.

Let $\map{\alpha}{T}{\FGamma^d_{Y/S}}$ be a family of cycles. We have to show
that if $f$ is a closed (resp. open) immersion then there is a closed (resp.
open) subscheme $U\inj T$ such that if $\map{g}{T'}{T}$ and
$\alpha'=g^*\alpha\in\FGamma^d_{X/S}(T')$ then $g$ factors through $U$. 
Let $X_T=X\times_S T$, $Y_T=Y\times_S T$, $Z=\FCI(\alpha)\subset Y_T$ and
$W=Z\cap X_T=Z\times_{Y_T} X_T\inj X_T$.

If $f$ is an open immersion we let $V$ be the closed subset $Y_T\mysetminus
X_T$ and $U$ be the complement of the image of $V\cap Z=Z\mysetminus W$ by
$Z\to T$. Thus $U$ is the open subset of $T$ such that $t\in U$ if and only if
the fiber $Z_t$ does not meet $V$ or equivalently is contained in $W$. As the
support commutes with arbitrary base change, see
Theorem~\pref{T:supp-and-arbitrary-base-change}, it is easily seen that
$Z\times_T T'$ factors through $X_{T'}$ if and only if $T'\to T$ factors
through $U$. Hence $T \times_{\FGamma^d_{Y/S}} \FGamma^d_{X/S} = T|_U$ which
shows that $\FGamma^d_{X/S}$ is an open subfunctor.

If $f$ is a closed immersion we consider the cartesian diagram
$$\xymatrix{
{T \times_{\FGamma^d_{Z/T}} \FGamma^d_{W/T}}\ar@{(->}[d]\ar[r]
 & {\FGamma^d_{W/T}}\ar@{(->}[d]\ar@{(->}[r]
 & {\FGamma^d_{X_T/T}}\ar@{(->}[d]\ar[r]
 & {\FGamma^d_{X/S}}\ar@{(->}[d]      \\
{T}\ar[r]
 & {\FGamma^d_{Z/T}}\ar@{(->}[r]\ar@{}[ul]|\square
 & {\FGamma^d_{Y_T/T}}\ar[r]\ar@{}[ul]|\square
 & {\FGamma^d_{Y/S}.}\ar@{}[ul]|\square
}$$
As $W$ and $Z$ are affine over $S$, the functors $\FGamma^d_{W/T}$ and
$\FGamma^d_{Z/T}$ are represented by $\Gamma^d(W/T)$ and $\Gamma^d(Z/T)$
respectively. As $\Gamma^d(W/T)\inj \Gamma^d(Z/T)$ is a closed immersion by
Proposition~\pref{P:affine-closed-immersion} it follows that $\FGamma^d_{X/S}$
is a closed subfunctor of $\FGamma^d_{Y/S}$.
\end{proof}

\begin{proposition}\label{P:Gamma^d_of_disj_union:general}
Let $S$ be an algebraic space and let $X_1,X_2,\dots,X_n$ be
algebraic spaces separated over $S$. Then
$$\FGamma^d_{\coprod_{i=1}^n X_i}=
\coprod_{\substack{d_i\in\N\\\sum_i d_i=d}}
\FGamma^{d_1}_{X_1}\times_S \FGamma^{d_2}_{X_2}\times_S\dots
\times_S \FGamma^{d_n}_{X_n}.$$
\end{proposition}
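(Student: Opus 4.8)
The plan is to reduce to $n=2$ by an immediate induction on $n$ (fibre products over $S$ distribute over coproducts of functors), and then to exhibit a bijection on $T$-valued points. By Proposition~\pref{P:Gamma-is-etale-sheaf} the functor $\FGamma^d_{X_1\sqcup X_2/S}$ is an \'etale sheaf, hence in particular a Zariski sheaf, and a $T$-point of the right-hand side amounts to a decomposition $T=\coprod_{a+b=d}T_{a,b}$ into open and closed subschemes together with, for each $(a,b)$ with $a+b=d$, a pair $(\alpha_1,\alpha_2)$ with $\alpha_1\in\FGamma^a_{X_1/S}(T_{a,b})$ and $\alpha_2\in\FGamma^b_{X_2/S}(T_{a,b})$. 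So it suffices to match such data bijectively and naturally with elements of $\FGamma^d_{X_1\sqcup X_2/S}(T)$.

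In one direction, given $\alpha\in\FGamma^d_{X_1\sqcup X_2/S}(T)$, let $Z=\FCI(\alpha)\inj(X_1\sqcup X_2)\times_S T=(X_1\times_S T)\sqcup(X_2\times_S T)$ be its image. A closed subspace of a disjoint union is the disjoint union of its traces, so $Z=Z_1\sqcup Z_2$ with $Z_i\inj X_i\times_S T$ closed, and since $Z\to T$ is integral by Theorem~\pref[IS:integral]{T:supp-properties}, so is each $Z_i\to T$. Passing to the minimal representative (Remark~\pref{R:minimal-repr}) we may regard $\alpha$ as a morphism $T\to\Gamma^d(Z/T)$, and Proposition~\pref{P:Gamma^d_of_disj_union} identifies $\Gamma^d(Z/T)$ with $\coprod_{a+b=d}\Gamma^a(Z_1/T)\times_T\Gamma^b(Z_2/T)$. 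Pulling this (open and closed) decomposition back along $\alpha$ produces $T=\coprod_{a+b=d}T_{a,b}$, and over $T_{a,b}$ the morphism $\alpha$ factors through $\Gamma^a(Z_1|_{T_{a,b}}/T_{a,b})\times_{T_{a,b}}\Gamma^b(Z_2|_{T_{a,b}}/T_{a,b})$, i.e.\ it gives a pair $(\alpha_1,\alpha_2)\in\FGamma^a_{X_1/S}(T_{a,b})\times\FGamma^b_{X_2/S}(T_{a,b})$.

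In the other direction, from such data I would, over each $T_{a,b}$, take the images $Z_i=\FCI(\alpha_i)\inj X_i\times_S T_{a,b}$, which are integral over $T_{a,b}$, form $Z_1\sqcup Z_2$ inside $(X_1\sqcup X_2)\times_S T_{a,b}$, use Proposition~\pref{P:Gamma^d_of_disj_union} to view $\Gamma^a(Z_1/T_{a,b})\times_{T_{a,b}}\Gamma^b(Z_2/T_{a,b})$ as the $(a,b)$-component of $\Gamma^d(Z_1\sqcup Z_2/T_{a,b})$, and thereby read off a family over $T_{a,b}$; since the $T_{a,b}$ are open and closed in $T$ these glue (trivially, or by the sheaf property) to an element of $\FGamma^d_{X_1\sqcup X_2/S}(T)$. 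Both assignments are manifestly compatible with pull-back along morphisms $T'\to T$. The only point that needs a short verification is that they are mutually inverse, which comes down to the fact that $\FCI(\alpha)$ is the \emph{smallest} closed subspace through whose $\Gamma^d$ the family factors, so that the image of the glued family over $T_{a,b}$ is exactly $\FCI(\alpha_1)\sqcup\FCI(\alpha_2)$, together with the disjointness of the components of $\coprod_{a+b=d}\Gamma^a(Z_1/T)\times_T\Gamma^b(Z_2/T)$, which makes the recovered decomposition of $T$ unique. I expect this bookkeeping with the functor of points of a coproduct to be the only mildly delicate point; the real content is entirely provided by Proposition~\pref{P:Gamma^d_of_disj_union} and the integrality of images in Theorem~\pref[IS:integral]{T:supp-properties}.
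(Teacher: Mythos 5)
Your argument is correct and is exactly the intended expansion of the paper's one-line proof, which simply cites Proposition~\pref{P:Gamma^d_of_disj_union}: you restrict a family to its image $Z=Z_1\sqcup Z_2$ (integral, hence affine over $T$), apply the affine decomposition there, and read off the open-and-closed decomposition of $T$, with the equivalence relation on families absorbing the minor bookkeeping about which closed subspace carries the glued family. No gaps.
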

\begin{proof}
Follows from Proposition~\pref{P:Gamma^d_of_disj_union}.
\end{proof}

\begin{corollary}\label{C:geom-points-of-Gamma}
Let $X/S$ be a separated algebraic space. Let $k$ be an algebraically closed
field and $\map{s}{\Spec(k)}{S}$ a geometric point of $S$. There is a
one-to-one correspondence between $k$-points of $\FGamma^d_{X/S}$ and effective
zero cycles of degree $d$ on $X_s$. In this correspondence, a zero cycle
$\sum_{i=1}^n d_i[x_i]$ on $X_s$ corresponds to the family $(Z,\alpha)$ where
$Z=\{x_1,x_2,\dots,x_n\}\subseteq X_s$ and $\alpha$ is the morphism
$$\alpha\;:\; \Spec(k) \iso \Gamma^{d_1}(x_1/k)\times_k
\Gamma^{d_2}(x_2/k)\times_k\dots\times_k \Gamma^{d_n}(x_n/k)
\inj \Gamma^d(Z/k)$$
\end{corollary}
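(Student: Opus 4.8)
The plan is to reduce to $S=\Spec(k)$ and then read off the description of $\FGamma^d_{X_s/k}(\Spec k)$ from the structure results of~\S\ref{S:support}. A $k$-point of $\FGamma^d_{X/S}$ over $s$ is, by definition of the functor, an element of $\FGamma^d_{X/S}(\Spec k)$ with $\Spec k$ regarded as an $S$-scheme via $s$, and this is precisely an equivalence class of families of zero cycles of degree $d$ on $X\times_S\Spec k=X_s$ over $\Spec k$; hence we may assume $S=\Spec k$ and $X=X_s$. By Remark~\pref{R:minimal-repr} every equivalence class contains a unique minimal representative $\bigl(\FCI(\alpha),\alpha'\bigr)$, so it is enough to parameterize these minimal representatives.

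So let $\alpha\in\FGamma^d_{X/k}(\Spec k)$ be minimal, with image $Z=\FCI(\alpha)$. By Proposition~\pref{P:supp-over-field} we have $Z=\FCI(\alpha)=\FCS(\alpha)=\coprod_{i=1}^n\Spec(k_i)$ with $n\leq d$ and each $k_i/k$ of finite separable degree; since $Z\to\Spec k$ is integral by Theorem~\pref[IS:integral]{T:supp-properties}, each $k_i$ is algebraic over $k$, hence $k_i=k$ because $k$ is algebraically closed. Thus $Z=\{x_1,\dots,x_n\}$ is a finite set of distinct $k$-rational points of $X$. As $\Gamma^e(\Spec k/k)=\Spec k$ for every $e\in\N$, Proposition~\pref{P:Gamma^d_of_disj_union} gives
$$\Gamma^d(Z/k)=\coprod_{\substack{d_i\in\N\\\sum_i d_i=d}}\Gamma^{d_1}(x_1/k)\times_k\dots\times_k\Gamma^{d_n}(x_n/k),$$
a finite disjoint union of copies of $\Spec k$. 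Therefore $\alpha$ factors through exactly one component, indexed by some $(d_1,\dots,d_n)$ with $\sum_i d_i=d$, and by Proposition~\pref{P:supp-at-most-d-components} the image of $\alpha$ equals $\coprod_{d_i>0}\Spec k$. Minimality forces $\FCI(\alpha)=Z$, so all $d_i>0$, and we attach to $\alpha$ the effective zero cycle $\sum_{i=1}^n d_i[x_i]$ of degree $d$ on $X$.

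Conversely, an effective zero cycle $\sum_{i=1}^n d_i[x_i]$ of degree $d$ (the $x_i$ distinct $k$-points, $d_i\geq 1$) determines $Z=\{x_1,\dots,x_n\}\inj X$ and the composite $\alpha\colon\Spec k\iso\Gamma^{d_1}(x_1/k)\times_k\dots\times_k\Gamma^{d_n}(x_n/k)\inj\Gamma^d(Z/k)$ of the statement; since all $d_i>0$, Proposition~\pref{P:supp-at-most-d-components} gives $\FCS(\alpha)=Z$, and $\FCI(\alpha)$ is reduced by Proposition~\pref{P:supp-over-red}, so $\FCI(\alpha)=Z$ and $(Z,\alpha)$ is already minimal. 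The two assignments are mutually inverse — from $\alpha$ one recovers $Z=\FCI(\alpha)$ and the component of $\Gamma^d(Z/k)$ through which it factors, which is exactly the cycle data, and vice versa — and distinct cycles yield inequivalent families, since a different support changes $\FCI$ while the same support with different multiplicities changes the connected component of the common $\Gamma^d(Z/k)$; by uniqueness of the minimal representative the correspondence is a bijection. The only non-routine input is Proposition~\pref{P:supp-over-field}, already established; the single delicate point is deducing $k_i=k$ from ``$k_i/k$ has finite separable degree'' using integrality of $Z$ over $k$ together with $k=\overline k$, while compatibility with the equivalence relation on families is handled by Remark~\pref{R:minimal-repr}.
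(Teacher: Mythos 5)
Your proposal is correct and follows essentially the same route as the paper's proof: reduce to $S=\Spec(k)$, use Theorem~\pref[IS:field]{T:supp-properties} (equivalently Proposition~\pref{P:supp-over-field} plus integrality and $k=\overline{k}$) to see that $\FCI(\alpha)$ is a finite set of $k$-rational points, and then read off the multiplicities $d_i$ from the decomposition of $\Gamma^d(Z/k)$ in Proposition~\pref{P:Gamma^d_of_disj_union}. You are somewhat more complete than the paper, which only spells out the forward direction, whereas you also verify minimality of the representative, the converse construction, and that the two assignments are mutually inverse.
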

\begin{proof}
Let $\alpha\in\FGamma^d_{X/S}(k)$ be a $k$-point. By
Theorem~\pref[IS:field]{T:supp-properties} we have that $Z=\FCI(\alpha)\inj
X_s$ is a finite disjoint union of points $x_1,x_2,\dots,x_n$, all with residue
field $k$ as $k$ is algebraically closed. According to
Proposition~\pref{P:Gamma^d_of_disj_union:general}, there are positive integers
$d_1,d_2,\dots,d_n$ such that $d=d_1+d_2+\dots+d_n$ and such that
$\map{\alpha}{k}{\Gamma^d(Z/k)}$ factors through the open and closed subscheme
$\Gamma^{d_1}(x_1/k)\times_k\Gamma^{d_2}(x_2/k)\times_k\dots\times_k
\Gamma^{d_n}(x_n/k)$. As $k(x_i)=k$, we have that $\Gamma^{d_i}(x_i/k)\iso
k$. The point $\alpha$ corresponds to $\sum_{i=1}^n d_i[x_i]$.
\end{proof}

\begin{proposition}\label{P:cover-of-Gamma:AF}
Let $X/S$ be a separated algebraic space. Let $\{U_\beta\}$ be an open covering
of $X$ such that any set of $d$ points in $X$ above the same point in $S$ lies
in one of the $U_\beta$'s. Then
$\coprod_\beta \FGamma^d_{U_\beta/S}\to \FGamma^d_{X/S}$ is an open covering.
If $X/S$ is an \AF{}-scheme then such a covering with the $U_\beta$'s
\emph{affine} exists.
\end{proposition}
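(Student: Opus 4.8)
The plan is to check the two conditions defining an open covering of functors: that each $\FGamma^d_{U_\beta/S}$ is an open subfunctor of $\FGamma^d_{X/S}$, and that these subfunctors jointly cover $\FGamma^d_{X/S}$. The first is immediate from Proposition~\pref{P:Gamma-immersion} applied to the open immersion $U_\beta\inj X$ (note that $U_\beta/S$ is separated, being an open subspace of the separated space $X/S$). For the second, I fix an $S$-scheme $T$ and a family $\alpha\in\FGamma^d_{X/S}(T)$ with image $Z=\FCI(\alpha)\inj X_T$, where $X_T:=X\times_S T$ and $U_{\beta,T}:=U_\beta\times_S T$, and must show that the open subsets $T_\beta:=T\times_{\FGamma^d_{X/S}}\FGamma^d_{U_\beta/S}\subseteq T$ cover $T$.

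First I would examine $Z$ fiberwise. Fix $t\in T$, lying over $s\in S$, and let $\alpha_t\in\FGamma^d_{X_t/\kappa(t)}$ be the induced family over the residue field. By Remark~\pref{R:minimal-repr} there is a bijective closed immersion $\FCI(\alpha_t)\inj Z_t:=Z\times_T\Spec\kappa(t)$, so Theorem~\pref[IS:field]{T:supp-properties} shows that the support of $Z_t$ consists of at most $d$ points of $X_t$, lying over at most $d$ points of the fiber $X_s\subseteq X$. Since all these points lie over $s$, the hypothesis on the covering $\{U_\beta\}$ yields an index $\beta$ with $Z_t\subseteq U_{\beta,T}$ set-theoretically.

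Next comes the spreading-out step. The morphism $p\colon Z\to T$ is integral, hence closed. As $Z\cap U_{\beta,T}$ is open in $Z$, the set $C_\beta:=p(Z\setminus U_{\beta,T})$ is closed in $T$ and does not contain $t$ by the previous paragraph; put $T':=T\setminus C_\beta$, an open neighborhood of $t$. From $Z\setminus U_{\beta,T}\subseteq p^{-1}(C_\beta)$ one gets $Z\times_T T'=Z\setminus p^{-1}(C_\beta)\subseteq Z\cap U_{\beta,T}$, so that $Z\times_T T'$ is contained, set-theoretically and therefore as a closed subscheme, in $U_{\beta,T'}$. By the explicit description of the open subfunctor in the proof of Proposition~\pref{P:Gamma-immersion} this means exactly that $T'\subseteq T_\beta$. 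Since $t\in T$ was arbitrary, the $T_\beta$ cover $T$, which proves the first assertion.

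Finally, when $X/S$ is an \AF{}-scheme I take $\{U_\beta\}$ to be the collection of all affine open subschemes of $X$; this is an open cover of $X$, and any set of at most $d$ points of $X$ lying over a common point $s\in S$ is a finite subset of the fiber $X_s$, hence contained in some affine open subscheme of $X$ by the very definition of an \AF{}-scheme. The step I expect to cost the most effort is the fiberwise analysis combined with the spreading out: one has to pass correctly from the at-most-$d$-points bound over $\kappa(t)$---which a priori only controls $\FCI(\alpha_t)$, whence the appeal to Remark~\pref{R:minimal-repr}---to a genuine Zariski neighborhood of $t$ in $T$, using that $Z\to T$ is universally closed. Once Proposition~\pref{P:Gamma-immersion} is in hand, everything else is formal.
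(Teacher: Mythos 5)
Your proof is correct and follows essentially the same route as the paper: openness of each $\FGamma^d_{U_\beta/S}$ comes from Proposition~\pref{P:Gamma-immersion}, and coverage is checked pointwise using the at-most-$d$-points description of the support from Theorem~\pref[IS:field]{T:supp-properties}. Your explicit spreading-out step merely re-derives the description of the open locus $T_\beta$ already given inside the proof of Proposition~\pref{P:Gamma-immersion} ($t\in T_\beta$ iff $Z_t\subseteq U_{\beta,T}$), so the paper can and does skip it by reducing directly to field-valued points.
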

\begin{proof}
Let $k$ be a field and $\alpha\in\FGamma^d_{X/S}(k)$. Then by
Theorem~\pref[IS:field]{T:supp-properties} there is a $\beta$ such that
$\alpha\in\FGamma^d_{U_\beta/S}(k)\subseteq \FGamma^d_{X/S}(k)$. Thus
$\coprod_\beta \FGamma^d_{U_\beta/S}\to \FGamma^d_{X/S}$ is an open covering by
Proposition~\pref{P:Gamma-immersion}.
\end{proof}

\begin{theorem}\label{T:Gamma-representable:AF}
Let $S$ be a scheme and $X/S$ an \AF{}-scheme. The functor $\FGamma^d_{X/S}$ is
then represented by an \AF{}-scheme $\Gamma^d(X/S)$.
\end{theorem}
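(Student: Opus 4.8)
The plan is to represent $\FGamma^d_{X/S}$ by gluing the affine pieces $\Gamma^d(U_\beta/S)$ over an open covering of $X$, and then to verify by hand that the resulting scheme is an \AF{}-scheme over $S$. Since $\FGamma^d_{X/S}$ is a sheaf in the Zariski topology by Proposition~\pref{P:Gamma-is-etale-sheaf} and being an \AF{}-scheme over $S$ is a Zariski-local condition on $S$, I would first reduce to the case where $S$ is affine; then the open subschemes of $X$ that are affine over $S$ are exactly the affine open subschemes of $X$.

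By Proposition~\pref{P:cover-of-Gamma:AF} there is an open covering $\{U_\beta\}$ of $X$ with each $U_\beta$ affine over $S$ such that $\coprod_\beta \FGamma^d_{U_\beta/S}\to\FGamma^d_{X/S}$ is an open covering, and by Proposition~\pref{P:Gamma-representable:affine} each $\FGamma^d_{U_\beta/S}$ is represented by $G_\beta:=\Gamma^d(U_\beta/S)$, which is affine over $S$. For each pair $\beta,\beta'$ the functor $\FGamma^d_{U_\beta/S}\times_{\FGamma^d_{X/S}}\FGamma^d_{U_{\beta'}/S}$ is an open subfunctor of the representable functor $\FGamma^d_{U_\beta/S}$, hence is represented by an open subscheme $G_{\beta\beta'}\inj G_\beta$; by symmetry it is also represented by an open subscheme $G_{\beta'\beta}\inj G_{\beta'}$, and Yoneda provides a canonical isomorphism $G_{\beta\beta'}\iso G_{\beta'\beta}$. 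These isomorphisms satisfy the cocycle condition on triple overlaps because every identification in sight is canonical, each triple overlap representing $\FGamma^d_{U_\beta/S}\times_{\FGamma^d_{X/S}}\FGamma^d_{U_{\beta'}/S}\times_{\FGamma^d_{X/S}}\FGamma^d_{U_{\beta''}/S}$. Gluing the schemes $G_\beta$ along these open subschemes yields a scheme $\Gamma^d(X/S)$, and since $\FGamma^d_{X/S}$ is a Zariski sheaf covered by the open subfunctors $\FGamma^d_{U_\beta/S}$, the usual gluing argument identifies $\Gamma^d(X/S)$ with a representing object of $\FGamma^d_{X/S}$.

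It then remains to show that $\Gamma^d(X/S)\to S$ is an \AF{}-scheme. Fix $s\in S$ and finitely many points $z_1,\dots,z_m$ of the fibre $\Gamma^d(X/S)_s=\Gamma^d(X_s/k(s))$, the last identification because $\FGamma^d$ commutes with base change on $S$. Each $z_i$ is a $k(z_i)$-point of $\Gamma^d(X/S)$, that is, a family $\alpha_i\in\FGamma^d_{X/S}(k(z_i))$, and by Theorem~\pref[IS:field]{T:supp-properties} its image $\FCI(\alpha_i)\inj X\times_S\Spec k(z_i)$ is a finite reduced set of points; these map, under $X\times_S\Spec k(z_i)\to X$, to a finite set $T_i$ of points of the fibre $X_s$. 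Put $T=\bigcup_{i=1}^m T_i$, a finite subset of $X_s$. As $X/S$ is an \AF{}-scheme there is an affine open $U\subseteq X$ with $T\subseteq U$; then $\FCI(\alpha_i)$, being reduced and topologically supported inside the open $U\times_S\Spec k(z_i)$, is a closed subscheme of $U\times_S\Spec k(z_i)$, so $\alpha_i$ belongs to $\FGamma^d_{U/S}(k(z_i))$ and $z_i$ lies in $\Gamma^d(U/S)$. By Proposition~\pref{P:Gamma-immersion}, together with the representability just established, $\Gamma^d(U/S)\inj\Gamma^d(X/S)$ is an open immersion, and $\Gamma^d(U/S)$ is affine over the affine scheme $S$, hence affine. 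Thus $\Gamma^d(U/S)$ is an affine open neighbourhood of $z_1,\dots,z_m$ in $\Gamma^d(X/S)$, which is exactly what the \AF{}-condition requires.

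The gluing step is routine bookkeeping; the step that needs the most care is the \AF{}-property, where one must control the supports of the chosen points by a \emph{single} affine open of $X$ and then ensure the corresponding $\Gamma^d(U/S)$ is genuinely an affine \emph{open} of $\Gamma^d(X/S)$ — which is the reason for reducing to $S$ affine at the outset.
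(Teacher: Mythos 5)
Your proposal is correct and follows essentially the same route as the paper: reduce to $S$ affine, cover $\FGamma^d_{X/S}$ by the representable open subfunctors $\FGamma^d_{U_\beta/S}$ from Proposition~\pref{P:cover-of-Gamma:AF} to get representability by a scheme, and then verify the \AF{}-condition by taking the (finite) union of the supports of finitely many points over a common $s\in S$ and enclosing it in a single affine open $U\subseteq X$, so that $\Gamma^d(U/S)$ is an affine open containing all of them. The only difference is that you spell out the standard gluing argument that the paper leaves implicit.
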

\begin{proof}
As $\FGamma^d_{X/S}$ is a sheaf in the Zariski topology, we can assume that $S$
is affine. Let $\{U_\beta\}$ be an open covering of $X$ by affines such that
any set of $d$ points in $X$ above the same point in $S$ lies in one of the
$U_\beta$'s. As $\FGamma^d_{U_\beta/S}$ is represented by an affine scheme,
Proposition~\pref{P:cover-of-Gamma:AF} shows that $\FGamma^d_{X/S}$ is
represented by a scheme $\Gamma^d(X/S)$.

If $\alpha_1,\alpha_2,\dots,\alpha_m$ are points of $\Gamma^d(X/S)$ above
the same point of $S$, then the union of their supports consists of at most
$dm$ points and there is thus an affine subset $U\subseteq X$ such that
$\alpha_1,\alpha_2,\dots,\alpha_m\in\Gamma^d(U/S)$. This shows that
$\Gamma^d(X/S)/S$ is an \AF{}-scheme.
%%
%% If we extend the definition of \AF{} to include the case with $S$ an
%% algebraic space, the Theorem remains valid (?).
\end{proof}

\end{subsection}

%%%%%%%%%%%%%%%%%%%%%%%%%%%%%%%%%%%%%%%%%%%%%%%%%%%%%%%%%%%%%%

\begin{subsection}{Effective pro-representability of $\FGamma^d_{X/S}$}
\label{SS:eff-pro-rep}

Let $A$ be a henselian local ring and $T=\Spec(A)$ together with a morphism
$T\to S$. The image of a family of cycles $\alpha\in\FGamma^d_{X/S}(T)$ over
$T$ is then a semi-local scheme $Z$, integral over $T$ by
\ref{IS:integral} and \ref{IS:semi-local} of
Theorem~\pref{T:supp-properties}. Furthermore,
Proposition~\pref{P:int_hensel} implies that $Z$ is a finite disjoint union of
local henselian schemes.

Let $z_1,z_2,\dots,z_n$ be the closed points of $Z\inj X_T$ and
$\{x_1,x_2,\dots,x_m\}$ their images in $X$ where the $x_i$'s are chosen to be
distinct. As $z_i$ lies over the closed point of $T$, all $x_i$ lies over a
common point $s\in S$. Let $\hensel{X}_{x_i}=\Spec(\hensel{\sO_{X,x_i}})$,
$\hensel{X}_{x_1,x_2,\dots,x_m}=\coprod_{i=1}^m \hensel{X}_{x_i}$ and
$\hensel{S}_{s}=\Spec(\hensel{\sO_{S,s}})$ be the henselizations of $X$ and
$S$ at the $x_i$'s and $s$. As $\sO_{Z,z_i}$ is henselian it
follows that $Z\inj X_T\to X$ factors uniquely through
$\hensel{X}_{x_1,x_2,\dots,x_m} \to X$. Thus $Z\inj X_T$ factors uniquely
through $\hensel{X}_{x_1,x_2,\dots,x_m}\times_{\hensel{S}_s} T\to X_T$ and
$\alpha$ corresponds to a unique element of
$\FGamma^d_{\hensel{X}_{x_1,x_2,\dots,x_m}/\hensel{S}_s}(T)$. As
$\hensel{X}_{x_1,x_2,\dots,x_m}$ is affine, we have a unique morphism $T\to
\Gamma^d(\hensel{X}_{x_1,x_2,\dots,x_m}/\hensel{S}_s)$.

Further, by Proposition~\pref{P:Gamma^d_of_disj_union}
$$\Gamma^d\left(\hensel{X}_{x_1,x_2,\dots,x_m}/\hensel{S}_s\right)=
\coprod_{\substack{d_i\in\N\\\sum_i d_i=d}}
\prod_{i=1}^m \Gamma^{d_i}\left(\hensel{X}_{x_i}/\hensel{S}_s\right).$$
and as $T$ is connected
$T\to \Gamma^d(\hensel{X}_{x_1,x_2,\dots,x_m}/\hensel{S}_s)$ factors through
one of these components.

To conclude, there are uniquely determined points $x_1,x_2,\dots,x_m\in X$,
unique positive integers $d_i$ and a unique morphism
$$\map{\varphi}{T}{\prod_{i=1}^m
\Gamma^{d_i}\left(\hensel{X}_{x_i}/\hensel{S}_s\right)
\inj \Gamma^d\left(\hensel{X}_{x_1,x_2,\dots,x_m}/\hensel{S}_s\right)}$$
such that $\alpha$ is equivalent to $\varphi\times_{\hensel{S}_s} \id{T}$.
This implies the following:

\begin{proposition}\label{P:strictly-local-rings-of-Gamma}
Let $X/S$ be a separated algebraic space and assume that $\FGamma^d_{X/S}$ is
represented by an algebraic space $\Gamma^d(X/S)$. Let $\beta\in\Gamma^d(X/S)$
be a point with residue field $k$ and $s$ its image in $S$. The point $\beta$
corresponds uniquely to points $x_1,x_2,\dots,x_m\in X$, positive integers
$d_1,d_2,\dots,d_m$ with sum $d$ and morphisms
$\map{\varphi_i}{k}{\Gamma^d\bigl(k(x_i)/k(s)\bigr)}$. The local henselian ring
(resp. strictly local ring) at $\beta$ is the local henselian ring
(resp. strictly local ring) of $\prod_{i=1}^m
\Gamma^{d_i}(\hensel{X}_{x_i}/\hensel{S}_s)$ at the point corresponding to the
morphisms~$\varphi_i$.
\end{proposition}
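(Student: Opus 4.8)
The plan is to recognise the proposition as a repackaging of the computation carried out just before its statement, the bijection obtained there becoming an isomorphism of the functors represented by the two henselian local rings. Write $Y=\Gamma^d(X/S)$, which represents $\FGamma^d_{X/S}$ by hypothesis, put $k=k(\beta)$, and set $Y'=\prod_{i=1}^m\Gamma^{d_i}(\hensel{X}_{x_i}/\hensel{S}_s)$; since $\hensel{X}_{x_i}=\Spec(\hensel{\sO_{X,x_i}})$ and $\hensel{S}_s=\Spec(\hensel{\sO_{S,s}})$ are affine, $Y'$ is an affine scheme, and the $k$-points $\varphi_i$ — which lie over the closed point of $\hensel{S}_s$ — determine a point $\beta'\in Y'$ with residue field $k$. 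I first reduce the assertion about strictly local rings to the one about henselian local rings: a choice of separable closure $k\inj k^{\mathrm{sep}}$ induces compatible geometric points over $\beta$, over $\beta'$ and over the closed point of $\hensel{S}_s$, and the strictly local ring at such a point is the strict henselisation of the henselian local ring there; hence an isomorphism $\hensel{\sO_{Y,\beta}}\iso\hensel{\sO_{Y',\beta'}}$ inducing the identity on residue fields automatically produces the corresponding isomorphism of strictly local rings.

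For the henselian local rings, let $A$ be a henselian local ring with residue field $k$. By the universal property of the henselisation, $\Hom_{\mathrm{loc}}(\hensel{\sO_{Y,\beta}},A)$ — local homomorphisms inducing the identity on residue fields — is naturally identified with the set of morphisms $\Spec(A)\to Y$ carrying the closed point to $\beta$ with trivial residue field extension; as $Y$ represents $\FGamma^d_{X/S}$, this is the set of families $\alpha\in\FGamma^d_{X/S}(\Spec A)$ whose restriction to the closed point $\Spec(k)$ is the family attached to $\beta$. Likewise $\Hom_{\mathrm{loc}}(\hensel{\sO_{Y',\beta'}},A)$ is identified with the set of morphisms $\Spec(A)\to Y'$ carrying the closed point to $\beta'$ with trivial residue extension — recall that, $A$ being henselian, the underlying morphism $\Spec(A)\to S$ of any of these factors uniquely through $\hensel{S}_s$, over which $Y'$ lives. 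Now, for such an $\alpha$, the support over the closed fibre $\Spec(k)$ already carries the prescribed data $(x_i,d_i)$, since the support commutes with base change by Theorem~\pref{T:supp-and-arbitrary-base-change}, so the construction recalled above attaches to $\alpha$ a morphism $\map{\varphi}{\Spec(A)}{Y'}$ over $\hensel{S}_s$ carrying the closed point to $\beta'$, and $\alpha\mapsto\varphi$ is the bijection in question (using Propositions~\pref{P:Gamma-representable:affine} and~\pref{P:Gamma^d_of_disj_union} to identify $Y'$ with the relevant component of $\Gamma^d(\hensel{X}_{x_1,x_2,\dots,x_m}/\hensel{S}_s)$). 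Thus the functors represented by $\hensel{\sO_{Y,\beta}}$ and by $\hensel{\sO_{Y',\beta'}}$ on the category of henselian local rings with residue field $k$ and residue-field-preserving local homomorphisms are naturally isomorphic, whence $\hensel{\sO_{Y,\beta}}\iso\hensel{\sO_{Y',\beta'}}$ by Yoneda's lemma in that category; combined with the first paragraph this proves the proposition.

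The point that needs care — and the main obstacle — is checking that the bijection $\alpha\leftrightarrow\varphi$ is natural in $A$ and that it matches closed points with closed points, so that the Yoneda argument applies. Both are consequences of the uniqueness clauses in the preceding discussion (the points $x_i$, the multiplicities $d_i$ and the morphism $\varphi$ are uniquely determined by $\alpha$) together with the compatibility of the image and the support of a family with base change recorded in Theorem~\pref{T:supp-and-arbitrary-base-change} and Remark~\pref{R:minimal-repr}.
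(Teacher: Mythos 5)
Your proof is correct and follows essentially the same route as the paper, whose entire argument for this proposition is the phrase ``this implies the following'' appended to the preceding computation for families over a henselian local base; you have simply made explicit the Yoneda-type step (identifying local homomorphisms out of the henselian local rings with families restricting to $\beta$ at the closed point) that the paper leaves implicit. The reduction of the strictly local case to the henselian case and the appeal to Theorem~\pref{T:supp-and-arbitrary-base-change} and Remark~\pref{R:minimal-repr} for naturality are exactly the right supporting points.
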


\begin{xpar}
If $X/S$ is \emph{locally of finite type} and $A$ is a complete local
\emph{noetherian} ring, then the support of any family of cycles $\alpha$ on
$X$ parameterized by $T=\Spec(A)$ is finite over $T$. Thus $\FCI(\alpha)$ is a
disjoint union of a finite number of complete local rings. Let $s\in S$ and
$x_i\in X$ be defined as above and let
$\widehat{X}_{x_i}=\Spec(\widehat{\sO}_{X,x_i})$,
$\widehat{S}_{s}=\Spec(\widehat{\sO}_{S,s})$ and
$\widehat{X}_{x_1,x_2,\dots,x_m}=\coprod_{i=1}^m \widehat{X}_{x_i}$ be the
completions of $X$ and $S$ at the corresponding points. Repeating the
reasoning above we conclude that there is a unique morphism
$$\map{\varphi}{T}
{\prod_{i=1}^m \Gamma^{d_i}(\widehat{X}_{x_i}/\widehat{S}_s)
\inj \Gamma^d(\widehat{X}_{x_1,x_2,\dots,x_m}/\widehat{S}_s)}$$
such that $\alpha$ is equivalent to $\varphi\times_{\widehat{S}_s} \id{T}$.
Thus we obtain:
\end{xpar}

\begin{proposition}\label{P:formal-local-rings-of-Gamma}
Let $S$ be locally noetherian and $X$ an algebraic space separated and
locally of finite type over $S$ and assume that $\FGamma^d_{X/S}$ is
represented by an algebraic space $\Gamma^d(X/S)$.
Let $\beta\in\Gamma^d(X/S)$ be a point with residue field $k$
and $s$ its image in $S$. The point $\beta$ corresponds uniquely to points
$x_1,x_2,\dots,x_m\in X$, positive integers $d_1,d_2,\dots,d_m$ with sum $d$
and morphisms $\map{\varphi_i}{k}{\Gamma^d\bigl(k(x_i)/k(s)\bigr)}$. The
formal local ring at $\beta$ is the formal local ring of
$\prod_{i=1}^m \Gamma^{d_i}(\widehat{X}_{x_i}/\widehat{S}_s)$ at
the point corresponding to the morphisms $\varphi_i$.
\end{proposition}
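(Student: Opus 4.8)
The plan is to identify both formal local rings as the object pro-representing one and the same deformation functor. Recall the standard fact that, for a locally noetherian algebraic space $Y$ over $S$ and a point $\beta\in Y$ with residue field $k$ lying over $s\in S$, the formal local ring $\widehat{\sO}_{Y,\beta}$ --- defined by passing to an \etale{} chart through $\beta$ on which the residue field is $k$, the result being independent of the chart --- pro-represents the functor sending an Artinian local $\widehat{\sO}_{S,s}$-algebra $A$ with residue field $k$ to the set of morphisms $\Spec(A)\to Y$ over $S$ whose closed point maps to $\beta$ and induces the identity on residue fields. Under our hypotheses $\Gamma^d(X/S)$ is locally of finite type over the locally noetherian $S$ (Proposition~\pref{P:Gamma^d_finiteness} together with \etale{} descent), hence locally noetherian, so this applies to $Y=\Gamma^d(X/S)$ at $\beta$; it applies equally to $Y'=\prod_{i=1}^m\Gamma^{d_i}(\widehat{X}_{x_i}/\widehat{S}_s)$ at the point $\beta'$ cut out by the $\varphi_i$. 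It therefore suffices to produce, naturally in $A$, a bijection between families $\alpha\in\FGamma^d_{X/S}(\Spec A)$ whose closed point is $\beta$ and morphisms $\Spec(A)\to Y'$ whose closed point is $\beta'$.

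This bijection is exactly the content of the paragraph preceding the proposition, which I would simply carry out in families. Given such an $\alpha$, its image $Z=\FCI(\alpha)\inj X\times_S\Spec A$ is integral over $\Spec A$ by Theorem~\pref[IS:integral]{T:supp-properties} and, being a closed subspace of $X\times_S\Spec A$, which is locally of finite type over the noetherian $\Spec A$, it is an affine scheme of finite type over $\Spec A$; hence $Z\to\Spec A$ is finite. As $A$ is complete local, $Z$ is a finite disjoint union of complete --- in particular henselian --- local schemes whose closed points lie over the closed point of $\Spec A$, hence over $s$, and map to points $x_1,\dots,x_m$ of $X$; restricting to the closed point and using that the support commutes with base change (Theorem~\pref{T:supp-and-arbitrary-base-change}), these are precisely the points, with the multiplicities $d_i$, that Proposition~\pref{P:strictly-local-rings-of-Gamma} attaches to $\beta$. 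Since the local rings of $Z$ are henselian, $Z\inj X\times_S\Spec A\to X$ factors uniquely through $\widehat{X}_{x_1,\dots,x_m}\to X$, hence $Z\inj X\times_S\Spec A$ factors uniquely through $\widehat{X}_{x_1,\dots,x_m}\times_{\widehat{S}_s}\Spec A$; as $\widehat{X}_{x_1,\dots,x_m}$ is affine over $\widehat{S}_s$, this packages $\alpha$ into a unique morphism $\Spec(A)\to\Gamma^d(\widehat{X}_{x_1,\dots,x_m}/\widehat{S}_s)$. Decomposing the target via Proposition~\pref{P:Gamma^d_of_disj_union} and using that $\Spec A$ is connected, this morphism lands in the single component $Y'=\prod_i\Gamma^{d_i}(\widehat{X}_{x_i}/\widehat{S}_s)$, with closed point $\beta'$. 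The construction is manifestly reversible and compatible with base change $A\to A'$, which yields the desired natural bijection; hence $\widehat{\sO}_{\Gamma^d(X/S),\beta}\iso\widehat{\sO}_{Y',\beta'}$.

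Since the mathematical substance --- finiteness of $Z$ over a complete local noetherian base and the resulting unique factorization through the completions --- is already dispatched in the discussion above, the one point that still needs care, and the main potential obstacle, is the bookkeeping with residue fields: one must check that the bijection respects the identification of $k(\beta)$ with $k(\beta')$ given by the $\varphi_i$, so that the two pro-objects are matched as local rings with a common residue field rather than merely abstractly. This is immediate, because on the closed point both sides are by construction the family $\coprod_i\Spec(k)\to\prod_i\Gamma^{d_i}\bigl(k(x_i)/k(s)\bigr)$ determined by the $\varphi_i$. A secondary, routine point is the reduction in the pro-representability statement from general test rings to Artinian ones and the insensitivity of the completion to the choice of \etale{} chart --- both standard for locally noetherian algebraic spaces.
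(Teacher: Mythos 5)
Your proof is correct and follows essentially the same route as the paper, which derives the proposition directly from the preceding discussion: for a complete local noetherian (or Artinian) test ring $A$, the image $Z=\FCI(\alpha)$ is finite over $\Spec(A)$, hence a disjoint union of complete local schemes, so the family factors uniquely through $\prod_i\Gamma^{d_i}(\widehat{X}_{x_i}/\widehat{S}_s)$, giving the identification of pro-representing objects. The only quibble is a wording slip: the unique factorization of $Z\to X$ through the completions $\widehat{X}_{x_i}$ (rather than the henselizations) requires that the local rings of $Z$ be \emph{complete}, not merely henselian --- a property you do establish, so the argument stands.
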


\begin{corollary} %\label{C:Gamma-is-eff-prorep}
Let $S$ be locally noetherian and $X$ an algebraic space separated and locally
of finite type over $S$. The functor $\FGamma^d_{X/S}$ is \emph{effectively
pro-representable} by which we mean the following: Let $k$ be any field and
$\beta_0\in\FGamma^d_{X/S}(k)$. There is then a complete local noetherian
ring $\widehat{A}$ and an object $\widehat{\beta}\in\FGamma^d_{X/S}(\Spec(A))$
such that for any local artinian scheme $T$ and family
$\alpha\in\FGamma^d_{X/S}(T)$, coinciding with $\beta_0$ at the closed point
of $T$, there is a unique morphism $\map{f}{T}{\Spec(\widehat{A})}$ such that
$\alpha=f^{*}\widehat{\beta}$.
%The restriction of the functor $\FGamma^d_{X/S}$
%to artinian local $k$-schemes is represented by a formal scheme.
\end{corollary}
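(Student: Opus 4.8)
The plan is to make precise and noetherian the construction sketched in the paragraph preceding Proposition~\pref{P:formal-local-rings-of-Gamma}. First I would reduce to the case $S=\Spec(R)$ with $R$ noetherian local and $s$ its closed point, where $s$ is the image of $\beta_0$: this is legitimate because any local artinian $S$-scheme whose closed point lies over $s$ factors through $\Spec(\sO_{S,s})$, and families of cycles pull back. By Theorem~\pref[IS:field]{T:supp-properties} the image $\FCI(\beta_0)\inj X\times_S\Spec(k)$ is a finite disjoint union of spectra of fields whose underlying points map to finitely many distinct points $x_1,\dots,x_m\in X$, all over $s$, and Proposition~\pref{P:Gamma^d_of_disj_union:general} supplies the uniquely determined multiplicities $d_1,\dots,d_m$ with $\sum_i d_i=d$ together with the morphisms $\varphi_i\colon\Spec(k)\to\Gamma^{d_i}\bigl(k(x_i)/k(s)\bigr)$ of the statement.

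Next I would choose, for each $i$, an affine scheme $V_i$ of finite type over $S$ together with an \etale{} morphism $V_i\to X$ whose image contains $x_i$ and through which the part of $\beta_0$ at $x_i$ is a push-forward; such a chart exists because $X/S$ is locally of finite type and, by the \etale{}-sheaf property of $\FGamma^d$ (Proposition~\pref{P:Gamma-is-etale-sheaf}) together with~\pref{X:Gamma^d_alg_filt_dir_lims}, $\hensel{\sO_{X,x_i}}$ is a filtered colimit of the local rings of such \etale{} neighbourhoods and $\Gamma^{d_i}$ commutes with that colimit. Put $V=\coprod_i V_i$, an affine $S$-scheme of finite type. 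By Propositions~\pref{P:Gamma-representable:affine} and~\pref{P:Gamma^d_of_disj_union:general}, $\FGamma^d_{V/S}$ is represented by the affine $S$-scheme $\Gamma^d(V/S)=\coprod_{\sum_i e_i=d}\Gamma^{e_1}(V_1/S)\times_S\dots\times_S\Gamma^{e_m}(V_m/S)$, and its component $G_0=\Gamma^{d_1}(V_1/S)\times_S\dots\times_S\Gamma^{d_m}(V_m/S)$ is of finite type over $R$, hence \emph{noetherian}, by Proposition~\pref{P:Gamma^d_finiteness}. I would then let $P_0\in G_0$ be the point cut out by $\beta_0$ via the $\varphi_i$, set $\widehat A=\widehat{\sO}_{G_0,P_0}$ (a complete local noetherian ring), and take $\widehat\beta\in\FGamma^d_{X/S}(\Spec\widehat A)$ to be the push-forward along $V\to X$ (which exists since $X/S$ is locally of finite type) of the pull-back to $\Spec\widehat A$ of the universal family of $\Gamma^d(V/S)$.

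To verify the pro-representability property I would take a local artinian $S$-scheme $T$ with closed point $t_0$ of residue field $k$ and $\alpha\in\FGamma^d_{X/S}(T)$ with $\alpha|_{t_0}=\beta_0$. Since $\FCI(\alpha)\inj X\times_S T$ is integral over $T$ by Theorem~\pref[IS:integral]{T:supp-properties}, and $X\times_S T\to T$ is locally of finite type with $T$ artinian, $\FCI(\alpha)\to T$ is finite; by Theorem~\pref{T:supp-and-arbitrary-base-change} its reduced closed fibre is $\FCI(\beta_0)$, so $\FCI(\alpha)$ is a finite disjoint union of local artinian schemes supported at points mapping to $x_1,\dots,x_m$. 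As artinian local rings are henselian, the analysis of the present subsection applies verbatim: $\alpha$ is, near the $x_i$, a morphism into $\prod_i\Gamma^{d_i}(\hensel{X}_{x_i}/\hensel{S}_s)$, and composing with the projections onto $\Gamma^{d_i}(V_i/S)$ realizes $\alpha$ as a family in $\FGamma^d_{V/S}(T)$; since $T$ is connected and the multiplicities $d_i$ are read off at $t_0$ from $\beta_0$ (Proposition~\pref{P:supp-at-most-d-components}), this is a morphism $\map{\psi}{T}{G_0}$ with $\psi(t_0)=P_0$ whose push-forward to $X$ is $\alpha$. Because $T$ is artinian, $\psi$ factors uniquely through $\Spec\widehat A$, which yields the required $\map{f}{T}{\Spec\widehat A}$; then $\alpha=f^{*}\widehat\beta$ by compatibility of push-forward with base change, and $f$ is unique since the factorization of $\psi$ through a completion is and since the morphism $\psi$ attached to $\alpha$ is forced by the uniqueness statements of~\S\ref{SS:eff-pro-rep}.

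The main obstacle is exactly this last step: rerunning the henselization--completion analysis of the present subsection while staying inside the finite-type, hence noetherian, world. One must verify that every family over an artinian base restricting to $\beta_0$ is \emph{canonically} a family on the finite-type charts $V_i$, and not merely on the henselizations $\hensel{X}_{x_i}$ or the completions $\widehat{X}_{x_i}$ used to describe the henselian and formal local rings in Propositions~\pref{P:strictly-local-rings-of-Gamma} and~\pref{P:formal-local-rings-of-Gamma}, and that the resulting morphism to $G_0$ is independent of the chosen charts; this rests on \etale{} descent for closed immersions, on $\Gamma^d$ commuting with localization and filtered colimits, and on the \etale{}-sheaf property of $\FGamma^d$. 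The passage to a noetherian model is genuinely necessary, since $\Gamma^{d_i}$ of the completed local rings $\widehat{\sO}_{X,x_i}$ need not be noetherian; once the identification above is in place, the universal property is formal.
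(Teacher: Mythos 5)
Your proof is correct in substance but takes a genuinely different route from the paper's. The paper derives the corollary directly from the discussion at the head of~\S\ref{SS:eff-pro-rep}: for an artinian (hence henselian, hence complete local noetherian) base $T$, a family restricting to $\beta_0$ factors \emph{uniquely} through $\prod_{i=1}^m\Gamma^{d_i}(\widehat{X}_{x_i}/\widehat{S}_s)$, and one takes $\widehat{A}$ to be the formal local ring of that product at the point determined by the $\varphi_i$ (cf.\ Proposition~\pref{P:formal-local-rings-of-Gamma}). You instead replace the completions $\widehat{X}_{x_i}$ by finite-type \etale{} charts $V_i\to X$, form the noetherian scheme $G_0=\prod_i\Gamma^{d_i}(V_i/S)$ via Proposition~\pref{P:Gamma^d_finiteness}, complete at $P_0$, and use the formal \etale{}ness of the push-forward over the regular locus (Proposition~\pref{P:pushfwd-over-etale-and-reg-is-etale}) to lift artinian families. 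What your route buys is that the noetherianity of $\widehat{A}$ is manifest, whereas in the paper's route $\Gamma^{d_i}_{\widehat{\sO}_{S,s}}(\widehat{\sO}_{X,x_i})$ is not covered by Proposition~\pref{P:Gamma^d_finiteness} (the completed local ring is not of finite type over $\widehat{\sO}_{S,s}$) and noetherianity of its formal local ring is seen only via the identification with the formal local ring of $\Gamma^d(X/S)$. What the paper's route buys is that uniqueness of the factorization is already packaged in the henselization/completion analysis, with no choices made.

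One point in your argument should be made explicit rather than left to the colimit remark: for $\beta_0$ to lift to a \emph{regular} family on $V=\coprod_i V_i$ you must choose each $V_i$ so that it has a point above $x_i$ whose residue field embeds (over $k(x_i)$) into the residue fields of the points of $\FCI(\beta_0)$ lying over $x_i$ --- in practice, a point with trivial residue field extension, as in Proposition~\pref{P:Gamma-neighborhoods}. The lift $P_0$ is then a genuine \emph{choice}, not canonical; what makes the construction well posed is that, once $P_0$ is fixed, the lift of any artinian deformation $\alpha$ of $\beta_0$ to a $G_0$-valued point specializing to $P_0$ is unique, by the \etale{}ness of $f_*$ over $\reg(f)$ applied to the square-zero extensions building up $T$. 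With that sentence added, your argument is complete.
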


\begin{remark}
Assume that $\FGamma^d_{X/S}$ is represented by an algebraic space
$\Gamma^d(X/S)$. Questions about properties of $\Gamma^d(X/S)$ which only
depend on the strictly local rings, such as being flat or reduced, can be
reduced to the case where $X$ is affine using
Proposition~\pref{P:strictly-local-rings-of-Gamma}. As some properties cannot
be read from the strictly local rings we will need the stronger result of
Proposition~\pref{P:Gamma-neighborhoods} which shows that any point in
$\Gamma^d(X/S)$ has an \etale{} neighborhood which is an open subset of
$\Gamma^d(U/S)$ for some affine scheme $U$.
\end{remark}

\end{subsection}

%%%%%%%%%%%%%%%%%%%%%%%%%%%%%%%%%%%%%%%%%%%%%%%%%%%%%%%%%%%%%%

\begin{subsection}{Push-forward of families of cycles}\label{SS:push-fwd}

\begin{definition}\label{D:push-forward}
Let $\map{f}{X}{Y}$ be a morphism of algebraic spaces separated over $S$.
If $(Z,\alpha)\in\FGamma^d_{X/S}(T)$ is a family of cycles over $T$ we let
$f_*(Z,\alpha)=(f_T(Z),f_*\alpha)$ where $f_T(Z)$ is the schematic image
of $Z$ along $X\times_S T\to Y\times_S T$ and
$f_*\alpha$ is the composition of $\map{\alpha}{T}{\Gamma^d(Z/T)}$ and
$\Gamma^d(Z/T)\to\Gamma^d(f_T(Z)/T)$. This induces a natural transformation
of functors $\map{f_*}{\FGamma^d_{X/S}}{\FGamma^d_{Y/S}}$ denoted the
push-forward.
\end{definition}

\begin{remark}
If $\map{g}{Y}{Z}$ is another morphism of $S$-spaces then clearly $g_* \circ
f_*=(g\circ f)_*$. If $X$ and $Y$ are affine over $S$, the push-forward
$\map{f_*}{\FGamma^d_{X/S}}{\FGamma^d_{Y/S}}$ coincides with the morphism
$\Gamma^d(X/S)\to \Gamma^d(Y/S)$ given by the covariance of the functor
$\Gamma^d$.

Definition~\pref{D:push-forward} only makes sense after we have checked that
$f_T(Z)$ is integral over $T$. If $Y/S$ is locally of finite type then $f_T(Z)$
is quasi-finite and proper and hence finite,
cf.\ Proposition~\pref{P:image_of_finite}. More generally, as $Z\to T$ is
integral with topological finite fibers by
Theorem~\pref[IS:utff]{T:supp-properties}, it follows from
Theorem~\pref{T:image_of_integral_wtff} that $f_T(Z)$ is integral without any
hypothesis on $Y/S$ except the separatedness.
\end{remark}

\begin{definition}\label{D:regular}
Let $X/S$ and $Y/S$ be separated algebraic spaces and let $\map{f}{X}{Y}$ be
any morphism of $S$-spaces. We say that $\alpha\in\FGamma^d_{X/S}(T)$ is
\emph{regular} (resp. \emph{quasi-regular}) with respect to $f$ if
${f_T}|_{\FCI(\alpha)}$ is a closed immersion (resp. universally injective) or
equivalently if
$\map{{f_T}|_{\FCI(\alpha)}}{\FCI(\alpha)}{f_T\bigl(\FCI(\alpha)\bigr)}$ is
an isomorphism (resp. a universal bijection). We let
$\FGamma^d_{X/S,\reg/f}(T)$ (resp. $\FGamma^d_{X/S,\qreg/f}(T)$) be the
elements which are regular (resp. quasi-regular) with respect to $f$.
\end{definition}

\begin{definition}
Let $\fF$ and $\fG$ be contravariant functors from $S$-schemes to sets.
We say that a morphism of functors $\map{f}{\fF}{\fG}$ is
\emph{topologically surjective} if for any field $k$ and element
$y\in\fG\bigl(\Spec(k)\bigr)$ there is a field extension
$\map{g}{\Spec(k')}{\Spec(k)}$ and an element $x\in\fF\bigl(\Spec(k')\bigr)$
such that $f(x)=g^{*}y$ in $\fG\bigl(\Spec(k')\bigr)$. If $\fF$ and $\fG$ are
represented by algebraic spaces, we have that $f$ is topologically surjective
if and only if the corresponding morphism of spaces is surjective.
\end{definition}

\begin{definition}
A morphism $\map{f}{X}{Y}$ is \emph{unramified} if it is formally unramified
and locally of finite type.
\end{definition}

In~\cite{egaIV} unramified morphisms are locally of finite presentation
but the above definition is more useful and also commonly used.

\begin{proposition}\label{P:unramified:quasi-regular=regular}
Let $X/S$ and $Y/S$ be separated algebraic spaces and let $\map{f}{X}{Y}$ be a
morphism of $S$-spaces. Let $\alpha\in\FGamma^d_{X/S}(T)$. If $f$ is unramified
then $\alpha$ is quasi-regular if and only if $\alpha$ is regular.
\end{proposition}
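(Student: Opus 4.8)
The plan is to show that quasi-regularity implies regularity, the reverse implication being trivial. So let $\alpha\in\FGamma^d_{X/S}(T)$ be quasi-regular with respect to $f$, and write $Z=\FCI(\alpha)\inj X_T$ and $Z'=f_T(Z)\inj Y_T$, so that $\map{f_T|_Z}{Z}{Z'}$ is universally injective. We must upgrade this to a closed immersion. First I would reduce to showing that $f_T|_Z$ is unramified: since $f$ is unramified by hypothesis, so is $f_T$ after base change, and hence so is its restriction to the closed subscheme $Z$. The key point is then the standard fact that a morphism which is both unramified (hence formally unramified and locally of finite type) and universally injective, and which is moreover a \emph{monomorphism}, is a closed immersion provided it is also integral (or at least universally closed and affine). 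The integrality of $Z\to Z'$ is immediate: $Z\to T$ is integral by Theorem~\pref[IS:integral]{T:supp-properties} and $Z'\to T$ is integral (this is exactly what makes $f_*\alpha$ a legitimate family, see the remark after Definition~\pref{D:push-forward}), and an affine morphism between two schemes integral over a common base is integral; in particular $Z\to Z'$ is affine and universally closed.

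The missing ingredient is thus that $f_T|_Z$ is a monomorphism. Here I would use that an unramified morphism which is universally injective is a monomorphism: indeed, a morphism $\map{g}{Z}{Z'}$ is a monomorphism if and only if the diagonal $\map{\Delta_g}{Z}{Z\times_{Z'}Z}$ is an isomorphism, and for an unramified morphism $\Delta_g$ is always an open immersion, while universal injectivity forces $\Delta_g$ to be surjective (the two projections $Z\times_{Z'}Z\to Z$ agree on topological points and on residue fields). An open immersion that is surjective is an isomorphism, so $g$ is a monomorphism. Applying this to $g=f_T|_Z$ gives that $f_T|_Z$ is a monomorphism. Combining: $f_T|_Z$ is an integral monomorphism, hence a closed immersion (an integral monomorphism is a closed immersion — it is affine, and on the level of rings an integral surjective-on-$\Spec$... more directly, a universally closed monomorphism which is separated is a closed immersion). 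Therefore $\alpha$ is regular with respect to $f$.

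Some care is needed about whether $Z$ and $Z'$ are honest schemes or algebraic spaces; since $X/S$ and $Y/S$ are only separated algebraic spaces, $Z$ and $Z'$ are algebraic spaces affine (indeed integral) over $T$. All the facts invoked — that the diagonal of an unramified morphism is an open immersion, that an integral monomorphism is a closed immersion — hold verbatim for algebraic spaces, so no essential change is required; one may also simply pass to an \'etale chart of $T$ and work with schemes, as the conclusion ``closed immersion'' is \'etale-local on the target. The main obstacle, if any, is the monomorphism step: one must be slightly careful that universal injectivity of $f_T|_Z$ is exactly what is needed to conclude that the surjective open immersion $\Delta_{f_T|_Z}$ is an isomorphism, rather than merely that $f_T|_Z$ is injective on $T$-points. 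But universal injectivity is built into the definition of quasi-regular (Definition~\pref{D:regular}), so this is exactly available. Everything else is routine.
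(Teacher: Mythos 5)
Your proof is correct and follows essentially the same route as the paper: quasi-regularity plus unramifiedness gives that the restriction of $f_T$ to $\FCI(\alpha)$ is unramified and universally injective, hence a monomorphism (the paper cites \cite[Prop.~17.2.6]{egaIV} for exactly the diagonal argument you spell out), and then a universally closed monomorphism is a closed immersion (the paper gets universal closedness from $\FCI(\alpha)\to T$ universally closed and $Y_T\to T$ separated, you from integrality of $Z\to f_T(Z)$ — the same thing). No gaps.
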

\begin{proof}
If $\alpha$ is quasi-regular and $f$ unramified then $\FCI(\alpha)\inj
X\times_S T\to Y\times_S T$ is unramified and universally
injective. By~\cite[Prop.~17.2.6]{egaIV} this implies that
$\map{{f_T}|_{\FCI(\alpha)}}{\FCI(\alpha)}{Y\times_S T}$ is a
monomorphism. As $\FCI(\alpha)\to T$ is universally closed and $Y_T\to T$ is
separated it follows that ${f_T}|_{\FCI(\alpha)}$ is a proper monomorphism
and hence a closed immersion~\cite[Cor.~18.12.6]{egaIV}.
\end{proof}

\begin{proposition}\label{P:push-fwd-and-supp}
Let $X/S$ and $Y/S$ be separated algebraic spaces and let $\map{f}{X}{Y}$ be
a morphism of $S$-spaces. Let $T$ be an $S$-scheme and
$\map{f_T}{X\times_S T}{Y\times_S T}$ the base change of $f$ along $T\to S$.
Let $\alpha\in\FGamma^d_{X/S}(T)$. Then
\begin{enumerate}
\item $\FCI(f_*\alpha)\inj f_T\bigl(\FCI(\alpha))$.
  \label{P:push-fwd-and-supp:inj}
\item $\FCS(f_*\alpha)=f_T\bigl(\FCS(\alpha))$.
  \label{P:push-fwd-and-supp:red}
\item $\FCS(\alpha) \to f_T\bigl(\FCS(\alpha))=\FCS(f_*\alpha)$ is a bijection
if $\alpha$ is quasi-regular with respect to $f_*$.
  \label{P:push-fwd-and-supp:q-reg}
\item $\FCI(\alpha)\iso f_T\bigl(\FCI(\alpha))=\FCI(f_*\alpha)$ if $\alpha$
is regular with respect to $f_*$.
  \label{P:push-fwd-and-supp:regular}
\end{enumerate}
\end{proposition}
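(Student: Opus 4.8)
The plan is to reduce everything to the set-theoretic statement $\FCS(f_*\alpha)=f_T(\FCS(\alpha))$, which is really the engine here, and then leverage the fact that the support commutes with arbitrary base change and that the image has the integrality and reducedness properties recorded in Theorem~\pref{T:supp-properties}. First I would prove \ref{P:push-fwd-and-supp:inj}: by definition, $f_*\alpha$ is the composition of $\alpha:T\to\Gamma^d(\FCI(\alpha)/T)$ with $\Gamma^d(\FCI(\alpha)/T)\to\Gamma^d(f_T(\FCI(\alpha))/T)$, where $f_T(\FCI(\alpha))$ denotes the schematic image. Since $\alpha$ already factors through $\Gamma^d(f_T(\FCI(\alpha))/T)$, the minimality of the image in Definition~\pref{D:FCI} (equivalently, the characterization of $\ker$ as the \emph{largest} ideal through which the law factors) gives $\FCI(f_*\alpha)\inj f_T(\FCI(\alpha))$. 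Note $f_T(\FCI(\alpha))$ is integral over $T$ by the remark following Definition~\pref{D:push-forward}, so the push-forward is legitimately defined.

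For \ref{P:push-fwd-and-supp:red}, the key point is that both sides commute with arbitrary base change: the support does by Theorem~\pref{T:supp-and-arbitrary-base-change}, and the set-theoretic image of a morphism commutes with base change since surjectivity onto its image is preserved. Hence it suffices to check equality on fibers, i.e., we may assume $T=\Spec(k)$ with $k$ a field. Over a field, $\FCI(\alpha)=\FCS(\alpha)=\coprod_{i=1}^n\Spec(k_i)$ is a finite reduced scheme by Theorem~\pref[IS:field]{T:supp-properties}, and this is precisely the situation already analyzed in the proof of Corollary~\pref{C:push-fwd-and-supp}: there it is shown that $\FCS(f_*\alpha)=f(\FCS(\alpha))$. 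Since the schematic image of a reduced finite scheme is its set-theoretic image with reduced structure, $f_T(\FCS(\alpha))$ is just the reduction of $f_T(\FCI(\alpha))$, and the claim $\FCS(f_*\alpha)=f_T(\FCS(\alpha))$ follows by taking reductions in \ref{P:push-fwd-and-supp:inj} together with Corollary~\pref{C:push-fwd-and-supp}.

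Parts \ref{P:push-fwd-and-supp:q-reg} and \ref{P:push-fwd-and-supp:regular} are then essentially formal. If $\alpha$ is quasi-regular with respect to $f$, then by Definition~\pref{D:regular} the map $\FCI(\alpha)\to f_T(\FCI(\alpha))$ is universally injective, hence a bijection onto its image; passing to reductions and invoking \ref{P:push-fwd-and-supp:red} gives that $\FCS(\alpha)\to\FCS(f_*\alpha)$ is a bijection. If $\alpha$ is regular, then $\FCI(\alpha)\to f_T(\FCI(\alpha))$ is a closed immersion which is also surjective (it is the schematic image, so scheme-theoretically dominant) — hence an isomorphism onto $f_T(\FCI(\alpha))$. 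Combined with the inclusion $\FCI(f_*\alpha)\inj f_T(\FCI(\alpha))$ from \ref{P:push-fwd-and-supp:inj}, we need the reverse inclusion $f_T(\FCI(\alpha))\inj\FCI(f_*\alpha)$: this holds because $f_*\alpha$, viewed via the isomorphism $\FCI(\alpha)\iso f_T(\FCI(\alpha))$, is literally $\alpha$ transported along an isomorphism, and its image is therefore $f_T(\FCI(\alpha))$ itself.

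The main obstacle is the bookkeeping in \ref{P:push-fwd-and-supp:red}: one must be careful that ``schematic image'' and ``set-theoretic image'' agree up to reduction in the relevant finite-over-a-field situation, and that the base-change compatibility of the set-theoretic image is genuinely available here (it is, since $\FCI(\alpha)\to T$ is integral hence universally closed, so its set-theoretic image behaves well). Everything else is a direct appeal to Theorem~\pref{T:supp-properties}, Theorem~\pref{T:supp-and-arbitrary-base-change}, and Corollary~\pref{C:push-fwd-and-supp}.
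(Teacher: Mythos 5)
Your proposal is correct and follows essentially the same route as the paper: part (i) from the definition of $f_*$ and minimality of the image, part (ii) via Corollary~\pref{C:push-fwd-and-supp} (reduction to fields by base-change compatibility of the support), and parts (iii) and (iv) as formal consequences of Definition~\pref{D:regular}. The paper's own proof is just a terser version of the same argument; your added care about schematic versus set-theoretic images and about $f_T(\FCI(\alpha))$ being integral over $T$ fills in exactly the details the paper leaves implicit.
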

\begin{proof}
\ref{P:push-fwd-and-supp:inj} follows immediately by the definition of $f_*$
and~\ref{P:push-fwd-and-supp:red} follows from
Corollary~\pref{C:push-fwd-and-supp}.
\ref{P:push-fwd-and-supp:q-reg} follows from the definition of a
quasi-regular family, as $f_T\bigl(\FCS(\alpha))=\FCS(f_*\alpha)$ by
Corollary~\pref{C:push-fwd-and-supp}. \ref{P:push-fwd-and-supp:regular}
follows by the definition of regular as
$\FCI(\alpha)\iso f_T\bigl(\FCI(\alpha)\bigr)$ easily implies
that $f_T\bigl(\FCI(\alpha)\bigr)=\FCI(f_*\alpha)$.
\end{proof}

\begin{examples}\label{Ex:push-fwd-examples}
We give two examples on bad behavior of the image with respect to
push-forward. In the first example $f$ is \etale{}, $\alpha$ not
(quasi\nobreakdash-)regular and $\FCI(f_*\alpha)\inj f_T\bigl(\FCI(\alpha))$
is not an isomorphism. In the second example $f$ is universally injective and
$\alpha$ quasi-regular but not regular.
\begin{enumerate}
\item
% f \etale but not injective.
% Example with $f$ \etale{} but $\alpha$ but not (quasi-)regular.
Let $S=\Spec(A)$, $Y=\Spec(B)$ and $X=Y\amalg Y=\Spec(B\times B)$ where
$A=k[\epsilon]/\epsilon^2$ and
$B=k[\epsilon,\delta]/(\epsilon^2,\delta^2,\epsilon\delta)$. We let
$\map{f}{X}{Y}$ be the \etale{} map given by the identity on the two
components. Finally we let $\alpha\in\FGamma^2_{X/S}(S)$ be the family of
cycles corresponding to the multiplicative polynomial law
$\mapname{F} B\times B\to B/(\delta-\epsilon)\times B/(\delta+\epsilon)\iso
A\times A \to A\otimes_A A\iso A$ which is homogeneous of degree $2$. The
support of $\alpha$ corresponds to $\ker(F)=
\bigl( (\delta-\epsilon), (\delta+\epsilon)\bigr)\subset B\times B$.
It is easily seen that $f\bigl(\FCI(\alpha)\bigr)=V(0)$. On the other
hand an easy calculation shows that $\FCI(f_*\alpha)=V(\delta)$.

\item
% f not \etale but injective.
% Example with $f$ universally injective and $\alpha$ quasi-regular but not
% regular.
Let $k$ be a field of characteristic different from $2$.  Let $S=\Spec(A)$,
$Y=\Spec(B)$ and $X=\Spec(C)$ where $A=k[\epsilon]/\epsilon^2$,
$B=k[\epsilon,\delta]/(\epsilon,\delta)^2$ and
$C=k[\epsilon,\delta,\tau]/\bigl(\epsilon^2,\epsilon\delta,\epsilon\tau,
\delta^2,\tau^2,\delta\tau-\epsilon\bigr)$. Let $\map{f}{X}{Y}$ be the natural
morphism. An easy calculation shows that $\Gamma^2_A(C)$ is generated by
$\gamma^2(\delta)$, $\gamma^2(\tau)$, $\delta\times 1$, $\tau\times 1$ and
$\delta\times\tau$. After finding explicit relations for these generators in
$\Gamma^2_A(C)$, it can also be shown that
$\gamma^2(\delta),\gamma^2(\tau),\delta\times 1,\tau\times 1\mapsto 0$ and
$\delta\times\tau\mapsto -2\epsilon$ defines a family
$\map{\alpha}{S}{\Gamma^2(X/S)}$. It is easy to check that $\FCI(\alpha)=X$,
$f(\FCI(\alpha))=Y$ but $\FCI(f_*\alpha)=V(\delta)$.
\end{enumerate}
\end{examples}

\begin{proposition}\label{P:reg-functors}
Let $\map{f}{X}{Y}$ be a morphism between algebraic spaces separated over $S$.
Then:
\begin{enumerate}
\item $\FGamma^d_{X/S,\reg/f}$ and $\FGamma^d_{X/S,\qreg/f}$ are subfunctors of
$\FGamma^d_{X/S}$.
\item If $\map{f}{X}{Y}$ is unramified then
$\FGamma^d_{X/S,\reg/f}=\FGamma^d_{X/S,\qreg/f}$ is an open subfunctor of
$\FGamma^d_{X/S}$.\label{PRF:reg-subset-open-if-unramified}
\item If $f$ is an immersion then
$\FGamma^d_{X/S,\reg/f}=\FGamma^d_{X/S,\qreg/f}=\FGamma^d_{X/S}$.
\item If $f$ is surjective then $\FGamma^d_{X/S,\reg/f}\to\FGamma^d_{Y/S}$ is
topologically surjective.\label{PRF:reg-subset-top-surj}
\end{enumerate}
\end{proposition}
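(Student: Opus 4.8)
The plan is to settle the four assertions in turn; each one comes down to a property, prepared in~\S\ref{S:support}, of the closed subspace $\FCI(\alpha)\inj X_T$ and of its restricted map $({f_T})|_{\FCI(\alpha)}$ to $Y_T$. For~(i) I would check that regularity and quasi-regularity are stable under base change. Given $\alpha\in\FGamma^d_{X/S}(T)$ regular with respect to $f$ and a morphism $\map{g}{T'}{T}$, Remark~\pref{R:minimal-repr} (resting on Theorem~\pref{T:supp-and-arbitrary-base-change}) provides a bijective closed immersion $\FCI(g^{*}\alpha)\inj g^{*}\FCI(\alpha)$, while $g^{*}\FCI(\alpha)\to Y_{T'}$ is the base change of the closed immersion $({f_T})|_{\FCI(\alpha)}$, hence a closed immersion; composing, $({f_{T'}})|_{\FCI(g^{*}\alpha)}$ is a closed immersion, so $g^{*}\alpha$ is regular. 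The quasi-regular case is identical with ``closed immersion'' replaced by ``universally injective'', a class also stable under base change and composition. For~(iii), if $f$ is an immersion then the composite $\FCI(\alpha)\inj X_T\xrightarrow{f_T}Y_T$ is an immersion which is moreover universally closed, since $\FCI(\alpha)\to T$ is integral by Theorem~\pref[IS:integral]{T:supp-properties} and $Y_T\to T$ is separated; an immersion that is universally closed is a closed immersion, so every family is regular and the three functors all equal $\FGamma^d_{X/S}$.

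For~(ii) the identity $\FGamma^d_{X/S,\reg/f}=\FGamma^d_{X/S,\qreg/f}$ is Proposition~\pref{P:unramified:quasi-regular=regular} applied to each family. Open-ness of a subfunctor may be tested \'etale-locally on the base, so to see that $\FGamma^d_{X/S,\qreg/f}$ is open I would fix $\alpha\in\FGamma^d_{X/S}(T)$ with $T$, hence $Z:=\FCI(\alpha)$, affine, and set $h=({f_T})|_{Z}$. Since $Z\inj X_T$ is a closed immersion and $f$ is unramified, $h$ is unramified; it is separated ($Z\to T$ affine, $Y_T\to T$ separated) and universally closed ($Z\to T$ integral, $Y_T\to T$ separated). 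Therefore the diagonal $\map{\Delta_h}{Z}{W:=Z\times_{Y_T}Z}$ is at once an open immersion (unramifiedness) and a closed immersion (separatedness), so its image is open and closed in $W$ and the complement $W'$ is a closed subspace of $W$. Recalling that a morphism is universally injective exactly when its diagonal is surjective, $\alpha$ is quasi-regular iff $W'=\emptyset$. For $\map{g}{T'}{T}$ one may replace $\FCI(g^{*}\alpha)$ by $g^{*}\FCI(\alpha)=\FCI(\alpha)\times_T T'$ without affecting universal injectivity of the map to $Y_{T'}$ (the inclusion between them is a universal homeomorphism with trivial residue field extensions), and then $g^{*}\FCI(\alpha)\times_{Y_{T'}}g^{*}\FCI(\alpha)=W\times_T T'$, so $g^{*}\alpha$ is quasi-regular iff $W'\times_T T'=\emptyset$ iff the image of $g$ is disjoint from $Z':=\mathrm{image}(W'\to T)$. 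This set $Z'$ is closed because $W'\to T$ is the composite of the closed immersion $W'\inj W$, the base change $W\to Z$ of the universally closed $h$, and the integral map $Z\to T$. Hence $T\times_{\FGamma^d_{X/S}}\FGamma^d_{X/S,\qreg/f}$ is the open subscheme $T\setminus Z'$.

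For~(iv) assume $f$ surjective and let $\beta\in\FGamma^d_{Y/S}(k)$ for a field $k$. After base changing to an algebraic closure $\overline{k}$, Corollary~\pref{C:geom-points-of-Gamma} writes $\beta_{\overline{k}}=\sum_{i=1}^{n}d_i[y_i]$ with the $y_i\in Y_{\overline{k}}$ distinct, each of residue field $\overline{k}$, and $\sum_i d_i=d$. By surjectivity each fibre $X\times_Y\Spec\kappa(y_i)$ is non-empty; choose a point of it with residue field $m_i\supseteq\overline{k}$, and let $k'$ be a quotient of $m_1\otimes_{\overline{k}}\dots\otimes_{\overline{k}}m_n$ by a maximal ideal, a field extension of $\overline{k}$ into which every $m_i$ embeds over $\overline{k}$. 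Base changing to $k'$ produces $k'$-points $x_1,\dots,x_n$ of $X_{k'}$, automatically of residue field $k'$, with $f_{k'}(x_i)=y_{i,k'}$; they are distinct because the $y_{i,k'}$ are. The cycle $\alpha=\sum_i d_i[x_i]\in\FGamma^d_{X/S}(k')$ has $\FCI(\alpha)=\coprod_i\Spec k'$, and $({f_{k'}})|_{\FCI(\alpha)}$ is a closed immersion (a finite disjoint union of distinct $k'$-points of the separated $k'$-space $Y_{k'}$), so $\alpha\in\FGamma^d_{X/S,\reg/f}(k')$; and $f_{*}\alpha=\sum_i d_i[y_{i,k'}]=\beta_{k'}$ by Corollary~\pref{C:push-fwd-and-supp}. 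This exhibits the required lift.

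The routine parts are~(i),~(iii) and the equality in~(ii). The real point is open-ness in~(ii): the quasi-regular locus is visibly the complement of a set, and to make that set \emph{closed} --- so the locus is open --- one needs $\Delta_h$ to be an open immersion, which is precisely where unramifiedness enters and cannot be dropped. A subtler care point, which is why I work throughout with integrality (universal closedness) of the support rather than properness, is that $X/S$, hence $\FCI(\alpha)\to T$, is not assumed of finite type, so the closedness of $\mathrm{image}(W'\to T)$ must be argued directly rather than quoted from a finite-presentation statement.
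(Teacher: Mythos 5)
Your proof is correct and follows the same overall architecture as the paper's: base-change stability for (i), reduction of (ii) to quasi-regularity via Proposition~\pref{P:unramified:quasi-regular=regular} followed by exhibiting the complement of the quasi-regular locus as the image of a closed set under a universally closed map, the triviality of (iii), and a pointwise lift over a field extension for (iv). The one place where you genuinely diverge is the mechanism for closedness in (ii). The paper notes that $Z\to f_T(Z)$ is \emph{finite} and unramified (finiteness coming from integrality plus the fact that unramified morphisms are locally of finite type) and invokes upper semicontinuity of the fibre rank of a finite morphism, via Nakayama, to get a closed subset $W\subseteq f_T(Z)$ where geometric fibres have more than one point; it then takes the complement of the image of $W$ in $T$. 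You instead use the characterization of universal injectivity by surjectivity of the diagonal: for the unramified separated universally closed $h=(f_T)|_Z$ the diagonal is open and closed in $Z\times_{Y_T}Z$, so its complement $W'$ is closed and its image in $T$ is closed. The two devices do the same job; yours has the mild advantage of not needing to pass through the auxiliary space $f_T(Z)$ or the semicontinuity lemma, and it makes the base-change compatibility transparent since the diagonal and its open-and-closed image commute with base change on the nose. Your treatment of (iv) is also a more explicit version of the paper's (the paper simply asserts the existence of $k'$ and of a closed $Z\inj X_{k'}$ mapping isomorphically to $W_{k'}$), but the content is the same.
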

\begin{proof}
(i) As the support commutes with arbitrary base change it follows that the
requirement for $\alpha\in\FGamma^d_{X/S}(T)$ to be quasi-regular is stable
under arbitrary base change. Thus the pull-back
$\FGamma^d_{X/S}(T)\to\FGamma^d_{X/S}(T')$ induced by $T'\to T$ restricts to
$\FGamma^d_{X/S,\qreg/f}$. If $\alpha\in\FGamma^d_{X/S}(T)$ is regular then by
definition $\FCI(\alpha)\iso f_T\bigl(\FCI(\alpha)\bigr)=\FCI(f_*\alpha)$. If
$\map{g}{T'}{T}$ is any morphism then clearly $\FCI(g^*\alpha)\iso\FCI(g^*f_*
\alpha)=\FCI(f_*g^*\alpha)$ and thus $g^*\alpha\in\FGamma^d_{X/S,\reg/f}(T')$.

(ii) Proposition~\pref{P:unramified:quasi-regular=regular} shows that
$\FGamma^d_{X/S,\qreg/f}=\FGamma^d_{X/S,\reg/f}$. To show that
$\FGamma^d_{X/S,\reg/f}\subseteq \FGamma^d_{X/S}$ is open we let
$\map{\alpha}{T}{\FGamma^d_{X/S}}$ be a morphism. This factors through
$T\to\Gamma^d(Z/T)$ where $Z=\FCI(\alpha)\inj X_T$ and $X_T=X\times_S T$. As
$f$ is unramified $\injmap{(f_T)|_Z}{Z}{X_T\to Y_T}$ is unramified. In
particular $\map{(f_T)|_Z}{Z}{f_T(Z)}$ is finite and unramified. By Nakayama's
lemma, the rank of the fibers of a finite morphism is upper semicontinuous.
%% \footnote{If $M$ is a finitely generated $A$-module then the support of $M$
%% is closed. If the images of $f_1,f_2,\dots,f_n\in M$ in $M\otimes_A k(\ip)$
%% are generators, then by Nakayama they are also generators of
%% $M\otimes_A A_\ip$. Thus $N=M/(f_1,f_2,\dots,f_n)$ is not supported at $\ip$
%% which shows that $f_1,f_2,\dots,f_n$ generates $M$ in a neighborhood of
%% $\ip$.}.
Thus, the subset $W$ of $f_T(Z)$ over which the geometric fibers of $(f_T)|_Z$
contain more than one point is closed. Let $U=T\mysetminus g_T(W)$, where
$\map{g}{Y}{S}$ is the structure morphism. Then
$\FGamma^d_{X/S,\qreg/f}\times_{\FGamma^d_{X/S}} T=U$ which shows that
$\FGamma^d_{X/S,\qreg/f}\subseteq \FGamma^d_{X/S}$ is an open subfunctor.

(iii) Obvious from the definitions.

(iv) Let $\beta\in\FGamma^d_{Y/S}(k)$ where $k=\overline{k}$ is an
algebraically closed field. Then by
Theorem~\pref[IS:field]{T:supp-properties} the image $W:=\FCI(\beta)\inj
Y_k$ is a finite disjoint union of reduced points, each with residue field
$k$. As $f$ is surjective we can then find a field extension $k\inj k'$ and a
closed subspace $Z\inj X_{k'}$ such that $f_{k'}(Z)=W_{k'}$ and
$\map{{f_{k'}}|_Z}{Z}{W_{k'}}$ is an isomorphism. This gives an element
$\alpha\in\FGamma^d_{X/S}(k')$ such that $f_*\alpha=\beta$.
\end{proof}

\begin{proposition}\label{P:pushfwd-diagram}
Let
$$\xymatrix{
X'\ar[r]^{g'}\ar[d]^{f'} & X\ar[d]^{f} \\
Y'\ar[r]^{g} & Y\ar@{}[ul]|\square
}$$
be a cartesian square of algebraic spaces separated over $S$. Let
\begin{align*}
\FGamma^d_{X'/S,\reg/g}
&= \FGamma^d_{X'/S}\times_{\FGamma^d_{Y'/S}}\FGamma^d_{Y'/S,\reg/g}\\
&= \left\{ \alpha\in\FGamma^d_{X'/S}\;:\; f'_*\alpha\text{ is regular with
respect to $g$} \right\}.
\end{align*}
Then
\begin{enumerate}
\item \label{PD:reg-inclusion}
If $g$ is unramified or $f$ is an immersion then
$$\FGamma^d_{X'/S,\reg/g} \subseteq \FGamma^d_{X'/S,\reg/g'}.$$
\item \label{PD:cartesian-if-etale-or-immersion}
If $g$ is \etale{} or $f$ is an immersion then we have a cartesian diagram
$$\xymatrix{
\FGamma^d_{X'/S,\reg/g}\ar[d]^{f'_*}\ar[r]^{g'_*}
  & \FGamma^d_{X/S}\ar[d]^{f_*} \\
\FGamma^d_{Y'/S,\reg/g}\ar[r]^{g_*}
  & \FGamma^d_{Y/S}.\ar@{}[ul]|\square
}$$
\item \label{PD:univhomeo-if-unramified}
For arbitrary $g$ the results of \ref{PD:reg-inclusion} and
\ref{PD:cartesian-if-etale-or-immersion} are true over reduced
$S$-schemes, i.e., for any reduced $S$-scheme $T$ we have that
$$\FGamma^d_{X'/S,\reg/g}(T) \subseteq \FGamma^d_{X'/S,\reg/g'}(T)$$
and the diagram in \ref{PD:cartesian-if-etale-or-immersion} is cartesian in the
subcategory of functors from reduced $S$-schemes.
\end{enumerate}
\end{proposition}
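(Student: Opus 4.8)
The square commutes because $f_*g'_*=(fg')_*=(gf')_*=g_*f'_*$, and $f'_*$ carries $\FGamma^d_{X'/S,\reg/g}$ into $\FGamma^d_{Y'/S,\reg/g}$ by the very definition of the former; hence there is a natural transformation
$$\Phi\colon\FGamma^d_{X'/S,\reg/g}\longrightarrow\FGamma^d_{X/S}\times_{\FGamma^d_{Y/S}}\FGamma^d_{Y'/S,\reg/g},\qquad\alpha\longmapsto(g'_*\alpha,f'_*\alpha),$$
and the substance of \ref{PD:cartesian-if-etale-or-immersion} is that $\Phi$ is an isomorphism, while \ref{PD:univhomeo-if-unramified} asks for the same over reduced $S$-schemes. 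I plan to prove \ref{PD:reg-inclusion} first, deduce injectivity of $\Phi$ from it, then construct the inverse of $\Phi$, and finally revisit the reduced case.

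\emph{Proof of \ref{PD:reg-inclusion}.} First I would show, \emph{for arbitrary $g$ and $f$}, that if $f'_*\alpha$ is quasi-regular with respect to $g$ then $\alpha$ is quasi-regular with respect to $g'$. Since quasi-regularity is stable under base change and universal injectivity can be tested on geometric fibres over $T$, one reduces to $T=\Spec(k)$ with $k$ algebraically closed; there $\FCI(\alpha)=\{z'_1,\dots,z'_n\}$ is a reduced set of $k$-points of $X'=X\times_Y Y'$ by Theorem~\pref[IS:field]{T:supp-properties}, and by naturality of push-forward together with Corollary~\pref{C:push-fwd-and-supp} one has $\FCI(f'_*\alpha)=\{f'(z'_i)\}$. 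If $g'(z'_i)=g'(z'_j)$ then $g(f'(z'_i))=f(g'(z'_i))=f(g'(z'_j))=g(f'(z'_j))$, so quasi-regularity of $f'_*\alpha$ forces $f'(z'_i)=f'(z'_j)$, and then $z'_i=z'_j$ because a $k$-point of $X\times_Y Y'$ is determined by its pair of images. To upgrade ``quasi-regular'' to ``regular'': if $g$ is unramified then so is $g'$, and Proposition~\pref{P:unramified:quasi-regular=regular} applies; if $f$ is an immersion then so is $f'$, whence $\FCI(\alpha)\iso\FCI(f'_*\alpha)$ by \ref{PRF:reg-subset-top-surj} (iii) of Proposition~\pref{P:reg-functors} and \ref{P:push-fwd-and-supp:regular} of Proposition~\pref{P:push-fwd-and-supp}, and since $X'_T=g_T^{-1}(X_T)\inj Y'_T$ the map $g'_T|_{\FCI(\alpha)}$ is a universally closed monomorphism, hence a closed immersion by~\cite[Cor.~18.12.6]{egaIV}. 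Injectivity of $\Phi$ then follows: in the source every $\alpha$ is regular with respect to $g'$, so $\FCI(\alpha)\iso\FCI(g'_*\alpha)$ and $\alpha$ is recovered from $g'_*\alpha$.

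\emph{Surjectivity of $\Phi$.} Given $(\beta,\gamma)$ with $f_*\beta=g_*\gamma$, set $Z=\FCI(\beta)\inj X_T$ and $W'=\FCI(\gamma)\inj Y'_T$; $g$-regularity of $\gamma$ identifies $W'\iso W:=g_T(W')\inj Y_T$ with $\FCI(g_*\gamma)=W=\FCI(f_*\beta)$. \textbf{If $f$ is an immersion}, then $\FCI(\gamma)\iso\FCI(g_*\gamma)=Z\inj X_T$ shows $\FCI(\gamma)\subseteq g_T^{-1}(X_T)=X'_T$, so $\gamma\in\FGamma^d_{X'/S}(T)$, and one checks directly that $\alpha:=\gamma$ satisfies $g'_*\alpha=\beta$ and $f'_*\alpha=\gamma$. \textbf{If $g$ is étale}, then $g'$ is étale; here $g_T^{-1}(W)=W'\amalg W'_2$ is a clopen decomposition, at each point of $W'$ the étale morphism $g_T$ has trivial residue extension and is therefore a local isomorphism, and $\FCS(\beta)\subseteq f_T^{-1}(|W|)$. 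The plan is to transport $Z$ and $W'$ across $g'_T$ to a closed subspace $Z'\inj X'_T$ carrying a family $\alpha$ with $g'_*\alpha=\beta$ and $f'_*\alpha=\gamma$, the verifications resting on the identity $f_*\beta=g_*\gamma$ and on the fact (Theorem~\pref{T:image-and-ess-smooth}) that the image commutes with étale base change. \emph{This étale construction is the main obstacle:} although $g$ is a local isomorphism near the image of the $g$-regular family $\gamma$, it need not be one on any Zariski neighbourhood of $W'$, so one cannot simply replace $g$ by an open immersion; the transport of $\beta$ and the bookkeeping of push-forward versus schematic image must be carried out carefully, using étale-locality on $T$ and the clopen splitting $g_T^{-1}(W)=W'\amalg W'_2$.

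\emph{Proof of \ref{PD:univhomeo-if-unramified}.} When $T$ is reduced, $\FCI(\alpha)$ is reduced by Theorem~\pref[IS:reduced]{T:supp-properties} and the schematic image of a reduced scheme is reduced, so by comparing supports one gets $\FCI(f_*\alpha)=f_T(\FCI(\alpha))$, i.e.\ the image commutes with push-forward. Hence, writing $Z=\FCI(g'_*\alpha)$ and $W=g_T(\FCI(f'_*\alpha))$,
$$f_T(Z)=\FCI(f_*g'_*\alpha)=\FCI(g_*f'_*\alpha)=W,$$
so $Z\subseteq f_T^{-1}(W)$ and the isomorphism $W'\iso W$ yields $Z\times_{Y_T}W'\iso f_T^{-1}(W)\cap Z=Z$ via $g'_T$; since $\FCI(\alpha)\subseteq g'^{-1}_T(Z)\cap f'^{-1}_T(W')=Z\times_{Y_T}W'$, it follows that $g'_T|_{\FCI(\alpha)}$ is a closed immersion, which gives \ref{PD:reg-inclusion} for arbitrary $g$ over reduced $T$. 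The same computation makes the construction of $\Phi^{-1}$ work for arbitrary $g$ over reduced $T$: from $(\beta,\gamma)$ one transports $\beta$ across the isomorphism $g'_T\colon Z\times_{Y_T}W'\iso Z$ to a family $\alpha$, and $f_*\beta=g_*\gamma$ forces $f'_*\alpha=\gamma$, while $g'_*\alpha=\beta$ is immediate; thus the diagram is cartesian on reduced $S$-schemes.
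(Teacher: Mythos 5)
Your part \ref{PD:reg-inclusion} is essentially complete and correct, and your pointwise argument is a nice variant of the paper's: you show that quasi-regularity descends for \emph{arbitrary} $f$ and $g$ by noting that a $k$-point of $X\times_Y Y'$ is determined by its two projections, and then upgrade to regularity via Proposition~\pref{P:unramified:quasi-regular=regular} when $g$ is unramified (the paper instead shows directly that $f'_T(\FCI(\alpha))\to Y_T$ is universally injective, unramified and universally closed, hence a closed immersion). Part \ref{PD:univhomeo-if-unramified} is also in order. The problems are both in part \ref{PD:cartesian-if-etale-or-immersion}. First, your justification of the injectivity of $\Phi$ is wrong: a $g'$-regular family is \emph{not} recovered from $g'_*\alpha$ alone. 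Take $X=Y=S$, $X'=Y'=S\amalg S$ with $g=g'$ the fold map and $d=1$: the two sections of $X'\to S$ are both regular with respect to $g'$ and have the same push-forward to $X$. Knowing $\FCI(\alpha)\iso\FCI(g'_*\alpha)$ abstractly does not determine $\FCI(\alpha)$ as a closed subscheme of $X'_T$; one must use the second component $f'_*\alpha$ as well, and even then an argument is needed (the paper sidesteps this by constructing a canonical inverse $\Lambda^{-1}$ and checking $\Lambda^{-1}\circ\Lambda=\mathrm{id}$ using part \ref{PD:reg-inclusion}).

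Second, and more seriously, the \etale{} case of surjectivity --- which is the heart of the proposition --- is only announced as a plan and not carried out; what you call ``the main obstacle'' is precisely the step you must supply. The missing construction is short. With $Z=\FCI(\beta)\inj X_T$, take $W=f_T(Z)$, the \emph{schematic image} (not $\FCI(f_*\beta)$, which is in general only a bijective closed subscheme of it, so that there is a natural morphism $Z\to W$). Since $\gamma$ is $g$-regular, $\FCI(\gamma)$ is a section of $g_T^{-1}\bigl(\FCI(f_*\beta)\bigr)\to\FCI(f_*\beta)$, hence open and closed there; as $\FCI(f_*\beta)\inj W$ is a homeomorphism, this determines an open and closed subscheme $W'\subseteq g_T^{-1}(W)$ which is \etale{}, bijective and with trivial residue field extensions over $W$, hence $W'\iso W$. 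Then $Z':=W'\times_W Z\iso Z$ is a closed subscheme of $X'_T$, one transports $\beta$ across this isomorphism to obtain $\alpha$, and $f'_*\alpha=\gamma$ follows because both families live over subschemes of $W'$ and $g_*$ is injective there. No \etale{}-localization on $T$ is needed. Until this construction (and a corrected injectivity argument, e.g.\ via $\Lambda^{-1}\circ\Lambda=\mathrm{id}$) is written out, the central assertion of the proposition is not proved.
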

\begin{proof}
\ref{PD:reg-inclusion} Let $\alpha'\in \FGamma^d_{X'/S}(T)$. If $f$ is an
immersion then $\FCI(f'_*\alpha')=\FCI(\alpha')$ and
$\FCI(f_*g'_*\alpha')=\FCI(g'_*\alpha')$. It is thus obvious that $\alpha'$ is
regular if and only if $f'_*\alpha'$ is regular, i.e.,
$\FGamma^d_{X'/S,\reg/g}=\FGamma^d_{X'/S,\reg/g'}$.

Assume instead that $f$ is arbitrary but $g$ is unramified. Let
$Z'=\FCI(\alpha')$ and $W'=f'_T(Z')$.
If $\alpha'\in\FGamma^d_{X'/S,\reg/g}(T)$, i.e., if $f'_*\alpha'$ is regular
with respect to $g$,
we have that $\FCI(f'_*\alpha')\inj W'\inj Y'_T\to Y_T$ is a closed
immersion. But $\FCI(f'_*\alpha')\inj W'$ is universally bijective and thus
$W'\to Y_T$ is universally injective and
unramified. By~\cite[Prop.~17.2.6]{egaIV} this implies that $W'\to Y_T$ is a
monomorphism and hence a closed immersion.
Thus $Z'\inj W'\times_{Y'_T} X'_T=W'\times_{Y_T} X_T\inj X_T$ is a closed
immersion which shows that $\alpha'$ is regular with respect to $g'$.

\ref{PD:cartesian-if-etale-or-immersion}
The commutativity of the diagrams is obvious. This gives us a canonical
morphism
$$\map{\Lambda}{\FGamma^d_{X'/S,\reg/g}}
{\FGamma^d_{X/S}\times_{\FGamma^d_{Y/S}}\FGamma^d_{Y'/S,\reg/g}}.$$
We construct an inverse $\Lambda^{-1}$ of this morphism as follows: Let $T$ be
an $S$-scheme, $\alpha\in\FGamma^d_{X/S}(T)$ and $\beta'\in
\FGamma^d_{Y'/S,\reg/g}(T)$ such that $\beta=g_*\beta'=f_*\alpha
\in\FGamma^d_{Y/S}(T)$. As $\beta'$ is regular with respect to $g$ we have that
$\FCI(\beta')\inj Y'_T$ is isomorphic to $\FCI(\beta)\inj Y_T$. Let
$Z=\FCI(\alpha)\inj X_T$. If $f$ is an immersion then $\alpha$ is regular with
respect to $f$ and $Z \inj X_T$ is isomorphic to $\FCI(\beta)$ and we let
$Z'=\FCI(\beta')\times_{\FCI(\beta)} \FCI(\alpha)\iso Z$.

For arbitrary $f$ but \etale{} $g$, let $W=f_T(Z)$. Then
$\FCI(\beta)\inj W$ is a bijective closed immersion. By the regularity of
$\beta'$, we have that $\FCI(\beta')$ is a
section of $g^{-1}_T\bigl(\FCI(\beta)\bigr)\to \FCI(\beta)$. As $g$ is
unramified it thus follows that $\FCI(\beta')$ is open and closed in
$g^{-1}_T\bigl(\FCI(\beta)\bigr)\inj g^{-1}_TW$. Let $W'$ be the corresponding
open and closed subscheme of $g^{-1}_T W$. As $g$ is
\etale{} $W'\iso W$ and we let $Z'=W'\times_W Z$.

In both cases we have obtained a canonical closed subscheme $Z'\inj X'_T$
such that $Z'\iso Z$. This gives a unique lifting of the family
$\alpha\in\FGamma^d_{Z}(T)$ to a family
$\alpha'\in\FGamma^d_{Z'}(T)\subseteq \FGamma^d_{X'/S}(T)$. By the
construction of $Z'$ and the regularity of $\beta'$, it is clear that
$f'_*\alpha'=\beta'$. We let $\Lambda^{-1}(T)(\alpha,\beta')=\alpha'$ and it is
obvious that $\Lambda$ is a morphism since the construction is functorial. By
construction $\Lambda\circ\Lambda^{-1}$ is the identity and as
$\FGamma^d_{X'/S,\reg/g} \subseteq \FGamma^d_{X'/S,\reg/g'}$ it follows
that $\Lambda^{-1}\circ\Lambda$ is the identity as well.

\ref{PD:univhomeo-if-unramified}
Over reduced schemes all the involved images are reduced by
Theorem~\pref[IS:reduced]{T:supp-properties} and the support of the
push-forward coincides with the image. The arguments of \ref{PD:reg-inclusion}
and \ref{PD:cartesian-if-etale-or-immersion} then simplify and go through
without any hypotheses on $f$ and $g$.
\end{proof}

\begin{corollary}\label{C:pushfwd-diagram}
Let $\map{f}{X}{Y}$ and $\map{g}{Y'}{Y}$ be morphism of algebraic spaces,
separated over $S$. Assume that for every involved space $Z$, the functor
$\FGamma^d_{Z/S}$ is represented by a space which we denote by $\Gamma^d(Z/S)$.
\begin{enumerate}
\item If $g$ is unramified, then $\FGamma^d_{Y'/S,\reg/g}$ is represented by an
open subspace $U=\reg(g)$ of $\Gamma^d(Y'/S)$.
\item If $g$ is \etale{}, then we have a cartesian diagram
$$\xymatrix{
{\Gamma^d(X'/S)|_{{f'_{*}}^{-1}(U)}}\ar[d]^{f'_*}\ar[r]^-{g'_*}
  & {\Gamma^d(X/S)}\ar[d]^{f_*} \\
{\Gamma^d(Y'/S)|_U}\ar[r]^{g_*}
  & {\Gamma^d(Y/S).}\ar@{}[ul]|\square
}$$
\item If $g$ is unramified, the canonical morphism
$$\map{\Lambda}{\Gamma^d(X'/S)|_{{f'_{*}}^{-1}(U)}}
{\Gamma^d(Y'/S)|_U\times_{\Gamma^d(Y/S)}\Gamma^d(X/S)}$$
is a universal homeomorphism such that $\Lambda_\red$ is an isomorphism.
\end{enumerate}
\end{corollary}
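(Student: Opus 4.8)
The three parts follow from Proposition~\pref{P:pushfwd-diagram}, the representability hypotheses and the Yoneda lemma; the plan is simply to transport each functorial statement to the representing spaces, the only genuinely non-formal point being the universal homeomorphism claim in~(iii). Throughout, write $Q=\Gamma^d(X'/S)|_{{f'_{*}}^{-1}(U)}$ and $P=\Gamma^d(Y'/S)|_U\times_{\Gamma^d(Y/S)}\Gamma^d(X/S)$, so that $\Lambda$ denotes a morphism $\map{\Lambda}{Q}{P}$.

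For~(i) I would apply Proposition~\pref[PRF:reg-subset-open-if-unramified]{P:reg-functors} to the unramified morphism $g$ (in the role of the morphism $f$ there, with $Y'$ in the role of $X$): it gives that $\FGamma^d_{Y'/S,\reg/g}=\FGamma^d_{Y'/S,\qreg/g}$ is an \emph{open} subfunctor of $\FGamma^d_{Y'/S}$; since the latter is represented by $\Gamma^d(Y'/S)$, an open subfunctor of it is represented by an open subspace, which I name $U=\reg(g)$. For~(ii), I would recall from Proposition~\pref{P:pushfwd-diagram} the definition $\FGamma^d_{X'/S,\reg/g}=\FGamma^d_{X'/S}\times_{\FGamma^d_{Y'/S}}\FGamma^d_{Y'/S,\reg/g}$; substituting the representing spaces $\Gamma^d(X'/S)$, $\Gamma^d(Y'/S)$ and $U$, and noting that the transition morphism $\FGamma^d_{X'/S}\to\FGamma^d_{Y'/S}$ is $f'_{*}$, this functor is represented by $\Gamma^d(X'/S)\times_{\Gamma^d(Y'/S)}U={f'_{*}}^{-1}(U)=Q$, an open subspace of $\Gamma^d(X'/S)$. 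When $g$ is \etale{}, Proposition~\pref[PD:cartesian-if-etale-or-immersion]{P:pushfwd-diagram} asserts that the square of functors on these four corners is cartesian; all four are now representable, so Yoneda turns it into the asserted cartesian square of representing spaces.

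For~(iii), the identity $g_*\circ f'_{*}=f_*\circ g'_{*}$ produces the canonical morphism $\map{\Lambda}{Q}{P}$ for any $g$. That $\Lambda_\red$ is an isomorphism I would read off Proposition~\pref[PD:univhomeo-if-unramified]{P:pushfwd-diagram}: over reduced $S$-schemes the square of Proposition~\pref[PD:cartesian-if-etale-or-immersion]{P:pushfwd-diagram} is cartesian, i.e.\ $\Lambda(T)$ is bijective for every reduced $T$; since a morphism from a reduced scheme to an algebraic space factors through the reduction of that space, it follows that $\Hom(T,Q_\red)$ and $\Hom(T,P_\red)$ are naturally identified for reduced $T$, whence $Q_\red\iso P_\red$ by Yoneda, the isomorphism being $\Lambda_\red$. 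It then remains to see that $\Lambda$ itself is a universal homeomorphism, and I would deduce this from the fact that this property is insensitive to reduction on both source and target: a morphism of algebraic spaces is a universal homeomorphism if and only if it is integral, universally injective and surjective, and each of these three conditions holds for a morphism precisely when it holds for its reduction --- surjectivity and universal injectivity being topological, and integrality because a monic relation holding modulo the nilradicals can be raised to a suitable power to yield an honest monic relation. Since $\Lambda_\red$ is an isomorphism, hence a universal homeomorphism, so is $\Lambda$.

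I expect the main obstacle to be precisely this last step, passing from ``$\Lambda_\red$ is an isomorphism'' to ``$\Lambda$ is a universal homeomorphism'': over a non-reduced base $\Lambda$ need not be an isomorphism and is not easy to describe directly (compare the pathologies in~\pref{Ex:push-fwd-examples}), so a description-free argument such as the reduction-invariance above seems the right way in. As an independent sanity check one can also verify that $\Lambda$ is a monomorphism --- a family $\alpha'$ on $X'$ that is regular with respect to $g'$ is determined by $g'_{*}\alpha'$, because regularity gives $\FCI(\alpha')\iso\FCI(g'_{*}\alpha')$ and the defining morphism to $\Gamma^d$ is transported along this isomorphism --- which already yields universal injectivity of $\Lambda$; but this is not needed once reduction-invariance is available.
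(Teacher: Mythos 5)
Your proposal is correct and takes essentially the same route as the paper, whose entire proof is the one line ``Follows immediately from Propositions~\pref{P:reg-functors} and~\pref{P:pushfwd-diagram}''; you have simply written out the Yoneda transport that this citation leaves implicit. The only step you add beyond the paper is the deduction of ``universal homeomorphism'' from ``$\Lambda_\red$ is an isomorphism'', and your reduction-invariance argument for this is sound (note only that the integrality part tacitly uses that an algebraic space whose reduction is affine is itself affine, a standard fact about thickenings; alternatively one can argue purely topologically, since $Q_\red\to Q$ and $P_\red\to P$ are universal homeomorphisms).
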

\begin{proof}
Follows immediately from Propositions~\pref{P:reg-functors}
and~\pref{P:pushfwd-diagram}.
\end{proof}

\begin{corollary}\label{C:pushfwd-diagram-2}
Let $\map{f_i}{X_i}{Y}$, $i=1,2$ be morphism of algebraic spaces, separated
over $S$. Let $\map{\pi_i}{X_1\times_Y X_2}{X_i}$ be the projections. Assume
that for every involved space $Z$, the functor $\FGamma^d_{Z/S}$ is represented
by a space which we denote by $\Gamma^d(Z/S)$. Assume that $f_1$ and $f_2$ are
both \etale{} and let $U_i=\reg(f_i)$ and $U_{12}=\reg(f_1\circ
\pi_1)=\reg(f_2\circ \pi_2)$. Then
\begin{enumerate}
\item $U_{12}=((\pi_1)_*)^{-1}(U_1)\cap ((\pi_2)_*)^{-1}(U_2)$.
\item The diagram
$$\xymatrix{
{\Gamma^d(X_1\times_Y X_2/S)|_{U_{12}}}
    \ar[d]^{(\pi_1)_*}\ar[r]^-{(\pi_2)_*}
  & {\Gamma^d(X_2/S)|_{U_2}}\ar[d]^{(f_2)_*} \\
{\Gamma^d(X_1/S)|_{U_1}}\ar[r]^{(f_1)_*}
  & {\Gamma^d(Y/S)}\ar@{}[ul]|\square
}$$
is cartesian.
\end{enumerate}
\end{corollary}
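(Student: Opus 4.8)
The plan is to derive~(ii) formally from~(i) and Corollary~\pref{C:pushfwd-diagram}, so that the substance lies in identifying the open subfunctors in~(i). First some preliminaries: since the square defining $X_1\times_Y X_2$ is cartesian, $\pi_1$ is the base change of $f_2$ and $\pi_2$ the base change of $f_1$, so $\pi_1,\pi_2$ are \etale{} and hence so is $f_1\circ\pi_1=f_2\circ\pi_2$. In particular $f_1$, $f_2$, $\pi_1$, $\pi_2$ and $f_1\circ\pi_1$ are all unramified, so by Proposition~\pref{P:unramified:quasi-regular=regular} a family is \emph{regular} with respect to any of them exactly when it is \emph{quasi-regular}, and the loci $U_1=\reg(f_1)$, $U_2=\reg(f_2)$, $U_{12}=\reg(f_1\circ\pi_1)$ are open subspaces of $\Gamma^d(X_1/S)$, $\Gamma^d(X_2/S)$, $\Gamma^d(X_1\times_Y X_2/S)$ by Corollary~\pref{C:pushfwd-diagram}. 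I will also use that $\FCS((\pi_i)_*\alpha)=(\pi_i)_T(\FCS(\alpha))$ by Proposition~\pref[P:push-fwd-and-supp:red]{P:push-fwd-and-supp}, and that quasi-regularity with respect to a morphism can be tested after passing to the support $\FCS(\alpha)=\FCI(\alpha)_\red$, since every field-valued point of $\FCI(\alpha)$ factors through $\FCS(\alpha)$.

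For~(i) it suffices to identify subfunctors of $\FGamma^d_{X_1\times_Y X_2/S}$, i.e.\ to show that a family $\alpha$ over $T$ is regular with respect to $f_1\circ\pi_1$ if and only if $(\pi_1)_*\alpha$ is regular with respect to $f_1$ and $(\pi_2)_*\alpha$ is regular with respect to $f_2$. For the ``only if'' part: if $(f_1\circ\pi_1)_T|_{\FCI(\alpha)}$ is a closed immersion then, as it factors through $(\pi_1)_T$ followed by $(f_1)_T$, the map $(\pi_1)_T|_{\FCI(\alpha)}$ is universally injective; since $\pi_1$ is \etale{}, $\alpha$ is regular with respect to $\pi_1$, so $\FCI(\alpha)\iso\FCI((\pi_1)_*\alpha)$ by Proposition~\pref[P:push-fwd-and-supp:regular]{P:push-fwd-and-supp}, and the closed immersion $(f_1\circ\pi_1)_T|_{\FCI(\alpha)}$ factoring through this isomorphism forces $(f_1)_T|_{\FCI((\pi_1)_*\alpha)}$ to be a closed immersion; the same argument with $\pi_2,f_2$ in place of $\pi_1,f_1$ handles $(\pi_2)_*\alpha$. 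For the ``if'' part: since $f_1\circ\pi_1$ is \etale{}, it is enough to check $\alpha$ is quasi-regular with respect to it, i.e.\ that $(f_1\circ\pi_1)_T$ is universally injective on $\FCS(\alpha)$; given two geometric points $z,z'$ of $\FCS(\alpha)$ with equal image under $f_1\circ\pi_1=f_2\circ\pi_2$, their images $(\pi_i)_T(z),(\pi_i)_T(z')$ lie on $\FCS((\pi_i)_*\alpha)$ with equal image under $(f_i)_T$, hence agree for $i=1,2$ by quasi-regularity of $(\pi_i)_*\alpha$ with respect to $f_i$, and since a geometric point of $X_1\times_Y X_2$ is determined by its two projections, $z=z'$.

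For~(ii), apply Corollary~\pref{C:pushfwd-diagram} to the cartesian square with $f=f_2$, $g=f_1$ (which is \etale{}) and $U=U_1$: this gives a cartesian square with corners $\Gamma^d(X_1\times_Y X_2/S)|_{((\pi_1)_*)^{-1}(U_1)}$, $\Gamma^d(X_2/S)$, $\Gamma^d(X_1/S)|_{U_1}$, $\Gamma^d(Y/S)$ and top arrow $(\pi_2)_*$. Replacing the upper-right corner by the open subspace $U_2\inj\Gamma^d(X_2/S)$ (i.e.\ base changing the cartesian square there) keeps it cartesian, with new upper-left corner the open subspace $((\pi_1)_*)^{-1}(U_1)\cap((\pi_2)_*)^{-1}(U_2)$, which equals $U_{12}$ by~(i); this is precisely the diagram in~(ii). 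The main obstacle is the ``if'' direction of~(i): one must see that regularity of the two factor push-forwards forces regularity with respect to the composition, which is where one descends to the support and argues with geometric points of the fiber product, using that quasi-regular equals regular for the \etale{} morphism $f_1\circ\pi_1$. Everything else is formal manipulation of cartesian squares together with quotation of the push-forward results established above.
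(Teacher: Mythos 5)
Your proposal is correct, and part (ii) is handled exactly as in the paper: one instance of Corollary~\pref{C:pushfwd-diagram} applied to the cartesian square with $g=f_1$ and $f=f_2$, followed by restriction of the resulting cartesian square over the open subspace $U_2$ of the upper-right corner, using (i) to identify the new upper-left corner with $U_{12}$. The only place you diverge is the non-trivial inclusion $((\pi_1)_*)^{-1}(U_1)\cap((\pi_2)_*)^{-1}(U_2)\subseteq U_{12}$ in (i). The paper gets it by quoting Proposition~\pref[PD:reg-inclusion]{P:pushfwd-diagram} twice (once for each orientation of the square), which yields that $\alpha$ is regular with respect to both $\pi_1$ and $\pi_2$, and then implicitly composes the closed immersion $\FCI(\alpha)\iso\FCI((\pi_1)_*\alpha)$ with the closed immersion $(f_1)_T|_{\FCI((\pi_1)_*\alpha)}$. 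You instead reduce to quasi-regularity via Proposition~\pref{P:unramified:quasi-regular=regular} and check universal injectivity of $(f_1\circ\pi_1)_T$ on the support directly, using that a field-valued point of $X_1\times_Y X_2$ is determined by its two projections together with $\FCS((\pi_i)_*\alpha)=(\pi_i)_T(\FCS(\alpha))$. Your route is more self-contained and symmetric in $f_1,f_2$ (it does not pass through regularity with respect to the projections at all), at the cost of redoing by hand a point-counting argument that Proposition~\pref{P:pushfwd-diagram} already packages; the paper's route is shorter but leans on having proved that base-change-of-regularity statement in the unramified case. The reverse inclusion $U_{12}\subseteq\cap$, which the paper dismisses as obvious, you spell out correctly.
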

\begin{proof}
It follows from \ref{PD:reg-inclusion} of Proposition~\pref{P:pushfwd-diagram}
that
$$((\pi_1)_*)^{-1}(U_1)\cap ((\pi_2)_*)^{-1}(U_2)\subseteq U_{12}$$
and the reverse inclusion is obvious. That the diagram is cartesian now follows
from Corollary~\pref{C:pushfwd-diagram}.
\end{proof}

\begin{remark}
The diagrams in Proposition~\pref{P:pushfwd-diagram} and
Corollary~\pref{C:pushfwd-diagram} are not always cartesian if $g$ is
unramified but not \etale{}. In fact, by Examples~\pref{Ex:push-fwd-examples}
there is a morphism $\map{f}{X}{Y}$ and a family $\alpha\in \FGamma^d_{X/S}(S)$
such that $\FCI(\alpha)=X$, $f(\FCI(\alpha))=Y$ and such that
$\FCI(f_*\alpha)\inj Y$ is not an isomorphism. If we let $Y'=\FCI(f_*\alpha)$
and $\beta'=f_*\alpha\in \FGamma^d_{Y'/S}(S)$, then we cannot lift
$(\alpha,\beta')$ to a family $\alpha'\in \FGamma^d_{X'/S}(S)$.
On the other hand, it is easily seen that Corollary~\pref{C:pushfwd-diagram-2}
remains valid if we replace \etale{} with unramified.
\end{remark}

\begin{remark}
Let $X$, $Y$, $U$, $f$ and $g$ as in Corollary~\pref{C:pushfwd-diagram} and let
$U'$ be the open subscheme of $\Gamma^d(X'/S)$ which represents
$\FGamma^d_{Y'/S,\reg/g'}$. Then ${{f'_{*}}^{-1}(U)\subseteq U'}$ by
Proposition~\pref[PD:reg-inclusion]{P:pushfwd-diagram}, i.e., the points of
$\Gamma^d(X'/S)|_{{f'_*}^{-1}(U)}$ are regular with respect to $g'$. On the
other hand, a point which is regular with respect to $g'$ need not be regular
with respect to $g$, i.e., the inclusion ${f'_{*}}^{-1}(U)\subseteq U'$ is
strict in general.
\end{remark}

\begin{proposition}\label{P:pushfwd-over-etale-and-reg-is-etale}
If $\map{f}{X/S}{Y/S}$ is an \etale{} (resp. \etale{} and surjective)
morphism of algebraic spaces separated over $S$, then
the push-forward
$\map{f_*}{\FGamma^d_{X/S,\reg/f}}{\FGamma^d_{Y/S}}$ is
representable and \etale{} (resp. \etale{} and surjective).
\end{proposition}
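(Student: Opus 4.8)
The plan is to verify that $f_*$ is representable and \etale{} (resp.\ \etale{} and surjective) after base change along an arbitrary morphism $T\to\FGamma^d_{Y/S}$ from a scheme; thus I fix a family $\beta\in\FGamma^d_{Y/S}(T)$ and study the fibre–product functor $G:=\FGamma^d_{X/S,\reg/f}\times_{\FGamma^d_{Y/S}}T$ on $T$-schemes, the goal being that $G$ is represented by an algebraic space \etale{} over $T$ (and, when $f$ is surjective, that $G\to T$ is surjective). Put $X_T=X\times_S T$, $Y_T=Y\times_S T$ and $f_T\colon X_T\to Y_T$; this is \etale{} and, being a morphism of $S$-separated spaces, separated, hence also quasi-finite. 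Set $W=\FCI(\beta)\inj Y_T$, which by Theorem~\pref[IS:integral]{T:supp-properties} is integral over $T$, hence a scheme affine over $T$, and put $E=f_T^{-1}(W)=X_T\times_{Y_T}W$, so that $E\to W$ is separated, \etale{} and quasi-finite.

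The first and main step is to identify $G$ with the functor of sections of $E\to W$, i.e.\ with $\mathrm{Res}_{W/T}(E/W)$. Let $\alpha\in\FGamma^d_{X/S}(T')$ be regular with respect to $f$ and with $f_*\alpha=\beta|_{T'}$. By Proposition~\pref[P:push-fwd-and-supp:regular]{P:push-fwd-and-supp}, regularity yields an isomorphism $\FCI(\alpha)\iso\FCI(f_*\alpha)=\FCI(\beta|_{T'})$ induced by $f_{T'}$; its inverse, composed with $\FCI(\alpha)\inj f_{T'}^{-1}\bigl(\FCI(\beta|_{T'})\bigr)$, is a section of $f_{T'}^{-1}\bigl(\FCI(\beta|_{T'})\bigr)\to\FCI(\beta|_{T'})$. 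Now the canonical closed immersion $\FCI(\beta|_{T'})\inj W\times_T T'$ is surjective --- by Remark~\pref{R:minimal-repr}, equivalently because the support commutes with base change, Theorem~\pref{T:supp-and-arbitrary-base-change} --- hence an integral, universally injective, surjective morphism, so that (as $f$ is \etale{}) topological invariance of the \etale{} site converts the above datum into a section of $E\times_T T'\to W\times_T T'$. Conversely such a section $s$ cuts out a closed subspace $s(W\times_T T')\inj X_{T'}$, carried isomorphically onto $\FCI(\beta|_{T'})$ by $f_{T'}$, onto which $\beta|_{T'}$ transports to a family whose image is precisely $s(W\times_T T')$; this family is therefore regular and pushes forward to $\beta|_{T'}$. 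These two assignments are mutually inverse and functorial, so $G\iso\mathrm{Res}_{W/T}(E/W)$.

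It then remains to show that $\mathrm{Res}_{W/T}(E/W)$ --- the sections of a separated, quasi-finite, \etale{} morphism $E\to W$ over a scheme $W$ affine over $T$ --- is represented by an algebraic space \etale{} over $T$. A section of such an $E\to W$ is an \etale{} monomorphism, hence an open immersion, and it is closed, so it is an open-and-closed immersion with image isomorphic to $W$; when $W\to T$ is an isomorphism the functor is $E$ itself, which is \etale{} over $T$. In general I would first reduce, by the limit techniques of~\cite[\S8]{egaIV}, to the case where $T$ is noetherian and $W\to T$ (hence $E\to W$) is of finite type; then Zariski's main theorem provides an open immersion $E\inj\overline{E}$ with $\overline{E}\to W$ finite, and the Weil restriction $\mathrm{Res}_{W/T}(\overline{E}/W)$ of the affine $W$-scheme $\overline{E}$ along the finite morphism $W\to T$ is represented by an affine $T$-scheme, while $\mathrm{Res}_{W/T}(E/W)$ is the open subfunctor on which the universal section factors through the open immersion $E\inj\overline{E}$, hence representable. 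Finally it is \etale{} over $T$: it is formally \etale{}, since a section over $W\times_T T'$ lifts uniquely along a square-zero extension $T'\inj T''$ because $E\times_T T''\to W\times_T T''$ is formally \etale{} and $W\times_T T'\inj W\times_T T''$ is a square-zero thickening, and it is of finite presentation over $T$ by the reduction above.

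When $f$ is surjective, $\FGamma^d_{X/S,\reg/f}\to\FGamma^d_{Y/S}$ is topologically surjective by Proposition~\pref[PRF:reg-subset-top-surj]{P:reg-functors}, hence surjective once representability is known. The step I expect to be the main obstacle is the last one --- constructing the section space and proving its \etaleness over $T$ when $W\to T$ is only integral, not of finite presentation --- and the reason it goes through is precisely that, although the image does not commute with base change, the \emph{support} does, so the discrepancy $\FCI(\beta|_{T'})\inj W\times_T T'$ is always a universal homeomorphism and hence invisible to the \etale{} site. Everything else is formal manipulation with the results of~\S\ref{S:support}, Proposition~\pref{P:reg-functors}, and the functoriality of $\Gamma^d$.
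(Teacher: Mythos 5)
Your reformulation of the problem is genuinely different from the paper's and its first half is sound: identifying the fibre of $f_*$ over a $T$-point $\beta$ with the functor of sections of $E=f_T^{-1}(W)\to W=\FCI(\beta)$, and using topological invariance of the \etale{} site to absorb the discrepancy between $\FCI(\beta|_{T'})$ and $W\times_T T'$ (a surjective closed immersion, since the support commutes with base change), is exactly the mechanism hiding inside the paper's Proposition~\pref[PD:cartesian-if-etale-or-immersion]{P:pushfwd-diagram}, where the inverse $\Lambda^{-1}$ is constructed precisely by turning a regular family into a clopen lift of the image. The paper instead uses that cartesian diagram to reduce to $X$, $Y$, $S$ affine with $Y$ integral over $S$, and then runs a chain of reductions (quasi-compact $\to$ affine $\to$ finitely presented $\to$ strictly henselian) ending in an explicit computation with $\Gamma^d$ of a disjoint union of strictly local schemes.

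The gap is in your final step. First, the reduction ``by the limit techniques of \cite[\S8]{egaIV} to $W\to T$ of finite type'' is not routine: $W$ is only \emph{integral} over $T$ (the paper even exhibits images that are not of finite presentation), and writing $\mathrm{Res}_{W/T}(E/W)$ as a filtered colimit of the functors $\mathrm{Res}_{W_\lambda/T}(E_\lambda/W_\lambda)$ does not yield representability --- a filtered colimit of representable functors need not be representable. One needs the pointwise argument of the paper's step III, showing that near each point of the regular locus the map is pulled back from a finite level; this is where the real work lies and it is omitted. Second, and more seriously, even after such a reduction the morphism $W\to T$ is finite but in general \emph{not flat}, and Weil restriction along a finite non-flat morphism is not representable: already $\mathrm{Res}_{W/T}(\A{1}_W)(T')=\Gamma(T',\pi_*\sO_W\otimes_{\sO_T}\sO_{T'})$ fails to be representable when $\pi_*\sO_W$ is not locally free (e.g.\ $T=\Spec k[x]$, $\pi_*\sO_W=k[x]\oplus k$: the functor $A'\mapsto A'/xA'$ does not inject along the schematically dominant map $k[x]\inj k[x,x^{-1}]$, which any separated representing scheme would force). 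So the claim that $\mathrm{Res}_{W/T}(\overline{E}/W)$ is an affine $T$-scheme is false as stated, and with it the description of $\mathrm{Res}_{W/T}(E/W)$ as an open subfunctor of a representable functor collapses --- there is no ``universal section'' to factor through $E\inj\overline{E}$. The section functor you need \emph{is} representable and \etale{}, but proving this requires something like the paper's step V: work \etale{}-locally on $T$, reduce to $T$ strictly henselian, decompose $W$ into local henselian pieces by Proposition~\pref{P:int_hensel} and $E$ accordingly, and observe that the sections are then indexed by a finite set of clopen components, each mapping isomorphically to its piece of $W$.
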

\begin{proof}
If $f$ is surjective then
$\map{f_*}{\FGamma^d_{X/S,\reg/f}}{\FGamma^d_{Y/S}}$ is topologically
surjective by Proposition~\pref[PRF:reg-subset-top-surj]{P:reg-functors}.

\textbf{I)} \emph{Reduction to $X\to S$ quasi-compact.}
Let $\{U_\beta\}$ be an open cover of $X$ such that $U_\beta$ is
quasi-compact and any set of $d$ points in $X$ over the same point in $S$
lies in some $U_\beta$. Then $\{\FGamma^d_{U_\beta,\reg/f|_{U_\beta}}\to
\FGamma^d_{X,\reg/f}\}$ is an open cover by
Proposition~\pref{P:cover-of-Gamma:AF}. Replacing $X$ with $U_\beta$ we can
thus assume that $X$ is quasi-compact.

\textbf{II)} \emph{Reduction to $X,Y$ and $S$ affine and $Y$ integral over
$S$.}
Let $T$ be an affine scheme and $T\to \FGamma^d_{Y/S}$ a morphism. Then it
factors as $T\to \Gamma^d(W/T)$ where $W\inj Y_T=Y\times_S T$ is a closed
subspace such that $W\to T$ is integral.
Let $Z=f_T^{-1}(W)$. Note that $f$ is separated and quasi-compact as $X\to
S$ is separated and quasi-compact. Hence $f$ is quasi-affine as well as
$Z\to W\to T$ which is the composition of two quasi-affine morphisms.  Thus
$\FGamma_{Z/T}$ and $\FGamma_{W/T}$ are both representable by
Theorem~\pref{T:Gamma-representable:AF}.
As $W\inj Y_T$ is a closed immersion it follows from
Proposition~\pref[PD:cartesian-if-etale-or-immersion]{P:pushfwd-diagram}
that we have a cartesian diagram
$$\xymatrix{
{\Gamma^d(Z/T)|_{\reg\left(f_T|_Z\right)}}\ar[d]^-{(f_T|_{Z})_*}\ar@{(->}[r]
& {\FGamma^d_{X_T/T,\reg/f_T}}\ar[d]^{(f_T)_*}\ar[r] &
  {\FGamma^d_{X/S,\reg/f}}\ar[d]^{f_*} \\
{\Gamma^d(W/T)}\ar@{(->}[r]
& {\FGamma^d_{Y_T/T}}\ar[r]\ar@{}[ul]|\square
& {\FGamma^d_{Y/S}.}\ar@{}[ul]|\square
}$$
This shows that $f_*$ is representable. To show that
$\map{f_*}{\FGamma^d_{X/S,\reg/f}}{\FGamma^d_{Y/S}}$ is \etale{} it is thus
enough to show that $\Gamma^d(Z/T)\to\Gamma^d(W/T)$ is \etale{} over the
open subset $\reg\left(f_T|_{Z}\right)$. Further, as $\Gamma^d(Z/T)$ is
covered by open affine subsets of the form $\Gamma^d(U/T)$ where
$U\subseteq Z$ is an affine open subset by
Proposition~\pref{P:cover-of-Gamma:AF}, we can assume that $Z/T$ is
affine. Replacing $X$, $Y$ and $S$ with $Z$, $W$ and $T$ we can then assume
that $X$ and $S$ are affine and $Y$ is integral over $S$.

\textbf{III)} \emph{Reduction to $X$ and $Y$ quasi-finite and finitely
presented over $S$.} Let $S=\Spec(A)$, $Y=\Spec(B)$ and $X=\Spec(C)$. We
can write $B$ as a filtered direct limit of finite and finitely presented
$A$-algebras $B_\lambda$. As $B\to C$ is of finite presentation, we can
find an $\mu$ and a $B_\mu$-algebra $C_\mu$ such that
$C=C_\mu\otimes_{B_\mu} B$. Let $C_\lambda=C_\mu\otimes_{B_\mu} B_\lambda$,
$X_\lambda=\Spec(C_\lambda)$ and $Y_\lambda=\Spec(B_\lambda)$ for every
$\lambda\geq\mu$.
As $\Gamma^d$ commutes with filtered direct limits, cf.
paragraph~\pref{X:Gamma^d_alg_filt_dir_lims}, we have that
$\Gamma^d_A(B)=\varinjlim_\lambda \Gamma^d_A(B_\lambda)$ and
$\Gamma^d_A(C)=\varinjlim_\lambda \Gamma^d_A(C_\lambda)$.

Let $U=\reg(f)\subseteq\Gamma^d(X/S)$ and let $u\in U$ be a point with
residue field $k$ and let $\alpha\in\FGamma_{X/S}(k)$ be the corresponding
family of cycles with image $Z\inj X_k$. Let $\beta=f_*\alpha$ and
$W=\FCI(\beta)$. As $\alpha$ is regular $Z\to W$ is an isomorphism. Now as
$W$ consists of a finite number of points each with a residue field of
finite separable degree over $k$, it is easily seen that there is a
$\lambda\geq\mu$ such that $(Y\times_S k)|_W
\to Y_\lambda\times_S k$ is universally injective. Thus the push-forward of
$\alpha$ along $\map{\psi_\lambda}{X}{X_\lambda}$ is quasi-regular with
respect to $f_\lambda$ and thus regular as $f_\lambda$ is \etale{}.
Corollary~\pref{C:pushfwd-diagram} gives the cartesian diagram
$$\xymatrix{
\Gamma^d(X/S)|_{\psi_\lambda^{-1}(V)}\ar[r]^-{f_*}\ar[d]^{{\psi_\lambda}_*}
 & \Gamma^d(Y/S)\ar[d] \\
\Gamma^d(X_\lambda)|_{V}\ar[r]^{(f_\lambda)_*}
  & \Gamma^d(Y_\lambda/S)\ar@{}[ul]|\square
}$$
where $V=\reg(f_\lambda)$ and $u\in\psi_\lambda^{-1}(V)$ as
$(\psi_\lambda)_*\alpha$ is regular.

Replacing $X$ and $Y$ with $X_\lambda$ and $Y_\lambda$ we can thus assume
that $X$ and $Y$ are of finite presentation over $S$. Further as $f$ is
quasi-finite and of finite presentation and $Y\to S$ is finite and of
finite presentation it follows that $X\to S$ is quasi-finite and of finite
presentation.  Proposition~\pref{P:Gamma^d_finiteness} then shows that
$\Gamma^d(X/S)$ and $\Gamma^d(Y/S)$ are of finite presentation over
$S$. Thus $\map{f_*}{\Gamma^d(X/S)}{\Gamma^d(Y/S)}$ is also of finite
presentation.

\textbf{IV)} \emph{Reduction to $S$ strictly local.}
%% \footnote{We could also instead reduce to the case where $S$ is noetherian
%% and use~\cite[Prop.~17.14.2]{egaIV} which reduces to the case with $S$
%% artinian.}
Let $\alpha\in\Gamma^d(X/S)$ and let $\beta=f_*(\alpha)$ and $s\in S$ be
its images. Let $S'\to S$ be a flat morphism such that $s$ is in its image.
Then, as $f_*$ is of finite presentation, $f_*$ is \etale{} at a point
$\alpha\in\Gamma^d(X/S)$ if the morphism
$\Gamma^d(X'/S')\to\Gamma^d(Y'/S')$ is
\etale{} at a point $\alpha'\in\Gamma^d(X'/S')$ above
$\alpha$~\cite[Prop.~17.7.1]{egaIV}. We take $S'$ as the strict
henselization of $\sO_{S,s}$. As $\FGamma^d_{X/S,\reg/f}$ is an open
subfunctor of $\FGamma^d_{X/S}$ we have that $\reg(f)\times_S S'=\reg(f')$.
We can thus replace $X$, $Y$ and $S$ with $X'$, $Y'$ and $S'$ and assume
that $S$ is strictly local.

\textbf{V)} \emph{Conclusion}
We have now reduced the proposition to the following situation: $S$ is
strictly local, $X\to S$ is quasi-finite and finitely presented and $Y\to
S$ is finite and finitely presented. The support of
$\alpha\in\Gamma^d(X/S)$ consists of a finite number of points
$x_1,x_2,\dots,x_m\in X$ lying above the closed point $s\in S$. As $X\to S$
is quasi-finite and $S$ is henselian it follows that
$X=\bigl(\coprod_{i=1}^m X_i\bigr) \amalg X'$ where $X_i$ are strictly
local schemes, finite over $S$, such that $x_i\in X_i$. Then
$\alpha\in\Gamma^d\bigl(\coprod_{i=1}^m X_i\bigr)\inj\Gamma^d(X/S)$ and we
can thus assume that $X=\coprod_{i=1}^m X_i$ is finite over $S$.

As $S$ is strictly local and $Y\to S$ is finite it follows that
$Y=\coprod_{j=1}^n Y_j$ is a finite disjoint union of strictly local
schemes.  For every $i=1,2,\dots,m$ there is a $j(i)$ such that $f(X_i)\inj
Y_{j(i)}$ and $\map{f|_{X_i}}{X_i}{Y_{j(i)}}$ is an isomorphism as $f$ is
\etale{}.  We have further by Proposition~\pref{P:Gamma^d_of_disj_union}
that
$$\Gamma^d(X/S) = \coprod_{\sum_i d_i=d}\prod_{i=1}^m \Gamma^{d_i}(X_i),
\quad \Gamma^d(Y/S) =
\coprod_{\sum_j e_j=d}\prod_{j=1}^n \Gamma^{e_j}(Y_j).$$
It is obvious that the regular subset $U\subseteq \Gamma^d(X/S)$ is given
by the connected components with $d_1,d_2,\dots,d_m$ such that for every
$j=1,2,\dots,n$ there is at most one $i$ with $d_i>0$ such that
$j(i)=j$. As
$$\prod_{i=1}^m \Gamma^{d_i}(X_i)\to
\prod_{i=1}^m \Gamma^{d_i}\left(Y_{j(i)}\right)$$
is an isomorphism this completes the demonstration.
\end{proof}

\begin{corollary}\label{C:exact-sequence}
Let $X/S$ be a separated algebraic space and
$\{\map{f_\alpha}{U_\alpha}{X}\}_\alpha$ an \etale{} separated cover. Assume
that for every involved space $Z$, the functor $\FGamma^d_{Z/S}$ is represented
by a space which we denote by $\Gamma^d(Z/S)$. Then
\begin{equation}\label{E:etale-equiv-rel}
\let\objectstyle=\displaystyle
\xymatrix{%
\coprod_{\alpha,\beta}\Gamma^d(U_\alpha\times_{X} U_\beta/S)|_\reg
   \ar@<.5ex>[r]\ar@<-.5ex>[r]
& \coprod_{\alpha}\Gamma^d(U_\alpha/S)|_\reg\ar[r]
& \Gamma^d(X/S)
}
\end{equation}
is an \etale{} equivalence relation. Here $\reg$ denotes the regular locus
with respect to the push-forward to $X$.
\end{corollary}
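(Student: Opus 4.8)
The plan is to recognize the displayed diagram as the canonical \etale{} presentation of $\Gamma^d(X/S)$ produced by the push-forward comparison squares of the preceding subsection. Write $Q=\Gamma^d(X/S)$, $V=\coprod_\alpha\Gamma^d(U_\alpha/S)|_{\reg}$ and $R=\coprod_{\alpha,\beta}\Gamma^d(U_\alpha\times_X U_\beta/S)|_{\reg}$, where $\reg$ always denotes the locus regular for the push-forward to $X$. To prove the assertion I must show: $V\to Q$ is \etale{} and surjective; the two structure maps $R\rightrightarrows V$ are \etale{}; the induced map $R\to V\times_S V$ is a monomorphism; and $R\rightrightarrows V$ is reflexive, symmetric and transitive. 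The entire content is the single identification $R=V\times_Q V$ compatible with these maps; the rest is formal.

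First I would collect the inputs. Each $f_\alpha\colon U_\alpha\to X$ is \etale{}, hence unramified, so by Proposition~\pref{P:pushfwd-over-etale-and-reg-is-etale} the push-forward $\map{(f_\alpha)_*}{\FGamma^d_{U_\alpha/S,\reg/f_\alpha}}{\FGamma^d_{X/S}}$ is representable and \etale{}; by Corollary~\pref{C:pushfwd-diagram}~(i) its source is the open subspace $\Gamma^d(U_\alpha/S)|_{\reg(f_\alpha)}$, and $\reg(f_\alpha)$ is precisely the locus regular for push-forward to $X$. Hence $V\to Q$ is \etale{}. Next, for a pair $(\alpha,\beta)$ apply Corollary~\pref{C:pushfwd-diagram-2} to the \etale{} morphisms $f_\alpha,f_\beta$ and the cartesian square with corners $U_\alpha\times_X U_\beta$, $U_\alpha$, $U_\beta$, $X$. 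Since $f_\alpha\circ\pi_\alpha=f_\beta\circ\pi_\beta$ is the structure morphism $U_\alpha\times_X U_\beta\to X$, the locus $U_{12}=\reg(f_\alpha\circ\pi_\alpha)$ of that corollary coincides with the $\reg$ of our statement, and we obtain
$$U_{12}=\bigl((\pi_\alpha)_*\bigr)^{-1}\bigl(\reg(f_\alpha)\bigr)\cap\bigl((\pi_\beta)_*\bigr)^{-1}\bigl(\reg(f_\beta)\bigr)$$
together with the cartesian square
$$\xymatrix{
{\Gamma^d(U_\alpha\times_X U_\beta/S)|_{\reg}}\ar[d]_{(\pi_\alpha)_*}\ar[r]^-{(\pi_\beta)_*}
 & {\Gamma^d(U_\beta/S)|_{\reg}}\ar[d]^{(f_\beta)_*} \\
{\Gamma^d(U_\alpha/S)|_{\reg}}\ar[r]^-{(f_\alpha)_*}
 & {\Gamma^d(X/S).}\ar@{}[ul]|\square
}$$
Taking the disjoint union over all pairs $(\alpha,\beta)$ gives $R=V\times_Q V$, the two structure maps $R\rightrightarrows V$ being the two projections of this fibre product.

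Granting $R=V\times_Q V$, the remaining assertions are purely formal: each projection $R\to V$ is a base change of the \etale{} morphism $V\to Q$, hence \etale{}; the map $R=V\times_Q V\to V\times_S V$ is the pullback of the relative diagonal $Q\to Q\times_S Q$ along $V\times_S V\to Q\times_S Q$, and a relative diagonal is a section of a projection, hence a monomorphism, so $R\inj V\times_S V$; and reflexivity, symmetry and transitivity are given by the usual maps of a fibre-product equivalence relation (the relative diagonal $V\to V\times_Q V$, the swap of the two factors, and the outer projection $V\times_Q V\times_Q V\to V\times_Q V$). Surjectivity of $V\to Q$ follows by applying Proposition~\pref{P:pushfwd-over-etale-and-reg-is-etale} to the \etale{} \emph{surjective} separated morphism $\coprod_\alpha f_\alpha\colon\coprod_\alpha U_\alpha\to X$ together with Proposition~\pref{P:Gamma^d_of_disj_union}, which decomposes $\Gamma^d(\coprod_\alpha U_\alpha/S)$ into the products $\prod_\alpha\Gamma^{d_\alpha}(U_\alpha/S)$. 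I expect this last bookkeeping to be the only delicate point: every geometric point of $\Gamma^d(X/S)$ is, by Theorem~\pref[IS:field]{T:supp-properties}, a cycle supported on at most $d$ points, and one has to lift those points through the cover to a regular family --- which is immediate if the cover is chosen so that any $d$ points of a fibre of $X/S$ lie over one $U_\alpha$, and otherwise is obtained by replacing each plain coproduct $\coprod_\alpha$ by the coproduct over all tuples $(d_\alpha)$ with $\sum_\alpha d_\alpha=d$ of the products $\prod_\alpha\Gamma^{d_\alpha}(U_\alpha/S)$, the equivalence-relation argument being unaffected.
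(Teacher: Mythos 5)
Your proof is correct and follows essentially the same route as the paper, whose entire argument is to cite Corollary~\pref{C:pushfwd-diagram-2} (which gives exactly your identification $R=V\times_Q V$ together with the matching of the regular loci, whence the equivalence-relation structure is formal) and Proposition~\pref{P:pushfwd-over-etale-and-reg-is-etale} (which gives the \etale{}ness of the maps). Your closing caveat about surjectivity is a genuine and correct observation --- for an arbitrary cover the plain coproduct $\coprod_\alpha\Gamma^d(U_\alpha/S)|_{\reg}$ need not surject onto $\Gamma^d(X/S)$, which is why the representability theorem works with $\Gamma^d(\coprod_\alpha U_\alpha/S)|_{\reg}$ instead --- but it is not needed here, since the corollary only asserts that the displayed diagram is an \etale{} equivalence relation, not that $\Gamma^d(X/S)$ is its quotient.
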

\begin{proof}
This follows from Corollary~\pref{C:pushfwd-diagram-2} and
Proposition~\pref{P:pushfwd-over-etale-and-reg-is-etale}.
\end{proof}

\end{subsection}

%%%%%%%%%%%%%%%%%%%%%%%%%%%%%%%%%%%%%%%%%%%%%%%%%%%%%%%%%%%%%%%

\begin{subsection}{Representability of $\FGamma^d_{X/S}$ by an algebraic space}
\label{SS:Rep_Gamma}

In this subsection, it will be shown that for \emph{any} algebraic
space $X$ separated over $S$, the functor $\FGamma^d_{X/S}$ is represented by
an algebraic space, separated over $S$.

\begin{theorem}\label{T:Gamma-representable:alg-space}
Let $S$ be an algebraic space and $X/S$ a separated algebraic space. Then the
functor $\FGamma^d_{X/S}$ is represented by a separated algebraic space
$\Gamma^d(X/S)$.
\end{theorem}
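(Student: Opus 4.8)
The plan is to exhibit $\FGamma^d_{X/S}$ as the quotient of a scheme by an \etale{} equivalence relation, by pushing forward along an \etale{} atlas of $X$, and then to check separatedness over $S$ by the valuative criterion. Since $\FGamma^d_{X/S}$ is a sheaf in the \etale{} --- hence Zariski --- topology by Proposition~\pref{P:Gamma-is-etale-sheaf}, and both representability and separatedness over $S$ are local on $S$ for the \etale{} topology, we may assume $S=\Spec A$ is affine. Choose an \etale{} cover $\{f_\alpha\colon U_\alpha\to X\}$ of $X$ by affine schemes; each $U_\alpha$ is then affine over $S$, so $\FGamma^d_{U_\alpha/S}$ is represented by the affine $S$-scheme $\Gamma^d(U_\alpha/S)$ (Proposition~\pref{P:Gamma-representable:affine}), and since $X/S$ is separated, $U_\alpha\times_X U_\beta\inj U_\alpha\times_S U_\beta$ is a closed immersion of affine $S$-schemes, so $\FGamma^d_{U_\alpha\times_X U_\beta/S}$ is likewise representable.

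Set $f=\coprod_\alpha f_\alpha\colon U:=\coprod_\alpha U_\alpha\to X$, an \etale{} surjection with $U$ separated over $S$. By Proposition~\pref{P:pushfwd-over-etale-and-reg-is-etale} the push-forward $f_*\colon\FGamma^d_{U/S,\reg/f}\to\FGamma^d_{X/S}$ is representable, \etale{} and surjective, and by Proposition~\pref[PRF:reg-subset-open-if-unramified]{P:reg-functors} its source is an open subfunctor of $\FGamma^d_{U/S}$, which by Proposition~\pref{P:Gamma^d_of_disj_union:general} is a coproduct of finite products of the representable functors $\FGamma^{d_\alpha}_{U_\alpha/S}$ and hence represented by a scheme; so $\FGamma^d_{U/S,\reg/f}$ is represented by a scheme $P$ carrying an \etale{} surjection $P\to\FGamma^d_{X/S}$. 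Applying the cartesian-square result of Proposition~\pref[PD:cartesian-if-etale-or-immersion]{P:pushfwd-diagram} at the level of functors (as in Corollary~\pref{C:pushfwd-diagram-2}), and noting that this does not presuppose the representability of $\FGamma^d_{X/S}$, one identifies $P\times_{\FGamma^d_{X/S}}P$ with $\FGamma^d_{U\times_X U/S,\reg}$, which is represented by a scheme $R$ for the same reasons as above, using that $U\times_X U\inj U\times_S U$ is a closed immersion into a scheme affine over $S$. Thus $R\rightrightarrows P$ is an \etale{} equivalence relation of schemes whose quotient sheaf is $\FGamma^d_{X/S}$; this is essentially Corollary~\pref{C:exact-sequence}, and it shows that $\FGamma^d_{X/S}$ is represented by an algebraic space $\Gamma^d(X/S)$.

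For separatedness of $\Gamma^d(X/S)\to S$, a quasi-separated morphism of algebraic spaces, I would verify the valuative criterion. Let $R$ be a valuation ring with fraction field $K$ and $\alpha_1,\alpha_2\in\FGamma^d_{X/S}(\Spec R)$ agreeing over $\Spec K$. As $R$ is a domain, the images $Z_i=\FCI(\alpha_i)\inj X_R$ are reduced by Proposition~\pref{P:supp-over-red} and integral over $R$ by Theorem~\pref[IS:integral]{T:supp-properties}; their generic fibres $Z_i|_K$ are reduced, being localisations of reduced rings, and equal $\FCI(\alpha_i|_K)$ via the canonical bijective closed immersion $\FCI(\alpha_i|_K)\inj Z_i|_K$ of Remark~\pref{R:minimal-repr}, so $Z_1|_K=Z_2|_K$. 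Every irreducible component of $Z_i$ dominates $\Spec R$ by Theorem~\pref[IS:comp-dom]{T:supp-properties} and hence meets the generic fibre in exactly its own generic point, so $Z_i$ is the reduced closure in $X_R$ of its generic fibre; as this depends only on the common generic fibre, $Z_1=Z_2=:Z$. Finally $Z\to\Spec R$ is integral, so $\Gamma^d(Z/R)$ is affine --- in particular separated --- over $\Spec R$, and the two $R$-points $\alpha_1,\alpha_2$ of $\Gamma^d(Z/R)$, agreeing over $K$, must coincide. Hence $\Gamma^d(X/S)\to S$ is separated.

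The step I expect to need the most care is identifying $P\times_{\FGamma^d_{X/S}}P$ with a representable functor \emph{before} the representability of $\FGamma^d_{X/S}$ is known: one must invoke the cartesian-diagram statements of \S\ref{SS:push-fwd} purely functorially (where they hold unconditionally, Proposition~\pref{P:pushfwd-diagram}) rather than through the not-yet-constructed $\Gamma^d(X/S)$. Once this is in place, representability is a formal consequence of the fact that the quotient of a scheme by an \etale{} equivalence relation is an algebraic space, and --- given the structural results of \S\ref{S:support} on images and supports used above --- the separatedness argument is routine.
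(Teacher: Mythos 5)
Your representability argument is essentially the paper's: an \etale{} atlas $U\to X$ by affines, Theorem~\pref{T:Gamma-representable:AF} (or Proposition~\pref{P:Gamma^d_of_disj_union:general}) for the source, and Proposition~\pref{P:pushfwd-over-etale-and-reg-is-etale} to obtain a representable \etale{} surjection onto the sheaf $\FGamma^d_{X/S}$. Your care about invoking Proposition~\pref{P:pushfwd-diagram} purely functorially, before representability of the target is known, is exactly right; the paper's proof likewise does not route through Corollary~\pref{C:exact-sequence}, whose statement presupposes representability.

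The separatedness argument is where you diverge, and there is a genuine gap. The uniqueness statement you verify over a valuation ring is correct (the identification $\FCI(\alpha_1)=\FCI(\alpha_2)$ via reducedness and component-domination is a nice argument), but the valuative criterion does not deliver separatedness in this generality. First, to apply it at all you need the diagonal of $\FGamma^d_{X/S}$ over $S$ to be quasi-compact, which you have not established. Second, and more seriously, even granting that, the criterion only shows that the diagonal --- a monomorphism --- is universally closed; upgrading a universally closed quasi-compact monomorphism to a closed immersion in the absence of any finiteness hypotheses on $X/S$ is precisely of the depth of Grothendieck's conjecture recalled in Appendix~\ref{SS:tff-result}, which the paper explicitly defers to a later article and works hard to avoid. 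The paper's own argument sidesteps all of this: given $\alpha,\beta\in\FGamma^d_{X/S}(T)$ with images $Z_\alpha,Z_\beta\inj X_T$, it sets $Z_0=Z_\alpha\times_{X_T}Z_\beta$, observes that the locus $T_0\inj T$ where both families factor through $\Gamma^d(Z_0/T)$ is closed (by Proposition~\pref{P:affine-closed-immersion}), and identifies the equalizer of $\alpha$ and $\beta$ with the pullback of the diagonal of the \emph{affine} morphism $\Gamma^d(Z_0/T)\to T$, which is a closed immersion. You should replace the valuative criterion by this direct computation, or by some other argument that exhibits the diagonal as a closed immersion outright.
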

\begin{proof}
Let $\map{f}{X'}{X}$ be an \etale{} cover such that $X'$ is a disjoint
union of affine schemes. Then $X'$ is an \AF{}-scheme and
$\FGamma^d_{X'/S}$ is represented by the scheme $\Gamma^d(X'/S)$, cf.\
Theorem~\pref{T:Gamma-representable:AF}. By
Propositions~\pref{P:Gamma-is-etale-sheaf}
and~\pref{P:pushfwd-over-etale-and-reg-is-etale}, the functor
$\FGamma^d_{X/S}$ is a sheaf in the \etale{} topology and the push-forward
$\map{f_*}{\Gamma^d(X'/S)|_{\reg(f)}}{\FGamma^d_{X/S}}$ is an \etale{}
presentation.

To show that $\FGamma^d_{X/S}$ is a separated algebraic space, it is thus
sufficient to show that the diagonal is represented by closed immersions.
Let $T$ be an $S$-scheme and $\alpha,\beta\in\FGamma^d_{X/S}(T)$. Let
$Z_\alpha,Z_\beta\inj X\times_S T$ be the images of $\alpha$ and $\beta$.
Let $Z_0=Z_\alpha\cap Z_\beta = Z_\alpha\times_{X_T} Z_\beta$. We then let
$T_0=\alpha^{-1}(\Gamma^d(Z_0/S))\cap\beta^{-1}(\Gamma^d(Z_0/S))$ where we have
considered $\alpha$ and $\beta$ as morphisms $T\to\Gamma^d(Z_\alpha/T)$ and
$T\to\Gamma^d(Z_\beta/T)$ respectively. Then $T_0\inj T$ is a closed subscheme
and
\begin{align*}
(\alpha,\beta)^*\Delta_{\FGamma^d_{X/S}/S}
 &= \FGamma^d_{X/S} \times_{\FGamma^d_{X/S}\times_S\FGamma^d_{X/S}} T \\
 &= \FGamma^d(Z_0/T) \times_{\FGamma^d(Z_0/T)\times_S\FGamma^d(Z_0/T)} T_0 \\
 &= (\alpha|_{T_0},\beta|_{T_0})^*\Delta_{\Gamma^d(Z_0/T)/T}
\end{align*}
which is a closed subscheme of $T_0$ as $\Gamma^d(Z_0/T)\to T$ is affine.
\end{proof}

%% \begin{remark}
%% Separatedness of $\Gamma^d(X/S)$ can also be shown using the valuative
%% criterion and Theorem~\pref[IS:comp-dom]{T:supp-properties}. Then, however,
%% we must also first show that $\Gamma^d(X/S)$ is quasi-separated which is not
%% obvious.
%% \end{remark}

\begin{proposition}\label{P:Gamma-neighborhoods}
Let $X/S$ be a separated algebraic space. Let $s\in S$ and let
$\alpha\in\Gamma^d(X/S)$ be a point over $s\in S$. There is then a finite
number of points $x_1,x_2,\dots,x_n\in X$ with $n\leq d$ such that the
following condition holds:
\begin{enumerate}\renewcommand{\labelenumi}{{\upshape{(*)}}}
\item Choose an \etale{} neighborhood $S'\to S$ of $s$ and
\etale{} neighborhoods $\{U_i\to X\}$ of $\{x_i\}$ such that the $U_i$'s are
algebraic $S'$-spaces. There is then an open subset $V$ of
$\Gamma^d\bigl(\coprod_{i=1}^n U_i/S'\bigr)$ such that $V\to \Gamma^d(X/S)$ is
an \etale{} neighborhood of $\alpha$.
\end{enumerate}
Furthermore, if we choose the $U_i$'s such that there is a point above $x_i$
with trivial residue field extension, then there is a point in $V$ above
$\alpha$ with trivial residue field extension.

In particular, $\Gamma^d(X/S)$ has an \etale{} covering of the form $\coprod_i
\Gamma^d(X_i/S_i)|_{V_i}$ where $S_i$ and $X_i$ are affine and $S_i\to S$ and
$X_i\to X$ \etale{}.
\end{proposition}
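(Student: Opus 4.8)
The plan is to take for $x_1,\dots,x_n$ the points of $X$ attached to $\alpha$ by Proposition~\pref{P:strictly-local-rings-of-Gamma}; equivalently, writing $k=k(\alpha)$, these are the (at most $d$) distinct images in $X$ of the support of the family $\alpha_k\in\FGamma^d_{X/S}(k)$ obtained from $\alpha$ by base change along $\Spec k\to\Gamma^d(X/S)$, which by Theorem~\pref[IS:field]{T:supp-properties} is a finite reduced set of points of $X\times_S\Spec k$ lying over $s$. The \etale{} neighborhood of (*) will then be realized as an open subset of the regular locus of a push-forward along an \etale{} morphism.

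Given $S'\to S$ and $\{U_i\to X\}$ as in (*), set $U=\coprod_{i=1}^n U_i$ and $X'=X\times_S S'$. Since each $U_i$ is \etale{} over $X$ and $X'$ is \etale{} over $X$, the $S'$-morphism $f\colon U\to X'$ induced by the maps $U_i\to X$ and the $S'$-structures is a morphism between \etale{} $X$-spaces, hence \etale{}. Proposition~\pref{P:pushfwd-over-etale-and-reg-is-etale} then shows that $f_*\colon\Gamma^d(U/S')|_{\reg(f)}\to\Gamma^d(X'/S')$ is representable and \etale{}; here $\reg(f)$ is open in $\Gamma^d(U/S')=\Gamma^d\bigl(\coprod_i U_i/S'\bigr)$ by Proposition~\pref[PRF:reg-subset-open-if-unramified]{P:reg-functors}, and $\Gamma^d(X'/S')=\Gamma^d(X/S)\times_S S'$ because, for an $S'$-scheme $T$, a family of zero cycles over $T$ on $X'\times_{S'}T=X\times_S T$ is the same datum whether taken relatively to $S'$ or to $S$. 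Composing with the \etale{} projection $\Gamma^d(X/S)\times_S S'\to\Gamma^d(X/S)$ yields an \etale{} morphism $p\colon\Gamma^d(U/S')|_{\reg(f)}\to\Gamma^d(X/S)$, so it suffices to exhibit a point $\tilde\alpha\in\reg(f)$ with $p(\tilde\alpha)=\alpha$; then any open neighborhood $V$ of $\tilde\alpha$ in $\reg(f)$ is the desired \etale{} neighborhood of $\alpha$.

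To produce $\tilde\alpha$ I would argue as in Proposition~\pref[PRF:reg-subset-top-surj]{P:reg-functors}(iv). The image $X_0$ of the \etale{} morphism $U\to X$ is open and contains $x_1,\dots,x_n$, and $U\to X_0$ is \etale{} and surjective; since $\FCI(\alpha_k)$ is a finite set of points mapping into $X_0$, the family $\alpha_k$ factors through $\FGamma^d_{X_0/S}\subseteq\FGamma^d_{X/S}$ by Proposition~\pref{P:Gamma-immersion}. Lifting the points of $\FCI(\alpha_k)$ through $U\to X_0$, one finds --- after a finite extension $k\inj k'$, which one may also use to lift $s$ to the chosen point $s'\in S'$ --- a closed subspace $Z\inj U\times_{S'}\Spec k'$ mapping isomorphically onto $\FCI(\alpha_{k'})$; pulling $\alpha_{k'}$ back along this isomorphism gives a family $\tilde\alpha_{k'}\in\FGamma^d_{U/S'}(k')$ which is regular with respect to $f$ and satisfies $f_*\tilde\alpha_{k'}=\alpha_{k'}$. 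This defines a point $\tilde\alpha\in\Gamma^d(U/S')|_{\reg(f)}$ with $p(\tilde\alpha)=\alpha$, and $k(\tilde\alpha)$ is finite over $k(\alpha)$ since $p$ is \etale{}. If moreover each $U_i$ has a point over $x_i$ with trivial residue extension, the lift can be carried out with $k'=k(\alpha)$ --- each point $\Spec k_j\subseteq\FCI(\alpha_k)$ over $x_i$ then lifts to the point of $U_i\times_X\Spec k_j$ with residue field $k_j$ --- whence $k(\alpha)\subseteq k(\tilde\alpha)\subseteq k(\alpha)$ forces $k(\tilde\alpha)=k(\alpha)$; this is the ``furthermore''. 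Finally, for the last assertion one applies (*) at each $\alpha$ with $S'\to S$ and $U_i\to X\times_S S'$ chosen \emph{affine} --- possible since $S$ and $X$ are algebraic spaces --- so that $X_\alpha:=\coprod_i U_i$ is an affine scheme \etale{} over $X$, $S_\alpha:=S'$ is affine \etale{} over $S$, and the open subsets $V_\alpha\subseteq\Gamma^d(X_\alpha/S_\alpha)$ cover $\Gamma^d(X/S)$.

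The step I expect to be the main obstacle is the lifting in the previous paragraph: one must check that $\alpha_{k'}$ genuinely descends through $f$ to a \emph{regular} family, i.e.\ that the chosen lift $Z$ is a closed subspace, that $Z\to\FCI(\alpha_{k'})$ is an isomorphism, and that the resulting $\tilde\alpha_{k'}$ is regular with respect to $f$ --- which is exactly where the distinctness of the $x_i$, the positivity of their multiplicities, and the unramifiedness of $f$ are used --- together with the residue-field bookkeeping needed for the ``furthermore''.
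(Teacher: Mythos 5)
Your proposal is correct and follows essentially the same route as the paper: take the $x_i$ to be the images in $X$ of the (at most $d$) support points of the fibre family $\alpha_{k(\alpha)}$, observe via Proposition~\pref{P:pushfwd-over-etale-and-reg-is-etale} that the push-forward from $\Gamma^d\bigl(\coprod_i U_i/S'\bigr)|_{\reg}$ is \etale{}, and lift $\alpha$ to a point of the regular locus as in Proposition~\pref[PRF:reg-subset-top-surj]{P:reg-functors}. The paper dismisses the lifting and the residue-field claim as ``obvious''; your explicit treatment of these, and of the identification $\Gamma^d(X\times_S S'/S')=\Gamma^d(X/S)\times_S S'$, merely fills in details the paper leaves to the reader.
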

\begin{proof}
The point $\alpha$ corresponds to a family $\Spec(k(\alpha))\to\Gamma^d(X/S)$
where $k(\alpha)$ is the residue field. Let $Z\inj X\times_S \Spec(k(\alpha))$
be the image of this family. Then $Z$ is reduced and consists of a finite
number of points $z_1,z_2,\dots,z_m$ such that $m\leq d$. Let
$W=\{x_1,x_2,\dots,x_n\}$ be the projection of $Z$ on $X$. Then $\alpha$ lies
in the closed subset $\Gamma^d(W/S)\inj\Gamma^d(X/S)$.

If $\map{f}{U}{X}$ is an \etale{} neighborhood of $W$ then it is obvious that
there is a lifting of $\alpha$ to $V=\Gamma^d(U/S)|_{\reg(f)}$. Furthermore,
if $f$ has trivial residue field extensions over $W$, then we can choose a
lifting with the residue field $k(\alpha)$. That $V\to \Gamma^d(X/S)$ is
\etale{} is Proposition~\pref{P:pushfwd-over-etale-and-reg-is-etale}.
\end{proof}

\end{subsection}

\end{section}

%%%%%%%%%%%%%%%%%%%%%%%%%%%%%%%%%%%%%%%%%%%%%%%%%%%%%%%%%%%%%%%

\begin{section}{Further properties of $\Gamma^d(X/S)$}\label{S:further-props}

%%%%%%%%%%%%%%%%%%%%%%%%%%%%%%%%%%%%%%%%%%%%%%%%%%%%%%%%%%%%%%%

\begin{subsection}{Addition of cycles and non-degenerate families}
\label{SS:addition-of-cycles}

In paragraphs\ \pref{X:univ-mult-of-laws}
and~\pref{X:univ-mult-and-comp:alg-case} we defined the universal
multiplication of laws
$\map{\rho_{d,e}} {\Gamma^{d+e}_A(B)}{\Gamma^d_A(B)\otimes_A\Gamma^e_A(B)}$. We
will give a corresponding morphism
$\Gamma^d(X/S)\times_S\Gamma^e(X/S)\to\Gamma^{d+e}(X/S)$ for arbitrary $X/S$.

\begin{definition-proposition}\label{D:addition-of-cycles}
Let $X/S$ be a separated algebraic space and let $d,e$ be positive integers.
Then there exists a morphism
$$\map{+}{\Gamma^d(X/S)\times_S\Gamma^e(X/S)}{\Gamma^{d+e}(X/S)}$$
which on points is addition of cycles. When $X/S$ is affine, this morphism
corresponds to the homomorphism $\rho_{d,e}$. The operation $+$ makes the space
$\Gamma(X/S)=\coprod_{d\geq 0}\Gamma^d(X/S)$ into a graded commutative
monoid.
\end{definition-proposition}
\begin{proof}
The morphism $+$ is the composition of the open and closed immersion
$\Gamma^d(X/S)\times\Gamma^e(X/S)\inj \Gamma^{d+e}(X\amalg X/S)$ of
Proposition~\pref{P:Gamma^d_of_disj_union:general} and the push-forward along
$X\amalg X\to X$. It is clear that this is an associative and commutative
operation as push-forward is functorial.
When $X/S$ is affine, it is clear from~\pref{X:univ-mult-of-laws} that the
addition of cycles corresponds to the homomorphism~$\rho_{d,e}$.
\end{proof}

\begin{proposition}
Let $X/S$ be a separated algebraic space and $T$ an $S$-scheme. Let
$\alpha\in\FGamma^d_{X/S}(T)$ and $\beta\in\FGamma^e_{X/S}(T)$.
\begin{enumerate}
\item If $T$ is connected and $\FCI(\alpha)=\coprod_{i=1}^n Z_i$ then there are
integers $d_i\geq 1$ and families of cycles
$\alpha_i\in\FGamma^{d_i}_{Z_i/S}(T)$ such that $d=d_1+d_2+\dots+d_n$ and
$\alpha=\alpha_1+\alpha_2+\dots+\alpha_n$.
\item $\FCS(\alpha+\beta)=\FCS(\alpha)\cup\FCS(\beta)$.
\item Let $\map{f}{X}{Y}$ be a morphism of separated algebraic spaces. Then
$f_*(\alpha+\beta)=f_*\alpha+f_*\beta$.
\end{enumerate}
\end{proposition}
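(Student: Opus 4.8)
The plan is to handle the three parts in turn, in each case reducing to the description of $+$ given in Definition-Proposition~\pref{D:addition-of-cycles}: the $n$-fold sum is the open-closed immersion $\prod_{i}\Gamma^{d_i}(X/S)\inj\Gamma^{d}(X^{\amalg n}/S)$ of Proposition~\pref{P:Gamma^d_of_disj_union:general} followed by the push-forward along the fold map $X^{\amalg n}\to X$, and push-forward is functorial (cf.\ the remark following Definition~\pref{D:push-forward}).

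For (i), I would first replace $X$ by $\FCI(\alpha)=\coprod_{i=1}^{n}Z_i$, which is harmless since $\alpha$ factors through $\Gamma^d(\FCI(\alpha)/T)$ and $\FCI(\alpha)\to T$ is integral by Theorem~\pref[IS:integral]{T:supp-properties}. By Proposition~\pref{P:Gamma^d_of_disj_union:general} and connectedness of $T$, $\alpha$ factors uniquely through one component $\FGamma^{d_1}_{Z_1/S}\times_S\dots\times_S\FGamma^{d_n}_{Z_n/S}$, giving $\alpha_i\in\FGamma^{d_i}_{Z_i/S}(T)$ with $\sum_i d_i=d$. If some $d_i$ were zero, $\alpha$ would factor through $\Gamma^d\bigl(\coprod_{j:\,d_j>0}Z_j/T\bigr)$, so that $\FCI(\alpha)\subseteq\coprod_{j:\,d_j>0}Z_j\subsetneq\coprod_j Z_j$, contradicting minimality of the image; hence each $d_i\geq 1$ (and, by the same argument, $\FCI(\alpha_i)=Z_i$ for every $i$). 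It then remains to identify $\alpha$ with $\alpha_1+\dots+\alpha_n$. Viewing each $\alpha_i$ inside $\FGamma^{d_i}_{X/S}$, it is supported on $Z_i\inj X$, so the point $(\alpha_1,\dots,\alpha_n)$ of $\prod_i\Gamma^{d_i}(X/S)$ lies in $\prod_i\Gamma^{d_i}(Z_i/S)$; inside $\Gamma^d(X^{\amalg n}/S)$ it is therefore a family supported on the closed subspace $\coprod_i Z_i\inj X^{\amalg n}$ obtained by placing $Z_i$ in the $i$-th copy. Since $\coprod_i Z_i\inj X^{\amalg n}\to X$ is the identity of $X=\coprod_i Z_i$, functoriality of push-forward turns the fold-push-forward of this family into the family itself, regarded via the immersion $\prod_i\Gamma^{d_i}(Z_i/S)\inj\Gamma^d(\coprod_i Z_i/S)=\Gamma^d(X/S)$ of Proposition~\pref{P:Gamma^d_of_disj_union:general} --- which is exactly $\alpha$. (Alternatively this last point is a direct affine computation from paragraph~\pref{X:Gamma^d_of_prod_of_rings} and formula~\eqref{E:rho-formula} for $\rho$.)

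For (ii), write $\alpha+\beta=p_*\gamma$, where $p\colon X\amalg X\to X$ is the fold and $\gamma\in\FGamma^{d+e}_{X\amalg X/S}(T)$ is the family corresponding to $(\alpha,\beta)$ under the immersion $\FGamma^d_{X/S}\times_S\FGamma^e_{X/S}\inj\FGamma^{d+e}_{X\amalg X/S}$. The disjoint-union decomposition (Proposition~\pref{P:Gamma^d_of_disj_union}, paragraph~\pref{X:Gamma^d_of_prod_of_rings}) identifies $\FCI(\gamma)$ with $\FCI(\alpha)\amalg\FCI(\beta)$ --- the first image in the first copy, the second in the second --- hence $\FCS(\gamma)=\FCS(\alpha)\amalg\FCS(\beta)$. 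Corollary~\pref{C:push-fwd-and-supp} then gives $\FCS(\alpha+\beta)=p\bigl(\FCS(\gamma)\bigr)=\FCS(\alpha)\cup\FCS(\beta)$ as reduced closed subspaces of $X$; no reduction to a field is needed. For (iii), with the same $p$ and $\gamma$, let $q\colon Y\amalg Y\to Y$ be the fold for $Y$ and $f\amalg f\colon X\amalg X\to Y\amalg Y$; since $f\circ p=q\circ(f\amalg f)$, functoriality of push-forward gives $f_*(\alpha+\beta)=f_*p_*\gamma=q_*(f\amalg f)_*\gamma$. Because push-forward along $f\amalg f$ restricts to $f_*$ on each copy, it is compatible with the product decompositions of Proposition~\pref{P:Gamma^d_of_disj_union:general}, so $(f\amalg f)_*\gamma$ corresponds to $(f_*\alpha,f_*\beta)$; applying $q_*$ yields $f_*\alpha+f_*\beta$ by the definition of $+$ on $Y$.

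The main obstacle is the final step of part (i), the equality $\alpha=\alpha_1+\dots+\alpha_n$: the conceptual content is small, but one must carefully reconcile the two a priori different ways of decomposing $\Gamma^d$ of a disjoint union --- the one used to define $+$ via a fold map, and the one of Proposition~\pref{P:Gamma^d_of_disj_union:general} --- and check that they agree, either by the functoriality argument sketched above or by the explicit computation with $\rho$.
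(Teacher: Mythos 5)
Your proof is correct and follows the same route as the paper, which disposes of (i) via Proposition~\pref{P:Gamma^d_of_disj_union:general}, of (ii) via Proposition~\pref[P:push-fwd-and-supp:red]{P:push-fwd-and-supp} (equivalently Corollary~\pref{C:push-fwd-and-supp}), and of (iii) via functoriality of the push-forward. You have merely written out the details the paper declares obvious, including the positivity of the $d_i$ and the reconciliation of the two disjoint-union decompositions in the last step of (i).
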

\begin{proof}
(i) is obvious from Proposition~\pref{P:Gamma^d_of_disj_union:general}. (ii)
follows from Proposition~\pref[P:push-fwd-and-supp:red]{P:push-fwd-and-supp}.
(iii) follows easily from the definitions and the functoriality of the
push-forward.
\end{proof}

\begin{proposition}\label{P:addition-etale-if-disjoint}
The morphism $\Gamma^d(X/S)\times_S\Gamma^e(X/S)\to\Gamma^{d+e}(X/S)$ is
\etale{} over the open subset $U\subseteq \Gamma^d(X/S)\times_S\Gamma^e(X/S)$
where $(\alpha,\beta)\in U$ if $\FCS(\alpha)$ and $\FCS(\beta)$ are
disjoint.
\end{proposition}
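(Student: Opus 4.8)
The plan is to reduce to the affine situation via the explicit \'etale{} charts for $\Gamma$ constructed earlier, and then to recognize the addition map, on the relevant open locus, as a push-forward along an \'etale{} morphism, to which Proposition~\pref{P:pushfwd-over-etale-and-reg-is-etale} applies. Recall from Definition-Proposition~\pref{D:addition-of-cycles} that $+$ factors as the open and closed immersion $\Gamma^d(X/S)\times_S\Gamma^e(X/S)\inj \Gamma^{d+e}(X\amalg X/S)$ of Proposition~\pref{P:Gamma^d_of_disj_union:general}, followed by the push-forward $\nabla_*$ along the fold map $\map{\nabla}{X\amalg X}{X}$. Since an open and closed immersion is in particular \'etale{}, the first factor causes no trouble; the entire content is to understand where $\nabla_*$ is \'etale{}.

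First I would identify the locus of $\Gamma^{d+e}(X\amalg X/S)$ corresponding, under the immersion, to pairs $(\alpha,\beta)$ with $\FCS(\alpha)\cap\FCS(\beta)=\emptyset$. A pair $(\alpha,\beta)$ over a point determines a family $\alpha\amalg\beta$ on $X\amalg X$ whose image is $\FCI(\alpha)\amalg\FCI(\beta)$; by Corollary~\pref{C:push-fwd-and-supp}, the support of $\nabla_*(\alpha\amalg\beta)$ is $\nabla\bigl(\FCS(\alpha)\amalg\FCS(\beta)\bigr)=\FCS(\alpha)\cup\FCS(\beta)$ inside $X$. Now $\nabla$ is \'etale{} (it is the fold map of a disjoint union), so by Proposition~\pref{P:unramified:quasi-regular=regular} the family $\alpha\amalg\beta$ is regular with respect to $\nabla$ precisely when it is quasi-regular, i.e.\ when $\map{\nabla_T|_{\FCI(\alpha)\amalg\FCI(\beta)}}{\FCI(\alpha)\amalg\FCI(\beta)}{X_T}$ is universally injective. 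Since $\nabla$ restricted to each copy of $X$ is an isomorphism, this universal injectivity holds if and only if the images $\FCI(\alpha)$ and $\FCI(\beta)$ are disjoint in $X_T$ — and, as the images are reduced on each copy up to nilpotents and the supports detect this set-theoretically while the support commutes with arbitrary base change (Theorem~\pref{T:supp-and-arbitrary-base-change}), the relevant open condition is exactly $\FCS(\alpha)\cap\FCS(\beta)=\emptyset$. Thus the open locus $U$ described in the statement is precisely the preimage, under the open and closed immersion, of $\FGamma^{d+e}_{X\amalg X/S,\reg/\nabla}$.

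It then remains to invoke Proposition~\pref{P:pushfwd-over-etale-and-reg-is-etale}: since $\nabla$ is \'etale{} and separated (a fold map of a separated space is separated), the push-forward $\map{\nabla_*}{\FGamma^{d+e}_{X\amalg X/S,\reg/\nabla}}{\FGamma^{d+e}_{X/S}}$ is representable and \'etale{}. Composing with the open and closed immersion $\Gamma^d(X/S)\times_S\Gamma^e(X/S)\inj\Gamma^{d+e}(X\amalg X/S)$ restricted to $U$ — which is an open immersion onto its image inside $\FGamma^{d+e}_{X\amalg X/S,\reg/\nabla}$ — we conclude that $+|_U$ is \'etale{}, as desired.

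The main obstacle, I expect, is the bookkeeping in the second paragraph: checking carefully that the open subfunctor of $\FGamma^{d+e}_{X\amalg X/S}$ cut out by quasi-regularity with respect to $\nabla$ pulls back, under the open and closed immersion of Proposition~\pref{P:Gamma^d_of_disj_union:general}, to exactly the disjoint-support locus $U$ in $\Gamma^d(X/S)\times_S\Gamma^e(X/S)$. This is where one must use that quasi-regularity is a condition on supports (not images), that supports commute with arbitrary base change, and that $\nabla$ is an isomorphism on each component, so that the only way quasi-regularity can fail is a genuine set-theoretic overlap of the two supports in a fiber; everything else is routine.
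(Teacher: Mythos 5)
Your proposal is correct and follows essentially the same route as the paper: factor the addition map through the open and closed immersion into $\Gamma^{d+e}(X\amalg X/S)$, apply Proposition~\pref{P:pushfwd-over-etale-and-reg-is-etale} to the push-forward along the \etale{} fold map, and identify the regular locus with the disjoint-support locus. The paper states this last identification without elaboration, whereas you spell it out via Proposition~\pref{P:unramified:quasi-regular=regular}; that is a reasonable amount of extra detail, not a difference in method.
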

\begin{proof}
The morphism $X\amalg X\to X$ is \etale{}. By
Propositions~\pref{P:Gamma^d_of_disj_union:general}
and\ \pref{P:pushfwd-over-etale-and-reg-is-etale} we have that
$\Gamma^d(X/S)\times_S\Gamma^e(X/S)\to\Gamma^{d+e}(X/S)$ is \etale{} at
$(\alpha,\beta)$ if $\alpha\amalg\beta$ is regular with respect to $X\amalg
X\to X$. This is fulfilled if and only if $\FCS(\alpha)$ and $\FCS(\beta)$ are
disjoint.
\end{proof}

\begin{notation}
We let $(X/S)^d$ denote the fiber product $X\times_S X\times_S \dots \times_S
X$ of $d$ copies of $X$ over $S$.
\end{notation}

\begin{proposition}\label{P:Gamma-is-top-quot}
Let $X/S$ be a separated algebraic space. The symmetric group on $d$
letters $\SG{d}$ acts on $(X/S)^d$ by permutation of factors. We equip
$\Gamma^d(X/S)$ with the trivial $\SG{d}$-action. Then:
\begin{enumerate}
\item There is a canonical $\SG{d}$-equivariant morphism
$\map{\Psi_X}{(X/S)^d}{\Gamma^d(X/S)}$.
\label{PSI:equivariant}
\item $\Psi_X$ is integral and universally open. Its fibers are the
orbits of $(X/S)^d$ and this also holds after base change.
\label{PSI:utq+int+uo}
\item $\Psi_X$ is \etale{} outside the diagonals of $(X/S)^d$.
\label{PSI:etale}
\item If $\map{f}{X}{Y}$ is a morphism of separated algebraic spaces we have
a commutative diagram
$$\xymatrix{
(X/S)^d\ar[d]^{\Psi_X}\ar[r]^{f^d} & (Y/S)^d\ar[d]^{\Psi_Y} \\
\Gamma^d(X/S)\ar[r]^{f_*} & \Gamma^d(Y/S).\ar@{}[ul]|\circ
}$$
If $f$ is unramified (resp. \etale{}) and $U=\reg(f)$ then the canonical
morphism
$$\map{\Lambda}{(X/S)^d|_{\Psi_X^{-1}(U)}}
{\Gamma^d(X/S)|_U \times_{\Gamma^d(Y/S)} (Y/S)^d}$$
is a universal homeomorphism (resp. an isomorphism).
\label{PSI:cartesian-if-etale}
\end{enumerate}
\end{proposition}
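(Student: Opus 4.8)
The plan is to realise $\Psi_X$ as a restricted push-forward along a fold map, which makes parts~\ref{PSI:etale} and~\ref{PSI:cartesian-if-etale} formal consequences of \S\ref{SS:push-fwd} and reduces part~\ref{PSI:utq+int+uo} to the affine case. Let $X^{\amalg d}=X\amalg\dots\amalg X$ ($d$ copies), a separated $S$-space, and let $\map{q_X}{X^{\amalg d}}{X}$ be the fold map, the identity on each copy, so $q_X$ is étale and surjective. By Proposition~\pref{P:Gamma^d_of_disj_union:general}, $\Gamma^d(X^{\amalg d}/S)=\coprod_{\nu\in\N^d,\,|\nu|=d}\Gamma^{\nu_1}(X/S)\times_S\dots\times_S\Gamma^{\nu_d}(X/S)$, and by Remark~\pref{R:Gamma^1} (so that $\Gamma^1(X/S)=X$) the open-and-closed component $\nu=(1,\dots,1)$ is exactly $(X/S)^d$. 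I define $\map{\Psi_X}{(X/S)^d}{\Gamma^d(X/S)}$ to be the restriction to this component of the push-forward $\map{(q_X)_*}{\Gamma^d(X^{\amalg d}/S)}{\Gamma^d(X/S)}$ of Definition~\pref{D:push-forward}; unwinding the construction in Definition-Proposition~\pref{D:addition-of-cycles}, together with associativity of $+$ and functoriality of push-forward, shows that $\Psi_X$ is the $d$-fold iterate of the addition-of-cycles morphism, so on points it is addition of cycles. This yields~\ref{PSI:equivariant}: the group $\SG d$ acts on $X^{\amalg d}$ by permuting the copies, hence on $\Gamma^d(X^{\amalg d}/S)$, preserving the $\nu=(1,\dots,1)$-component, where the action is permutation of the factors of $(X/S)^d$; and $q_X\circ\sigma=q_X$ gives $(q_X)_*\circ\sigma_*=(q_X)_*$, so $\Psi_X$ is equivariant for the trivial action on the target.

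For~\ref{PSI:etale}: a $T$-point $(s_1,\dots,s_d)$ of $(X/S)^d$, viewed in the above component of $\Gamma^d(X^{\amalg d}/S)$, has as its image $\FCI$ the disjoint union over $i$ of the graph of $s_i$ in the $i$-th copy of $X_T$, and $q_X$ restricted to it is a closed immersion precisely when these graphs have pairwise disjoint images in $X_T$, i.e.\ when $(s_1,\dots,s_d)$ avoids every diagonal $\Delta_{ij}$. Hence $\reg(q_X)\cap(X/S)^d$, which is open by Proposition~\pref{P:reg-functors}, is exactly the complement of the diagonals, and over it $\Psi_X$ coincides with $(q_X)_*$, which is étale there by Proposition~\pref{P:pushfwd-over-etale-and-reg-is-etale} as $q_X$ is étale. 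For~\ref{PSI:cartesian-if-etale}, the commutativity of the square is the identity $q_Y\circ f^{\amalg d}=f\circ q_X$ pushed forward and restricted to the $(1,\dots,1)$-components. Since $X^{\amalg d}=X\times_Y Y^{\amalg d}$ is cartesian with vertical fold maps $q_X,q_Y$, Corollary~\pref{C:pushfwd-diagram} — applied with its ``$g$'' taken to be $\map{f}{X}{Y}$ and its ``$f$'' the fold map $\map{q_Y}{Y^{\amalg d}}{Y}$, so that its ``$U$'' is $U=\reg(f)\subseteq\Gamma^d(X/S)$ — shows that for $f$ étale (resp.\ $f$ unramified) the canonical map $\Gamma^d(X^{\amalg d}/S)|_{(q_X)_*^{-1}(U)}\to\Gamma^d(X/S)|_U\times_{\Gamma^d(Y/S)}\Gamma^d(Y^{\amalg d}/S)$ is an isomorphism (resp.\ a universal homeomorphism that becomes an isomorphism after reduction). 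Restricting to the $(1,\dots,1)$-components — which are matched by every map occurring, and on which $(q_X)_*^{-1}(U)$ cuts out $\Psi_X^{-1}(U)$ — identifies this restriction with $\Lambda$, and the asserted properties pass to open-and-closed pieces.

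It remains to prove~\ref{PSI:utq+int+uo}. For the fibres, $\Psi_X^{-1}(\Psi_X(p))\supseteq\SG d\cdot p$ holds by~\ref{PSI:equivariant}; conversely, if $\Psi_X(p')=\Psi_X(p)$, choose an algebraically closed field $K$ and points $\bar p,\bar p'$ of $(X/S)^d$ above $p,p'$ lying over a common $K$-point of $\Gamma^d(X/S)$, and apply Corollary~\pref{C:geom-points-of-Gamma}: the $K$-tuples $\bar p,\bar p'$ determine the same effective zero cycle, so $\bar p'$ is a permutation of $\bar p$ and $p'\in\SG d\cdot p$. As $\FGamma^d_{X/S}$ restricted to $S'$-schemes is $\FGamma^d_{X_{S'}/S'}$ and $\Psi_X$ is functorial, $\Psi_{X_{S'}}=\Psi_X\times_S S'$, so the fibre description survives base change $S'\to S$. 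That $\Psi_X$ is integral and universally open is étale-local on the target: applying~\ref{PSI:cartesian-if-etale} to an étale surjective cover $\map{h}{X'}{X}$ with $X'$ a disjoint union of affine schemes, the base change of $\Psi_X$ along the étale surjective map $\Gamma^d(X'/S)|_{\reg(h)}\to\Gamma^d(X/S)$ is $\Psi_{X'}$ restricted to the preimage of $\reg(h)$, and by Proposition~\pref{P:Gamma^d_of_disj_union:general} again $\Psi_{X'}$ is a disjoint union of products of the affine morphisms $\map{\Psi_{X'_\beta,e}}{(X'_\beta/S)^e}{\Gamma^e(X'_\beta/S)}$; integrality and universal openness are stable under products and under restriction to a preimage of an open. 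Finally, writing such an affine piece as $\Spec(B)$ over $S=\Spec(A)$, the morphism $\Psi_{X'_\beta,e}$ is $\Spec\bigl(\T^e_A(B)\bigr)\to\Spec\bigl(\Gamma^e_A(B)\bigr)$ and factors through $\Spec\bigl(\TS^e_A(B)\bigr)$, the first map being the quotient by the $\SG e$-action (integral and universally open, \cite[Ch.~V, \S2]{bourbaki_alg_comm_5_6}) and the second the universal homeomorphism of~\pref{X:Gamma-TS} (hence also integral and universally open).

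The linchpin of the argument is the first step — the identification of $\Psi_X$ with the restriction of the push-forward along the fold map $q_X$ to the $\nu=(1,\dots,1)$-component of $\Gamma^d(X^{\amalg d}/S)$; once it is in hand, parts~\ref{PSI:etale} and~\ref{PSI:cartesian-if-etale} drop out of Propositions~\pref{P:pushfwd-over-etale-and-reg-is-etale},~\pref{P:reg-functors} and Corollary~\pref{C:pushfwd-diagram}, and~\ref{PSI:utq+int+uo} reduces to the affine case and to Corollary~\pref{C:geom-points-of-Gamma}. The genuinely new content is therefore the affine case of integrality and universal openness, which I would deduce from the standard properties of quotients by a finite group together with the universal homeomorphism $\Spec\TS^e_A(B)\to\Spec\Gamma^e_A(B)$; the point most in need of care is the bookkeeping of the open-and-closed decompositions, so that restricting the cartesian square and the (universal) homeomorphism of Corollary~\pref{C:pushfwd-diagram} to the diagonal-index component really reproduces $\Lambda$ with its stated source and target.
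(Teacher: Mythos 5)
Your overall architecture is sound and is in fact the paper's own: $\Psi_X$ is defined via the fold map (equivalently, iterated addition of cycles), equivariance comes from commutativity, \ref{PSI:etale} comes from Proposition~\pref{P:pushfwd-over-etale-and-reg-is-etale} applied to the fold map, and \ref{PSI:cartesian-if-etale} is exactly the paper's application of Corollary~\pref{C:pushfwd-diagram} to the cartesian square $X^{\amalg d}=X\times_Y Y^{\amalg d}$ restricted to the $(1,\dots,1)$-components. Your derivation of the orbit description of the fibers from Corollary~\pref{C:geom-points-of-Gamma} is a legitimate (and arguably cleaner) alternative to the paper's reduction to $\Gamma^d(X/k)=\Sym^d(X/k)$ over a field.

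There is, however, a genuine circularity in your treatment of integrality and universal openness in \ref{PSI:utq+int+uo}. In the final affine step you factor $\Spec\bigl(\T^e_A(B)\bigr)\to\Spec\bigl(\Gamma^e_A(B)\bigr)$ through $\Spec\bigl(\TS^e_A(B)\bigr)$ and invoke the statement of~\pref{X:Gamma-TS} that the second map is a universal homeomorphism. But within this paper that statement is only a forward reference to Corollary~\pref{C:Sym-Gamma}, whose proof uses precisely Proposition~\pref{P:Gamma-is-top-quot}: the integrality of $\SymGamma_X$ is deduced there from the integrality of $\Gamma^d_A(B)\to\TS^d_A(B)\inj\T^d_A(B)$, i.e.\ from part~\ref{PSI:utq+int+uo}, and its universal bijectivity from the orbit description of the fibers of $\Psi_X$. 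So as written your proof of \ref{PSI:utq+int+uo} assumes a consequence of \ref{PSI:utq+int+uo}. The repair is not difficult but it is a different argument: integrality of $\T^e_A(B)$ over $\Gamma^e_A(B)$ can be seen directly, since each generator $1^{\otimes i}\otimes b\otimes 1^{\otimes e-i-1}$ is a root of the monic polynomial $\sum_{k}(-1)^k\varphi\bigl(\gamma^{k}(b)\times\gamma^{e-k}(1)\bigr)\,T^{e-k}$ with coefficients in the image of $\Gamma^e_A(B)$ (this is the Cayley--Hamilton device of Proposition~\pref{P:supp-integrality}); alternatively one argues, as the paper does, that $\Psi_X$ is affine and universally closed, hence integral by \cite[Prop.~18.12.8]{egaIV}. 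Universal openness then follows without any reference to $\Sym^d\to\Gamma^d$: $\Psi_X$ is surjective (by Corollary~\pref{C:geom-points-of-Gamma}), universally closed, and satisfies $\Psi_X^{-1}\bigl(\Psi_X(U)\bigr)=\bigcup_{\sigma\in\SG{d}}\sigma U$ universally by the orbit description you already established, so $\Psi_X(U)$ is the complement of the closed set $\Psi_X\bigl((X/S)^d\mysetminus\bigcup_\sigma\sigma U\bigr)$. With that substitution your proof goes through.
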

\begin{proof}
\ref{PSI:equivariant} As
$\Hom_S\bigl(T,(X/S)^d\bigr)=\Hom_S(T,X)^d=\FGamma^1_{X/S}(T)^d$ by
Remark\ \pref{R:Gamma^1} we obtain by addition of cycles the morphism
$\map{\Psi_X}{(X/S)^d}{\Gamma^d(X/S)}$ and this is clearly an
$\SG{d}$-equivariant morphism as addition of cycles is commutative.

\ref{PSI:etale} Follows immediately from
Proposition~\pref{P:addition-etale-if-disjoint}.

\ref{PSI:cartesian-if-etale} Follows from the definition of $\Psi$ and
Corollary~\pref{C:pushfwd-diagram} since
$$\xymatrix{
(X/S)^d\ar[r]^{f^d}\ar[d] & (Y/S)^d\ar[d] \\
\Gamma^d(\coprod_{i=1}^d X)\ar[r]
& \Gamma^d(\coprod_{i=1}^d Y)\ar@{}[ul]|\square
}$$
is cartesian.

\ref{PSI:utq+int+uo} We first show that the fibers of $\Psi$ are the
$\SG{d}$-orbits and that this holds after any base change. Let
$\map{f}{\Spec(k)}{\Gamma^d(X/S)}$ be a morphism. Then $f$ factors through
$\Gamma^d(Z/k)\to\Gamma^d(X/S)$ where $Z\inj X\times_S \Spec(k)$ is a closed
subspace integral over $k$.

As $\Gamma^d$ commutes with base change, we can replace $S$ with
$\Spec(k)$. Furthermore, using the unramified part of
\ref{PSI:cartesian-if-etale}, we can replace $X$ with $Z$. We can thus assume
that $S=\Spec(k)$ and that $X=Z=\Spec(B)$. Then
$(X/k)^d=\Spec\bigl(\T^d_k(B)\bigr)$ and
$\Gamma^d(X/k)=\Spec\bigl(\TS^d_k(B)\bigr)=\Sym^d(X/k)$. As the fibers of
$(X/k)^d\to \Sym^d(X/k)$ are the $\SG{d}$-orbits it follows that the same holds
for $\Psi$.

If $U\inj (X/S)^d$ is an open (resp. closed subset) then
$\Psi^{-1}\bigl(\Psi(U)\bigr)=\bigcup_{\sigma\in \SG{d}} \sigma U$. As this
also holds after any base change $T\to \Gamma^d(X/S)$ it follows that $\Psi$ is
universally closed and universally open.

We will now show that $\Psi_X$ is affine. As $\Psi_X$ is universally closed it
then follows that $\Psi_X$ is integral by~\cite[Prop.~18.12.8]{egaIV}. As
affineness is local in the \etale{} topology we can assume that $S$ is
affine. Let $\map{f}{X'}{X}$ be an \etale{} covering such that $X'$ is a
disjoint union of affine schemes and in particular an \AF{}-scheme. By
Proposition~\pref{P:pushfwd-over-etale-and-reg-is-etale} the push-forward
morphism $\map{f_*}{\Gamma^d(X'/S)|_{\reg(f)}}{\Gamma^d(X/S)}$ is an \etale
cover. Using~\ref{PSI:cartesian-if-etale} and replacing $X$ with $X'$ we can
thus assume that $X$ is \AF{}. Proposition~\pref{P:cover-of-Gamma:AF} then
shows that $\Gamma^d(X/S)$ is covered by open subsets $\Gamma^d(U/S)$ where $U$
is affine. Finally $\Psi_U$ is affine as $(U/S)^d$ is affine.
\end{proof}

\begin{definition}\label{D:non-degenerate}
Let $X/S$ be a separated algebraic space, $T$ an $S$-space and
$\alpha\in\FGamma^d_{X/S}(T)$ a family of cycles. Let $t\in T$ be a point and
let $\overline{k}$ be an algebraic closure of its residue field $k$. We say
that $\alpha$ is \emph{non-degenerated} in a point $t\in T$ if the support of
the cycle $\alpha_t\times_k \overline{k}$ consists of $d$ distinct points. Here
$\alpha_t\times_k \overline{k}$ denotes the family given by the composition of
$\Spec(\overline{k})\to \Spec(k)\to T$ and $\alpha$.
The \emph{non-degeneracy locus} is the set of points $t\in T$ such that
$\alpha$ is non-degenerate in $t$.
\end{definition}

\begin{definition}
We let $\Gamma^d(X/S)_{\nondeg}\subseteq \Gamma^d(X/S)$ denote the subset of
non-degenerate families.
%, or equivalently, the non-degeneracy locus of the universal family.
\end{definition}

\begin{proposition}\label{P:etale-over-non-deg}
The subset $\Gamma^d(X/S)_{\nondeg}\subseteq \Gamma^d(X/S)$ is open. The
morphism $\map{\Psi_X}{(X/S)^d}{\Gamma^d(X/S)}$ is \etale{} of rank $d!$ over
$\Gamma^d(X/S)_{\nondeg}$ and the addition morphism
$\map{+}{\Gamma^d(X/S)\times_S \Gamma^e(X/S)}{\Gamma^{d+e}(X/S)}$ is \etale{}
of rank $((d,e))$ over $\Gamma^{d+e}(X/S)_{\nondeg}$.
\end{proposition}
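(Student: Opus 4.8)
The plan is to reduce the three assertions to the description of the geometric points of $\Gamma^d(X/S)$ in Corollary~\pref{C:geom-points-of-Gamma}, combined with the properties of $\Psi_X$ and of the addition morphism already established — in particular Propositions~\pref{P:Gamma-is-top-quot} and~\pref{P:addition-etale-if-disjoint}.

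First I would treat openness. Write $\Delta\subseteq(X/S)^d$ for the union of the diagonals $\Delta_{ij}=(\pi_i,\pi_j)^{-1}(\Delta_{X/S})$, $i\neq j$; this closed subset commutes with arbitrary base change. The morphism $\Psi_X$ is surjective, and for any point $p$ with $\alpha=\Psi_X(p)$ and any geometric point $\bar p$ over $p$ lying over a geometric point $\bar\alpha$ over $\alpha$, Corollary~\pref{C:geom-points-of-Gamma} identifies $\bar\alpha$ with the cycle $[\pi_1(\bar p)]+\dots+[\pi_d(\bar p)]$; this cycle is non-degenerate exactly when the $\pi_k(\bar p)$ are pairwise distinct, exactly when $\bar p$ avoids the base change of $\Delta$, exactly when $p\notin\Delta$. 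It follows that $\Gamma^d(X/S)_{\nondeg}=\Psi_X\bigl((X/S)^d\setminus\Delta\bigr)$, which is open because $\Psi_X$ is universally open by Proposition~\pref[PSI:utq+int+uo]{P:Gamma-is-top-quot}. The same computation gives $\Psi_X^{-1}\bigl(\Gamma^d(X/S)_{\nondeg}\bigr)=(X/S)^d\setminus\Delta$, over which $\Psi_X$ is \etale{} by Proposition~\pref[PSI:etale]{P:Gamma-is-top-quot} and, being also integral by the same statement, finite \etale. For the rank I would pass to a geometric point $\bar\alpha=[x_1]+\dots+[x_d]$ with distinct $x_i$: by Proposition~\pref[PSI:utq+int+uo]{P:Gamma-is-top-quot} the fibre of $\Psi_X$ is the $\SG{d}$-orbit of $(x_1,\dots,x_d)$, which has $d!$ elements since its stabilizer is trivial, and is reduced because $\Psi_X$ is \etale; hence the rank is $d!$.

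For the addition morphism I would argue identically. If $(\beta,\gamma)$ is a point with $\beta+\gamma$ non-degenerate, then after passing to a geometric point $\overline{\beta+\gamma}=\bar\beta+\bar\gamma$ is a sum of $d+e$ pairwise distinct points each of multiplicity one, so $\FCS(\bar\beta)$ and $\FCS(\bar\gamma)$ are disjoint subsets of this set; as the support commutes with arbitrary base change (Theorem~\pref{T:supp-and-arbitrary-base-change}) it follows that $\FCS(\beta)\cap\FCS(\gamma)=\emptyset$, and Proposition~\pref{P:addition-etale-if-disjoint} makes $+$ \etale{} at $(\beta,\gamma)$, hence over $\Gamma^{d+e}(X/S)_{\nondeg}$. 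Over a geometric point $[x_1]+\dots+[x_{d+e}]$ the fibre of $+$ consists of the pairs $(\bar\beta,\bar\gamma)$ obtained by splitting the set $\{x_1,\dots,x_{d+e}\}$ into a $d$-element subset (the support of $\bar\beta$) and its complement, of which there are $((d,e))=\binom{d+e}{d}$; this fibre is reduced since $+$ is \etale, so the rank is $((d,e))$.

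The step I expect to require the most care is matching the scheme-theoretic loci — the diagonals of $(X/S)^d$ and the ``disjoint supports'' locus of Proposition~\pref{P:addition-etale-if-disjoint} — with the conditions on the \emph{geometric} fibres used to define non-degeneracy. Over an imperfect residue field two distinct geometric points can share an image in $X$, so one must be sure that membership in $\Delta$ and disjointness of supports are genuinely detected on geometric points; this is exactly what is supplied by the base-change compatibility of the diagonals and of the support. Once that is in place, the remaining content is the bookkeeping above with Corollary~\pref{C:geom-points-of-Gamma} and Propositions~\pref{P:Gamma-is-top-quot} and~\pref{P:addition-etale-if-disjoint}.
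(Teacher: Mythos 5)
Your proof is correct and follows essentially the same route as the paper: identify $\Gamma^d(X/S)_{\nondeg}$ as $\Psi_X(U)$ for $U$ the complement of the diagonals, use that $\Psi_X$ is universally open, and deduce the \'etaleness statements from Proposition~\pref{P:addition-etale-if-disjoint}. The paper's proof is just a terser version of this; your explicit rank counts via the reduced geometric fibres (the free $\SG{d}$-orbit of size $d!$, and the $\binom{d+e}{d}$ splittings of the support) correctly fill in what the paper leaves implicit.
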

\begin{proof}
Let $U$ be the complement of the diagonals of $(X/S)^d$, which is an open
subset. Then $\Gamma^d(X/S)_{\nondeg}=\Psi_X(U)$ which is an open subset as
$\Psi_X$ is open. The last two statements follow from
Proposition~\pref{P:addition-etale-if-disjoint}.
\end{proof}

\end{subsection}

%---------------------------------------------------------------

\begin{subsection}{The $\Sym^d\to\Gamma^d$ morphism}
\newcommand\SymGamma{\mathrm{SG}}

\begin{definition}[{\cite{kollar_quotients,rydh_finite_quotients}}]
If $G$ is a group and $\map{f}{X}{Y}$ a $G$-equivariant morphism, then we say
that $f$ is fixed-point reflecting, or fpr, at $x\in X$ if the stabilizer of
$x$ coincides with the stabilizer of $f(x)$. The subset of $X$ where $G$ is
fixed-point reflecting is $G$-stable and denoted $\fpr(f)$.
\end{definition}

\begin{remark}
Let $X/S$ be a separated algebraic space. There is then a uniform geometric and
categorical quotient $\Sym^d(X/S):=(X/S)^d/\SG{d}$,
cf.~\cite{rydh_finite_quotients}. Furthermore we have that
$\map{q}{(X/S)^d}{\Sym^d(X/S)}$ is integral, universally open and a topological
quotient, i.e., it satisfies (ii) of Proposition~\pref{P:Gamma-is-top-quot}.
Moreover (iii) and the \etale{} part of (iv) also holds for $q$ instead of
$\Psi$ if we replace $\reg(f)$ with $\fpr(f)$,
cf.~\cite{rydh_finite_quotients}.

As $\map{\Psi_X}{(X/S)^d}{\Gamma^d(X/S)}$ is
$\SG{d}$-equivariant and $\Sym^d(X/S)$ is a \emph{categorical} quotient, we
obtain a canonical morphism
$\map{\SymGamma_X}{\Sym^d(X/S)}{\Gamma^d(X/S)}$ such that
$\Psi_X=\SymGamma_X\circ q$.
\end{remark}

\begin{lemma}\label{L:qreg=fpr}
Let $\map{f}{X}{Y}$ be a morphism of algebraic spaces and
let $\alpha\in\Gamma^d(X/S)$ be a point. Then $\alpha$ is quasi-regular with
respect to $f$ if and only if $f^d$ is fixed-point reflecting at
$\Psi_X^{-1}(\alpha)$ with respect to the action of $\SG{d}$.
\end{lemma}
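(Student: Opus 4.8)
The plan is to reduce to the case where the base is the spectrum of an algebraically closed field and $\alpha$ is a rational point, at which point both conditions become the single combinatorial statement that $f$ is injective on the support of $\alpha$.

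First I would pass to $\Spec(k(\alpha))$: since $\Gamma^d(\cdot/\cdot)$ and $(\cdot/\cdot)^d$ commute with base change and the fibres of $\Psi_X$ are the $\SG{d}$-orbits both before and after base change by Proposition~\pref[PSI:utq+int+uo]{P:Gamma-is-top-quot}, we may assume $\alpha$ is rational over the base field. A further base change along an algebraic closure $\overline{k}/k$ changes neither side. Indeed, quasi-regularity is, by Definition~\pref{D:regular}, the universal injectivity of $f|_{\FCI(\alpha)}$, a property which descends along the faithfully flat extension $\overline{k}/k$; here one uses Theorem~\pref{T:supp-and-arbitrary-base-change} together with Proposition~\pref{P:supp-over-red} to see that $\FCI(\alpha)\times_k\overline{k}$ and $\FCI(\alpha\times_k\overline{k})$ have the same underlying reduced scheme, and universal injectivity ignores the difference in nilpotents. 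On the other side, the fixed-point-reflecting condition at a fibre of $\Psi_X$ is likewise insensitive to base change, cf.\ \cite{rydh_finite_quotients}. So I will assume $S=\Spec(k)$ with $k=\overline{k}$ and $\alpha$ a $k$-point.

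By Corollary~\pref{C:geom-points-of-Gamma} the point $\alpha$ is the cycle $\sum_{i=1}^n d_i[x_i]$ for distinct $k$-rational points $x_1,\dots,x_n$ of $X$ and integers $d_i\geq 1$ with $\sum_i d_i=d$, and $\FCI(\alpha)=\{x_1,\dots,x_n\}$ with the reduced structure. As $f$ is a morphism over $\Spec(k)$, each $f(x_i)$ is again a $k$-rational point of $Y$, so all residue field extensions occurring in $f|_{\FCI(\alpha)}$ are trivial; hence $f|_{\FCI(\alpha)}$ is universally injective if and only if $f(x_1),\dots,f(x_n)$ are pairwise distinct, which is exactly the quasi-regularity of $\alpha$ with respect to $f$.

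For the other condition, $\Psi_X$ carries a $d$-tuple to the sum of the corresponding points, so $\Psi_X^{-1}(\alpha)$ is the $\SG{d}$-orbit of the tuple $\xi=(x_1,\dots,x_1,\dots,x_n,\dots,x_n)$ in which each $x_i$ appears $d_i$ times (it is a single orbit by Proposition~\pref[PSI:utq+int+uo]{P:Gamma-is-top-quot}). Splitting the $d$ coordinates into consecutive blocks $B_1,\dots,B_n$ of sizes $d_1,\dots,d_n$, the stabiliser $\mathrm{Stab}_{\SG{d}}(\xi)$ is the Young subgroup $\prod_i\Sym(B_i)\cong\SG{d_1}\times\dots\times\SG{d_n}$, since the $x_i$ are distinct, while $\mathrm{Stab}_{\SG{d}}\bigl(f^d(\xi)\bigr)$ is the possibly larger Young subgroup obtained by amalgamating the blocks $B_i$ that share a common value of $f(x_i)$. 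One always has $\mathrm{Stab}(\xi)\subseteq\mathrm{Stab}(f^d(\xi))$, and equality holds precisely when no amalgamation occurs, i.e.\ when $f(x_1),\dots,f(x_n)$ are pairwise distinct. Since $\fpr(f^d)$ is $\SG{d}$-stable and $\Psi_X^{-1}(\alpha)$ is a single orbit, $f^d$ is fixed-point reflecting at $\Psi_X^{-1}(\alpha)$ if and only if $\mathrm{Stab}(\xi)=\mathrm{Stab}(f^d(\xi))$; comparing with the previous paragraph, this holds exactly when $\alpha$ is quasi-regular with respect to $f$. The stabiliser computations are immediate; the one place that will demand attention is the base-change reduction — verifying carefully that both universal injectivity and the fixed-point-reflecting property genuinely descend from $\overline{k}$ to $k$.
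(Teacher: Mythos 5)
Your argument is correct and is essentially the paper's own proof: both reduce to the algebraic closure of $k(\alpha)$, identify the stabilizers of the points of $\Psi_X^{-1}(\alpha)$ and $\Psi_Y^{-1}(f_*\alpha)$ as the Young subgroups $\SG{d_1}\times\dots\times\SG{d_n}$ and $\SG{e_1}\times\dots\times\SG{e_m}$ determined by the multiplicities of the supports, and observe that these coincide exactly when $f$ is injective on the (finitely many, rational) support points. The only difference is that you make explicit the descent of both conditions along $k(\alpha)\to\overline{k(\alpha)}$, a step the paper leaves implicit.
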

\begin{proof}
\newcommand{\kab}{\overline{k(\alpha)}}
Let $k$ be the algebraic closure of the residue field $k(\alpha)$.
The supports of $\alpha$ and $f_*\alpha$ are finite disjoint unions of points.
Thus $\map{\alpha}{\Spec(k)}{\Gamma^d(X/S)}$ and
$\map{f_*\alpha}{\Spec(k)}{\Gamma^d(Y/S)}$ factors as
$$\Spec(k)\to\prod_{i=1}^n \Gamma^{d_i}(x_i/k)\to\Gamma^d(X/S)$$
and
$$\Spec(k)\to\prod_{j=1}^m \Gamma^{e_i}(y_j/k)\to\Gamma^d(Y/S)$$
where $x_i$ and $y_j$ are points of $X\times_S \Spec(k)$ and $Y\times_S
\Spec(k)$ respectively and $k(x_i)=k(y_j)=k$. Every point of $(X/S)^d$ (resp.
$(Y/S)^d$) above $\alpha$ (resp. $f_*\alpha$) is thus such that, after a
permutation, the first $d_1$ (resp. $e_1$) projections agree, the next $d_2$
(resp. $e_2$) projections agree, etc, but no other two projections are equal.
Thus the stabilizers of the points of $\Psi_X^{-1}(\alpha)$ (resp.
$\Psi_Y^{-1}(f_*\alpha)$) are $\SG{d_1}\times\SG{d_2}\times\dots\times\SG{d_n}$
(resp.  $\SG{e_1}\times\SG{e_2}\times\dots\times\SG{e_m}$). Equality holds if
and only if $f$ is quasi-regular.
%
%As $f$ is fixed-point reflecting at a
%point $x$ if and only if $\stab_G(x)=\stab_G\bigl(f(x)\bigr)$ and
%$d_1!d_2!\dots d_n!=e_1!e_2!\dots e_m!$ if and only if $f$ is quasi-regular at
%$\alpha$ the lemma follows.
\end{proof}

\begin{proposition}\label{P:SG-cartesian-if-etale}
Let $\map{f}{X}{Y}$ be an \etale{} morphism of algebraic spaces. Then
$\Psi_X^{-1}\bigl(\reg(f)\bigr)=\fpr(f^d)$, and we have
a cartesian diagram
$$\xymatrix{
{(X/S)^d|_{\fpr(f^d)}}\ar[r]^-q\ar[d]^{f^d}
& {\Sym^d(X/S)|_{\fpr(f^d)}}
\ar[r]^-{\SymGamma_X}\ar[d]^{f^d/\SG{d}}
& {\Gamma^d(X/S)|_{\reg(f)}}\ar[d]^{f_*} \\
{(Y/S)^d}\ar[r]^q
& {\Sym^d(Y/S)}\ar[r]^-{\SymGamma_{Y}}\ar@{}[ul]|\square
& {\Gamma^d(Y/S)}\ar@{}[ul]|\square
}$$
In particular $f^d/\SG{d}$ is \etale{} over the open subset
$q\bigl(\fpr(f^d)\bigr)=\SymGamma_X^{-1}\bigl(\reg(f)\bigr)$.
\end{proposition}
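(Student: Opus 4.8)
The plan is to bootstrap from the two ``canonical morphism'' statements already available --- Proposition~\pref[PSI:cartesian-if-etale]{P:Gamma-is-top-quot} applied both to $\Psi$ and (via the Remark above) to $q$ --- together with the étaleness of the push-forward from Proposition~\pref{P:pushfwd-over-etale-and-reg-is-etale} and the fact that $\Sym^d(Y/S)=(Y/S)^d/\SG{d}$ is a \emph{uniform} geometric quotient, hence commutes with flat base change.

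First I would settle the equality of loci. Since $f$ is étale it is unramified, so Proposition~\pref{P:unramified:quasi-regular=regular}, applied to every base change $f_T$, gives $\FGamma^d_{X/S,\reg/f}=\FGamma^d_{X/S,\qreg/f}$, which is an open subfunctor of $\FGamma^d_{X/S}$ by Proposition~\pref[PRF:reg-subset-open-if-unramified]{P:reg-functors}; thus $\reg(f)=\qreg(f)$ is an open subspace of $\Gamma^d(X/S)$. Lemma~\pref{L:qreg=fpr} says exactly that $\alpha\in\qreg(f)$ if and only if $f^d$ is fixed-point reflecting at $\Psi_X^{-1}(\alpha)$, so taking $\Psi_X$-preimages yields $\Psi_X^{-1}(\reg(f))=\fpr(f^d)$. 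In particular $\fpr(f^d)$ is an open, $\SG{d}$-stable subspace of $(X/S)^d$, so its image $q(\fpr(f^d))$ is open in $\Sym^d(X/S)$ with $q^{-1}\bigl(q(\fpr(f^d))\bigr)=\fpr(f^d)$, and since $\SymGamma_X\circ q=\Psi_X$ and $q$ is surjective we get $\SymGamma_X^{-1}(\reg(f))=q(\fpr(f^d))$; this is the open subspace written $\Sym^d(X/S)|_{\fpr(f^d)}$ in the diagram, and over it $\SymGamma_X$ does land in $\Gamma^d(X/S)|_{\reg(f)}$, so the top row is well defined.

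Next, the outer rectangle: the étale case of Proposition~\pref[PSI:cartesian-if-etale]{P:Gamma-is-top-quot} with $U=\reg(f)$ provides an isomorphism $(X/S)^d|_{\fpr(f^d)}\xrightarrow{\sim}\Gamma^d(X/S)|_{\reg(f)}\times_{\Gamma^d(Y/S)}(Y/S)^d$, and this isomorphism is $\SG{d}$-equivariant for the permutation action on $(X/S)^d$ and $(Y/S)^d$ and the trivial action on the $\Gamma$'s, since it is assembled from $\Psi_X$ and $f^d$, both equivariant for those actions (Proposition~\pref[PSI:equivariant]{P:Gamma-is-top-quot}). Hence the outer rectangle is cartesian. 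For the right-hand square I would use that $\map{f_*}{\Gamma^d(X/S)|_{\reg(f)}}{\Gamma^d(Y/S)}$ is étale by Proposition~\pref{P:pushfwd-over-etale-and-reg-is-etale}, hence flat, so the uniform geometric quotient $\Sym^d(Y/S)=(Y/S)^d/\SG{d}$ commutes with this base change; therefore $\Gamma^d(X/S)|_{\reg(f)}\times_{\Gamma^d(Y/S)}\Sym^d(Y/S)$ is the geometric quotient of $\Gamma^d(X/S)|_{\reg(f)}\times_{\Gamma^d(Y/S)}(Y/S)^d$ by $\SG{d}$. Combining with the equivariant isomorphism above, this quotient is $\Sym^d(X/S)|_{q(\fpr(f^d))}$, which is precisely the assertion that the right-hand square is cartesian; the left-hand square is then cartesian by the pasting lemma for fibre squares (alternatively by the $q$-version of Proposition~\pref[PSI:cartesian-if-etale]{P:Gamma-is-top-quot} noted in the Remark). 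Finally, the right-hand square being cartesian exhibits $f^d/\SG{d}$ restricted to $q(\fpr(f^d))=\SymGamma_X^{-1}(\reg(f))$ as the base change of the étale morphism $f_*$ along $\SymGamma_Y$, hence étale over that open subset.

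The main obstacle is the right-hand square. It is not a direct instance of Proposition~\pref{P:Gamma-is-top-quot}, and, in contrast to the usual pasting situation, it cannot be deduced by formal nonsense from the outer rectangle and the left square, since $f^d$ need not be surjective. What unblocks it is the étaleness of $f_*$ (Proposition~\pref{P:pushfwd-over-etale-and-reg-is-etale}), which brings the flat-base-change property of the uniform geometric quotient $\Sym^d(Y/S)$ into play; one must also be careful to verify the $\SG{d}$-equivariance of the isomorphism coming from Proposition~\pref[PSI:cartesian-if-etale]{P:Gamma-is-top-quot} so that descending to quotients is legitimate.
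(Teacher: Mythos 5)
Your proof is correct and follows essentially the same route as the paper's: the equality of loci via Proposition~\pref{P:unramified:quasi-regular=regular} and Lemma~\pref{L:qreg=fpr}, the outer rectangle from Proposition~\pref[PSI:cartesian-if-etale]{P:Gamma-is-top-quot}, the right-hand square from the uniform-quotient property of $\Sym^d(Y/S)$ applied along the flat (\etale{}) morphism $f_*$, and the left-hand square by pasting. Your extra care with the $\SG{d}$-equivariance of the canonical isomorphism and with identifying $q\bigl(\fpr(f^d)\bigr)=\SymGamma_X^{-1}\bigl(\reg(f)\bigr)$ simply makes explicit what the paper leaves implicit.
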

\begin{proof}
As $f$ is unramified $\reg(f)=\qreg(f)$ by
Proposition~\pref{P:unramified:quasi-regular=regular}, the first statement
follows from Lemma~\pref{L:qreg=fpr}. The outer square is cartesian by
Proposition~\pref[PSI:cartesian-if-etale]{P:Gamma-is-top-quot} and as $q$ is a
uniform quotient the formation of the quotient commutes with \etale{} base
change which shows that the right square is cartesian. It follows that the left
square is cartesian too.
\end{proof}

\begin{corollary}\label{C:Sym-Gamma}
Let $X/S$ be a separated algebraic space. The canonical
morphism $\map{\SymGamma_X}{\Sym^d(X/S)}{\Gamma^d(X/S)}$ is a universal
homeomorphism with trivial residue field extensions. If $S$ has pure
characteristic zero or $X/S$ is flat, then $\SymGamma_X$ is an isomorphism.
\end{corollary}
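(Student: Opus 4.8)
The plan is to exploit the factorisation $\Psi_X=\SymGamma_X\circ q$, where $\map{q}{(X/S)^d}{\Sym^d(X/S)}$ is the quotient map. I shall use that $q$ is integral, universally open and surjective with the $\SG{d}$-orbits as its fibres, and that all of this persists under arbitrary base change — this is part of the properties of $\Sym^d(X/S)$ recalled above (proved in~\cite{rydh_finite_quotients}) — while $\Psi_X$ enjoys exactly the same properties by Proposition~\pref[PSI:utq+int+uo]{P:Gamma-is-top-quot}. First I would show that $\SymGamma_X$ is a universal homeomorphism. Since $\Psi_X$ and $q$ are surjective, so is $\SymGamma_X$; since the fibres of both $q$ and $\Psi_X$ are precisely the $\SG{d}$-orbits, every fibre of $\SymGamma_X$ is a single point; and since $\Psi_X$ is universally closed while $q$ is a continuous surjection, $\SymGamma_X$ is universally closed. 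Hence $\SymGamma_X$ is a universally bijective, universally closed, continuous morphism, i.e.\ a universal homeomorphism, and the argument is stable under base change along any $\map{g}{T}{\Gamma^d(X/S)}$ because $(X/S)^d\times_{\Gamma^d(X/S)}T=(X/S)^d\times_{\Sym^d(X/S)}\bigl(\Sym^d(X/S)\times_{\Gamma^d(X/S)}T\bigr)$, so that this base change is at once a base change of $q$ and of $\Psi_X$.

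Next I would reduce the residue field claim to the affine case. Choose an \'etale cover $\coprod_\alpha U_\alpha\to X$ by affine schemes, as in the proof of Theorem~\pref{T:Gamma-representable:alg-space}. By Proposition~\pref{P:SG-cartesian-if-etale} the base change of $\SymGamma_X$ along the \'etale presentation $\coprod_\alpha\Gamma^d(U_\alpha/S)|_{\reg}\to\Gamma^d(X/S)$ of Proposition~\pref{P:pushfwd-over-etale-and-reg-is-etale} is the restriction of $\SymGamma_{\coprod_\alpha U_\alpha}$, which by Proposition~\pref{P:Gamma^d_of_disj_union:general} is a disjoint union of products of the $\SymGamma_{U_\alpha}$. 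As being a universal homeomorphism with trivial residue field extensions is \'etale-local on the target and is inherited by products and disjoint unions, and as $\Gamma^d$ and $\Sym^d$ commute with \'etale base change on $S$, it suffices to prove the statement when $X=\Spec(B)$ and $S=\Spec(A)$ are affine; there $\SymGamma_X=\Spec(\varphi)$ for the canonical homomorphism $\map{\varphi}{\Gamma^d_A(B)}{\TS^d_A(B)}$ of~\pref{X:Gamma-TS}.

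In the affine case I would use the standard criterion that $\Spec(\varphi)$ is a universal homeomorphism with trivial residue field extensions iff $\varphi$ is integral and, for every field $K$ with a ring homomorphism $\Gamma^d_A(B)\to K$, there is exactly one $\Gamma^d_A(B)$-algebra homomorphism $\TS^d_A(B)\to K$. Integrality of $\varphi$ follows from Proposition~\pref[PSI:utq+int+uo]{P:Gamma-is-top-quot}, which gives that $\T^d_A(B)$, hence its subalgebra $\TS^d_A(B)$, is integral over $\Gamma^d_A(B)$. Uniqueness is already automatic from the first part: $\Spec(\varphi)$ is a universal homeomorphism and factors through the closed immersion $\Spec\bigl(\Gamma^d_A(B)/\ker(\varphi)\bigr)\inj\Spec\bigl(\Gamma^d_A(B)\bigr)$, so the induced $\Spec\bigl(\TS^d_A(B)\bigr)\to\Spec\bigl(\Gamma^d_A(B)/\ker(\varphi)\bigr)$ is radicial, hence injective on $K$-points. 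For existence — equivalently, that the unique point of $\Spec\bigl(\TS^d_A(B)\bigr)$ over a prime $\mathfrak p$ has residue field $\kappa(\mathfrak p)$ — I would fix $K=\kappa(\mathfrak p)$ and the corresponding degree-$d$ multiplicative law $\map{G}{B}{K}$ via~\pref{X:univ_prop_of_Gamma:mult}; by Proposition~\pref{P:supp-over-field} the image of the associated $K$-point of $\Gamma^d(X/S)$ is a finite product $\prod_iK_i$ of finite field extensions of $K$ through which $G$ factors, and using that $\SymGamma$ is compatible with disjoint unions and with the (closed, hence unramified) immersion $\FCI(G)\inj X\times_S\Spec(K)$ — so that the relevant square is cartesian, cf.\ Proposition~\pref{P:SG-cartesian-if-etale} — one reduces to $B=\prod_iK_i$, where each $K_i$ is a free $A(=K)$-module, so $\varphi$ is an \emph{isomorphism} by~\pref{X:Gamma-TS} and the lift exists. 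Finally, when $S$ has pure characteristic zero or $X/S$ is flat, $A$ is a $\Q$-algebra resp.\ $B$ is flat over $A$ in the affine case, so $\varphi$ is an isomorphism by~\pref{X:Gamma-TS}; since being an isomorphism is \'etale-local on the target, the reduction above shows $\SymGamma_X$ is an isomorphism in general.

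The step I expect to be the main obstacle is the reduction ``replace $B$ by $\FCI(G)$'' in the affine case. Because taking $\SG{d}$-invariants is not exact, the functor $\TS^d_A(-)$ — unlike $\Gamma^d_A(-)$ — does not commute with arbitrary base change, nor with arbitrary closed immersions, so one cannot naively pass to the image. The point that must be made carefully is that it is enough to control a single scheme-theoretic fibre of $\SymGamma_X$ at a time, and that for such a fibre the square attached to the \emph{unramified} immersion $\FCI(G)\inj X\times_S\Spec(K)$ is cartesian, which is precisely what makes Proposition~\pref{P:SG-cartesian-if-etale} (and the unramified part of Proposition~\pref[PSI:cartesian-if-etale]{P:Gamma-is-top-quot}) applicable.
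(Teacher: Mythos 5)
Your first paragraph (the universal homeomorphism via the factorisation of $\Psi_X$ through $q$ and the orbit description of the fibres), the \etale{}-local reduction to the affine case, and the characteristic-zero/flat case are all correct and essentially coincide with the paper's proof, which deduces integrality and universal bijectivity from Proposition~\pref[PSI:utq+int+uo]{P:Gamma-is-top-quot} and localizes using Propositions~\pref{P:SG-cartesian-if-etale} and~\pref{P:Gamma-neighborhoods}.

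The gap is in the existence half of the residue-field step, exactly where you placed your warning --- but your proposed resolution does not close it. You want to compute the fibre of $\Spec\bigl(\TS^d_A(B)\bigr)\to\Spec\bigl(\Gamma^d_A(B)\bigr)$ over $\mathfrak{p}$ by replacing $B$ with $\prod_i K_i$, invoking a cartesian square ``cf.\ Proposition~\pref{P:SG-cartesian-if-etale}''. That proposition requires the morphism to be \emph{\etale{}}; you are applying it to the closed immersion $\FCI(G)\inj X\times_S\Spec(K)$ preceded by the (generally non-flat) base change $\Spec(K)\to S$. Neither operation is compatible with $\TS^d$: the canonical map $\TS^d_A(B)\otimes_A K\to\TS^d_K(B\otimes_A K)$ is not an isomorphism when $A\to K$ is not flat, so the fibre over $\mathfrak{p}$ is not computed by $\Sym^d$ of anything over $K$; and for the surjection $B\otimes_A K\surj\prod_i K_i$ the induced morphism $\Sym^d(\coprod_i\Spec(K_i)/K)\to\Sym^d(X_K/K)$ need not even be a closed immersion (see the remark following Proposition~\pref{P:pushfwd-properties}). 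The unramified substitutes you cite (Proposition~\pref[PSI:cartesian-if-etale]{P:Gamma-is-top-quot} and part (iii) of Corollary~\pref{C:pushfwd-diagram}) only yield a \emph{universal homeomorphism} between the two sides, and a universal homeomorphism can have nontrivial purely inseparable residue field extensions --- which is precisely what you are trying to exclude, so these results cannot carry the argument. (A smaller slip: by Proposition~\pref{P:supp-over-field} the $K_i$ have finite \emph{separable} degree over $K$ but need not be finite extensions; that one is harmless, since any field extension is flat.)

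The statement you need is nevertheless true, and the paper obtains it without any cartesian square by using the comparison map in the one direction in which it always exists. Let $s\in S$ be the image of $\mathfrak{p}$ and $k=k(s)$. Functoriality of $\T^d$ gives $\TS^d_A(B)\to\TS^d_k(B\otimes_A k)$; since $B\otimes_A k$ is free over the field $k$, the target equals $\Gamma^d_k(B\otimes_A k)=\Gamma^d_A(B)\otimes_A k$ by~\pref{X:Gamma-TS}, and the given point $\Gamma^d_A(B)\to\kappa(\mathfrak{p})$ factors through $\Gamma^d_A(B)\otimes_A k$ because $\mathfrak{p}$ lies over $s$. The composite $\TS^d_A(B)\to\Gamma^d_A(B)\otimes_A k\to\kappa(\mathfrak{p})$ is then the required $\Gamma^d_A(B)$-algebra homomorphism to $K$. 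In the paper's geometric phrasing: one lifts $a\in\Sym^d(X/S)$ to a point $\overline{a}$ of $\Sym^d(X_s/k(s))$, where the map to $\Gamma^d(X_s/k(s))=\Gamma^d(X/S)\times_S k(s)$ is an isomorphism by flatness over a field, and sandwiches $k(b)\subseteq k(a)\subseteq k(\overline{a})=k(b)$.
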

\begin{proof}
\newcommand\abar{\overline{a}}
\newcommand\bbar{\overline{b}}
Using Proposition~\pref{P:SG-cartesian-if-etale} and the covering in
Proposition~\pref{P:Gamma-neighborhoods} we can assume that $X=\Spec(B)$ and
$S=\Spec(A)$ are affine. Then $(X/S)^d=\Spec\bigl(\T^d_A(B)\bigr)$,
$\Gamma^d(X/S)=\Spec\bigl(\Gamma^d_A(B)\bigr)$ and
$\Sym^d(X/S)=\Spec\bigl(\TS^d_A(B)\bigr)$ are all affine.
As $\Gamma^d_A(B)\to\TS^d_A(B)\inj \T^d_A(B)$ is integral by
Proposition~\pref[PSI:utq+int+uo]{P:Gamma-is-top-quot}, we have that
$\map{\SymGamma_X}{\Spec\bigl(\TS^d_A(B)\bigr)}
{\Spec\bigl(\Gamma^d_A(B)\bigr)}$ is integral.

The geometric fibers of both $\Psi_X$ and $\map{q}{(X/S)^d}{\Sym^d(X/S)}$
are the geometric orbits of $(X/S)^d$. Thus $\SymGamma_X$ is universally
bijective and hence a universal homeomorphism.
That $\SymGamma_X$ is an isomorphism when $S$ is purely of characteristic zero
or $X/S$ is flat follows from paragraph~\pref{X:Gamma-TS} as
$X$ and $S$ are affine.

Let $a\in\Sym^d(X/S)$ be any point, $b=\SymGamma_X(a)\in\Gamma^d(X/S)$ and $s$
its image in $S$. We have a commutative diagram
$$\xymatrix@C+1cm{
{\Sym^d\bigl(X_s/k(s)\bigr)}\ar[r]^{\SymGamma_{X_s}}_{\cong}\ar[d]
  & {\Gamma^d\bigl(X_s/k(s)\bigr)}\ar[d]^{\cong}\\
{\Sym^d(X/S)\times_S k(s)}\ar[r]^{\SymGamma_X\times_S \id{k(s)}}
  & {\Gamma^d(X/S)\times_S k(s)}\ar@{}[ul]|\circ
}$$
which gives a commutative diagram of residue fields
$$\xymatrix{
{k(\abar)}\ar@{}[dr]|\circ & {k(\bbar)}\ar@{(x->}[l]_{\cong}\\
{k(a)}\ar@{(x->}[u] & {k(b)}\ar@{(x->}[l]\ar@{(x->}[u]_{\cong}.
}$$
and thus $k(a)=k(b)$.
\end{proof}

\begin{proposition}\label{P:Sym-Gamma:nondeg}
Let $X/S$ be a separated algebraic space. The canonical
morphism $\map{\SymGamma_X}{\Sym^d(X/S)}{\Gamma^d(X/S)}$ is an isomorphism
over $\Gamma^d(X/S)_{\nondeg}$.
\end{proposition}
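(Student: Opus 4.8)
The statement to prove is that $\SymGamma_X\colon\Sym^d(X/S)\to\Gamma^d(X/S)$ is an isomorphism over the non-degeneracy locus $\Gamma^d(X/S)_{\nondeg}$. By Corollary~\pref{C:Sym-Gamma} we already know that $\SymGamma_X$ is a universal homeomorphism with trivial residue field extensions, so the only remaining point is that it is an isomorphism of algebraic spaces, i.e., that the comorphism is an isomorphism of structure sheaves over the open locus in question. The first step is therefore a reduction to the affine case: using Proposition~\pref{P:SG-cartesian-if-etale}, which says that both $\Sym$ and $\Gamma$ and the morphism between them commute with the \etale{} coverings of Proposition~\pref{P:Gamma-neighborhoods}, we may assume $X=\Spec(B)$ and $S=\Spec(A)$ are affine. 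Then $\Sym^d(X/S)=\Spec(\TS^d_A(B))$ and $\Gamma^d(X/S)=\Spec(\Gamma^d_A(B))$, and $\SymGamma_X$ corresponds to the algebra homomorphism $\varphi\colon\Gamma^d_A(B)\to\TS^d_A(B)$ of paragraph~\pref{X:Gamma-TS}.

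The core of the argument is then to show that $\varphi$ becomes an isomorphism after localizing at the (open) non-degeneracy locus. The plan is to reduce to the non-degenerate situation geometrically. Recall from Proposition~\pref{P:etale-over-non-deg} that over $\Gamma^d(X/S)_{\nondeg}$ the morphism $\Psi_X\colon (X/S)^d\to\Gamma^d(X/S)$ is \etale{} of rank $d!$; likewise $q\colon (X/S)^d\to\Sym^d(X/S)$ is \etale{} of rank $d!$ over the image of the complement of the diagonals (the $\SG{d}$-action being free there, using that $q$ is a geometric quotient and $\fpr$ equals the whole diagonal-free locus). Since $\Psi_X=\SymGamma_X\circ q$, over the non-degeneracy locus we have a commutative triangle of \etale{} morphisms: $\Psi_X$ is \etale{} of rank $d!$, $q$ is \etale{} of rank $d!$ onto its image, hence $\SymGamma_X$ is \etale{} of rank $1$ on this locus — that is, $\SymGamma_X$ is an open immersion there. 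Combined with the fact that $\SymGamma_X$ is a universal homeomorphism (Corollary~\pref{C:Sym-Gamma}) and in particular bijective, an \etale{} universal homeomorphism is an isomorphism, so $\SymGamma_X$ restricts to an isomorphism over $\Gamma^d(X/S)_{\nondeg}$.

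The main technical point to be careful about is the bookkeeping that makes the triangle of \etale{} maps legitimate: one must check that the open subset $\Psi_X^{-1}(\Gamma^d(X/S)_{\nondeg})$ is precisely the complement of all diagonals in $(X/S)^d$, which is where $\SG{d}$ acts freely and where $q$ is \etale{} of rank $d!$; and one must verify that ``\etale{}'' and the rank count can be checked on geometric fibers over $\Gamma^d(X/S)_{\nondeg}$, using that everything in sight is affine over $S$ and the formation of $\Sym$, $\Gamma$ and $(X/S)^d$ commutes with base change $S'\to S$ (so one reduces to $S=\Spec(k)$ with $k$ algebraically closed, where a non-degenerate cycle is literally a set of $d$ distinct $k$-points and the fibers of both $\Psi_X$ and $q$ over it are the single free $\SG{d}$-orbit). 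This fiber-wise analysis, together with the openness and universal homeomorphism property already established, is the crux; the rest is formal. I expect the step asserting ``$\SymGamma_X$ is \etale{} of rank one over the non-degeneracy locus'' — extracting this from the factorization $\Psi_X = \SymGamma_X\circ q$ of two rank-$d!$ \etale{} maps — to be the part requiring the most care, since one must know $q$ really is \etale{} of rank $d!$ over that locus, which relies on the properties of the finite-quotient $q$ recorded in the preceding remark (the \etale{} part of (iv) with $\fpr$ in place of $\reg$, and the identification of $\fpr(\id^d)$ with the diagonal-free locus).
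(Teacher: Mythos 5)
Your proposal is correct and follows essentially the same route as the paper: the paper's proof likewise takes $U$ to be the complement of the diagonals in $(X/S)^d$, notes that $\Psi_X(U)=\Gamma^d(X/S)_{\nondeg}$ with $\SG{d}$ acting freely on $U$, invokes Proposition~\pref{P:etale-over-non-deg} for $\Psi_X$ being \etale{} of rank $d!$ there, and uses that $q$ is \etale{} of rank $d!$ over $q(U)$ (as $\Sym^d(X/S)|_{q(U)}$ is the quotient of a free \etale{} equivalence relation), so that the factorization $\Psi_X=\SymGamma_X\circ q$ forces $\SymGamma_X$ to be an isomorphism. Your added affine reduction is harmless but unnecessary, and your explicit "rank $1$ \etale{} plus universal homeomorphism" bookkeeping just spells out what the paper leaves implicit.
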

\begin{proof}
Let $U$ be the complement of the diagonals in $(X/S)^d$. Then
$\Psi_X(U)=\Gamma^d(X/S)_{\nondeg}$ and $\SG{d}$ acts freely on $U$. By
Proposition~\pref{P:etale-over-non-deg} the morphism $\Psi_X$ is \etale{} of
rank $d!$ over $\Gamma^d(X/S)_{\nondeg}$. It is further well-known that
$\map{q}{(X/S)^d}{\Sym^d(X/S)}$ is \etale{} of rank $d!$ over $q(U)$. In fact,
$\Sym^d(X/S)|_{q(U)}$ is the quotient sheaf in the \etale{} topology of the
\etale{} equivalence relation $\groupoidIL{\SG{d}\times U}{U}{}{}$.
\end{proof}

\end{subsection}

%%%%%%%%%%%%%%%%%%%%%%%%%%%%%%%%%%%%%%%%%%%%%%%%%%%%%%%%%%%%%%%%

\begin{subsection}{Properties of $\Gamma^d(X/S)$ and the push-forward}

\begin{proposition}\label{P:Gamma^d-properties}
Let $S$ be an algebraic space and $X$ an algebraic space separated over $S$.
Consider for a morphism of algebraic spaces the property of being
\begin{enumerate}
\item quasi-compact
\item finite type
\item finite presentation
\item locally of finite type
\item locally of finite presentation
\item flat
\end{enumerate}
If $X\to S$ has one of these properties then so does $\Gamma^d(X/S)\to S$.
\end{proposition}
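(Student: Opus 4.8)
The plan is to establish each property by first reducing to the affine case via the étale covering of $\Gamma^d(X/S)$ produced in Proposition~\pref{P:Gamma-neighborhoods}, and then invoking Proposition~\pref{P:Gamma^d_finiteness} and paragraph~\pref{X:Gamma^d_flat/free} for the affine statements. Concretely, by Proposition~\pref{P:Gamma-neighborhoods} there is an étale covering $\coprod_i \Gamma^d(X_i/S_i)|_{V_i}\to\Gamma^d(X/S)$ with $S_i\to S$ étale, $X_i\to X$ étale, and $X_i,S_i$ affine. Since all six properties descend along, and are étale-local on the source and target of, morphisms of algebraic spaces, it suffices to check each property for the composite $\Gamma^d(X_i/S_i)|_{V_i}\to S_i\to S$. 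The map $\Gamma^d(X_i/S_i)|_{V_i}\to\Gamma^d(X_i/S_i)$ is an open immersion, hence locally of finite presentation, flat, and (locally) of finite type; $S_i\to S$ is étale, hence has all six properties. So everything comes down to $\Gamma^d(X_i/S_i)\to S_i$ with $X_i\to S_i$ the base change of $X\to S$ along $S_i\to S$ (up to a further étale localization on $X_i$), which still enjoys whichever of the six properties $X\to S$ has, these being stable under base change and étale localization.

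First I would dispose of the affine case. If $X/S$ is affine, say $X=\Spec(B)$, $S=\Spec(A)$, then $\Gamma^d(X/S)=\Spec(\Gamma^d_A(B))$ by definition, and Proposition~\pref{P:Gamma^d_finiteness} gives: $B$ of finite type (resp.\ of finite presentation, resp.\ finite, resp.\ integral) over $A$ implies $\Gamma^d_A(B)$ is of finite type (resp.\ of finite presentation, resp.\ finite, resp.\ integral) over $A$. This immediately handles properties (ii) and (iii); property (i) follows since finite type implies quasi-compact (and in the affine case $\Gamma^d(X/S)$ is automatically affine, hence quasi-compact, whenever $X/S$ is quasi-compact — one reduces to $X$ affine by covering). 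For the local properties (iv) and (v): if $X\to S$ is locally of finite type, cover $X$ by affine opens $U\subseteq X$ with $U\to S$ of finite type; then $\Gamma^d(U/S)\to S$ is of finite type, and by Proposition~\pref{P:cover-of-Gamma:AF} these $\Gamma^d(U/S)$ form an open cover of $\Gamma^d(X/S)$ after passing to the AF-covering in Proposition~\pref{P:Gamma-neighborhoods}, so $\Gamma^d(X/S)\to S$ is locally of finite type; the finite-presentation version is identical using Proposition~\pref{P:Gamma^d_finiteness}. For flatness (vi), paragraph~\pref{X:Gamma^d_flat/free} says $\Gamma^d_A(B)$ is a flat $A$-module when $B$ is flat over $A$; combined with the étale (hence flat) covering maps, flatness of $X\to S$ propagates to $\Gamma^d(X/S)\to S$.

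Assembling the general case: given any of the six properties for $X\to S$, the property is étale-local on the base and quasi-compact/finite-type conditions can be checked on an open cover; using the covering of Proposition~\pref{P:Gamma-neighborhoods} and the affine results just recalled, the property transfers to $\Gamma^d(X/S)\to S$. The one point requiring a little care is (i) quasi-compactness, since the covering $\coprod_i \Gamma^d(X_i/S_i)|_{V_i}$ need not be finite; here I would argue that if $X\to S$ is quasi-compact one may take $S$ affine and then $X$ quasi-compact, cover $X$ by finitely many affine opens $U_1,\dots,U_m$, and observe that any $d$-point family is supported in an affine open contained in some union of the $U_j$ — more precisely, by Proposition~\pref{P:cover-of-Gamma:AF} applied to a suitable refinement, $\Gamma^d(X/S)$ is covered by finitely many affine pieces $\Gamma^d(U/S)$ once $X$ is quasi-compact and separated, each of which is quasi-compact, so the union is. The main obstacle is precisely this bookkeeping for quasi-compactness: one must be sure that finitely many affine opens of $X$ suffice to produce finitely many affine (or at least quasi-compact) opens covering $\Gamma^d(X/S)$, which follows from the AF-property of a quasi-compact separated $X/S$ and Theorem~\pref{T:Gamma-representable:AF}. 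The remaining five properties are routine once the affine case is in hand.
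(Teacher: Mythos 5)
Your reduction of (iv), (v) and (vi) to the affine case via the \etale{} neighborhoods of Proposition~\pref{P:Gamma-neighborhoods}, followed by Proposition~\pref{P:Gamma^d_finiteness} and paragraph~\pref{X:Gamma^d_flat/free}, is exactly the paper's argument and is fine. The genuine gap is in your treatment of quasi-compactness. You claim that once $X/S$ is quasi-compact and separated one can cover $X$ by finitely many affine opens and conclude, ``by the \AF{}-property of a quasi-compact separated $X/S$'', that $\Gamma^d(X/S)$ is covered by finitely many affine pieces $\Gamma^d(U/S)$. But quasi-compact and separated does not imply \AF{}: there are proper separated schemes in which a pair of points admits no common affine (or quasi-affine) neighborhood, and here $X$ is moreover only an algebraic space, so it need not have any open affine cover at all. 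Proposition~\pref{P:cover-of-Gamma:AF} with affine $U_\beta$'s is available only for \AF{}-schemes, so this step fails, and Theorem~\pref{T:Gamma-representable:AF} cannot be invoked either. Since (ii) and (iii) are (i) combined with (iv), respectively (v), plus separatedness, the gap propagates to those as well; your affine-case derivation of (ii) and (iii) from Proposition~\pref{P:Gamma^d_finiteness} does not assemble to the general case on its own, because finite type and finite presentation are not local on the source.

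The repair --- and it is what the paper actually does for (i) --- is to use the morphism $\map{\Psi_X}{(X/S)^d}{\Gamma^d(X/S)}$ of Proposition~\pref{P:Gamma-is-top-quot}: it is surjective, $(X/S)^d\to S$ is quasi-compact whenever $X\to S$ is, and a surjection onto $\Gamma^d(X/S)$ from a space quasi-compact over $S$ forces $\Gamma^d(X/S)\to S$ to be quasi-compact. With (i) in hand, (ii) and (iii) follow from (iv) and (v) because $\Gamma^d(X/S)\to S$ is separated, and the rest of your argument goes through.
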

\begin{proof}
If $X\to S$ is quasi-compact then $(X/S)^d\to S$ is quasi-compact. As there is
a surjective morphism $\map{\Psi_X}{(X/S)^d}{\Gamma^d(X/S)}$ it follows that
$\Gamma^d(X/S)$ is quasi-compact over $S$. This shows (i).  (ii) and (iii)
follows from (i), (iv) and (v) as $\Gamma^d(X/S)$ is separated. It is thus
enough to show (iv), (v) and (vi).

As the question is local over $S$ we can assume that $S$ is affine. 
By Proposition~\pref{P:Gamma-neighborhoods} any point of $\Gamma^d(X/S)$
has an \etale{} neighborhood $V$ such that $V$ is an open subset of
$\Gamma^d(U/S)$ where $U$ is an affine scheme and $U\to X$ \etale{}. If
$V\to S$ is locally of finite type (resp. locally of finite presentation,
resp. flat) for any such neighborhood $V$ then it follows by
\cite[Lem.~17.7.5]{egaIV} and \cite[Cor.~2.2.11 (iv)]{egaIV} that
$\Gamma^d(X/S)$ is locally of finite type (resp. locally of finite
presentation, resp. flat) over $S$. Replacing $X$ with $U$ we can thus assume
that $X$ is affine. The proposition now follows from
Proposition~\pref{P:Gamma^d_finiteness} and
paragraph~\pref{X:Gamma^d_flat/free}.
\end{proof}

\begin{corollary}\label{C:Gamma^d_for_flat+reduced}
Let $S$ and $X$ be algebraic spaces. If $\map{f}{X}{S}$ is flat with geometric
reduced fibers then $\Gamma^d(X/S)\to S$ is flat with geometric reduced fibers.
In particular, if in addition $S$ is reduced then $\Gamma^d(X/S)$ is reduced.
\end{corollary}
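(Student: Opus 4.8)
The plan is to handle flatness and the fibres separately, the former being already available. Flatness of $\Gamma^d(X/S)\to S$ is Proposition~\pref{P:Gamma^d-properties}, so the real content is the fibres. The first step is to record that $\FGamma^d_{X/S}$ is compatible with base change in the base: for $S'\to S$ and $X'=X\times_S S'$ one has $X'\times_{S'}T'=X\times_S T'$ for every $S'$-scheme $T'$, hence $\FGamma^d_{X'/S'}=\FGamma^d_{X/S}\times_S S'$ and therefore $\Gamma^d(X/S)\times_S S'=\Gamma^d(X'/S')$ (using Theorem~\pref{T:Gamma-representable:alg-space} for representability). Applying this to a geometric point $\Spec(\Omega)\to S$ identifies the fibre of $\Gamma^d(X/S)\to S$ with $\Gamma^d(Y/\Omega)$, where $Y=X\times_S\Spec(\Omega)$ is a geometric fibre of $X\to S$ and hence reduced by hypothesis. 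Since $\Omega$ is algebraically closed it then suffices to prove: if $Y$ is a reduced algebraic space over an algebraically closed field $k$, then $\Gamma^d(Y/k)$ is reduced.

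For this I would invoke Proposition~\pref{P:Gamma-neighborhoods}: $\Gamma^d(Y/k)$ admits an \etale{} covering by open subspaces of schemes $\Gamma^d(U/k)$ with $U\to Y$ \etale{} and $U$ affine (the auxiliary bases $S_i\to\Spec(k)$ appearing there are \etale{}, hence isomorphic to $\Spec(k)$ as $k=\overline{k}$). Reducedness of algebraic spaces is \etale{}-local and is inherited by open subspaces, and $U$ is reduced because it is \etale{} over the reduced space $Y$; so the claim reduces to showing that $\Gamma^d_k(B)$ is reduced for every reduced $k$-algebra $B$. Here the key input is that $B$ is flat over the field $k$, so by~\pref{X:Gamma-TS} the map $\Gamma^d_k(B)\to\TS^d_k(B)$ is an isomorphism of $k$-algebras; thus $\Gamma^d_k(B)=\TS^d_k(B)=\bigl(\T^d_k(B)\bigr)^{\SG{d}}=\bigl(B^{\otimes_k d}\bigr)^{\SG{d}}$, a subring of $B^{\otimes_k d}$. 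Since $k$ is perfect and $B$ is reduced, $B$ is geometrically reduced over $k$, tensor powers of geometrically reduced algebras remain (geometrically) reduced, and a subring of a reduced ring is reduced; this settles the fibre statement.

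For the last assertion I would apply the standard fact that an algebraic space which is flat over a reduced algebraic space and has geometrically reduced fibres is itself reduced, to $\Gamma^d(X/S)\to S$, whose flatness and reduced geometric fibres have just been established.

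I do not expect a genuine obstacle here: the two substantive points are the base-change compatibility of $\Gamma^d$, used to identify the fibres, and the reduction to the affine case over a field via the \etale{}-local description of $\Gamma^d(X/S)$ in Proposition~\pref{P:Gamma-neighborhoods}. Once $X$ and $S$ are affine over a field the whole assertion collapses to the identification $\Gamma^d_k=\TS^d_k$ together with the elementary stability of geometric reducedness under tensor products and under passing to rings of $\SG{d}$-invariants.
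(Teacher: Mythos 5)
Your proof is correct and follows essentially the same route as the paper: flatness from Proposition~\pref{P:Gamma^d-properties}, reduction to geometric fibres, and then reducedness of $\Gamma^d(X_k/k)$ via the identification with symmetric tensors in the flat (over a field) case together with reducedness of tensor powers over an algebraically closed field. The only cosmetic difference is that the paper invokes the global comparison $\Sym^d(X_k/k)\iso\Gamma^d(X_k/k)$ of Corollary~\pref{C:Sym-Gamma} directly, whereas you localize \'etale-locally via Proposition~\pref{P:Gamma-neighborhoods} and use the affine statement~\pref{X:Gamma-TS}; these are the same underlying fact.
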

\begin{proof}
Proposition~\pref{P:Gamma^d-properties} shows that $\Gamma^d(X/S)\to S$ is
flat. It is thus enough to show that $\Gamma^d(X_k/k)$ reduced for any
algebraic closed field $k$ and morphism $\Spec(k)\to S$. As $X_k$ is reduced by
hypothesis and hence also $(X_k/k)^d$ it follows that $\Sym^d(X_k/k)$ is
reduced and $\Gamma^d(X_k/k)=\Sym^d(X_k/k)$ by Corollary~\pref{C:Sym-Gamma}.
The last statement follows by~\cite[Prop.~5.17]{picavet_decent_rings}.
%The last statement follows by \cite[Cor.~3.3.5]{egaIV} when $X$ and $S$ are
%locally noetherian but is true in general,
%cf.\ \cite[Prop.~5.17]{picavet_decent_rings}.
\end{proof}

\begin{proposition}\label{P:Gamma-of-smooth}
Let $S$ and $X$ be algebraic spaces. If $\map{f}{X}{S}$ is smooth of relative
dimension $0$ (resp. $1$, resp. at most $1$) then $\Gamma^d(X/S)\to S$ is
smooth of relative dimension $0$ (resp. $d$, resp. at most $d$).
\end{proposition}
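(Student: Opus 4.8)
The plan is to reduce, as in the proof of Proposition~\pref{P:Gamma^d-properties}, to the affine case and then make a careful local computation. First, by the local structure result of Proposition~\pref{P:Gamma-neighborhoods}, any point of $\Gamma^d(X/S)$ has an \etale{} neighborhood which is an open subset of $\Gamma^d(U/S)$ for $U\to X$ \etale{} and $U$ affine. Since smoothness of a given relative dimension is \etale{}-local on source and target, and since $U\to X$ \etale{} means $U\to S$ has the same relative dimension as $X\to S$, we may assume $X=\Spec(B)$ and $S=\Spec(A)$ are affine. The case of relative dimension $0$ is then immediate: a smooth affine morphism of relative dimension $0$ is \etale{}, hence $B$ is a flat $A$-algebra of finite presentation, so by Proposition~\pref{P:Gamma^d-properties} and paragraph~\pref{X:Gamma^d_flat/free} the ring $\Gamma^d_A(B)$ is flat and of finite presentation over $A$; one checks the fibers are \etale{} by base-changing to a field and using that $\Gamma^d$ of an \etale{} algebra over a field is a finite product of separable extensions (this can also be deduced from Corollary~\pref{C:Sym-Gamma} together with the classical description of $\Sym^d$ of an \'etale algebra).

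For the relative dimension $\leq 1$ (and $=1$) case, the plan is to work Zariski-locally on $X$ and reduce to the standard \'etale coordinate presentation $B=(A[t][g^{-1}])$, or more precisely $X\to \A{1}_S$ \'etale. The key geometric input is Proposition~\pref{P:SG-cartesian-if-etale} and Proposition~\pref[PSI:cartesian-if-etale]{P:Gamma-is-top-quot}: if $f\colon X\to Y$ is \'etale then $\Gamma^d(X/S)|_{\reg(f)}\to\Gamma^d(Y/S)$ is \'etale, and moreover $\Gamma^d$ of a disjoint union decomposes by Proposition~\pref{P:Gamma^d_of_disj_union}. So I would cover $X$ by opens $V$ admitting \'etale maps $V\to\A{1}_S$; the regular locus $\Gamma^d(V/S)|_{\reg}$ is \'etale over $\Gamma^d(\A{1}_S/S)$, and these regular loci together with the decomposition into disjoint-support pieces (via the addition morphism of Definition-Proposition~\pref{D:addition-of-cycles}, which is \'etale over the disjoint-support locus by Proposition~\pref{P:addition-etale-if-disjoint}) cover $\Gamma^d(X/S)$. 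It therefore suffices to treat $X=\A{1}_S$, where $\Gamma^d(\A{1}_S/S)=\Sym^d(\A{1}_S/S)$ (by Corollary~\pref{C:Sym-Gamma}, since $\A{1}_S\to S$ is flat) and the latter is the classical symmetric product, namely $\Spec\bigl(A[e_1,\dots,e_d]\bigr)=\A{d}_S$, which is manifestly smooth of relative dimension $d$ over $S$. For relative dimension $\leq 1$ one allows $X\to S$ to drop dimension on some components; then locally $X$ is either \'etale over $S$ or \'etale over $\A{1}_S$, and by the disjoint-support decomposition $\Gamma^d(X/S)$ is locally a product $\Gamma^{d_1}(X_0/S)\times_S\Gamma^{d_2}(X_1/S)$ with $X_0/S$ \'etale and $X_1/S$ of relative dimension $1$; the first factor is \'etale and the second smooth of relative dimension $d_2\leq d$, so the product is smooth of relative dimension $d_2\leq d$. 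Since $d_1+d_2=d$ the total is $\leq d$, and in the pure relative-dimension-$1$ case the maximal stratum where all $d$ points sit on the $1$-dimensional part gives exactly $d$.

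The main obstacle I anticipate is making the patching rigorous: one must show that the \'etale neighborhoods of the form $\Gamma^d(V/S)|_{\reg}$ over $\Gamma^d(\A{1}_S/S)$, combined with the open cover of $\Gamma^d(X/S)$ by disjoint-support strata, genuinely exhaust $\Gamma^d(X/S)$, and that on overlaps the computed relative dimension is unambiguous. This is essentially the content of Proposition~\pref{P:Gamma-neighborhoods} applied with the $U_i$ chosen \'etale over $\A{1}_S$ rather than merely affine, together with Proposition~\pref{P:Gamma^d_of_disj_union:general} to split $\Gamma^d(\coprod U_i/S)$; the dimension count in each stratum $\prod_i\Gamma^{d_i}(U_i/S)$ with $U_i\to\A{1}_S$ \'etale gives $\sum_i d_i=d$, matching the generic fiber dimension $d$ of $\Gamma^d(X/S)\to S$ when $X/S$ is purely of relative dimension $1$. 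Once this combinatorial bookkeeping is in place, smoothness itself follows formally from the \'etale-local description and the base case $\A{d}_S/S$, using \cite[Lem.~17.7.5]{egaIV} exactly as in Proposition~\pref{P:Gamma^d-properties}.

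\begin{proof}
As smoothness and its relative dimension are \etale{}-local on source and target, Proposition~\pref{P:Gamma-neighborhoods} reduces us to the case where $S=\Spec(A)$ and $X$ admits an \etale{} morphism to $S$ or to $\A{1}_S$; more precisely $\Gamma^d(X/S)$ has an \etale{} cover by opens of $\Gamma^d(U/S)$ with $U\to X$ \etale{} and $U$ affine, and after refining $U$ we may assume $U\to S$ factors through an \etale{} morphism $U\to\A{r}_S$ with $r=0$ or $r=1$ equal to the relative dimension of the component of $X$ in question.

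\emph{Relative dimension $0$.} Here $X\to S$ is \etale{}, so $B$ is flat and of finite presentation over $A$; by Proposition~\pref{P:Gamma^d-properties} and paragraph~\pref{X:Gamma^d_flat/free}, $\Gamma^d_A(B)$ is flat and of finite presentation over $A$. For the fibers, base change to a geometric point $\Spec(k)\to S$: then $B\otimes_A k$ is \etale{} over $k$, hence a finite product of finite separable field extensions, and by Corollary~\pref{C:Sym-Gamma} together with Proposition~\pref{P:Gamma^d_of_disj_union} and paragraph~\pref{X:Gamma^d_of_prod_of_rings}, $\Gamma^d_k(B\otimes_A k)=\TS^d_k(B\otimes_A k)$ is again a finite product of finite separable extensions of $k$. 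Thus $\Gamma^d(X/S)\to S$ is \etale{}, i.e., smooth of relative dimension $0$.

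\emph{Relative dimension $1$ and $\leq 1$.} By Proposition~\pref{P:Gamma^d_of_disj_union:general} and the fact that addition of cycles is \etale{} over the disjoint-support locus (Proposition~\pref{P:addition-etale-if-disjoint}), $\Gamma^d(X/S)$ is covered by open subsets of products $\Gamma^{d_0}(X_0/S)\times_S\prod_j\Gamma^{d_j}(V_j/S)$ where $X_0\to S$ is \etale{}, each $V_j\to S$ factors as an \etale{} morphism $V_j\to\A{1}_S$, and $d_0+\sum_j d_j=d$. By Proposition~\pref{P:SG-cartesian-if-etale} the morphism $\Gamma^{d_j}(V_j/S)|_{\reg}\to\Gamma^{d_j}(\A{1}_S/S)$ is \etale{}, and since $\A{1}_S\to S$ is flat, Corollary~\pref{C:Sym-Gamma} identifies $\Gamma^{d_j}(\A{1}_S/S)$ with $\Sym^{d_j}(\A{1}_S/S)=\Spec(A[e_1,\dots,e_{d_j}])=\A{d_j}_S$, which is smooth of relative dimension $d_j$ over $S$. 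By the relative-dimension-$0$ case $\Gamma^{d_0}(X_0/S)\to S$ is \etale{}, so the product is smooth over $S$ of relative dimension $\sum_j d_j\leq d$, with equality on the stratum $d_0=0$ when $X\to S$ has pure relative dimension $1$. Smoothness of $\Gamma^d(X/S)\to S$ now follows from \cite[Lem.~17.7.5]{egaIV} as in Proposition~\pref{P:Gamma^d-properties}, and the relative dimension is the maximum over the strata, namely $0$ (resp.\ $d$, resp.\ at most $d$) in the three cases.
\end{proof}
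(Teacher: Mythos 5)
Your overall strategy --- \etale{}-localize on $X$, put $X$ in standard \etale{} coordinates over $\A{r}_S$ with $r\le 1$, and push forward to $\Gamma^{d_j}(\A{1}_S/S)=\A{d_j}_S$ --- is genuinely different from the paper's, and it is viable in outline, but as written it has a real gap at the decisive step. The push-forward $\Gamma^{d_j}(V_j/S)\to\Gamma^{d_j}(\A{1}_S/S)$ along an \etale{} map $V_j\to\A{1}_S$ is \etale{} only over the open locus $\reg(V_j\to\A{1}_S)$ (the relevant statement is Proposition~\pref{P:pushfwd-over-etale-and-reg-is-etale}, not Proposition~\pref{P:SG-cartesian-if-etale}), and you never verify that the points you must handle lie in that locus. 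In general they do not: a family whose support contains two points with the same image in $\A{1}_S$, or a single point whose residue field is a nontrivial separable extension of the residue field of its image in $\A{1}_S$, is not regular with respect to $V_j\to\A{1}_S$ --- for instance $V=\A{1}_S\mysetminus\{0\}\to\A{1}_S$, $t\mapsto t^2$, and the family $[1]+[-1]$. On the complement of $\reg(V_j\to\A{1}_S)$ your argument proves nothing, so smoothness is not established there. The decomposition furnished by Proposition~\pref{P:Gamma-neighborhoods} does not rescue this, since several points of the support may lie over the same $x_i$ and hence land in the same $U_i$, and the residue-field obstruction persists over a general base.

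The missing ingredient is precisely the first reduction in the paper's proof: since $\Gamma^d(X/S)\to S$ is flat and locally of finite presentation by Proposition~\pref{P:Gamma^d-properties}, it suffices to show that its \emph{geometric fibers} are regular \cite[Thm.~17.5.1]{egaIV}, and regularity of a locally noetherian scheme may be checked at closed points. Over $S=\Spec(k)$ with $k$ algebraically closed, the support of a closed point of $\Gamma^d(X_k/k)$ consists of $k$-rational points, so all collision and residue-field issues disappear (a single reduced rational point maps to $\A{1}_k$ by a closed immersion), and your local model applies. The paper then concludes not by \etale{}-coordinate patching but via the formal local rings of Proposition~\pref{P:formal-local-rings-of-Gamma}: the completed local ring at $y$ is the completion of $\prod_i\Gamma^{d_i}(\widehat{X}_{x_i}/k)$ with $\widehat{\sO}_{X,x_i}=k$ or $k[[t]]$, reducing everything to $\Gamma^e_k(k[t])=\TS^e_k(k[t])=k[s_1,\dots,s_e]$ --- the same computation underlying your identification $\Gamma^{d}(\A{1}_S/S)=\A{d}_S$. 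Your relative-dimension-$0$ case is essentially correct. With the reduction to closed points of geometric fibers inserted at the start, the rest of your argument can be repaired; without it, the proof is incomplete.
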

\begin{proof}
As $\Gamma^d(X/S)\to S$ is flat and locally of finite presentation by
Proposition~\pref{P:Gamma^d-properties}, it is enough to show that its
geometric fibers are regular~\cite[Thm.~17.5.1]{egaIV}. Thus we can assume that
$S=\Spec(k)$ where $k$ is algebraically closed.
Let $y\in\Gamma^d(X/k)$. Then by
Proposition~\pref{P:formal-local-rings-of-Gamma}, the formal local ring
$\widehat{\sO}_{\Gamma^d(X/k),y}$ is the completion at a point of the scheme
$\prod_{i=1}^n\Gamma^{d_i}(\widehat{X}_{x_i}/k)$ where $x_1,x_2,\dots,x_n$ are
points of $X$ and $d=d_1+d_2+\dots+d_n$. If $f$ has relative dimension $0$ at
$x_i$ then $\sO_{X,x_i}=k$ and if $f$ has relative dimension $1$ at $x_i$ then
$\widehat{\sO}_{X,x_i}=k[[t]]$, cf.~\cite[Prop.~17.5.3]{egaIV}. The proposition
now easily follows if we can show that
$\Gamma^e\bigl(\Spec(k[t])/\Spec(k)\bigr)$ is smooth of relative dimension~$e$.
But $\Gamma^e_k(k[t])=\TS^e_k(k[t])=k[s_1,s_2,\dots,s_e]$ where
$s_1,s_2,\dots,s_e$ are the elementary symmetric functions.
\end{proof}

\begin{remark}
If $X/S$ is smooth of relative dimension $\geq 2$ then $\Gamma^d(X/S)$ is not
smooth for $d\geq 2$. This can be seen by an easy tangent space calculation. If
$X/S$ is smooth of relative dimension $2$ then on the other hand $\Hilb^d(X/S)$
is smooth and gives a resolution of $\Gamma^d(X/S)$~\cite[Cor.~2.6 and
Thm.~2.9]{fogarty_algfam_surf_I}. Moreover $\Hilb^d(X/S)\to \Gamma^d(X/S)$ is
a blow-up in this case~\cite{haiman_blowup,skjelnes_ekedahl_good_component}.
\end{remark}

\begin{proposition}\label{P:pushfwd-properties}
If $\map{f}{X}{Y}$ has one of the following properties, then so has
$\map{f^d/\SG{d}}{\Sym^d(X/S)}{\Sym^d(Y/S)}$:
%
% Warning: Explicit references to the numbers - do not change order.
%
\begin{enumerate}
\item quasi-compact
\item closed
\item open
\item universally closed
\item universally open
\item open immersion
\item affine
\item quasi-affine
\item integral
\end{enumerate}
If $f$ has one of the above properties or one of the following
\begin{enumerate}\setcounter{enumi}{9}
\item closed immersion
\item immersion
\end{enumerate}
then so has $\map{f_*}{\Gamma^d(X/S)}{\Gamma^d(Y/S)}$.
\end{proposition}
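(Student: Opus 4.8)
The plan is to reduce every assertion to the corresponding statement for the $d$\nobreakdash-fold fibre product $f^d=f\times_S f\times_S\dots\times_S f\colon (X/S)^d\to(Y/S)^d$, which inherits each of the listed properties from $f$ since all of (i)--(xi) are stable under base change and composition. The bridge is furnished by the $\SG{d}$\nobreakdash-equivariant, integral, universally open, universally closed and universally surjective morphisms $\map{\Psi_X}{(X/S)^d}{\Gamma^d(X/S)}$ and $\map{q_X}{(X/S)^d}{\Sym^d(X/S)}$ of Proposition~\pref{P:Gamma-is-top-quot} (and of \cite{rydh_finite_quotients} for $q_X$), together with the universal homeomorphism $\map{\SymGamma_X}{\Sym^d(X/S)}{\Gamma^d(X/S)}$ of Corollary~\pref{C:Sym-Gamma}, which satisfies $\Psi_X=\SymGamma_X\circ q_X$ and is therefore also integral and affine. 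The analogous morphisms exist over $Y$, and since $q_X$ is an epimorphism the identities $\Psi_Y\circ f^d=f_*\circ\Psi_X$ (Proposition~\pref{P:Gamma-is-top-quot}) and $q_Y\circ f^d=(f^d/\SG{d})\circ q_X$ (definition of $f^d/\SG{d}$) yield a commutative square relating $f^d/\SG{d}$ and $f_*$ through $\SymGamma_X$ and $\SymGamma_Y$.

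First I would dispose of the topological properties (i)--(v) and the ``algebraic'' properties (vii)--(ix) by a two\nobreakdash-step descent: from $f^d$ to $f^d/\SG{d}$ along $q_X$, and from $f^d/\SG{d}$ to $f_*$ along $\SymGamma_X$. In each step the morphism that is post\nobreakdash-composed ($q_Y$, resp.\ $\SymGamma_Y$) is integral --- hence affine, quasi\nobreakdash-compact, universally closed and universally open --- so the property of interest is preserved by the composition; and the morphism that is ``divided out'' ($q_X$, resp.\ $\SymGamma_X$) is integral and surjective, so the property descends. For (i)--(v) this descent is the elementary remark that a surjection $h$ satisfies $h(h^{-1}(Z))=Z$, applied after an arbitrary base change for the universal variants (here one uses that $q_X$ and $\SymGamma_X$ are universally surjective); for (vii)--(ix) it is the relative form of Chevalley's theorem on affineness: if $h$ is integral and surjective and $g\circ h$ is affine, resp.\ quasi\nobreakdash-affine, resp.\ integral, then so is $g$. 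For the immersion properties (vi), (x) and (xi) of $f_*$ there is nothing to prove: these are exactly the content of Proposition~\pref{P:Gamma-immersion}, which says that $\FGamma^d_{X/S}$ is an open, closed, resp.\ locally closed subfunctor of $\FGamma^d_{Y/S}$ when $f$ is an open immersion, closed immersion, resp.\ immersion; and for $f^d/\SG{d}$ with $f$ an open immersion one notes that $(X/S)^d\inj(Y/S)^d$ is a $\SG{d}$\nobreakdash-stable open immersion onto a saturated open subspace, so by the compatibility of the uniform geometric quotient $q_Y$ with saturated opens \cite{rydh_finite_quotients} the space $\Sym^d(X/S)$ is identified with an open subspace of $\Sym^d(Y/S)$.

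The step I expect to be the genuine obstacle is the relative Chevalley theorem invoked for (vii)--(ix): the morphisms $q_X$ and $\SymGamma_X$ along which affineness, quasi\nobreakdash-affineness and integrality must be descended are integral and surjective but not flat, so fppqc descent is unavailable and one has to appeal to Chevalley's theorem (for schemes this is classical; in the setting of algebraic spaces an explicit reference is needed). A minor point to check carefully is the handling of the bare properties (ii) ``closed'' and (iii) ``open'': here one reduces to the corresponding property of $\Psi_Y\circ f^d$, resp.\ $q_Y\circ f^d$, and uses the quasi\nobreakdash-compactness built into the surrounding hypotheses where necessary. Everything else is bookkeeping with stability of morphism properties under base change, composition, and surjective descent, together with the two equivariant quotient diagrams.
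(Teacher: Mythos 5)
For (i), (iv), (v) and the immersion statements (vi), (x), (xi) your argument is essentially the paper's: quasi-compactness and the universal topological properties are descended from $f^d$ along the surjective, integral, universally open and universally closed morphisms $q_X$, $\Psi_X$ and $\Sym^d(X/S)\to\Gamma^d(X/S)$, and the immersion statements for $f_*$ are exactly Proposition~\pref{P:Gamma-immersion}; your saturated-open argument for (vi) on the $\Sym^d$ side is what the paper's ``well-known'' amounts to. One warning: your blanket claim that all eleven properties are stable under base change fails for the bare properties (ii) ``closed'' and (iii) ``open'', so the reduction to $f^d$ genuinely breaks there. For instance, with $S=\Spec(k)$, $X=X_1\amalg X_2=\A{1}\amalg\A{1}$ and $Y=Y_1\amalg Y_2=\Spec(k)\amalg\A{1}$, with $f$ the structure morphism on the first summand and the identity on the second, $f$ is closed, yet the restriction of $f^2/\SG{2}$ to the open and closed subspace $X_1\times_S X_2\subseteq\Sym^2(X/S)$ is the second projection $\A{2}\to\A{1}=Y_1\times_S Y_2$, which is not closed. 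The paper's one-line treatment of (i)--(v) has the same defect, but you should not hide the problem behind ``quasi-compactness built into the surrounding hypotheses''.

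The substantive divergence is (vii)--(ix). You propose to descend affineness, quasi-affineness and integrality along the integral surjections $q_X$ and $\Psi_X$ via a relative Chevalley theorem (``$h$ integral surjective and $g\circ h$ affine imply $g$ affine''). You correctly identify this as the obstacle: in the generality required here --- non-noetherian algebraic spaces --- this is a genuinely hard theorem, of the same nature as the Grothendieck conjecture discussed in Appendix~\ref{SS:tff-result} and deferred to~\cite{rydh_noetherian-approx}, and for quasi-affineness the corresponding descent statement is not standard at all; so as written this part of your proof has a real gap. The paper avoids the issue entirely. For (vii) it uses Proposition~\pref{P:Gamma-neighborhoods} (every point of $\Gamma^d(Y/S)$ has an \etale{} neighborhood which is an open subspace of $\Gamma^d(V/S')$ with $V$ and $S'$ affine) together with the cartesian diagrams of Corollary~\pref{C:pushfwd-diagram} to reduce, \etale{}-locally on the target, to the case where $Y/S$ and hence $X/S$ is affine, where $\Gamma^d(X/S)$, $\Gamma^d(Y/S)$, $\Sym^d(X/S)$ and $\Sym^d(Y/S)$ are all affine over $S$ and there is nothing left to prove. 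Then (viii) follows by factoring a quasi-affine $f$ as a quasi-compact open immersion followed by an affine morphism and combining (i), (vi) and (vii), and (ix) is (vii) plus (iv), since integral equals affine plus universally closed. I would rewrite your (vii)--(ix) along these lines rather than importing integral-surjective descent of affineness.
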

\begin{proof}
Use that $\Psi_X$ and $\map{q}{(X/S)^d}{\Sym^d(X/S)}$ are universally closed,
universally open, quasi-compact and surjective for (i)-(v). Property (vi) is
well-known.  For (vii) reduced to $Y/S$ affine using
Proposition~\pref{P:Gamma-neighborhoods} and then use that $\Gamma^d(X/S)$ and
$\Sym^d(X/S)$ are affine if $X/S$ is affine. The combination of (i), (vi) and
(vii) gives (viii). Finally (ix) follows from (vii) and (iv). The last two
properties for $f_*$ follow from Proposition~\pref{P:Gamma-immersion}.
\end{proof}

\begin{remark}
If $f$ has one of the properties (x) or (xi), then $f^d/\SG{d}$ need not have
that property. If $f$ has one of the properties
\begin{enumerate}
\item finite
\item locally of finite type
\item locally of finite presentation
\item unramified
\item flat
\item \etale{}
\end{enumerate}
then neither $f^d/\SG{d}$ nor $f_*$ need to have that property.
\end{remark}

\begin{corollary}\label{C:addition-is-integral-and-univ-open}
The addition morphism
$\map{+}{\Gamma^d(X/S)\times_S\Gamma^e(X/S)}{\Gamma^{d+e}(X/S)}$ is integral
and universally open.
\end{corollary}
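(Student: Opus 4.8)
The plan is to exploit the factorization of the addition morphism recorded in Definition-Proposition~\pref{D:addition-of-cycles}. There, $\map{+}{\Gamma^d(X/S)\times_S\Gamma^e(X/S)}{\Gamma^{d+e}(X/S)}$ is built as the composite of the open and closed immersion $\injmap{\iota}{\Gamma^d(X/S)\times_S\Gamma^e(X/S)}{\Gamma^{d+e}(X\amalg X/S)}$ coming from Proposition~\pref{P:Gamma^d_of_disj_union:general} with the push-forward $\nabla_*$ along the codiagonal $\map{\nabla}{X\amalg X}{X}$. Since both being integral and being universally open are properties stable under composition of morphisms of algebraic spaces, it suffices to verify each of these two properties separately for $\iota$ and for $\nabla_*$.

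For $\iota$ there is nothing to prove: it is an open immersion, hence universally open, and it is a closed immersion, hence affine with surjective homomorphism on structure sheaves, hence integral. For $\nabla_*$ I would first observe that the codiagonal $\map{\nabla}{X\amalg X}{X}$ is finite and \etale{}, each of the two copies of $X$ mapping to $X$ by the identity; in particular $\nabla$ is integral (being affine and universally closed) and universally open (being \etale{}, hence flat and locally of finite presentation). I then invoke Proposition~\pref{P:pushfwd-properties}: its clauses~(v) and~(ix) say exactly that if $f$ is universally open (resp. integral) then so is $f_*$. Applying this with $f=\nabla$ gives that $\nabla_*$ is universally open and integral, and composing with $\iota$ completes the argument.

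I do not expect a genuine obstacle; the proof is entirely formal once Proposition~\pref{P:pushfwd-properties} is available. The only points worth a moment of care are that the push-forward $\nabla_*$ is legitimately defined --- which holds because $X\amalg X$ and $X$ are separated over $S$ (as $X/S$ is) and because the relevant schematic image is integral by the remark following Definition~\pref{D:push-forward} --- and that one cites the correct entries of the long list in Proposition~\pref{P:pushfwd-properties}. Everything else reduces to standard permanence properties of integral and universally open morphisms.
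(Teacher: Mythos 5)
Your proof is correct and follows exactly the same route as the paper: factor the addition morphism as the open and closed immersion into $\Gamma^{d+e}(X\amalg X/S)$ followed by the push-forward along the finite \etale{} codiagonal $X\amalg X\to X$, and apply clauses (v) and (ix) of Proposition~\pref{P:pushfwd-properties}. Nothing to add.
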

\begin{proof}
The morphism $X\amalg X\to X$ is finite and \etale{} and hence integral and
universally open. Thus $\Gamma^{d+e}(X\amalg X/S)\to\Gamma^{d+e}(X/S)$ is
integral and universally open by Proposition~\pref{P:pushfwd-properties}. As
the addition morphism is the composition of the open and closed immersion
$\Gamma^d(X/S)\times_S\Gamma^e(X/S)\inj\Gamma^{d+e}(X\amalg X/S)$ and the
push-forward along $X\amalg X\to X$ the corollary follows.
\end{proof}

\end{subsection}

\end{section}

%---------------------------------------------------------------

\appendix
\begin{section}{Appendix}

\begin{subsection}{The (AF) condition}\label{SS:AF}
The (AF) condition has frequently been used as a natural setting for a wide
range of problems. It guarantees the existence of finite
quotients~\cite[Exp.~V]{sga1}, push-outs~\cite{ferrand_conducteur} and
the Hilbert scheme of points~\cite{rydh_hilbert}. Moreover, under the
(AF) condition, \etale{} cohomology can be calculated using
\v{C}ech cohomology~\cite[Cor.~4.2]{artin_Hensel-join},\ \cite{schroer_bigger-Brauer}. % \cite[Ch.~III, Thm.~2.17]{milne} is Artin's Cor.~4.2

\begin{definition}
We say that a scheme $X/S$ is \AF{} if it satisfies the following condition.
\renewcommand{\theequation}{AF}
\begin{equation}\label{E:AF}
\parbox{10 cm}{Every finite set of points $x_1,x_2,\dots,x_n\in X$
over the same point $s\in S$ is contained in an open subset $U\subseteq X$
such that $U\to S$ is quasi-affine.}
\end{equation}
\end{definition}

\begin{remark}
Let $X/S$ and $Y/S$ be \AF{}-schemes. Then $X\times_S Y/S$ is an
\AF{}-scheme. If $S'\to S$ is any morphism, then $X\times_S S'/S'$ is an
\AF{}-scheme. This is obvious as the class of quasi-affine morphisms is
stable under products and base change. It is also clear that the \eqref{E:AF}
condition is local on $S$ and that the subset $U$ in the condition can be
chosen such that $U$ is an affine scheme. Moreover, if $S$ is quasi-separated,
then we can replace the condition that $U\to S$ is quasi-affine with the
condition that $U$ is affine.
\end{remark}

\begin{proposition}
Let $X$ be an $S$-scheme. If $X$ has an ample invertible sheaf $\sO_X(1)$
relative to $S$ then $X/S$ is an \AF{}-scheme. In particular, it is so if $X/S$
is (quasi-)affine or (quasi-)projective.
%More generally $X/S$
%satisfies (AF) if there is an affine cover $\{U_\alpha\}$ of $S$ such that
%$f^{-1}(U_\alpha)$ is an open subset of $\Proj(A)$ where $A$ is any
%graded $\Gamma(S,U_\alpha)$-algebra.
\end{proposition}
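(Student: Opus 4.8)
The plan is to use the remark above that the \eqref{E:AF} condition is local on $S$, and thereby reduce to the case where $S=\Spec(A)$ is affine and $\sO_X(1)$ is ample on $X$ in the absolute sense: this is exactly what relative ampleness provides, since by definition $S$ has an affine open cover over which the restriction of $\sO_X(1)$ is ample, and it suffices to treat each member of the cover. Given a finite set $x_1,\dots,x_n\in X$ lying over a common point $s\in S$, it then suffices to produce a single \emph{quasi-affine} (indeed affine, after a further shrinking) open $U\subseteq X$ containing all the $x_i$; since such a $U$ maps to the affine scheme $S$, the morphism $U\to S$ is automatically quasi-affine, which is \eqref{E:AF}.

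To build $U$, I would invoke the standard structure theory of ample sheaves: for $m$ large enough $\sO_X(m)$ is globally generated and the associated morphism realises $X$ as a locally closed subscheme $j\colon X\hookrightarrow P:=\mathbb{P}(\mathcal{E})$ over $S$, where $\mathcal{E}$ is the quasi-coherent $\sO_S$-module $f_*\sO_X(m)$ (in the quasi-projective case one may take $\mathcal{E}$ of finite type, so $P$ is a $\mathbb{P}^N_S$). It now suffices to find a homogeneous section $g\in\Gamma(P,\sO_P(k))$ of positive degree $k$ with $j(x_1),\dots,j(x_n)\in P_g$: then $U:=j^{-1}(P_g)=X\cap P_g$ is open in $X$, contains every $x_i$, and is a locally closed subscheme of the affine scheme $P_g$, hence quasi-affine. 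To find such a $g$, I would pass to the fibre over $s$. Over the field $\kappa(s)$, $P_s=\mathbb{P}(\mathcal{E}\otimes_A\kappa(s))$ is a projective space and the $j(x_i)$ are finitely many of its points; since the evaluation map $\Sym^k(\mathcal{E}\otimes\kappa(s))\to\bigoplus_i\sO_{P_s}(k)|_{x_i}$ is surjective for $k\gg0$, a standard independence/counting argument over $\kappa(s)$ yields a homogeneous form $\bar g$ of degree $k$ vanishing at none of the $x_i$. Lifting $\bar g$ along the surjection $\Sym^k(\mathcal{E})\twoheadrightarrow\Sym^k(\mathcal{E}\otimes\kappa(s))$ to a homogeneous $g$ of the same degree, and using that each $x_i$ lies over $s$ (so $g$ cannot vanish at $j(x_i)$ since $\bar g$ does not), gives the required $P_g$.

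The two ``in particular'' cases then come for free: if $X/S$ is quasi-affine then $\sO_X$ is relatively ample, and if $X/S$ is quasi-projective then $\sO_X(1)$ is relatively ample by the very definition of quasi-projectivity, so the general statement applies in both. The step I expect to be the main obstacle is the first part of the second paragraph: pinning down precisely which form of ``ample sheaf $\Rightarrow$ immersion into a projectivised quasi-coherent module'' one should cite or re-prove, and dealing cleanly with the quasi-compactness (and quasi-separatedness) hypotheses on $X/S$ that such a statement requires; once $X$ is embedded as a locally closed subscheme of some $\mathbb{P}(\mathcal{E})$, the hypersurface-avoidance over $\kappa(s)$ and the lifting of $\bar g$ are routine because $\Sym^k$ commutes with the base change $A\to\kappa(s)$.
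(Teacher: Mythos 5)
Your proof is correct and follows essentially the same route as the paper: reduce to $S$ affine, where relative ampleness becomes absolute ampleness, and then place the finitely many points in a common (quasi-)affine open --- the paper simply cites \cite[Cor.~4.5.4]{egaII} for this last step, which is precisely the statement you re-prove via the embedding into $\mathbb{P}(\mathcal{E})$ and hypersurface avoidance. One small repair in your lifting step: $\Sym^k(\mathcal{E})\to\Sym^k(\mathcal{E}\otimes_A\kappa(s))$ is not surjective in general since $A\to\kappa(s)$ is not, so you should first localize at the prime of $s$ and clear a denominator $t\notin\mathfrak{p}_s$, which only rescales $\bar g$ by a nonzero scalar and does not affect non-vanishing at the $x_i$.
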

\begin{proof}
Follows immediately from~\cite[Cor.~4.5.4]{egaII} since we can assume that
$S=\Spec(A)$ is affine.
\end{proof}

\begin{proposition}
Let $X/S$ be an \AF{}-scheme. Then $X/S$ is separated.
\end{proposition}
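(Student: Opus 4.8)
The plan is to prove that the diagonal $\map{\Delta_{X/S}}{X}{X\times_S X}$ is a closed immersion, which is exactly the statement that $X/S$ is separated. Since the property of being a closed immersion is local on the target, it is enough to cover $X\times_S X$ by open subschemes over each of which $\Delta_{X/S}$ restricts to a closed immersion.

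First I would take an arbitrary point $z\in X\times_S X$ and let $x_1,x_2\in X$ be its images under the two projections. As $z$ lies in the fiber product over $S$, both $x_1$ and $x_2$ lie over a single point $s\in S$. Applying the \eqref{E:AF} condition to the finite set $\{x_1,x_2\}\subseteq X$ yields an open subset $U\subseteq X$ containing $x_1$ and $x_2$ with $U\to S$ quasi-affine. Then $U\times_S U$ is an open subscheme of $X\times_S X$ --- open immersions being stable under fiber products over $S$ --- it contains $z$ because both projections of $z$ land in $U$, and $\Delta_{X/S}^{-1}(U\times_S U)=U$. Consequently the base change of $\Delta_{X/S}$ over $U\times_S U$ is the diagonal $\map{\Delta_{U/S}}{U}{U\times_S U}$ of $U/S$.

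Finally I would invoke the standard fact that quasi-affine morphisms are separated, so that $\Delta_{U/S}$ is a closed immersion. Letting $z$ vary over $X\times_S X$, the open subschemes $U\times_S U$ obtained this way form an open cover of $X\times_S X$, and $\Delta_{X/S}$ is a closed immersion over each of them; hence $\Delta_{X/S}$ is a closed immersion and $X/S$ is separated.

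I do not anticipate a real obstacle: the argument uses only that closed immersions can be checked on an open cover of the target, that open immersions are stable under fiber product over $S$, and that quasi-affine morphisms are separated. The one substantive observation is that feeding the pair $\{x_1,x_2\}$ into \eqref{E:AF} produces an open neighborhood of $z$ of the special product shape $U\times_S U$, on which the diagonal of $X/S$ coincides with the diagonal of a separated morphism.
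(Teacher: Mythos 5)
Your proof is correct and uses the same key idea as the paper: apply the \eqref{E:AF} condition to the two projections $x_1,x_2$ of a point of $X\times_S X$ to obtain a quasi-affine (hence separated) open $U/S$ with $z\in U\times_S U$, over which $\Delta_{X/S}$ restricts to the closed immersion $\Delta_{U/S}$. The paper phrases the same local verification slightly differently, by taking $z$ in the closure of the diagonal and concluding that the image of $\Delta_{X/S}$ is closed, but the two arguments are essentially identical.
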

\begin{proof}
Let $z$ be a point in the closure of $\Delta_{X/S}(X)$, where
$\injmap{\Delta_{X/S}}{X}{X\times_S X}$ is the diagonal morphism, and let
$x_1,x_2\in X$ be its two projections. Choose an affine neighborhood $U$
containing $x_1$ and $x_2$. Then $\injmap{\Delta_{U/S}}{U}{U\times_S U}$ is
closed and $\Delta_{U/S}$ is the pull-back of $\Delta_{X/S}$ along the open
immersion $U\times_S U\subset X\times_S X$. Taking closure commutes with
restricting to open subsets and thus $z\in U\subset X$. This shows that
$\Delta_{X/S}(X)$ is closed and hence that $X/S$ is separated.
\end{proof}

The following conjecture was proved by
Kleiman~\cite{kleiman_num_theory_ampleness}.

\begin{theorem}[Chevalley's conjecture]
Let $X/k$ be a proper regular algebraic scheme. Then $X$ is projective if and
only if $X/k$ is an \AF{}-scheme.
\end{theorem}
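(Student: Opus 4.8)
We prove the two implications separately; the interesting one is Chevalley's original conjecture, and it is the one for which we invoke Kleiman's work.

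If $X/k$ is projective it carries an invertible sheaf that is ample relative to $k$, so by the Proposition above (a scheme with a relatively ample invertible sheaf is \AF{}) we conclude that $X/k$ is an \AF{}-scheme. For the converse we first make harmless reductions. Passing to connected components is compatible with both hypotheses: a finite set of points meets only finitely many components, and a finite disjoint union of affine (resp.\ projective) schemes is again affine (resp.\ projective). We may therefore assume $X$ connected, hence --- being regular --- integral; and we may assume $\dim X\ge 2$, since finite $k$-schemes and proper regular curves over $k$ are projective.

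The geometric content extracted from the \AF{} condition is the following. For any affine open $U\subseteq X$ the complement $X\setminus U$ is empty or pure of codimension one: if an irreducible component of it had generic point $\eta$ of codimension $\ge 2$, then $\mathcal{O}_{X,\eta}$ would be a regular local ring of depth $\ge 2$, so the pushforward $j_*\mathcal{O}_U$ along the affine immersion $j\colon U\hookrightarrow X$ would satisfy $(j_*\mathcal{O}_U)_\eta=\mathcal{O}_{X,\eta}$, which, since $U$ is the relative spectrum of $j_*\mathcal{O}_U$ over $X$, forces $\eta\in U$ --- a contradiction. As $X$ is regular, $X\setminus U$ is then the support of an effective Cartier divisor $D_U$ with tautological section $s_U$ satisfying $X_{s_U}=U$. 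By quasi-compactness of $X$ finitely many affine opens $U_1,\dots,U_n$ cover $X$, yielding effective Cartier divisors $D_1,\dots,D_n$ whose complements are affine and cover $X$. Moreover, for every integral curve $C\subseteq X$ and every point $x\in C$, an affine open $U\ni x$ gives $C\not\subseteq\mathrm{Supp}(D_U)$ while $C\cap\mathrm{Supp}(D_U)\ne\emptyset$ (a complete curve cannot lie in the affine $U$), so $(D_U\cdot C)>0$: every curve admits a ``certifying'' effective Cartier divisor on which it is strictly positive.

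It remains to manufacture an ample class from these divisors. By Kleiman's numerical criterion, an invertible sheaf $\mathcal{L}$ on the proper scheme $X$ is ample if and only if $(\mathcal{L}\cdot\gamma)>0$ for every nonzero class $\gamma$ in the closed cone of effective $1$-cycles $\overline{NE}(X)\subseteq N_1(X)_{\mathbb{R}}$. One combines the covering divisors $D_1,\dots,D_n$ with the certifying divisors $D_U$ and uses the finite-dimensionality of $N^1(X)_{\mathbb{R}}$ to produce a positive rational combination of finitely many of these divisors whose class lies in the interior of the cone dual to $\overline{NE}(X)$; such a class is ample and $X$ is projective. The delicate point --- and the reason this is Kleiman's theorem rather than a formal consequence of the previous paragraph --- is the passage from ``positive on every irreducible curve'' to ``positive on all of $\overline{NE}(X)\setminus\{0\}$'': a divisor may meet every curve positively and still fail to be ample, so one cannot merely add up certifying divisors. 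Controlling the limits of curve classes --- showing that the \AF{} hypothesis forces $\overline{NE}(X)$ to be salient and that the span of the divisors $D_U$ meets the nef cone in its interior --- is the heart of the argument, and it is precisely the input we take from \cite{kleiman_num_theory_ampleness}.
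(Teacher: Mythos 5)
The paper offers no argument for this statement at all --- it is imported wholesale with the citation to Kleiman --- so there is no internal proof to compare against, and your proposal, which also ends by invoking \cite{kleiman_num_theory_ampleness} for the decisive step, is in that sense the same ``approach.'' Everything you actually verify is correct and standard: the easy direction via the ample-implies-\AF{} proposition, the reduction to $X$ integral of dimension $\geq 2$, the Hartogs/depth argument showing that the complement of an affine open in a regular scheme is pure of codimension one (hence Cartier by local factoriality), and the positivity $(D_U\cdot C)>0$ for any curve $C$ meeting $U$.

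The one point worth flagging is that nothing you prove before the final citation uses the \AF{} hypothesis: every scheme has an affine neighborhood of a \emph{single} point, so your divisors $D_U$ and the statement ``every integral curve meets some effective Cartier divisor positively'' hold for an arbitrary proper regular scheme --- including Hironaka's smooth proper non-projective threefolds. Consequently the written argument makes no progress toward projectivity; the entire content of the theorem, and the only place \eqref{E:AF} enters, sits inside the cited black box. Concretely, the missing idea is that \AF{} supplies, for any \emph{finite} collection of curves $C_1,\dots,C_r$, a single affine open $U$ meeting all of them (choose one point on each and apply \eqref{E:AF}), hence a single divisor $D_U$ with $(D_U\cdot C_i)\geq 1$ for every $i$ simultaneously; it is this uniform positivity on finite families, combined with the integrality of intersection numbers and the finite generation of $N_1(X)$, that Kleiman converts into positivity on $\overline{NE}(X)\setminus\{0\}$. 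Your final paragraph gestures at this but does not perform it; if the intent is merely to cite Kleiman, the sketch is fine (and matches the paper), but as a self-contained proof it stops exactly where the theorem begins.
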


It is however not true that a proper singular scheme always is projective if it
is \AF{}. In fact, there are singular, proper but non-projective
\AF{}-surfaces~\cite{horrocks_nonproj_surface}.
\end{subsection}

%--------------------------------------------------------------

\begin{subsection}{A theorem on integral morphisms}\label{SS:tff-result}

\begin{definition}\label{D:tff}
We say that a morphism $\map{f}{X}{Y}$ has \emph{topologically finite fibers}
if the underlying topological space of every fiber is a finite set.
We say that $f$ has \emph{universally topologically finite fibers}
if the base change of $f$ by any morphism $Y'\to Y$ has topologically finite
fibers, equivalently the underlying topological space of every fiber is a
finite set and the residue field extensions has finite separable degree.
\end{definition}

The purpose of this section is to prove the following theorem:

\begin{theorem}\label{T:image_of_integral_wtff}
Let $\map{f}{X}{Y}$ and $\map{g}{Y}{S}$ be morphisms of algebraic spaces.
If $g\circ f$ is integral with topologically finite fibers and $g$ is separated
then the ``schematic'' image $Y'$ of $f$ exists and $Y'\to S$ is integral with
topologically finite fibers.
\end{theorem}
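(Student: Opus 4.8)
The strategy is to reduce to the affine case by étale-local and limit arguments, and then to produce $Y'$ as the spectrum of a subring of $\Gamma(X,\sO_X)$. First I would observe that the existence of a ``schematic image'' for $f$ is purely local on $S$ in the étale (indeed Zariski) topology, and that being integral with topologically finite fibers is also étale-local on the target; so we may assume $S = \Spec(A)$ is affine. Since $g\circ f$ is integral, $X = \Spec(B)$ is affine and $A \to B$ is integral. The subtlety is that $Y$ is only separated over $S$, not necessarily affine or quasi-compact. I would handle this by writing $B = \varinjlim B_\lambda$ as a filtered direct limit of finite $A$-subalgebras, so that $X = \varprojlim X_\lambda$ with $X_\lambda = \Spec(B_\lambda)$ finite over $S$; each composite $X \to X_\lambda$ is integral, surjective and closed. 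The map $X \to Y$ together with $X_\lambda$ finite (hence proper and separated) over $S$, and $Y$ separated over $S$, lets one form the schematic image $Y_\lambda'$ of the composite $X \to Y \times_S X_\lambda$ (or more directly of a suitable finite-level approximation), which is proper and quasi-finite over $S$, hence finite over $S$; here one uses the topologically-finite-fibers hypothesis to get quasi-finiteness, and a standard limit argument (EGA IV, \S8) to descend the morphism $X \to Y$ to a morphism $X_\lambda \to Y$ for $\lambda$ large, after replacing $B$ by the subalgebra it generates.

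The heart of the matter is the case where $X \to S$ is \emph{finite}. Here $X = \Spec(B)$ with $B$ a finite $A$-algebra, $Y \to S$ is separated, and $f\colon X \to Y$ is a morphism over $S$ with topologically finite fibers. Because $X \to S$ is finite and $Y \to S$ is separated, $f$ is proper (as $X\to S$ proper factors through the separated $Y\to S$). Since the fibers of $X \to S$ are finite and $f$ has topologically finite fibers, the fibers of $Y' \to S$ over points of $S$ in the image are finite sets; more precisely, over each $s \in S$ the fiber of $f$ lands in a finite subset of $Y_s$, and the schematic image $Y'$ of $f$ — which exists since $f$ is proper (quasi-compact and quasi-separated suffice) — is a closed subspace of $Y$ with $Y' \to S$ proper. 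By the topological finiteness of the fibers of $X\to S$ pushed through $f$, the morphism $Y' \to S$ is quasi-finite; being also proper, it is finite by \cite[Cor.~18.12.6]{egaIV} or the analogous statement for algebraic spaces, hence integral, and its fibers are finite sets with residue extensions of finite separable degree (this last point again from the ``universally'' version: base change and use that a quasi-finite proper morphism has fibers with finite residue-field extensions, separably; or invoke Theorem~\pref[IS:field]{T:supp-properties}-style finiteness).

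With the finite case in hand, I would assemble the general integral case: writing $X = \varprojlim X_\lambda$ as above, the schematic images $Y_\lambda'$ form a projective system of finite $S$-schemes (the transition maps being closed immersions, since $X_\mu \to X_\lambda$ is surjective for $\mu \geq \lambda$ forces $Y_\mu' \to Y_\lambda'$ to be dominant as well as a closed immersion, hence an isomorphism onto a closed subspace), and $Y' := \varprojlim Y_\lambda'$ is a closed subspace of $Y$ which is integral over $S$ (inverse limit of finite $S$-schemes along closed immersions is integral over $S$). One checks $Y'$ is the schematic image of $f$: it is the smallest closed subspace of $Y$ through which $f$ factors, because $f$ factors through each $Y_\lambda'$ (being the schematic image at finite level), hence through $Y'$, and minimality is inherited from the finite levels. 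The topologically-finite-fibers property of $Y' \to S$ follows because each $Y'_\lambda \to S$ has finite fibers and $Y' \to Y'_\lambda$ is a closed immersion for $\lambda$ large enough to see each fiber; for the \emph{universal} version one repeats the argument after an arbitrary base change $S' \to S$, using that schematic image of a quasi-compact quasi-separated morphism commutes with flat base change and, in general, that the \emph{set-theoretic} image does, which suffices for the topological statement.

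\textbf{Main obstacle.} The hardest point is making the reduction to the finite case rigorous when $Y$ is merely separated over $S$: one must know the schematic image of $f$ exists at all, and this requires $f$ to be quasi-compact and quasi-separated, which is exactly where the topologically-finite-fibers hypothesis combined with integrality of $g\circ f$ (so $X$ is ``small'' over $S$) is used to control $f$. The limit/descent argument identifying $Y'$ with $\varprojlim Y'_\lambda$ and verifying that the transition maps are isomorphisms onto closed subspaces — i.e.\ that the system stabilizes set-theoretically fiber by fiber — is the technically delicate step, relying on the standard approximation results of \cite[\S8]{egaIV} adapted to algebraic spaces and on the surjectivity and closedness of $X \to X_\lambda$.
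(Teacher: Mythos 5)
There are two genuine gaps, and together they bypass the actual content of the theorem. First, the reduction to the finite case rests on descending $\map{f}{X}{Y}$ to a morphism $X_\lambda\to Y$ for some finite level $X_\lambda=\Spec(B_\lambda)$. The limit results of \cite[\S8]{egaIV} that you invoke ($\Hom_S(\varprojlim X_\lambda,Y)=\varinjlim\Hom_S(X_\lambda,Y)$) require $Y\to S$ to be \emph{locally of finite presentation}, whereas here $Y\to S$ is only assumed separated. The claim is in fact false in general: take $Y=X$ and $f=\id{X}$ with $X\to S$ integral but not finite; then $f$ factors through no $X_\lambda$. The fallback of taking the schematic image of $X\to Y\times_S X_\lambda$ does not repair this, since that image still lives over the unknown $Y$ and one is back where one started.

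Second, and more fundamentally, the ``finite case'' argument is circular at its key step. You conclude that $Y'\to S$ is finite because it is ``proper and quasi-finite'', citing Deligne's theorem. But quasi-finiteness presupposes that $Y'\to S$ is locally of finite type, and nothing in the hypotheses gives this: $Y$ is merely separated over $S$, so $Y'$ is a priori just a separated, universally closed algebraic space over $S$ whose fibers are finite \emph{topological} spaces. Knowing that such a morphism is integral (equivalently affine) is precisely the difficulty --- the paper points out that this is a case of Grothendieck's conjecture \cite[Rem.~18.12.9]{egaIV} that a separated universally closed morphism with integral fibers is integral. The easy case where $Y/S$ is locally of finite type is Proposition~\pref{P:image_of_finite}; the general case is handled in the paper by an entirely different route: one shows directly that $f$ is quasi-compact, separated and universally closed (so the schematic image exists with no limit argument), reduces integrality of $Y'\to S$ to affineness, and then descends affineness along the faithfully flat, non-quasi-compact morphism $\coprod_s\Spec(\hensel{\sO_{S,s}})\to S$ (Proposition~\pref{P:flat_descends_affineness}); over a henselian local base $X$ splits into finitely many local henselian schemes (Proposition~\pref{P:int_hensel}), which forces $Y'$ to split into local pieces (Lemma~\pref{L:disj_union}) that are then shown to be affine schemes (Lemma~\pref{L:local_alg_space_is_scheme}). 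Some substitute for this descent-to-the-henselization step is unavoidable; your proposal does not contain one.
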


Let us first note that this is easy to proof when $g$ is locally of finite
type:

\begin{proposition}\label{P:image_of_finite}
Let $X$ and $Y$ be schemes locally of finite type and separated over the base
scheme $S$. Let $\map{f}{X}{Y}$ and $\map{g}{Y}{S}$ be $S$-morphisms. If
$g\circ f$ is finite then the schematic image $Y'$ of $f$ exists and $Y'\to S$
is finite.
\end{proposition}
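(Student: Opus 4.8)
The plan is to realize $Y'$ as the \emph{schematic image} of $f$ and then transport the finiteness properties from $X$ to $Y'$ along the resulting surjection. First I would check that $f\colon X\to Y$ is quasi-compact and quasi-separated: the composite $g\circ f$ is finite, hence quasi-compact and separated, and $g$ is separated, so the usual cancellation properties give that $f$ is quasi-compact and quasi-separated. Consequently $f_*\mathcal O_X$ is a quasi-coherent $\mathcal O_Y$-algebra and the closed subscheme $Y'\inj Y$ defined by $\ker(\mathcal O_Y\to f_*\mathcal O_X)$ is the schematic image of $f$, with underlying space the closure of $f(X)$. Write $f'\colon X\to Y'$ for the induced morphism and $g'\colon Y'\to S$ for the restriction of $g$, so that $g'\circ f'=g\circ f$. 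Since $Y'\inj Y$ is a closed immersion and $Y\to S$ is separated and locally of finite type, $g'$ is separated and locally of finite type.

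Next I would prove that $f'$ is surjective. The morphism $g'\circ f'=g\circ f$ is finite, hence universally closed, and $g'$ is separated; factoring $f'$ through its graph $X\to X\times_S Y'$ (a closed immersion, being a base change of $\Delta_{Y'/S}$) followed by the projection $X\times_S Y'\to Y'$ (a base change of $g\circ f$, hence universally closed), we get that $f'$ is universally closed. A closed morphism with topologically dense image is surjective, so $f'$ is surjective, and surjectivity is stable under base change.

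It then remains to see that $g'$ is finite. For finiteness of fibres: for $s\in S$ the fibre $(Y')_s$ is the image of $X_s$ under the surjection $(f')_s\colon X_s\to (Y')_s$, and $X_s$ is a finite discrete space since $g\circ f$ is finite; hence $(Y')_s$ is finite. For quasi-compactness of $g'$: for an affine open $U\subseteq S$ the scheme $X\times_S U$ is affine (as $g\circ f$ is affine), hence quasi-compact, and surjects onto $Y'\times_S U$, which is therefore quasi-compact. Thus $g'$ is of finite type with finite fibres, i.e.\ quasi-finite. Finally $g'$ is universally closed: for any $S'\to S$, the base change $X_{S'}\to S'$ is closed and $X_{S'}\to Y'_{S'}$ is surjective, so for a closed subset $Z\subseteq Y'_{S'}$ the image of $Z$ in $S'$ equals the image of its preimage in $X_{S'}$, which is closed. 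Hence $g'$ is proper and quasi-finite, and therefore finite by Zariski's main theorem.

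The argument is entirely formal apart from two standard inputs: good behaviour of the schematic image for the quasi-compact quasi-separated morphism $f$, and the implication ``proper $+$ quasi-finite $\Rightarrow$ finite''. The only step needing a little care is the surjectivity of $f'$, since it is scheme-theoretic dominance that allows universal closedness and the fibre bound to be pushed from $X$ to $Y'$; I do not expect any genuine obstacle beyond this.
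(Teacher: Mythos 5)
Your proof is correct and follows essentially the same route as the paper: establish that $f$ is quasi-compact, separated and universally closed by cancellation, take the schematic image $Y'$ with $X\to Y'$ surjective, transport universal closedness and finiteness of fibres to $Y'\to S$, and conclude by the quasi-finite $+$ universally closed $+$ separated $\Rightarrow$ finite criterion (Deligne's theorem, \cite[Cor.~18.12.4]{egaIV}), which is exactly your ``proper and quasi-finite implies finite'' step.
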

\begin{proof}
As $g\circ f$ is separated, $f$ is separated. As $g\circ f$ is quasi-compact
and universally closed and $g$ is separated, $f$ is quasi-compact
and universally closed. Thus the image $Y'$ exists
\cite[Prop.~6.10.5]{egaI_NE} and $X\to Y'$ is surjective. As $g\circ f$ is
universally closed and $X\to Y'$ is surjective it follows that
$Y'\to S$ is universally closed. Further it is immediately seen that $Y'\to S$
has discrete fibers. Thus $Y'\to S$ is quasi-finite, universally closed and
separated. By Deligne's theorem~\cite[Cor.~18.12.4]{egaIV} this implies that
$Y'\to S$ is finite.
\end{proof}

\begin{remark}
It is easy to generalize Proposition~\pref{P:image_of_finite} to the case where
$X$ and $Y$ are \emph{algebraic spaces}.
In~\cite[Thm.~6.15]{knutson_alg_spaces} Deligne's theorem is proven for
algebraic spaces under a finite presentation hypothesis. The full version of
Deligne's theorem for algebraic spaces is given in~\cite[Thm.~A.2]{laumon}.
\end{remark}

\begin{remark}
Now instead assume as in Theorem~\pref{T:image_of_integral_wtff} that $X$ and
$Y$ are arbitrary schemes and $g\circ f$ is integral with topologically finite
fibers. The first part of the proof of Proposition~\pref{P:image_of_finite}
then shows as before that the schematic image $Y'$ exists and $Y'\to S$ is
separated and universally closed. It is further easily seen that every fiber
$Y'_s$ is a discrete finite topological space.

Under the hypothesis that $Y/S$ is an \AF{}-scheme it easily follows that
$Y'\to S$ is integral. In fact, then $Y'/S$ is \AF{} and any neighborhood of
$Y'_s$ in $Y'$ contains an affine neighborhood of $Y'_s$. Thus $Y'\to S$ is
affine by~\cite[Lem.~18.12.7.1]{egaIV} and therefore integral
by~\cite[Prop.~18.12.8]{egaIV}.

In general, note that $Y'_s$ is affine and hence integral over $k(s)$ as a
morphism is integral if and only if it is universally closed and affine,
cf.~\cite[Prop.~18.12.8]{egaIV}. Theorem~\pref{T:image_of_integral_wtff} thus
follows by the following conjecture of Grothendieck (for schemes):
\end{remark}

\begin{conjecture}[{\cite[Rem.~18.12.9]{egaIV}}]
If $X\to S$ is a separated, universally closed morphism of algebraic spaces,
such that $X_s$ is integral, then $X\to S$ is integral.
\end{conjecture}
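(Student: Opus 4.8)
The statement being local on $S$ for the conclusion, I would first reduce to $S=\Spec(A)$ affine. Since a morphism of algebraic spaces is integral if and only if it is affine and universally closed \cite[Prop.~18.12.8]{egaIV}, and since each fibre $X_s$ is automatically universally closed over $k(s)$, the hypothesis that $X_s$ be integral over $k(s)$ is exactly the requirement that $X_s$ be affine; as universal closedness of $f$ is assumed, the conclusion then comes down to showing that $X$ is affine. (As in the application to Theorem~\pref{T:image_of_integral_wtff}, where $X\to S$ arises as the image of an integral morphism, one may also assume $X$ quasi-compact, so that $f$ is quasi-compact and quasi-separated.) The target thus becomes: \emph{a separated, quasi-compact, universally closed morphism $\map{f}{X}{\Spec(A)}$ all of whose fibres $X_s$ are affine is affine.}

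I would attack this through the canonical factorization $X\to\bar X\to S$ with $\bar X:=\Spec_S\bigl(f_*\sO_X\bigr)$. As $\bar X\to S$ is affine, hence separated, the cancellation lemma makes $X\to\bar X$ separated, quasi-compact and universally closed, and it is schematically dominant by construction; since a universally closed morphism is closed, it suffices to prove that $X\to\bar X$ is a monomorphism --- a quasi-compact universally closed monomorphism is a closed immersion, and a schematically dominant closed immersion is an isomorphism, whence $X\iso\bar X$ is affine over $S$. Writing $f_*\sO_X=\varinjlim_\lambda B_\lambda$ as the filtered union of its finitely generated $A$-subalgebras gives $\bar X=\varprojlim_\lambda\Spec(B_\lambda)$, and each $X\to\Spec(B_\lambda)$ is again separated, quasi-compact and universally closed with affine fibres --- the fibres being fibres of the affine maps $X_s\to\Spec(B_\lambda\otimes_A k(s))$. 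Hence, by noetherian approximation --- the delicate point being to preserve universal closedness and fibrewise affineness, which one tracks via the valuative criterion and via the fact that fibres of morphisms of affine schemes are affine --- and then localizing at a point of $S$ and completing, one reduces to the case where $A$ is a complete local noetherian ring with closed point $s_0$.

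Over such a base the closed fibre $X_{s_0}$ is affine, hence so is every infinitesimal thickening $X_n:=X\times_A A/\im^{n+1}$, a locally nilpotent thickening of an affine scheme being affine. The plan is then to deduce that $X$ is affine by comparing it with $\Spec$ of the complete ring $\widehat C:=\varprojlim_n\Gamma(X_n,\sO_{X_n})$, letting universal closedness over the complete local ring $A$ play the role that properness plays in the theorem on formal functions and in Grothendieck's existence theorem. This last step is precisely the one with no finite-presentation hypothesis to lean on, and it is the main obstacle of the whole argument. For algebraic spaces rather than schemes one carries the proof out after passing to an \etale{} presentation, all the notions in play being \etale{}-local, with Deligne's finiteness theorem \cite[Thm.~A.2]{laumon} entering where finite-type fibres must be recognized as finite; the remaining reductions are routine applications of the cancellation lemma, noetherian approximation and \etale{} descent.
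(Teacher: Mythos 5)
The first thing to say is that the paper does not prove this statement: it is recorded as a conjecture, with the proof deferred to the separate paper \cite{rydh_noetherian-approx}, and the rest of the appendix is explicitly organized so as to prove Theorem~\pref{T:image_of_integral_wtff} \emph{without} it (via henselization of the base, Proposition~\pref{P:int_hensel}, descent of affineness along non-quasi-compact faithfully flat morphisms in Proposition~\pref{P:flat_descends_affineness}, and Lemma~\pref{L:local_alg_space_is_scheme}). So there is no proof in the paper to compare against, and your attempt has to stand on its own.

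It does not: there is a genuine gap, and you name it yourself. Over a complete local noetherian base you correctly get that every infinitesimal thickening $X_n$ is affine, but the passage from the system $(X_n)$ to $X$ is exactly where the theorem on formal functions and Grothendieck's existence theorem would be invoked, and both require properness, i.e.\ finite type --- which is precisely the hypothesis that is absent. No substitute is offered, so the argument does not close. There is a second, quieter problem earlier: writing $f_*\sO_X=\varinjlim_\lambda B_\lambda$ approximates only the affine hull $\overline{X}=\Spec_S(f_*\sO_X)$, not $X$; the morphisms $X\to\Spec(B_\lambda)$ are still not of finite presentation, so this step does not actually reduce to a noetherian situation. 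The proof in \cite{rydh_noetherian-approx} instead rests on the much harder \emph{absolute} approximation theorem, writing the quasi-compact quasi-separated space $X$ itself as a filtered inverse limit of finitely presented algebraic spaces; once one is in the finitely presented case the conjecture is easy and needs no formal geometry at all, since an integral fibre of finite type over $k(s)$ is \emph{finite}, so $X\to S$ is quasi-finite, separated and universally closed, hence finite by Deligne's theorem \cite[Cor.~18.12.4]{egaIV}, \cite[Thm.~A.2]{laumon}. In short, the difficulty is concentrated in the approximation of $X$, not in an algebraization step, and your sketch has neither ingredient. (A minor further caveat: reducing to ``$X\to\overline{X}$ is a monomorphism'' is essentially a restatement of affineness here rather than a simplification.)
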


This conjecture will be proved in~\cite{rydh_noetherian-approx}. In the
remainder of this appendix, we will give an independent proof of
Theorem~\pref{T:image_of_integral_wtff} without using Grothendieck's
conjecture. We first establish the following preliminary results.
\begin{enumerate}
\item If $X\to Y$ is integral, $X$ a semi-local scheme and $Y$ henselian then
$X$ is henselian, cf.\ Proposition~\pref{P:int_hensel}.
\item Affineness is descended by (not necessarily quasi-compact) flat morphisms
if we a priori know that the morphism in question is quasi-compact and
quasi-separated, cf.\ Proposition~\pref{P:flat_descends_affineness}.
\item A criterion for an algebraic space to be a scheme,
cf.\ Lemma~\pref{L:local_alg_space_is_scheme}.
\end{enumerate}

\begin{proposition}\label{P:int_hensel}
If $A$ is semi-local and henselian and $B$ is an integral semi-local
$A$-algebra, then $B$ is henselian. In particular $B$ is a finite direct
product of local henselian rings.
\end{proposition}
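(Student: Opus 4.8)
The plan is to prove the sharper statement that $B$ is a finite direct product of henselian local rings; since a semi-local ring is henselian if and only if it decomposes in this way (cf.\ \cite{egaIV}), the ``in particular'' — and hence the proposition — follows. First I would reduce to the case where $A$ is \emph{local}: writing the henselian semi-local ring $A$ as a finite product $\prod_i A_i$ of henselian local rings, $B$ decomposes accordingly as $\prod_i (B\otimes_A A_i)$, and each factor is integral over, and semi-local over, the henselian local ring $A_i$.

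So assume $A$ is henselian local. I would write $B=\varinjlim_\lambda B_\lambda$ as the filtered colimit of its finite $A$-subalgebras. Each $B_\lambda$ is finite over the henselian local ring $A$, hence is a finite product $\prod_i B_{\lambda,i}$ of henselian local rings (cf.\ \cite{egaIV}, \S18.5). Since $B_\lambda\inj B$ is an injective integral ring map, $\Spec(B)\to\Spec(B_\lambda)$ is surjective, and using that a domain integral over a field is a field one checks that it restricts to a surjection on maximal spectra; in particular every maximal ideal of $B_\lambda$ is the contraction of one of the finitely many maximal ideals $\mathfrak{n}_1,\dots,\mathfrak{n}_r$ of $B$. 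The key point — and the step I expect to require the most care — is a stabilization argument: the contractions $\mathfrak{n}_j\cap B_\lambda$ are compatible in $\lambda$, and $\mathfrak{n}_j\cap B_\lambda=\mathfrak{n}_k\cap B_\lambda$ for \emph{all} $\lambda$ would force $\mathfrak{n}_j=\mathfrak{n}_k$ (as $B=\bigcup_\lambda B_\lambda$). Since there are only finitely many pairs $(j,k)$, there is $\lambda_0$ so that for $\lambda\ge\lambda_0$ the map from $\{\mathfrak{n}_1,\dots,\mathfrak{n}_r\}$ to the set of maximal ideals of $B_\lambda$ is a bijection; for such $\lambda$ one may therefore write $B_\lambda=\prod_{j=1}^r B_{\lambda,j}$ with $B_{\lambda,j}$ the local factor at $\mathfrak{n}_j\cap B_\lambda$.

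Next I would check that for $\lambda_0\le\lambda\le\lambda'$ the transition homomorphism $B_\lambda\to B_{\lambda'}$ respects these decompositions and is local on each factor. The reason is that each $B_{\lambda',k}$ is local, so $\Spec(B_{\lambda',k})$ is connected and maps into a single one of the clopen pieces $\Spec(B_{\lambda,j})$ of $\Spec(B_\lambda)$, namely the one containing the image of its closed point, i.e.\ the one with $j=k$ by the bijectivity above. This forces the transition map to be ``diagonal'' with components $B_{\lambda,j}\to B_{\lambda',j}$, and locality of each component is immediate since it matches up closed points.

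Finally, passing to the colimit, $B=\varinjlim_{\lambda\ge\lambda_0}B_\lambda=\prod_{j=1}^r\bigl(\varinjlim_\lambda B_{\lambda,j}\bigr)$, since finite products commute with filtered colimits. Each $C_j:=\varinjlim_\lambda B_{\lambda,j}$ is a filtered colimit of henselian local rings along local homomorphisms, and is again henselian local: it is local because a relation $xy=1$ in $C_j$ already holds in some $B_{\lambda,j}$, so the non-units of $C_j$ form an ideal; and it is henselian because a monic polynomial over $C_j$ together with a factorization of its reduction modulo the maximal ideal involves only finitely many elements, hence descends to some $B_{\lambda,j}$, where the factorization lifts by henselianity, and the lift maps back to $C_j$. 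Therefore $B=\prod_{j=1}^r C_j$ is a finite product of henselian local rings, which is what was wanted. The routine parts are the reduction, the lying-over bookkeeping, and the colimit manipulations; the genuine content is the stabilization step, where the semi-locality of $B$ (finitely many maximal ideals, hence finitely many pairs to separate) and the connectedness of the spectrum of each local factor of $B_\lambda$ are both essential.
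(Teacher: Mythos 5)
Your proof is correct. The paper itself does not argue this proposition at all: it disposes of it in one line by citing Raynaud (\emph{Anneaux locaux hens\'eliens}, Ch.~XI, \S 2, Prop.~2), which is essentially the statement being proved. What you have written is, in effect, the standard proof of that cited result: reduce to $A$ henselian local, exhaust $B$ by its finite $A$-subalgebras $B_\lambda$ (each of which splits as a finite product of henselian local rings), use lying-over and going-up to identify $\mathrm{Max}(B_\lambda)$ with contractions of the finitely many maximal ideals of $B$, stabilize the indexing so that the decompositions become compatible with local transition maps, and pass to the filtered colimit, where locality and the Hensel lifting property are both finitary and therefore survive. All the steps check out; the only points worth polishing in a written version are (a) noting explicitly that the contraction $\mathfrak{n}_j\cap B_\lambda$ of a maximal ideal is itself maximal (the residue ring $B_\lambda/(\mathfrak{n}_j\cap B_\lambda)$ is a domain over which the field $B/\mathfrak{n}_j$ is integral, hence is a field), which you need for the map $j\mapsto \mathfrak{n}_j\cap B_\lambda$ to land in $\mathrm{Max}(B_\lambda)$, and (b) that coprimality of the factors mod the maximal ideal, i.e.\ the Bezout witnesses, also descends to finite level before you invoke Hensel's lemma in $B_{\lambda,j}$. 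Your route buys a self-contained argument where the paper relies on an external reference; the paper's citation buys brevity and, implicitly, Raynaud's slightly more flexible formulation of henselianity for semi-local rings.
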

\begin{proof}
Follows immediately from \cite[Ch.~XI, \S2, Prop.~2]{raynaud_hensel_rings}.
%% as $(B,IB)$ is a henselian couple if and only if $(B,\sqrt(IB))$ is a
%% henselian couple. (Here $I$ is the radical of $A$ and $\sqrt(IB)$ is the
%% radical of $B$. By definition $B$ is henselian if $(B,\rad(B))$ is a
%% henselian couple). cf.\ text after Def.~18.5.5, Def.~18.5.8 and Prop.~18.5.9
%% in \cite{egaIV}.
\end{proof}

%% \begin{remark}
%% The result in Proposition~\pref{P:int_hensel} is missing from the
%% demonstration of~\cite[Prop.~18.6.8]{egaIV} which states that if $A$ is
%% semi-local and $B$ is a semi-local and integral $A$-algebra then
%% $B\otimes_A \hensel{A}=\hensel{B}$.
%% \end{remark}

\begin{proposition}\label{P:flat_descends_affineness}
Let $\map{f}{X}{Y}$ and $\map{g}{Y'}{Y}$ be morphisms of schemes with $g$
faithfully flat. Let $\map{f'}{X'}{Y'}$ be the base-change of $f$ along
$g$. Then
\begin{enumerate}
\item $f$ is a homeomorphism if $f$ is quasi-compact and $f'$
is a homeomorphism.
\item $f$ is an isomorphism if and only if $f$ is quasi-compact and $f'$
is an isomorphism.
\item $f$ is affine if and only if $f$ is quasi-compact and quasi-separated and
$f'$ is affine.
\end{enumerate}
\end{proposition}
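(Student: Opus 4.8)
The plan is to prove the three assertions in the order (i), (iii), (ii): part~(i) is purely topological and self-contained, part~(iii) will use~(i), and part~(ii) will use both. The one genuine subtlety is that $g$ is \emph{not} assumed quasi-compact, so $g$ need not be submersive and the usual fpqc descent results (cf.\ \cite{egaIV}) are not available directly. The remedy I intend to use is a two-pronged one: extract all \emph{topological} information from quasi-compactness of $f$, and all \emph{coherent} information from quasi-separatedness of $f$, using only that a faithfully flat morphism (with no finiteness hypothesis) is surjective, satisfies going-down, and has exact and \emph{faithful} pullback on quasi-coherent sheaves, hence reflects isomorphisms of such.

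For~(i), write $\map{p}{X'}{X}$ for the base change of $g$. Surjectivity of $f$ is immediate from surjectivity of $g$ and of $f'$. For injectivity, if $f(x_1)=f(x_2)=y$ I would pick $y'\in Y'$ over $y$; the fibre of $p$ over each $x_i$ is $\Spec\kappa(x_i)\times_Y Y'\iso\Spec\kappa(x_i)\times_{\Spec\kappa(y)}Y'_y$, which surjects onto $Y'_y$ since $\Spec\kappa(x_i)\to\Spec\kappa(y)$ is surjective; so there are $x_i'\in X'$ with $p(x_i')=x_i$ and $f'(x_i')=y'$, and injectivity of $f'$ forces $x_1'=x_2'$, whence $x_1=x_2$. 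To see $f$ is closed it suffices, since $f$ is quasi-compact, to check that specializations lift along $f$. Given $x$, $y=f(x)$ and a specialization $y\rightsquigarrow y_0$ in $Y$: choose $y_0'\in Y'$ over $y_0$, then by going-down for the flat morphism $g$ a point $y'\in Y'$ over $y$ with $y'\rightsquigarrow y_0'$; lift $y'$ to $x'\in X'$ over $x$ as above; since $f'$ is a homeomorphism, $y'\rightsquigarrow y_0'$ lifts to $x'\rightsquigarrow x_0':=(f')^{-1}(y_0')$, and then $x_0:=p(x_0')$ is a specialization of $x$ with $f(x_0)=g(y_0')=y_0$. Thus $f$ is a closed continuous bijection, i.e.\ a homeomorphism.

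For~(iii), the ``only if'' direction is trivial, affine morphisms being quasi-compact, quasi-separated and stable under base change. For ``if'', I would note the claim is local on $Y$, so assume $Y$ affine; then $\mathcal A:=f_{*}\sO_X$ is quasi-coherent because $f$ is quasi-compact and quasi-separated. Put $Z:=\underline{\Spec}_Y(\mathcal A)$ and let $\map{\phi}{X}{Z}$ be the canonical $Y$-morphism; since $Z\to Y$ is affine and $X\to Y$ is quasi-compact and quasi-separated, $\phi$ is again quasi-compact and quasi-separated. By the flat base change theorem $g^{*}\mathcal A=f'_{*}\sO_{X'}$, so $Z\times_Y Y'=\underline{\Spec}_{Y'}(f'_{*}\sO_{X'})$ and the base change $\phi'=\phi\times_Y Y'$ is the canonical morphism attached to $f'$, hence an \emph{isomorphism} because $f'$ is affine. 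So $\phi$ is quasi-compact with $\phi'$ an isomorphism, hence a homeomorphism by~(i). Applying flat base change once more to $\phi$ along the flat surjective (hence faithfully flat) morphism $Z\times_Y Y'\to Z$, the pullback of $\sO_Z\to\phi_{*}\sO_X$ is the isomorphism $\sO_{Z'}\to\phi'_{*}\sO_{X'}$; as faithfully flat pullback reflects isomorphisms of quasi-coherent sheaves (its kernel and cokernel, being quasi-coherent, are killed), $\sO_Z\to\phi_{*}\sO_X$ is an isomorphism. A homeomorphism inducing an isomorphism on structure sheaves is an isomorphism, so $\phi$ is an isomorphism and $X\iso Z$ is affine over $Y$.

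For~(ii), ``only if'' is again trivial. For ``if'', by~(i) the morphism $f$ is a homeomorphism; moreover $\Delta_{f'}$ is surjective (it is an isomorphism) and is the base change of $\Delta_f$ along the surjective morphism $X'\times_{Y'}X'\to X\times_Y X$, so $\Delta_f$ is surjective, i.e.\ $f$ is radicial; since a surjective immersion is a closed immersion, $\Delta_f$ is a closed immersion and $f$ is separated, in particular quasi-separated. Then $f$ is quasi-compact and quasi-separated and $f'$ is affine, so $f$ is affine by~(iii); writing $X=\underline{\Spec}_Y(\mathcal A)$ with $\mathcal A=f_{*}\sO_X$, flat base change gives $g^{*}\mathcal A=f'_{*}\sO_{X'}=\sO_{Y'}$, so $g^{*}(\sO_Y\to\mathcal A)$ is an isomorphism and therefore $\sO_Y\to\mathcal A$ is an isomorphism by conservativity of faithfully flat pullback on quasi-coherent sheaves; hence $X\iso\underline{\Spec}_Y(\sO_Y)=Y$ and $f$ is an isomorphism. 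I expect the main obstacle throughout to be exactly the absence of quasi-compactness on $g$, which forces this routing through quasi-compactness and quasi-separatedness of $f$; the technical points that need care are verifying ``specializations lift along $f$'' so that (i)'s criterion applies, and checking that the auxiliary morphism $\phi$ in (iii) is quasi-compact and quasi-separated so that (i) and flat base change apply to it as well.
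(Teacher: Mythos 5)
Your proof is correct, but it takes a genuinely different route from the paper's. The paper first reduces to the special case $Y'=\coprod_{y\in Y}\Spec(\sO_{Y,y})$: it interposes the quasi-compact faithfully flat morphism $\coprod_{y}\Spec(\sO_{Y',y'})\to\coprod_{y}\Spec(\sO_{Y,y})$ and invokes the already-known quasi-compact fpqc descent statements of EGA~IV (2.6.2, 2.7.1). With that normal form, (i) is proved by noting that $f$ is bijective and generizing and then passing to the constructible topology (a quasi-compact bijection is a homeomorphism there, hence open, hence Zariski-open since $f$ is generizing); (ii) then follows from (i) by reading off that $f$ induces isomorphisms on stalks; and (iii) is deduced from (ii) applied to $X\to\Spec(f_*\sO_X)$ after flat base change of direct images. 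You avoid the reduction entirely and work with an arbitrary faithfully flat $g$: your (i) replaces the constructible-topology argument by lifting points and specializations through $X'$ and using that a quasi-compact morphism along which specializations lift is closed; and you reverse the logical order of (ii) and (iii), replacing the paper's stalkwise argument for (ii) by conservativity of faithfully flat pullback on quasi-coherent sheaves --- which costs you the extra (correct) step of showing that $\Delta_f$ is a surjective immersion, so that $f$ is quasi-separated and (iii) applies. Both proofs rest on the same coherent input, namely flat base change for direct images along quasi-compact quasi-separated morphisms and the factorization $X\to\Spec(f_*\sO_X)\to Y$. The paper's reduction outsources the delicate topology to the quasi-compact case already in EGA; your version is more self-contained and makes explicit exactly which properties of a non-quasi-compact faithfully flat morphism are used (surjectivity, going-down, and exact, faithful pullback on quasi-coherent modules).
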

\begin{proof}
The conditions in (ii) and (iii) are clearly necessary. Assume that
$f'$ is a homeomorphism (resp. an isomorphism, resp. affine).
Let $Y''=\coprod_{y\in Y}\Spec(\sO_{Y,y})$
and choose for every $y\in Y$ a point $y'\in g^{-1}(y)$. If we let
$Y'''=\coprod_{y\in Y}\Spec(\sO_{Y',y'})$ then $f'''$ is a homeomorphism
(resp. an isomorphism, resp. affine) and 
we can factor $Y'''\to Y'\to Y$ through the natural faithfully flat and
quasi-compact morphism $Y'''\to Y''$. As the statement of the proposition
is true when $g$ is quasi-compact by
\cite[Prop.~2.6.2 (iv), Prop.~2.7.1 (viii), (xiii)]{egaIV} it follows that
$f''$ is a homeomorphism (resp. an isomorphism, resp. affine). Replacing $Y'$
with $Y''$ we can thus assume that $Y'=\coprod_{y\in Y}\Spec(\sO_{Y,y})$.

(i) In order to show that $f$ is a homeomorphism it is enough to show that
$f$ is open since it is clearly bijective. As $f$ is generizing,
see \cite[Def.~3.9.2]{egaI_NE}, it follows by \cite[Thm.~7.3.1]{egaI_NE}
that $f$ is open if and only it is open in the constructible topology
\cite[7.2.11]{egaI_NE}. But as $f$ is quasi-compact and bijective
it follows from \cite[Prop.~7.2.12 (iv)]{egaI_NE} that $f$ is a homeomorphism
in the constructible topology and in particular open.

(ii) From (i) it follows that $f$ is a homeomorphism and since $f'$ is an
isomorphism, we have that $f$ is an isomorphism on the stalks. This shows that
$f$ is an isomorphism.

(iii) Taking direct images along quasi-compact and quasi-separated morphisms
commutes with flat pull-back by \cite[Lem.~2.3.1]{egaIV}. Thus we have a
cartesian diagram:
$$\xymatrix{
{X'}\ar[r]\ar[d] & {\Spec(f'_{*}\sO_{X'})}\ar[r]\ar[d] & {Y'}\ar[d] \\
{X}\ar[r]\ar@{}[ur]|{\square} & {\Spec(f_{*}\sO_{X})}\ar[r]
   & {Y}\ar@{}[ul]|{\square}
}$$
Since $\map{f'}{X'}{Y'}$ is affine we have that $X'\to\Spec(f'_{*}\sO_{X'})$ is
an isomorphism and it is enough to show that $X\to\Spec(f_{*}\sO_{X})$ is an
isomorphism. This follows from (ii).
\end{proof}

\begin{definition}
We say that an algebraic space $X$ is \emph{local} if there exist a point
$x\in X$ such that every closed subset $Z\subseteq X$ contains $x$.
\end{definition}

\begin{remark}
If $X$ is a local algebraic space then there is exactly one closed point
$x\in X$. If $X$ is a local scheme then $X$ is the spectrum of a local ring and
in particular affine.
\end{remark}

\begin{lemma}\label{L:disj_union}
Let $\map{f}{X}{Y}$ be a closed surjective morphism of algebraic spaces.  Let
$y\in Y$ be a closed point such that $f^{-1}(y)$ is discrete and such that for
any $x\in f^{-1}(y)$ we can write $X=X'_x\amalg X''$ where $X'_x$ is local and
contains $x$. Then $Y=Y'\amalg Y''$ where $Y'$ is local and contains
$y$. Furthermore $f^{-1}(Y')=\coprod_{x\in f^{-1}(y)} X'_x$.
\end{lemma}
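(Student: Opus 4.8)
The plan is to exhibit the decomposition of $Y$ concretely: set $X':=\coprod_{x\in f^{-1}(y)}X'_x$, put $Y':=f(X')$ and $Y'':=Y\setminus Y'$, and then verify that $Y'$ is open, closed, $f$-saturated, and local with closed point $y$; the assertion $f^{-1}(Y')=\coprod_x X'_x$ then holds by construction. First I would record some preliminary identifications. Write $D:=f^{-1}(y)$. It is closed in $X$, being the preimage of the closed point $y$ under a continuous map, and discrete by hypothesis. For $x\in D$ the given splitting $X=X'_x\amalg X''_x$ makes $X'_x$ an open and closed subspace which is local; let $\xi_x$ be its unique closed point. Since $\xi_x$ lies in every nonempty closed subspace of $X'_x$ and $D\cap X'_x$ is such a subspace (it contains $x$), we have $\xi_x\in D$; also $\xi_x\in\overline{\{x\}}$, and a point of the discrete set $D$ that is a specialization of $x\in D$ must equal $x$, so $\xi_x=x$. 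The same argument gives $D\cap X'_x=\{x\}$. Hence the $X'_x$, $x\in D$, are pairwise disjoint: if $x\neq x'$ and $X'_x\cap X'_{x'}\neq\emptyset$, this intersection is a nonempty open and closed subspace of $X'_x$, so it contains the closed point $x$ of $X'_x$, whence $x\in X'_{x'}\cap D=\{x'\}$, a contradiction. Thus $X'=\coprod_{x\in D}X'_x$ is an open subspace of $X$ containing $D$, and $X\setminus X'=\bigcap_{x\in D}X''_x$.

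Next I would prove that $X'$ is \emph{$f$-saturated}, i.e.\ $f^{-1}(f(X'))=X'$. Two observations suffice. (1) Every point of $f(X')$ is a generization of $y$: if $q\in X'_x$, then $\overline{\{q\}}$ is a nonempty closed subspace of $X'_x$, hence contains $x$, and since $f$ is closed and continuous $f(\overline{\{q\}})=\overline{\{f(q)\}}$, so $y=f(x)\in\overline{\{f(q)\}}$. (2) If $p\in X$ and $f(p)$ is a generization of $y$, then $p\in X'$: here $f(\overline{\{p\}})=\overline{\{f(p)\}}\ni y$, so $\overline{\{p\}}$ meets $D$, say $x\in\overline{\{p\}}\cap D$; then $p$ is a generization of $x$, and as $X'_x$ is open (hence stable under generization) and contains $x$, we get $p\in X'_x\subseteq X'$. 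Combining (1) and (2) yields $f^{-1}(f(X'))\subseteq X'$, the reverse inclusion being automatic. So with $Y':=f(X')$ we have $f^{-1}(Y')=X'$, and with $Y'':=Y\setminus Y'$ we have $f^{-1}(Y'')=X\setminus X'=\bigcap_{x\in D}X''_x$.

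To conclude, note that a closed continuous surjection is a topological quotient map, so a subset of $Y$ is open (resp.\ closed) exactly when its preimage is; since $f^{-1}(Y')=X'$ is open, $Y'$ is open. That $Y'$ is moreover \emph{closed} is equivalent to $X'=\coprod_{x\in D}X'_x$ being closed in $X$, and \textbf{this is the step I expect to be the main obstacle}: everything else is formal bookkeeping with closures, generizations and the quotient-map property, but closedness of an infinite disjoint union of open–closed pieces is not automatic. It is immediate when $f^{-1}(y)$ is finite, since then $X'$ is a finite union of open and closed subspaces; this covers the intended application to Theorem~\ref{T:image_of_integral_wtff}, where $g\circ f$ has topologically finite fibers, so $f^{-1}(y)$ is a closed subspace of the quasi-compact fiber $(g\circ f)^{-1}(g(y))$ and hence finite. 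In general one reduces to showing that $\bigcap_{x\in D}X''_x$ is open, which again comes down to the finite case. Granting that $Y'$ is clopen, we get $Y=Y'\amalg Y''$ as open subspaces. Finally $Y'$ is local with closed point $y$: indeed $y=f(x)\in Y'$, and if $C\subseteq Y'$ is a nonempty closed subspace then $C$ is closed in $Y$, so $f^{-1}(C)$ is a nonempty closed subspace of $X'$ meeting some $X'_{x_0}$ in a nonempty closed subspace, which therefore contains the closed point $x_0$ of $X'_{x_0}$, so $y=f(x_0)\in C$. And $f^{-1}(Y')=X'=\coprod_{x\in f^{-1}(y)}X'_x$ by construction, as required.
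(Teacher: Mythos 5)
Your argument is essentially the paper's own, carried out with more care. The paper likewise takes $Y'$ to be the set of generizations of $y$, observes that $f(X'')$ is closed and misses $y$ (hence misses $Y'$) while $f(X'_x)\subseteq Y'$, and deduces from surjectivity of $f$ that $Y'=f\bigl(\coprod_x X'_x\bigr)$ and $Y''=f(X'')$ are complementary, open and closed; this is exactly your saturation computation. You additionally supply details the paper leaves implicit and which genuinely need the discreteness of $f^{-1}(y)$: that $x$ is the closed point of $X'_x$, and that the $X'_x$ are pairwise disjoint. Concerning the one step you flag --- closedness of $\bigcup_x X'_x$ when $f^{-1}(y)$ is infinite --- the paper's proof makes exactly the same unspoken assumption, since it ``chooses $X''$ such that $X=\bigl(\coprod_{x\in f^{-1}(y)}X'_x\bigr)\amalg X''$'' without comment; so your proof is no less complete than the printed one. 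In both invocations of the lemma the issue evaporates: in the proof of Theorem~\pref{T:image_of_integral_wtff} the fiber is finite (indeed, a discrete closed subspace of a quasi-compact space), and in Lemma~\pref{L:local_alg_space_is_scheme} the source is already a coproduct $\coprod_\alpha X'_\alpha$ with each local piece closed in its own component, so the union is closed componentwise. Hence your finite-case resolution covers everything that is actually used, and the rest of your write-up (saturation, the quotient-map argument for openness, and the verification that $Y'$ is local with closed point $y$) is correct.
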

\begin{proof}
For every $x\in f^{-1}(y)$ let $X'_x\subseteq X$ be a local subspace containing
$x$ and choose $X''$ such that $X=\left(\coprod_{x\in f^{-1}(y)} X'_x\right)
\amalg X''$.  Let $Y'$ be the subset of $Y$ consisting of every generization of
$y$.  As $f(X'')$ is closed and does not contain $y$, it does not intersect
$Y'$. On the other hand $f(X'_x)$ is contained in $Y'$. Since $f$ is surjective
this shows that $f(X'')=Y\mysetminus Y'$ and $f(\bigcup X'_x)=Y'$. Thus $Y'$
and $Y''=Y\mysetminus Y'$ are both open and closed.
\end{proof}

\begin{lemma}\label{L:local_alg_space_is_scheme}
Let $X=\coprod_{\alpha\in \I} X_\alpha$ and $Y$ be algebraic spaces such that
$X_\alpha$ is local with closed point $x_\alpha$ and $Y$ is local with closed
point $y$. Let $\map{f}{X}{Y}$ be a universally closed schematically dominant
morphism such that $f^{-1}(y)=\{x_\alpha\;:\;\alpha\in\I\}$. If $X_\alpha$ is
a henselian scheme for every $\alpha\in\I$ then $Y$ is affine.
\end{lemma}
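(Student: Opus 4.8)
The plan is to replace $Y$ by a local \emph{scheme} that is étale over it, and then to propagate affineness back to $Y$. First, $f$ is surjective: being universally closed it is closed, and being schematically dominant its image is dense, so the image is all of $Y$; moreover if $\I=\emptyset$ then $X=\emptyset$ and schematical dominance forces $Y=\emptyset$, so we may assume $\I\neq\emptyset$. Choose an étale morphism $\map{p}{V}{Y}$ with $V$ an affine scheme and $y$ in its image; since every nonempty open subspace of the local space $Y$ contains $y$, $p$ is automatically surjective. Fix $v\in V$ over $y$; as étale morphisms have discrete fibres, $v$ is closed in $V$. Put $W=X\times_Y V$, so $W\to X$ is étale and $W=\coprod_\alpha W_\alpha$ with $W_\alpha\to X_\alpha$ étale. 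Because $X_\alpha$ is henselian local, $W_\alpha=W_\alpha^0\amalg W_\alpha^1$ where $W_\alpha^0\to X_\alpha$ is finite étale (so $W_\alpha^0$ is a finite disjoint union of henselian local schemes) and $W_\alpha^1$ has empty fibre over $x_\alpha$; since $f^{-1}(y)\cap X_\alpha=\{x_\alpha\}$, every point of the (discrete) fibre of $W\to V$ over $v$ lies in some $W_\alpha^0$, hence in an open-and-closed henselian local component of $W$. As $W\to V$ is closed and surjective, Lemma~\pref{L:disj_union} applied at $v$ gives $V=V'\amalg V''$ with $V'$ local, $v\in V'$, and $W\times_V V'$ a disjoint union of henselian local schemes. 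Replacing $V$ by $V'$ (and $W$ by $W\times_V V'$) we may assume: $V=\Spec(B)$ is a \emph{local} scheme with closed point $v$, $\map{p}{V}{Y}$ is étale surjective, and $X_V:=X\times_Y V=\coprod_w\Spec(C_w)$ with each $C_w$ henselian local, while $X_V\to V$ stays universally closed, surjective, and (as $p$ is flat) schematically dominant.

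Next I would show $B$ is henselian. For a finite $B$-algebra $D$, the base change $X_V\times_V\Spec(D)=\coprod_w\Spec(D\otimes_B C_w)$ is again a disjoint union of henselian local schemes, since $D\otimes_B C_w$ is integral and semi-local over the henselian local ring $C_w$ and hence henselian by Proposition~\pref{P:int_hensel}; and it maps universally closedly, surjectively, and schematically dominantly onto the semi-local scheme $\Spec(D)$. A short analysis of the images of the individual pieces $\Spec(D\otimes_B C_w)\to\Spec(D)$ (each such image is the spectrum of a local quotient of $D$, and their union, being everything, must meet every generic point) then forces every connected component of $\Spec(D)$ to be local, i.e. $D$ is a finite product of local rings; since this holds for every finite $B$-algebra, $B$ is henselian and in particular $V$ is affine.

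Finally one must descend affineness along $\map{p}{V}{Y}$. Here $R:=V\times_Y V$, with its two étale projections, is the equivalence relation presenting $Y=V/R$, and $X\times_Y R$ is étale over $X_V=\coprod_w\Spec(C_w)$ along either projection, so the construction of the first paragraph applies verbatim to $X\times_Y R\to R$; combined with Proposition~\pref{P:flat_descends_affineness}, which recognises affineness and isomorphisms after the (possibly non–quasi-compact) faithfully flat base changes that occur, this should show that $R$ is affine and that the relation $R\rightrightarrows V$ is effective in affine schemes, so that $Y$ is a scheme — and a local scheme is the spectrum of a local ring, hence affine. The main obstacle is exactly this last step: one must organise the bookkeeping so that $R$ (equivalently, the chart $p$) is tame enough — affine, indeed finite — for the quotient to be an affine scheme, and it is here that the henselian hypothesis on the $X_\alpha$ and the schematical dominance of $f$ enter essentially rather than formally.
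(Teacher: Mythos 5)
Your first reduction is essentially the paper's: pull back along an \'etale chart $V\to Y$, use the henselianness of the $X_\alpha$ to split off finite \'etale local pieces over the fibre, and apply Lemma~\pref{L:disj_union} to replace $V$ by a local scheme. But you miss one small choice that turns out to carry the whole proof: the chart should be taken with a point $v$ over $y$ having \emph{trivial residue field extension} $k(v)=k(y)$. With that choice, the fibre of $X_V\to V$ over $v$ consists of exactly one point $x'_\alpha$ in each $X_\alpha\times_Y V$ with $k(x'_\alpha)=k(x_\alpha)$, so each $\Spec(\sO_{X'_\alpha,x'_\alpha})\to X_\alpha$ is finite \'etale with trivial residue field extension at the closed point of a henselian local scheme, hence an \emph{isomorphism}; after the Lemma~\pref{L:disj_union} localization one gets $X_V\iso X$, not merely ``$X_V$ is a disjoint union of henselian local schemes.'' Your version retains only the weaker statement, and your intermediate paragraph (showing $B$ is henselian via an analysis of images in finite $B$-algebras) is both sketchy and, in the paper's argument, unnecessary.

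The real gap is the last step, which you yourself flag as ``the main obstacle'': you never actually show that $R=V\times_Y V$ is affine or that the quotient is a scheme, and even granting that $R$ is affine, an \'etale equivalence relation of affine schemes does not automatically have a scheme (let alone affine) quotient. The paper closes this by making the equivalence relation \emph{trivial}: since $X_V\iso X$, also $X\times_Y R=X_V\times_X X_V\iso X$, so writing $V=\Spec(A')$, $R=\Spec(A'')$ (local by another application of Lemma~\pref{L:disj_union}, hence affine since it is quasi-affine), and $X_V=\Spec(B')=X\times_Y R$, schematic dominance gives $A'\inj B'$ and $A''\inj B'$, and flatness of $A'\to A''$ yields $A''/A'\inj (A''/A')\otimes_{A'}B'=B'/B'=0$, whence $A''=A'$ and both projections are the identity. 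Thus $Y=V$ is a local scheme with no quotient to be taken. This is the step where schematic dominance and henselianness ``enter essentially,'' and it is absent from your argument; as written, the proposal does not prove the lemma.
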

\begin{proof}
There is an \etale{} quasi-compact separated surjective morphism
$\map{g}{Y'}{Y}$ such that $Y'$ is a scheme and such that there is a point
$y'\in g^{-1}(y)$ with $k(y')=k(y)$. Let $X'=X\times_Y Y'$ with projections
$\map{h}{X'}{X}$ and $\map{f'}{X'}{Y'}$.  Similarly we let
$X'_\alpha=X_\alpha\times_Y Y'$ and we have that
$X'=\coprod_{\alpha\in\I}X'_\alpha$. As $k(y')=k(y)$ we have that
$f'^{-1}(y')=\{x'_\alpha\}$ such that $x'_\alpha\in X'_\alpha$ and
$h(x'_\alpha)=x_\alpha$.

Since $X_\alpha$ is henselian and $h$ is quasi-finite
and separated it follows by \cite[Thm.~18.5.11 c)]{egaIV} that
$\Spec(\sO_{X'_\alpha,x'_\alpha})\to X_\alpha$ is finite and that
$\Spec(\sO_{X'_\alpha,x'_\alpha})\subseteq X'$ is open and closed.
Further as $X_\alpha$
is henselian, $k(x'_\alpha)=k(x_\alpha)$ and $X'_\alpha\to X_\alpha$ is
\etale{} it follows that $\Spec(\sO_{X'_\alpha,x'_\alpha})\to X_\alpha$ is an
isomorphism. By Lemma~\pref{L:disj_union} we then have a decomposition
$Y'=Y'_1\amalg Y'_2$ where $Y'_1$ is local and
$f'^{-1}(Y'_1)=\coprod_\alpha \Spec(\sO_{X'_\alpha,x'_\alpha}) \iso X$. Thus
we can, replacing $Y'$ with $Y'_1$, assume that $Y'$ is a local scheme and
$X'\iso X$.

Let $Y''=Y'\times_Y Y'$, which is a quasi-affine scheme, and
$X''=X\times_Y Y''=X'\times_X X'\iso X$. Lemma~\pref{L:disj_union} shows as
before that $Y''$ is local and hence affine. Let
$Y'=\Spec(A')$, $Y''=\Spec(A'')$, $X'=\Spec(B')$ and $X''=\Spec(B'')$ where
$B''=B'$. As $A'\inj A''$ is faithfully flat it follows that $A''/A'$ is a
flat $A'$-algebra. Further $A'\to B'$ is injective since $X\to Y$ is
schematically dominant. Thus $A''/A'\inj (A''/A')\otimes_{A'} B'=B''/B'=0$
which shows that $A''=A'$. This shows that $Y$ is the quotient of the \etale{}
equivalence relation
$\xymatrix@M=1pt{{\Spec(A'')} \ar@<.5ex>[r] \ar@<-.5ex>[r] & \Spec(A')}$ where
the two morphisms are the identity. Thus $Y=\Spec(A')$ is a local
\emph{scheme}.
\end{proof}

\begin{proof}[Proof of Theorem~\pref{T:image_of_integral_wtff}]
As $g\circ f$ is separated, $f$ is separated. As $g\circ f$ is quasi-compact
and universally closed and $g$ is separated, $f$ is quasi-compact
and universally closed. Thus the image $Y'$ exists
\cite[Prop.~6.10.5]{egaI_NE} and \cite[Prop.~4.6]{knutson_alg_spaces} and
$X\to Y'$ is surjective. As $g\circ f$ is
universally closed and $X\to Y'$ is surjective it follows that
$Y'\to S$ is universally closed. Further it is obvious that $Y'\to S$
has topologically finite fibers.

Since the question is local over $S$, we can assume that $S$ is affine. Then
$X$ is affine and we will show that $Y'\to S$ is \emph{affine}.
It then follows that $Y'\to S$ is integral since
$\sO_S\to g_*\sO_{Y'}\inj g_* f_*\sO_X$ is integral.

Using Proposition~\pref{P:flat_descends_affineness} we are allowed to replace
$S$ with the henselization $\Spec(\hensel{\sO_{S,s}})$ at an arbitrary point
$s$ and thus assume that $S$ is local and henselian. Then by
Proposition~\pref{P:int_hensel} $X$ is henselian and a disjoint union of local
schemes.

Let $x_1,x_2,\dots,x_n$ be the closed points of $X$ and
$X=X_1\amalg X_2\amalg\dots\amalg X_n$ the corresponding partition into
local henselian schemes. Then by Lemma~\pref{L:disj_union}
$Y=Y_1\amalg Y_2\amalg\dots\amalg Y_m$ where $Y_k$ is a local space with
closed point $y_k\in f(x_j)$ for some $j$ depending on $k$. Further
Lemma~\pref{L:local_alg_space_is_scheme} shows that $Y_k$ is a local
\emph{scheme} and hence affine.
\end{proof}

\end{subsection}

\end{section}

%---------------------------------------------------------------

\bibliography{famzerocycles}
\bibliographystyle{dary}

\end{document}